\DeclareFontFamily{U}{min}{}
\DeclareFontShape{U}{min}{m}{n}{<-> udmj30}{}
\def\minusone#1
\def\sumplusone#1#2%
\def\paragraph{\@startsection{paragraph}{4}%
  \z@\z@{-\fontdimen2\font}
  {\normalfont\bfseries}}
\newcommand{\cC}{\mathcal{C}}
\newcommand{\cD}{\mathcal{D}}
\newcommand{\cE}{\mathcal{E}}
\newcommand{\cF}{\mathcal{F}}
\newcommand{\cH}{\mathcal{H}}
\newcommand{\cM}{\mathcal{M}}
\newcommand{\cQ}{\mathcal{Q}}
\newcommand{\cS}{\mathcal{S}}
\newcommand{\cT}{\mathcal{T}}
\newcommand{\cV}{\mathcal{V}}
\newcommand{\cW}{\mathcal{W}}
\newcommand{\set}{\cS\!\mathit{et}}
\newcommand{\sset}{\mathit{s}\set}
\newcommand{\gpd}{\mathcal Gpd}
\newcommand{\D}{\Sigma}
\newcommand{\targetcat}{\mathcal{C}}
\newcommand{\sdot}{S_{\bullet}}
\newcommand{\pcat}{\mathcal{P}}
\newcommand{\epi}{\twoheadrightarrow}
\newcommand{\mono}{\rightarrowtail}
\newcommand{\asdc}{\mathcal{DC}at_{aug}^{st}}
\newcommand{\dc}{\mathcal{DC}at}
\newcommand{\untwoseg}{\mathcal{U}2\mathcal{S}eg}
\newcommand{\sS}{\targetcat^{\Delta^{\operatorname{op}}}} 
\newcommand{\saS}{\targetcat^{\Sigma^{\operatorname{op}}}} 
\newcommand{\saset}{\set^{\Sigma^{\operatorname{op}}}} 
\newcommand{\sasset}{\sset^{\Sigma^{\operatorname{op}}}}
\newcommand{\bset}{\set^{(\Delta\times \Delta)^{\operatorname{op}}}}
\newcommand{\Sq}{Sq\,}
\newcommand{\Ob}{Ob\,{}}
\newcommand{\Hor}{Hor\,}
\newcommand{\Ver}{Ver\,}
\newcommand{\aug}{A}
\DeclareMathOperator{\colim}{colim}
\DeclareMathOperator{\holim}{holim}
\DeclareMathOperator{\id}{id}
\DeclareMathOperator{\Funset}{Hom} 
\DeclareMathOperator{\Fun}{Fun}
\DeclareMathOperator{\Hom}{Hom}
\DeclareMathOperator{\Map}{Map} 
\DeclareMathOperator{\inj}{inj}
\DeclareMathOperator{\proj}{proj}
\DeclareFontFamily{OT1}{pzc}{}
\DeclareFontShape{OT1}{pzc}{m}{it}{<-> s * [1.10] pzcmi7t}{}
\DeclareMathAlphabet{\mathpzc}{OT1}{pzc}{m}{it}
\DeclareMathOperator{\op}{op}
\DeclareMathOperator{\pr}{pr}
\newcommand{\wW}[1]{\operatorname{W}[#1]}
\newcommand{\sW}[1]{\cW[#1]}
\newcommand{\wH}[1]{H[#1]}
\newcommand{\sH}[1]{\cH[#1]}
\newcommand{\wV}[1]{V[#1]}
\newcommand{\sV}[1]{\cV[#1]}
\newcommand{\Wloc}{\mathscr{W}}
\newcommand{\Sloc}{\mathscr{S}}
\newcommand{\Tloc}{\mathscr{T}}
\newcommand{\decomposition}{P}
\newcommand{\htimes}[1]{\underset{#1}{\overset{h}{\times}}}
\newcommand{\ttimes}[1]{\underset{#1}{\times}}
\newcommand{\aamalg}[1]{\underset{#1}{{\amalg}}} 
\newcommand{\bamalg}[2]{\overset{#2}{\underset{#1}{{\amalg}}}} 
\newcommand{\btimes}[2]{\overset{#2}{\underset{#1}{{\times}}}}
\newcommand{\inda}{q} 
\newcommand{\indb}{r}
\newcommand{\indc}{k}
\newcommand{\indd}{\ell}
\newcommand{\aughor}[1]{{\zeta}^{#1}}
\newcommand{\augver}[1]{\overline{\zeta}^{#1}}
\newcommand{\segalhor}[1]{\sigma^{#1}}
\newcommand{\segalver}[1]{\overline{\sigma}^{#1}}
\newcommand{\stablespan}[1]{\overline{\tau}^{#1}}
\newcommand{\stablecospan}[1]{\tau^{#1}}
\newcommand{\sourceverd}{s^v}
\newcommand{\sourcehord}{s^h}
\newcommand{\targetverd}{t^v}
\newcommand{\targethord}{t^h}
\newcommand{\bchainhord}{b^h}
\newcommand{\echainhord}{e^h}
\newcommand{\bchainverd}{b^v}
\newcommand{\echainverd}{e^v}
\newcommand{\sourcever}{s_v}
\newcommand{\sourcehor}{s_h}
\newcommand{\targetver}{t_v}
\newcommand{\targethor}{t_h}
\newcommand{\echainhor}{e_h}
\newcommand{\bchainver}{b_v}
\newcommand{\leqnomode}{\tagsleft@true\let\veqno\@@leqno}
\newcommand{\reqnomode}{\tagsleft@false\let\veqno\@@eqno}
\newcommand{\filt}{F}
\newcommand*{\longhookrightarrow}{\ensuremath{\lhook\joinrel\relbar\joinrel\rightarrow}}
\tikzset{arrow/.style={-stealth}}
\tikzset{arrowshorter/.style={-stealth, shorten <=2pt, shorten >=2pt}}
\tikzset{arrowmuchshorter/.style={-stealth, shorten <=7pt, shorten >=6pt}}
\tikzset{mono/.style={>-stealth}} 
\tikzset{epi/.style={-twotriang}} 
\tikzset{twoarrowlonger/.style={double,double distance=1.5pt,
shorten <=5pt,shorten >=6pt,
decoration={markings,mark=at position -4pt with {\arrow[scale=1.75]{>}}},
preaction={decorate}}} 
\tikzset{twoarrowlongerred/.style={thick, red,double,double distance=1.5pt,
shorten <=5pt,shorten >=6pt,
decoration={markings,mark=at position -4pt with {\arrow[scale=1.75, red]{>}}},
preaction={decorate}}} 
\tikzset{mapstikz/.style={-stealth, 
decoration={markings,mark=at position 0pt with {\arrow[scale=0.5]{|}}}, preaction={decorate}}}
\tikzset{mapstikz2/.style={-stealth, 
decoration={markings,mark=at position 0pt with {\arrow[scale=1]{|}}}, preaction={decorate}}}
\tikzset{dot/.style={circle,draw,fill,inner sep=1pt}}
\tikzstyle{category1} = [rectangle, rounded corners, minimum width=2cm, minimum height=2.5cm,text centered, draw=black, fill=yellow!30, text width=2cm]
\tikzstyle{category2} = [rectangle, rounded corners, minimum width=2cm, minimum height=2.5cm,text centered, draw=black, fill=blue!30, text width=2cm]
\tikzstyle{higharrow}=[thick,red]
\tikzset{highnode/.style={red}}
\theoremstyle{plain}  
\newtheorem{thm}{Theorem}[section] 
\let\c@thm\c@thm\makeatother
\let\c@cor\c@thm\makeatother
\newtheorem{lem}{Lemma}[section]
\let\c@lem\c@thm\makeatother
\newtheorem{prop}{Proposition}[section]
\let\c@prop\c@thm\makeatother
\let\c@claim\c@thm\makeatother
\newtheorem*{unnumberedtheorem}{Theorem}
\newtheorem*{question}{Question}
\theoremstyle{definition}
\newtheorem{defn}{Definition}[section]
\let\c@defn\c@thm\makeatother
\let\c@const\c@thm\makeatother
\newtheorem{notn}{Notation}[section]
\let\c@notn\c@thm\makeatother
\newtheorem{assumption}{Assumption}[section]
\let\c@assumption\c@thm\makeatother
\theoremstyle{remark}
\newtheorem{rmk}{Remark}[section]
\let\c@rmk\c@thm\makeatother
\newtheorem{ex}{Example}[section]
\let\c@ex\c@thm\makeatother
\let\c@observation\c@thm\makeatother
\let\c@digression\c@thm\makeatother
\let\c@equation\c@thm
\numberwithin{equation}{section}
\newcommand{\newrefformat}[2]{}
\crefname{lem}{Lemma}{Lemmas}
\crefname{thm}{Theorem}{Theorems}
\crefname{defn}{Definition}{Definitions}
\crefname{notn}{Notation}{Notations}
\crefname{const}{Construction}{Constructions}
\crefname{prop}{Proposition}{Propositions}
\crefname{rmk}{Remark}{Remarks}
\crefname{cor}{Corollary}{Corollaries}
\crefname{equation}{}{} 
\crefname{ex}{Example}{Examples}
\crefname{section}{Section}{Sections}
\crefname{subsection}{Section}{Sections} 
\crefname{digression}{Digression}{Digressions}
\crefname{assumption}{Assumption}{Assumptions}
\begin{document}
\title{2-Segal objects and the Waldhausen construction}

\author{Julia E. Bergner}
\address{Department of Mathematics, University of Virginia, Charlottesville, VA 22904, USA}
\email{jeb2md@virginia.edu}

\author{Ang\'{e}lica M. Osorno}
\address{Department of Mathematics, Reed College, Portland, OR 97202, USA}
\email{aosorno@reed.edu}

\author{Viktoriya Ozornova}
\address{Fakult\"at f\"ur Mathematik, Ruhr-Universit\"at Bochum, D-44780 Bochum, Germany}
\email{viktoriya.ozornova@rub.de}

\author{Martina Rovelli}
\address{Department of Mathematics,
Johns Hopkins University,
Baltimore, MD 21218, USA}
\email{mrovelli@math.jhu.edu}

\author{Claudia I. Scheimbauer}
\address{Institutt for matematiske fag, Norges teknisk-naturvitenskapelige universitet (NTNU), 7491 Trondheim, Norway}
\email{claudia.scheimbauer@ntnu.no}

\date{\today}

\subjclass[2010]{55U10, 55U35, 55U40, 18D05, 18G55, 18G30, 19D10}

\keywords{2-Segal space, Waldhausen $\sdot$-construction, double Segal space, model category}

\thanks{The first-named author was partially supported by NSF CAREER award DMS-1659931. The second-named author was partially supported by a grant from the Simons Foundation (\#359449, Ang\'elica Osorno), the Woodrow Wilson Career Enhancement Fellowship and NSF grant DMS-1709302.
The fourth-named and fifth-named authors were partially funded by the Swiss National Science Foundation, grants P2ELP2\textunderscore172086 and P300P2\textunderscore 164652. The fifth-named author was partially funded by the Bergen Research Foundation and would like to thank the University of Oxford for its hospitality. The authors would like to thank the Isaac Newton Institute for Mathematical Sciences, Cambridge, for support and hospitality during the program ``Homotopy Harnessing Higher Structures" where work on this paper was undertaken. This work was supported by EPSRC grant no. EP/K032208/1.}

\begin{abstract}
In a previous paper, we showed that a discrete version of the $S_\bullet$-construction gives an equivalence of categories between unital 2-Segal sets and augmented stable double categories.  Here, we generalize this result to the homotopical setting, by showing that there is a Quillen equivalence between a model category for unital 2-Segal objects and a model category for augmented stable double Segal objects which is given by an $\sdot$-construction.  We show that this equivalence fits together with the result in the discrete case and briefly discuss how it encompasses other known $\sdot$-constructions.  
\end{abstract}

\maketitle

\setcounter{tocdepth}{2} 
\tableofcontents

\section*{Introduction} 

Waldhausen's $S_\bullet$-construction, first described in \cite{waldhausen}, provides a way to define the algebraic $K$-theory 
via classifying spaces of categories of certain diagrams in the given category, so that its output is a simplicial space.  While this construction was defined in the more general setting of categories with cofibrations and weak equivalences (often now called Waldhausen categories) it provides a different way to think about the algebraic $K$-theory of exact categories, originally constructed by Quillen \cite{QuillenK}.

In work of Dyckerhoff and Kapranov \cite{DK} and of G\'alvez-Carrillo, Kock, and Tonks \cite{GalvezKockTonks}, the simplicial spaces obtained as the output of an $S_\bullet$-construction for exact categories were shown to have the additional structure of a \emph{unital $2$-Segal space} or \emph{decomposition space}.  It is particularly interesting that both sets of authors identify the output of the Waldhausen construction as a crucial example, although their approaches to this structure are quite different. The main starting point for this work was the following question.

\begin{question}
Is this source of examples exhaustive?  In other words, does every unital 2-Segal space arise from such a construction?
\end{question} 

The main result of this work is to give a positive answer to this question for a generalization of the $\sdot$-construction.

To give a brief overview, first recall that a \emph{Segal space} is a simplicial space $X$ such that the Segal maps 
\[ X_n \longrightarrow \underbrace{X_1 \htimes{X_0} \cdots \htimes{X_0} X_1}_n \]
are weak equivalences for $n \geq 2$. We can think of a Segal space $X$ as having a space of objects $X_0$, a space of morphisms $X_1$, and up-to-homotopy composition which can be defined by the span
\[ X_1\htimes{X_0} X_1 \stackrel{\simeq}{\longleftarrow}X_2 \longrightarrow X_1, \]
since the first arrow is a weak equivalence.  In other words, a Segal space is a topological category up to homotopy.

As a generalization, a $2$-Segal space is a simplicial space $X$ such that certain maps
\[ X_n \longrightarrow \underbrace{X_2 \htimes{X_1} \cdots \htimes{X_1} X_2}_{n-1} \]
are weak equivalences for $n \geq 3$.  In this setting, we still have a space of objects $X_0$ and a space of morphisms $X_1$, but we no longer have composition of all composable pairs of morphisms, since the first map in the span
\[ X_1\htimes{X_0} X_1 \longleftarrow X_2 \longrightarrow X_1 \] 
is no longer necessarily invertible, even up to homotopy.  However, we can think of a $2$-Segal space as having a multi-valued composition, where an element of $X_1 \times_{X_0} X_1$ can be lifted to any preimage in $X_2$, which is in turn sent to its image in $X_1$.  Thus, two potentially composable morphisms could have no composite at all (if the preimage in $X_2$ is empty) or multiple composites (if the preimage has multiple elements).  The homotopy invertibility of the $2$-Segal maps given above is used to prove that this multi-valued composition is homotopy associative. A $2$-Segal space is \emph{unital} if composition with identity morphisms always exists and is unique up to homotopy.  We think of this structure as that of a category with multi-valued composition up to homotopy.   

Our work arose from the following question: does every unital $2$-Segal space arise via the $S_\bullet$-construction for some suitably general input category?  In previous work \cite{boors} we considered this question in a discrete setting, in which the output was a simplicial set, rather than a simplicial space.  We gave a positive answer to this question, in that we showed that the category of unital $2$-Segal sets is equivalent to the category of {\em augmented stable double categories}; the equivalence given by a discrete $S_\bullet$-construction and its inverse by a path construction.  While illuminating in its own right, this discrete setting is insufficient to encompass even the classical example of exact categories, for which we need to work in the homotopical setting.

In particular, we expect not an equivalence of categories, but instead an equivalence of homotopy theories, given by a Quillen equivalence of model categories. Indeed, we need not restrict ourselves to working solely in the context of spaces, but can consider 2-Segal objects in any sufficiently nice model category $\cC$.  By a result of Dyckerhoff and Kapranov, there is a model structure on the category of simplicial objects in $\targetcat$ in which the fibrant objects are unital 2-Segal objects.

To make our comparison, then, we first need a model category whose fibrant objects are augmented stable double Segal objects in $\targetcat$, the homotopical analogues of augmented stable double categories.  While double Segal objects give a homotopical generalization of double categories, we can then describe the appropriate generalizations of the notions of augmentation and stability.  The resulting structures can be realized as the fibrant objects in a model structure on a category of bisimplicial objects in $\targetcat$ with additional structure.

The main result of this paper is the following theorem, which is given in detail as \cref{quillenequivalence}.

\begin{unnumberedtheorem}
There is a Quillen equivalence between the model category for unital $2$-Segal objects and the model category for augmented stable double Segal objects.
\end{unnumberedtheorem}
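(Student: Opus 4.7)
My plan is to promote the strict equivalence of categories between unital 2-Segal sets and augmented stable double categories established in \cite{boors} into a Quillen equivalence of model categories. First, I would extend the $\sdot$-construction and its inverse path construction from the discrete setting to a pair of adjoint functors between $\sS$ and a suitable category $\cA$ of ``augmented bisimplicial'' objects in $\targetcat$, defined by left Kan extension along the Yoneda embedding from the discrete constructions of \cite{boors}. By construction, restricting the resulting functors to the discrete objects on each side should recover the adjunction of \cite{boors}, so the global adjointness drops out formally from the universal property of left Kan extension.

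Next, I would exhibit the two model structures as Bousfield localizations. On the simplicial side, I would use the Dyckerhoff--Kapranov model structure on $\sS$ whose fibrant objects are unital 2-Segal objects, obtained by localizing the injective model structure at an explicit set of Segal and unitality maps. On the augmented-bisimplicial side, I would build a model structure on $\cA$ by iterated left Bousfield localization, inverting maps that homotopically encode the double Segal, augmentation and stability conditions. Existence of both localizations follows from the standard Bousfield localization machinery once $\targetcat$ is assumed to be a combinatorial (or otherwise sufficiently nice) model category.

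To see that the adjunction is Quillen, I would use that the left adjoint is cocontinuous and sends representables to representables (up to the extra augmentation datum), so it preserves cofibrations in the underlying injective model structure; the nontrivial input is that the localizing maps on the source are sent to weak equivalences on the target. This I would verify by computing the image of each generating localizing map under the $\sdot$-construction and exhibiting it, up to homotopy, as an iterated pushout of target-side localizing maps, using the correspondence of generating cells established in \cite{boors} to bootstrap the homotopical statement from the discrete one.

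Finally, to upgrade the Quillen adjunction to a Quillen equivalence I would check that the derived unit and counit are weak equivalences on fibrant-cofibrant objects. On such objects, the derived transformations reduce, after passing to appropriate underlying-category data, to the strict unit and counit of \cite{boors}, which are already isomorphisms; lifting back to the full homotopical statement then follows by a skeletal/inductive argument using the 2-Segal, augmentation and stability conditions to propagate weak equivalence from level zero to every level in $\targetcat$. I expect the main obstacle to lie exactly in this last step: pinning down the correct set of localizing maps on the double Segal side so that the two constructions become mutually inverse on fibrant objects, and verifying that the homotopy types of $\sdot X$ and the path construction agree on arbitrary fibrant inputs, not only on the discrete ones handled in \cite{boors}.
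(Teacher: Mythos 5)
Your overall architecture overlaps with the paper's up to a point: both sides are obtained as enriched left Bousfield localizations of injective model structures, and the Quillen-pair step does come down to showing that the images under the left adjoint of the generating maps in $\Sloc$ are local equivalences on the augmented side (the paper packages this as an auxiliary localization $\saS_{\Wloc}$ at $\pcat(\Sloc)$ and then shows every $\Tloc$-local object is $\Wloc$-local). But your setup muddles which functor can play which role. Only the path construction can be produced as the cocontinuous (left Kan extended) functor: it is $\pcat=p^*$, determined by $\Delta[n]\mapsto\wW{n}$, and it is the \emph{left} adjoint; the generalized $\sdot$-construction is its right adjoint $p_*$, corepresented by the $\wW{n}$, and so cannot itself be defined by left Kan extension nor serve as the left Quillen functor (indeed $\pcat$ does not preserve injectively fibrant objects, so the adjunction is not Quillen in the opposite direction). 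Accordingly, your step ``computing the image of each generating localizing map under the $\sdot$-construction'' has the roles reversed: what must be analyzed are the maps $\pcat f_{\decomposition}\colon\wW{\decomposition}\to\wW{n}$ and $\pcat\omega_{n,i}$, and showing these are $\Tloc$-local equivalences is not a formal bootstrap from the discrete correspondence of generating cells in \cite{boors}; it requires explicit filtration arguments attaching squares, their horizontal/vertical composites and interchange grids along maps of $\Tloc$, together with retract tricks for the unitality maps (see \cref{inclusionHn} and the surrounding lemmas). Also, $\pcat$ preserves injective cofibrations simply because it is a reindexing, $(\pcat X)_{\inda,\indb}=X_{\inda+1+\indb}$, not because representables go to representables.

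The genuine gap is in your final step. Fibrant objects of $\sS_{\Sloc}$ and $\saS_{\Tloc}$ are not discrete, so the derived unit and counit on fibrant--cofibrant objects do not ``reduce to the strict unit and counit of \cite{boors}''; the discrete equivalence gives no control over them, and a skeletal induction propagating equivalences up from level zero is not available (the levels are tied together only through the localizing maps, and $\pcat$ of a fibrant object is only fibrant after injective fibrant replacement). What is actually needed, and what the paper proves, is: (i) $\pcat$ reflects weak equivalences, which rests on the nontrivial fact that $(\pcat X)^f$ is $\Tloc$-local whenever $X$ is $\Sloc$-local (the 2-Segal and unitality conditions on $X$ translate into the double Segal, stability and augmentation conditions for $\pcat X$); and (ii) for every $\Tloc$-fibrant $Y$ the counit $\pcat\sdot Y\to Y$ is a levelwise equivalence, which via $(\pcat\sdot Y)_{\inda,\indb}\cong\Map_{\saS}(\wW{\inda+1+\indb},Y)$ reduces to the combinatorial statement that the inclusion $\Sigma[\inda,\indb]\hookrightarrow\wW{\inda+1+\indb}$ is an acyclic cofibration in $\saS_{\Tloc}$ (\cref{window}). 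That statement is proved by a genuinely new cell-attachment argument using the Segal, stability and augmentation maps; nothing in the discrete comparison supplies it, so as written your plan for upgrading the adjunction to a Quillen equivalence would not go through.
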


As in the discrete case, the functor from augmented double Segal objects can be regarded as a version of the $S_\bullet$-construction, and its adjoint can be thought of as a path construction.  We show in \cref{augmentednerve} that the restriction to the discrete case recovers the original functors.

A topic of much recent work has been the generalization of the classical $S_\bullet$-construction to more general contexts, such as for stable $(\infty, 1)$-categories, for example in \cite{barwickKtheory}, \cite{BGT}, and \cite{FiorePieper}.  In a companion paper \cite{exact} we show that our construction recovers and generalizes these results for exact $(\infty,1)$-categories; we give a summary in \cref{examples}.

\addtocontents{toc}{\protect\setcounter{tocdepth}{1}}
\subsection*{Outline of the paper}
In \cref{background}, we recall some background information about enriched localizations and 2-Segal spaces, as well as summarize our previous results in the discrete setting.  \cref{gensdot} is concerned with developing the definition of augmented stable double Segal objects and defining the $\sdot$-construction in this context.  In \cref{various examples} we discuss how this construction is a generalization of previously known ones: in \cref{augmentednerve} we show explicitly that we recover the one from the discrete setting, and in \cref{examples} we summarize how it generalizes the $\sdot$-construction as defined for proto-exact $(\infty,1)$-categories. 

We turn to developing the model structures that we need in \cref{modelstructures}.  We show that there is a Quillen pair between these model structures in \cref{quillenpair}, and in \cref{quillenequiv} we show that it is a Quillen equivalence.

Finally, in \cref{variants} we consider some variants of the model structures and comparisons that we have developed in this paper.

\subsection*{Acknowledgements}
We would like to thank the organizers of the Women in Topology II Workshop and the Banff International Research Station for providing a wonderful opportunity for collaborative research.  Conversations with T.\ Dyckerhoff, I.\ G\'alvez--Carrillo, J.\ Kock, L.\ Meier, and A.\ Tonks were helpful.

\addtocontents{toc}{\protect\setcounter{tocdepth}{2}}
\section{Summary of background results} \label{background}

In this section we recall some necessary background material.  First, we review some standard results concerning presheaves in a symmetric monoidal model category.  A primary example is that of simplicial sets with its classical model structure due to Quillen, but the results hold in more generality. We then review the theory of unital 2-Segal objects, which are central to our work here, and summarize the comparison between unital 2-Segal sets and augmented stable double categories.

\subsection{Enriched model structures on presheaf categories} \label{sec model background}

We begin with a review of the basic theory of presheaf categories in a symmetric monoidal category and some relevant results in the context of model categories.  One purpose of this section is to set notation that we use throughout; the reader may choose to skip this section and refer back as needed.

Let $I$ be a small category and $\targetcat=(\targetcat, \otimes, \mathds{1})$ a complete and cocomplete closed symmetric monoidal category.  We denote by $\Fun(I,\targetcat)$ the category of functors $I \rightarrow \targetcat$.  As explained in {\cite[\S2.5]{Kelly}}, there is an adjunction
\[
d\colon\set\rightleftarrows\targetcat\colon U
\]
between the functors defined by 
$$d(X):=\coprod\limits_{X} \mathds{1} \quad \mbox{ and } \quad U(Y):=\Hom_{\targetcat}(\mathds{1}, Y).$$
Objects in the image of $d$ are called \emph{discrete}. Observe that here, and elsewhere in the paper, we follow the convention of displaying the left adjoint topmost.

The following proposition enables us to view every presheaf in $\set$ as a discrete presheaf in $\targetcat$ via the functor $d_*$. 

\begin{prop} \label{setsadjunction}
The adjunction $d\colon\set\rightleftarrows\targetcat\colon U$ induces an adjunction
$$d_*\colon\Fun(I,\set)\rightleftarrows\Fun(I,\targetcat)\colon U_*.$$
\end{prop}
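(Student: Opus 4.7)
The plan is to recognize this as an instance of the general fact that any adjunction $F \dashv G$ between categories $\cA$ and $\cB$ is transported to an adjunction $F_* \dashv G_*$ on functor categories $\Fun(I,\cA) \rightleftarrows \Fun(I,\cB)$ by postcomposition, for any small category $I$. In our situation, I would define
\[
d_*(X) := d \circ X \quad \text{for } X \in \Fun(I,\set), \qquad U_*(Y) := U \circ Y \quad \text{for } Y \in \Fun(I,\targetcat),
\]
and extend to morphisms by whiskering natural transformations with $d$ and $U$ respectively. Both assignments are evidently functorial.

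To establish the adjunction, I would exhibit a natural bijection
\[
\Hom_{\Fun(I,\targetcat)}(d_*X, Y) \;\cong\; \Hom_{\Fun(I,\set)}(X, U_*Y).
\]
A morphism on the left is a natural transformation $\alpha \colon d \circ X \Rightarrow Y$, that is, a family $\{\alpha_i \colon d(X(i)) \to Y(i)\}_{i \in I}$ in $\targetcat$ satisfying the naturality squares for every morphism $f \colon i \to j$ in $I$. Applying the bijection from the adjunction $d \dashv U$ pointwise produces a family $\{\widetilde{\alpha}_i \colon X(i) \to U(Y(i))\}_{i \in I}$ in $\set$. Naturality of the original adjunction in both variables — applied with respect to $X(f)$ on the source side and $Y(f)$ on the target side — translates the naturality squares for $\alpha$ into the naturality squares for $\widetilde{\alpha}$, so $\widetilde{\alpha}$ is a morphism in $\Fun(I,\set)$. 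The inverse direction is symmetric, and the two constructions are mutually inverse because the pointwise adjunction bijections are.

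Finally, I would note that the bijection is natural in $X$ and $Y$: precomposition of $\alpha$ by a natural transformation $X' \Rightarrow X$ and postcomposition by $Y \Rightarrow Y'$ correspond under the bijection to the analogous operations on $\widetilde{\alpha}$, again by naturality of the adjunction $d \dashv U$. No step here presents a genuine obstacle; the argument is entirely formal and could equivalently be phrased by observing that $\Fun(I,-)$ is a 2-functor and hence preserves adjunctions. The only mild care needed is to keep track of the direction of the adjunction so that $d_*$ appears on the left, matching the convention of displaying the left adjoint topmost adopted earlier in this subsection.
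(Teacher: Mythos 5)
Your argument is correct and is exactly the standard one the paper has in mind: the paper states \cref{setsadjunction} without proof, treating it as the formal fact that postcomposition with an adjoint pair (equivalently, applying the 2-functor $\Fun(I,-)$) yields an adjunction on functor categories, which is precisely your pointwise-bijection/whiskering argument. Nothing is missing; you also correctly respect the paper's convention of writing the left adjoint $d_*$ on top.
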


For simplicity, we frequently omit the notation $d_*$ when referring to discrete presheaves.

An important feature of the category $\Fun(I,\targetcat)$ is that it has a canonical enrichment.

\begin{prop}[{\cite[\S 2.2]{Kelly}}]
\label{mappingobjects}
The category of functors $\Fun(I,\targetcat)$ is enriched over $\targetcat$. Given objects $X$ and $Z$ of $\Fun(I,\targetcat)$, the mapping object in $\targetcat$ is the end
$$\Map_{\Fun(I,\targetcat)}(X,Z):=\int_{i\in I}\Map_{\targetcat}(X_i,Z_i).$$
\end{prop}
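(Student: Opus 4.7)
The statement is classical (it is cited to Kelly's book), so the plan is essentially to spell out the standard construction of the pointwise $\targetcat$-enrichment on a functor category, checking that all the structural data exist and satisfy the enriched category axioms.

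First I would observe that since $\targetcat$ is closed symmetric monoidal, it is canonically enriched over itself: the mapping object $\Map_\targetcat(Y,Z)$ is the internal hom, characterized by the adjunction $\Hom_\targetcat(X \otimes Y, Z) \cong \Hom_\targetcat(X, \Map_\targetcat(Y,Z))$, and composition and identities come from the counit/unit of this adjunction in the usual way. Next, for each pair $X,Z \in \Fun(I,\targetcat)$ I would form the end
\[
\Map_{\Fun(I,\targetcat)}(X,Z) := \int_{i \in I} \Map_\targetcat(X_i, Z_i),
\]
whose existence is guaranteed by the completeness of $\targetcat$, since ends can be computed as equalizers of products indexed by objects and morphisms of $I$.

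The next step is to produce the enriched composition morphism
\[
\Map_{\Fun(I,\targetcat)}(Y,Z) \otimes \Map_{\Fun(I,\targetcat)}(X,Y) \longrightarrow \Map_{\Fun(I,\targetcat)}(X,Z).
\]
I would build this out of the universal property of the end on the right: for each $i \in I$, composing the projections with the pointwise composition of $\targetcat$ yields a morphism into $\Map_\targetcat(X_i, Z_i)$, and one checks that these morphisms are dinatural in $i$, so they factor uniquely through the end. The identity $\mathds{1} \to \Map_{\Fun(I,\targetcat)}(X,X)$ is built analogously from the pointwise identities $\mathds{1} \to \Map_\targetcat(X_i, X_i)$, which trivially form a dinatural family.

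Finally, one must verify associativity and unitality of this composition. By the universal property of the end it suffices to check the axioms after projecting to each component $\Map_\targetcat(X_i, W_i)$, where they reduce to the corresponding enriched axioms in $\targetcat$, which hold because $\targetcat$ is self-enriched. The main obstacle, if any, is just keeping track of the dinaturality conditions when producing composition from pointwise data; once those are in place, every axiom is inherited pointwise from $\targetcat$.
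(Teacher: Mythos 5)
Your argument is correct and is exactly the standard construction from Kelly's \S 2.2 that the paper cites in lieu of a proof: existence of the end via completeness, composition and identities induced by the universal property from the pointwise (di)natural families, and the enriched axioms checked componentwise using that the end projections are jointly monic. Nothing differs in substance from the cited approach.
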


Now suppose that $f\colon I\to J$ is a functor between small categories.  The associated precomposition functor $f^*\colon\Fun(J, \targetcat)\to\Fun(I, \targetcat)$ admits a right adjoint $f_*\colon\Fun(I, \targetcat)\to\Fun(J, \targetcat)$ via right Kan extension. The following statement is a variant of \cite[Theorem 4.50]{Kelly}.

\begin{prop}
\label{upradeadjunctions}
The precomposition functor $f^*$ and its right adjoint $f_*$ form an $\targetcat$-enriched adjunction
\[f^*\colon\Fun(J, \targetcat) \rightleftarrows\Fun(I, \targetcat)\colon f_*.\]
\end{prop}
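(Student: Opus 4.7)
The plan is to define $f_*$ as the pointwise right Kan extension of $X\colon I\to\targetcat$ along $f$; this exists because $\targetcat$ is complete, and is given explicitly by the end formula
\[(f_*X)_j \;=\; \int_{i\in I} \prod_{J(j,f(i))} X_i,\]
with functoriality in $j$ inherited from the contravariant functoriality of $J(-,f(i))$. The underlying (unenriched) adjunction $f^*\dashv f_*$ is then immediate from the standard pointwise Kan extension calculation.

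The actual content of the proposition is to upgrade this to a $\targetcat$-enriched adjunction, which amounts to producing a natural isomorphism
\[\Map_{\Fun(J,\targetcat)}(Y, f_*X) \;\cong\; \Map_{\Fun(I,\targetcat)}(f^*Y, X)\]
in $\targetcat$. The plan is to compute both sides using the end formula of \cref{mappingobjects} and reduce via three standard manipulations: first, pull $\Map_\targetcat(Y_j,-)$ inside the end defining $(f_*X)_j$, using that it is a right adjoint in the closed monoidal structure and hence preserves ends and $\set$-cotensors; second, swap the order of the two resulting ends via Fubini; third, apply the weighted Yoneda reduction
\[\int_{j\in J}\prod_{J(j,f(i))}\Map_\targetcat(Y_j, X_i) \;\cong\; \Map_\targetcat(Y_{f(i)}, X_i),\]
which uses the functoriality of $\Map_\targetcat(Y_{-}, X_i)$ on $J^{\op}$. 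Each of the three steps is a natural isomorphism, and composing them yields the desired identification.

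The statement is essentially a direct specialization of Kelly's Theorem 4.50, with $I$ and $J$ regarded as $\targetcat$-categories via the discrete functor $d$ of \cref{setsadjunction}, so no genuinely new ideas enter. The main point requiring care is the bookkeeping for the Yoneda reduction under Fubini, but this is routine enriched category theory once the end formula for the Kan extension is in place. Naturality in both $X$ and $Y$ follows step by step from naturality of the isomorphisms used, so the resulting bijection of hom-sets upgrades to the claimed isomorphism of mapping objects in $\targetcat$.
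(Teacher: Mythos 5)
Your proof is correct, and the paper itself gives no argument here beyond citing Kelly's Theorem 4.50; your end-calculus computation (pointwise Kan extension formula, pulling $\Map_\targetcat(Y_j,-)$ inside the end, Fubini, and Yoneda reduction) is precisely the standard argument underlying that citation, so it is essentially the same approach.
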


Now we would like to equip $\targetcat$ with the additional structure of a model category which is compatible with the closed symmetric monoidal structure.  

\begin{assumption} \label{assumption}
Let $\targetcat=(\targetcat, \otimes, \mathds{1})$ be a combinatorial closed symmetric monoidal model category in which all objects are cofibrant, so in particular it is left proper.
\end{assumption}

Examples include the category $\sset$ of simplicial sets endowed with the classical model structure due to Quillen \cite{QuillenHA}, the category $\gpd$ of groupoids endowed with the canonical model structure \cite[\S 5]{AndersonGroupoids} and the category $\set$ endowed with the (unique) model structure where weak equivalences are isomorphisms. Note that the usual model structure on topological spaces is not known to be combinatorial, but one could instead use the framework of cellular model categories in that setting. 

The following lemma incorporates the appropriate model structures to give us more information about the adjunction from Proposition \ref{setsadjunction}.   We leave the proof to the reader.
 
\begin{lem} \label{discreterespectscofibrations}
The functor
$$d_*\colon \Fun(I, \set) \to \Fun(I, \targetcat)$$
sends monomorphisms to levelwise cofibrations.
\end{lem}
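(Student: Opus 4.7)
The plan is to reduce the lemma to a levelwise statement about coproduct inclusions in $\targetcat$ and then invoke the cofibrancy assumption on $\targetcat$.

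First, I would recall that a monomorphism in the presheaf category $\Fun(I,\set)$ is computed levelwise: a map $f\colon X\to Y$ is a monomorphism if and only if $f_i\colon X_i\to Y_i$ is injective for every $i\in I$. Since ``levelwise cofibration'' is also a pointwise condition, it suffices to show that for each $i\in I$ the morphism $d(f_i)\colon d(X_i)\to d(Y_i)$ is a cofibration in $\targetcat$. This reduces the problem to the single-object case of a set-level injection $S\hookrightarrow T$.

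Next, given an injection of sets $S\hookrightarrow T$, I would write $T = S\sqcup (T\setminus S)$. Because $d$ is a left adjoint (with right adjoint $U$ as recorded in \cref{setsadjunction}), it preserves colimits; in particular it preserves coproducts and the initial object. Hence $d(T)\cong d(S)\sqcup d(T\setminus S)$ in $\targetcat$, and under this identification the map $d(S)\to d(T)$ is the canonical coproduct inclusion, i.e.\ the pushout of $\emptyset \to d(T\setminus S)$ along $\emptyset \to d(S)$.

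Finally, I would use \cref{assumption}, which forces every object of $\targetcat$ to be cofibrant. In particular $\emptyset \to d(T\setminus S)$ is a cofibration in $\targetcat$, and cofibrations are closed under pushout in any model category, so the coproduct inclusion $d(S)\to d(S)\sqcup d(T\setminus S) \cong d(T)$ is a cofibration. Applying this argument levelwise yields the desired conclusion about $d_*$. The entire argument is routine once the pieces are assembled; there is no real obstacle, only the bookkeeping of checking that the three ingredients — levelwise characterization of monomorphisms, colimit preservation by $d$, and universal cofibrancy in $\targetcat$ — combine in the expected way.
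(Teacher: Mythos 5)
Your proof is correct, and since the paper explicitly leaves this lemma to the reader, your argument (reduce to a levelwise set injection, identify $d$ of the inclusion with a coproduct inclusion via colimit preservation, and conclude from the cofibrancy of all objects plus closure of cofibrations under pushout) is precisely the intended routine verification. No gaps.
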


Motivated by this lemma, we consider the injective model structure on presheaf categories on $\targetcat$, in which both the weak equivalences and the cofibrations are given levelwise.

\begin{thm}[{\cite[Propositions 3.30 and 3.31]{barwick2010}}]
\label{injectivemodelstructure}\ 
The category $\Fun(I,\targetcat)$ admits the injective model structure. This model structure is combinatorial, symmetric monoidal, enriched over $\targetcat$ with respect to the mapping spaces from \cref{mappingobjects}, and has all objects cofibrant.
\end{thm}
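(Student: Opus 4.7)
The plan is to appeal directly to Barwick's general machinery for injective model structures on presheaf categories while noting where each of the four claimed properties enters.

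First, for the existence and combinatoriality of the injective model structure, I would invoke Barwick's general theorem (Proposition 3.30 of the cited paper) which states that for any small category $I$ and any combinatorial model category $\targetcat$, the category $\Fun(I,\targetcat)$ carries a combinatorial model structure whose weak equivalences and cofibrations are detected levelwise. The key point that allows this — in contrast to the usual approach that first produces a projective structure — is that the set of generating (trivial) cofibrations must be enlarged by a transfinite small-object argument. This uses that $\targetcat$ is combinatorial in an essential way, since cofibrant generation for the injective structure is not obvious and relies on accessibility.

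Second, to verify that the model structure is symmetric monoidal, I would check the pushout-product axiom for the objectwise tensor product. If $f\colon A\to B$ and $g\colon C\to D$ are cofibrations in $\Fun(I,\targetcat)$, then at each $i \in I$ the map $f_i\square g_i$ is a cofibration in $\targetcat$ by the pushout-product axiom there, and is trivial if one of $f,g$ is. The unit axiom follows since the monoidal unit $\mathds{1}$ is cofibrant in $\targetcat$ by \cref{assumption}. For the enrichment over $\targetcat$ with the mapping object from \cref{mappingobjects}, the analogous pushout-product axiom involving the end follows formally from the levelwise pushout-product, using that limits in $\targetcat$ of fibrations of a particular form behave well because of the all-objects-cofibrant hypothesis.

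Third, the claim that all objects are cofibrant is immediate once we know cofibrations are detected levelwise: given any $X \in \Fun(I,\targetcat)$, the unique map from the initial presheaf to $X$ is levelwise the map from the initial object of $\targetcat$ to $X_i$, which is a cofibration by the all-objects-cofibrant assumption on $\targetcat$.

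The main obstacle here is really the first step — showing cofibrant generation of the injective structure — since unlike the projective case, one cannot take the generating (trivial) cofibrations to be the images of those from $\targetcat$ under representable embeddings. This is the content of Barwick's argument using the full strength of combinatoriality, and it is the only part where more than a direct verification is needed; the remaining monoidal, enriched, and cofibrancy claims are then routine consequences of the levelwise definitions and \cref{assumption}.
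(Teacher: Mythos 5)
Your proposal is correct and follows the same route as the paper, which simply cites Barwick (Propositions 3.30 and 3.31) for this statement: the only genuinely hard point is cofibrant generation of the injective structure, which you correctly defer to Barwick's combinatoriality argument, while the monoidal, enriched, and cofibrancy claims reduce to levelwise checks (the enrichment axiom most cleanly via the adjoint pushout-product formulation for the levelwise tensoring). Nothing further is needed.
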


Throughout this paper, unless indicated otherwise, we assume that $\Fun(I,\targetcat)$ is equipped with the injective model structure.  When we want to emphasize this structure or when there might be ambiguity (in particular in the last section when we consider other possibilities) we denote this model structure by~$\Fun(I, \targetcat)_{\inj}$.

Our choice of the injective model structure is due to the fact that all objects are cofibrant, and hence derived mapping spaces can be computed easily, as the following proposition demonstrates.  We discuss other model structures on our categories of interest in \cref{variants}.

\begin{defn}
\label{derivedmappingspace}
Given objects $X$ and $Z$ of $\Fun(I,\targetcat)$, the \emph{derived mapping object} is given by
$$\Map^h(X,Z):=\Map(X,Z^{f}),$$
where $(-)^f$ denotes a functorial fibrant replacement in the injective model structure on $\Fun(I,\targetcat)$. 
\end{defn}

More information about derived mapping objects and enriched model categories can be found in \cite{barwick2010}. We collect the main properties in the following proposition.

\begin{prop}
The derived mapping object is functorial, respects weak equivalences in both variables and coincides up to equivalence with the strict mapping space when the second variable is fibrant.
\end{prop}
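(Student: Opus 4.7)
The plan is to verify each of the three claims in turn, using that $\Fun(I,\targetcat)_{\inj}$ is by \cref{injectivemodelstructure} a combinatorial symmetric monoidal model category enriched over $\targetcat$ with mapping objects $\Map$ as in \cref{mappingobjects}, and in which every object is cofibrant. The key technical tool throughout is the pushout–product (SM7) axiom for this enrichment, together with the resulting Ken Brown--type consequence: for $X$ cofibrant the functor $\Map(X,-)$ sends weak equivalences between fibrant objects to weak equivalences in $\targetcat$, and dually for $Z$ fibrant the functor $\Map(-,Z)$ sends weak equivalences between cofibrant objects to weak equivalences.

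Functoriality is immediate from the definition $\Map^h(X,Z)=\Map(X,Z^f)$: the fibrant replacement $(-)^f$ is chosen to be functorial, and $\Map$ is a two-variable functor in the enriched sense, so their composite is functorial (contravariant in $X$, covariant in $Z$).

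For the coincidence statement, suppose $Z$ is fibrant. Then the natural map $Z\to Z^f$ is a weak equivalence between fibrant objects. Since every object of $\Fun(I,\targetcat)_{\inj}$ is cofibrant, in particular $X$ is cofibrant, and hence $\Map(X,Z)\to \Map(X,Z^f)=\Map^h(X,Z)$ is a weak equivalence by the consequence of SM7 recorded above.

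For invariance under weak equivalences, suppose first that $Z\to Z'$ is a weak equivalence. Functoriality of the fibrant replacement produces an induced map $Z^f\to (Z')^f$ which is a weak equivalence between fibrant objects, and $\Map(X,Z^f)\to\Map(X,(Z')^f)$ is then a weak equivalence because $X$ is cofibrant. Now suppose $X\to X'$ is a weak equivalence; both $X$ and $X'$ are cofibrant, and $Z^f$ is fibrant, so $\Map(X',Z^f)\to \Map(X,Z^f)$ is a weak equivalence in $\targetcat$ by the dual consequence of SM7. Taken together, these two arguments give homotopy invariance in each variable. The proof is essentially a packaging of standard enriched model category facts, so there is no substantial obstacle; the only point that requires care is that the SM7 axiom is used in the enriched form furnished by \cref{injectivemodelstructure}, rather than the more familiar simplicial version.
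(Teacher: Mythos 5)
Your proof is correct, and it fills in exactly the standard enriched-model-category argument that the paper itself omits (the proposition is stated without proof, with a pointer to \cite{barwick2010}). The use of the enriched SM7 axiom from \cref{injectivemodelstructure} together with Ken Brown's lemma in each variable, plus cofibrancy of all objects, is precisely the intended justification, so there is nothing to add.
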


We now recall some terminology and the construction of enriched localizations of a model category. 

\begin{defn}\label{definitionlocal} 
Let $S$ be a set of maps in $\Fun(I, \targetcat)$ with the injective model structure.
\begin{itemize}
\item An object $X$ of $\Fun(I,\targetcat)$ is \emph{$S$-local} if it is fibrant and, for every map $f\colon A\to B$ in $S$, the induced map
$$f^*\colon\Map^h(B,X)\to\Map^h(A,X)$$
is a weak equivalence in $\targetcat$.

\item A map $g\colon C\to D$ in $\Fun(I,\targetcat)$ is an \emph{$S$-local equivalence} if for every $S$-local object $X$, the induced map
$$g^*\colon\Map^h(D,X)\to\Map^h(C,X)$$
is a weak equivalence in $\targetcat$.
\end{itemize}
\end{defn}
  
Observe that in our setting, since we use the injective model structure and all objects are cofibrant and $S$-local objects are assumed to be fibrant, the underived mapping spaces model derived mapping spaces, so we can use underived mapping spaces in this definition.

With these definitions in hand, we can define the enriched localization of the injective model structure on $\Fun(I,\targetcat)$ with respect to the maps in $S$.

\begin{thm}[{\cite[Theorem 4.46]{barwick2010}}]
Let $S$ be a set of maps of $\Fun(I,\targetcat)$.
\label{enrichedlocalization}
There exists a cofibrantly generated model structure on $\Fun(I,\targetcat)$, denoted by $\Fun(I,\targetcat)_S$, in which
\begin{itemize}
\item the cofibrations are the levelwise cofibrations, and in particular all objects are cofibrant;
\item the fibrant objects are the $S$-local objects;
\item the weak equivalences are the $S$-local equivalences; and
\item the weak equivalences between fibrant objects are the levelwise weak equivalences.
\end{itemize}
\end{thm}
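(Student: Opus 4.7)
The plan is to invoke the general existence theorem for enriched left Bousfield localizations of combinatorial $\targetcat$-model categories, verifying that it applies to our setting. By \cref{injectivemodelstructure}, the injective model structure $\Fun(I,\targetcat)_{\inj}$ is combinatorial, $\targetcat$-enriched with respect to the mapping objects of \cref{mappingobjects}, and has all objects cofibrant; in particular it is left proper. These are precisely the standing hypotheses under which the general framework of enriched Bousfield localization produces a new cofibrantly generated model structure.

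Concretely, I would augment the generating trivial cofibrations of $\Fun(I,\targetcat)_{\inj}$ by a set built from $S$ as follows: for each map $f\colon A\to B$ in $S$, first replace $f$ by a levelwise cofibration $\tilde{f}\colon A\hookrightarrow \hat B$ via the injective factorization; then, for each generating cofibration $K\hookrightarrow L$ of $\targetcat$, form the pushout-product corner map $A\otimes L \cup_{A\otimes K} \hat B\otimes K \to \hat B\otimes L$. Adding these corner maps to the injective generating trivial cofibrations and running Smith's recognition theorem produces a cofibrantly generated model structure whose cofibrations still coincide with the levelwise ones (so all objects remain cofibrant) and whose fibrant objects, by adjunction together with \cref{mappingobjects}, are precisely those injectively fibrant $X$ for which $\Map(\hat B,X)\to\Map(A,X)$ is a trivial fibration in $\targetcat$; since all objects of $\targetcat$ are cofibrant, this is equivalent to the $S$-local condition from \cref{definitionlocal}. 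The identification of the weak equivalences with the $S$-local equivalences, and of those between fibrant objects with levelwise weak equivalences, then follows from the standard localization argument, using that fibrant objects in the localized structure are in particular injectively fibrant.

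The main obstacle is ensuring compatibility with the $\targetcat$-enrichment: one must check that the new model structure is again $\targetcat$-enriched, and equivalently that $S$-local objects are closed under $\Map_{\targetcat}(K,-)$ for all $K\in\targetcat$. This is exactly what motivates including the pushout-products with generating cofibrations of $\targetcat$ (rather than just the cofibrant replacements of the maps of $S$ themselves) among the new generators. Once this enriched compatibility is in place, the remaining verifications (properness of the localization, the adjunction properties needed to apply Smith's recognition theorem, and the two-out-of-three and retract axioms for $S$-local equivalences) are formal consequences of combinatoriality and left properness of $\Fun(I,\targetcat)_{\inj}$.
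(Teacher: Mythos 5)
The paper offers no proof of this statement; it is quoted directly from Barwick's Theorem 4.46, and your sketch is essentially the argument given there: form enriched horns on $S$ by taking pushout-products of (cofibrant replacements of) the maps of $S$ with the generating cofibrations of $\targetcat$, and apply Smith's recognition theorem, using combinatoriality, left properness, and cofibrancy of all objects exactly as you describe. The one imprecision is that Smith's theorem takes as input the class of $S$-local equivalences (whose accessibility must be verified) rather than an explicitly augmented set of generating trivial cofibrations --- the new generating trivial cofibrations are produced by the theorem, not written down in advance --- but this does not change the strategy.
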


The definitions we have used, and the previous theorem, hold for more general model categories than the injective model structure, in particular for the projective model structure on $\Fun(I, \targetcat)$, which we consider in Section \ref{variants}. The $S$-local objects are dependent on the model structure chosen, although the $S$-local equivalences only depend on the weak equivalences thereof. 

\subsection{Unital 2-Segal objects}

In the context of a category $\targetcat$ satisfying \cref{assumption}, we recall the notion of a Segal map. 

\begin{notn}
\label{spine}
Let $n\ge 0$.
Denote by $\mathcal I[n]$ the spine of the standard $n$-simplex, i.e., the simplicial set which is the colimit
of standard $1$-simplices
\[\Delta[1]\aamalg{\Delta[0]}\dots\aamalg{\Delta[0]}\Delta[1]\cong\mathcal I[n]\subset\Delta[n],\]
where the image of the $i$-th copy of $\Delta[1]$ is the simplex of~$\Delta[n]$ with vertices $i$ and $i+1$.
\end{notn}

\begin{defn}
A \emph{Segal object} in $\targetcat$ is a simplicial object in $\targetcat$ such that, for every $n \geq 2$, the Segal map
\[X_n \longrightarrow \underbrace{X_1 \htimes{X_0} \cdots \htimes{X_0} X_1}_{n}\simeq\Map^h_{\sS}(\mathcal{I}[n],X), \]
which is induced by the inclusion $\mathcal I[n] \hookrightarrow \Delta[n]$ is a weak equivalence. 
\end{defn} 

Dyckerhoff and Kapranov define 2-Segal objects using higher-dimensional analogues of these Segal maps, in the sense that they arise from polygonal decompositions of polygons \cite{DK}.

For an $(n+1)$-gon in the plane with vertices labeled cyclically by \linebreak $\{0,1,\ldots, n\}$, we consider its polygonal decompositions with vertices chosen amongst the vertices of the polygon.   Thus, a polygonal decomposition consists of a collection of non-crossing diagonals subdividing the $(n+1)$-gon into other polygons. For technical reasons, we view such a decomposition as the collection of the resulting polygons together with the defining diagonals. 

Formally, we can regard such a decomposition as a subposet $P$ of the power set of $\{0,1,\ldots, n\}$ which is closed under non-empty intersection. For instance, the polygonal decomposition of the square
\begin{center}
\begin{tikzpicture}[scale=1.0, baseline=(a)]
\node (a) at (0,0.5) {};
\draw (0,0) node[dot] {} node[below left] {0} --  (1,0) node[dot] {} node[below right] {1} -- (1,1) node[dot] {} node[above right] {2} -- (0,1) node[dot] {} node[above left] {3} -- cycle;
\draw (1,0) -- (0,1);
\end{tikzpicture}
\end{center}
corresponds to the poset $\{\{1,3\},\{1,2,3\},\{0,1,3\}\}$. 

\begin{defn}
Let $P$ be a polygonal decomposition of a regular $(n+1)$-gon, consisting of $(k+1)$-gons of the form $\{i_0,\dots,i_k\}$ for varying $k$. The \emph{$P$-Segal map} of a simplicial object $X$ in $\targetcat$ is the induced map
$$X_n\to \underset{\{i_0,\dots,i_k\}\in P}{\holim}X_k,$$
where the homotopy limit is taken over the poset of polygons $\{i_0,\dots,i_k\}$ occurring in the decomposition $P$, ordered by inclusion of vertices.
\end{defn}

Now we are able to define 2-Segal objects, following \cite{DK}.  Here, we use the original definition, given in terms of triangulations, or polygonal decompositions into triangles.

\begin{defn} 
A \emph{2-Segal object} in $\targetcat$ is a simplicial object in $\targetcat$ such that, for every $n \geq 3$ and every triangulation $T$ of a regular $(n+1)$-gon, the $T$-\emph{Segal map}
\[X_n \rightarrow \underset{\{i_0,i_1,i_2\}\in \cT}{\holim}X_2\simeq\underbrace{X_2 \htimes{X_1} \cdots \htimes{X_1} X_2}_{n-1} \]
is a weak equivalence. 
\end{defn}

The conditions for a $P$-Segal map being a weak equivalence can be rephrased in terms of derived mapping spaces by means of the following notation.

\begin{notn}  \label{DeltaP}
Let $P$ be a polygonal decomposition of the regular $(n+1)$-gon, consisting of $(k+1)$-gons of the form $\{i_0,\dots,i_k\}$ for varying $k$. We denote by $\Delta[P]$ be the smallest subcomplex of $\Delta[n]$ containing all these simplices, as in \cite[Formula 2.2.12]{DK}. 
The simplicial set $\Delta[P]$ can be expressed as a colimit
of standard simplices
\[\underset{\{i_0,\dots,i_k\}\in P}{\colim}\Delta[k]\cong \Delta[P]\subset\Delta[n].\]
\end{notn}

We emphasize two special cases. When $P=T$ is a triangulation of the regular $(n+1)$-gon with vertices from the original $(n+1)$-gon, the simplicial set $\Delta[T]$ can be expressed as an iterated pushout
 of the form
\[\underbrace{\Delta[2]\aamalg{\Delta[1]}\dots\aamalg{\Delta[1]}\Delta[2]}_{n-1}\cong \Delta[P]\subset\Delta[n].\]
When $P$ is a decomposition of the $(n+1)$-gon into a triangle $\{0, 1, 2\}$ and the remaining $n$-gon, or into the triangle $\{n-2,n-1,n\}$ and the remaining $n$-gon, such in as the examples
\begin{center}
\begin{tikzpicture}[scale=0.5, every node/.style={font=\footnotesize}]
\begin{scope}
\path (-2,0) node[dot] (0) {} node[anchor=east] {$0$} 
arc (180:240:2) node[dot] (1) {} node[anchor=north east] {$1$}
arc (240:300:2) node[dot] (2) {} node[anchor=north] {$2$}
arc (300:0: 2) node[dot] (3){} node[anchor=north west] {$3$} 
arc (0: 60:2) node[dot] (4) {} node[anchor=south west] {$4$} 
arc (60:120:2) node[dot] (5) {} node[anchor=south east] {$5$}
 --cycle;
 
\draw (0)--(1);
\draw (1)--(2);
\draw (2)--(3);
\draw (3)--(4);
\draw (4)--(5);
\draw (5)--(0);
\draw (0) -- (2);
\end{scope}
\begin{scope}[xshift=9cm]
\path (-2,0) node[dot] (0) {} node[anchor=east] {$0$} 
arc (180:240:2) node[dot] (1) {} node[anchor=north east] {$1$}
arc (240:300:2) node[dot] (2) {} node[anchor=north] {$2$}
arc (300:0: 2) node[dot] (3){} node[anchor=north west] {$3$} 
arc (0: 60:2) node[dot] (4) {} node[anchor=south west] {$4$} 
arc (60:120:2) node[dot] (5) {} node[anchor=south east] {$5$}
 --cycle;
 
\draw (0)--(1);
\draw (1)--(2);
\draw (2)--(3);
\draw (3)--(4);
\draw (4)--(5);
\draw (5)--(0);
\draw (3) -- (5);
\end{scope}
\end{tikzpicture}
\end{center}
the simplicial set $\Delta[P]$ can be expressed as a pushout of the form
$$\Delta[n-1]\aamalg{\Delta[1]}\Delta[2]\cong \Delta[P] \subset \Delta[n].$$

This discussion can be summarized via the following proposition.

\begin{prop} \label{Segalmapintermsofmappingspaces}
Given a simplicial object $X$ in $\targetcat$, the $P$-Segal map is a weak equivalence in $\targetcat$ if and only if the associated map
$$\Map^h(\Delta[n],X)\to\Map^h(\Delta[P],X),$$
induced by the inclusion
$$\underset{\{i_0,\dots,i_k\}\in P}{\colim}\Delta[k]\cong\Delta[P]\hookrightarrow\Delta[n]$$
is a weak equivalence in $\targetcat$.
\end{prop}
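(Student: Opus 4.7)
The plan is to identify both sides of the map $\Map^h(\Delta[n],X)\to\Map^h(\Delta[P],X)$ with the corresponding sides of the $P$-Segal map $X_n\to\holim_{\{i_0,\dots,i_k\}\in P}X_k$ using the (derived) enriched Yoneda lemma, so that the two maps are identified up to weak equivalence and the biconditional follows immediately.

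First, I would invoke enriched Yoneda: for any simplicial object $Y$ in $\targetcat$, the end formula from \cref{mappingobjects} together with the coend presentation $\Delta[n]_k=\coprod_{\Hom_\Delta([k],[n])}\mathds 1$ gives $\Map(\Delta[n],Y)\cong Y_n$. Choosing a functorial fibrant replacement $X\to X^f$ in the injective model structure of \cref{injectivemodelstructure} and using that weak equivalences there are levelwise, I would conclude
\[
\Map^h(\Delta[n],X)=\Map(\Delta[n],X^f)\cong (X^f)_n\simeq X_n.
\]

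Second, I would use the colimit presentation $\Delta[P]\cong\colim_{\{i_0,\dots,i_k\}\in P}\Delta[k]$ of \cref{DeltaP}. Since $\Map(-,X^f)$ is a right adjoint in the enriched sense (\cref{mappingobjects}), it turns this colimit into the strict limit
\[
\Map(\Delta[P],X^f)\cong \lim_{\{i_0,\dots,i_k\}\in P}\Map(\Delta[k],X^f)\cong \lim_{\{i_0,\dots,i_k\}\in P}(X^f)_k.
\]
The subtle point is to pass to homotopy limits. The diagram of simplices $\{\Delta[k]\}$ indexed by $P$ is a diagram of monomorphisms between simplicial sets, hence, after $d_*$, a diagram of levelwise cofibrations in $\sS$ by \cref{discreterespectscofibrations}; such a poset-indexed diagram is cofibrant for the projective model structure on $\sS^{P}$, so its strict colimit is a homotopy colimit. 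Dually, mapping into the fibrant object $X^f$ turns this into the homotopy limit, yielding $\Map^h(\Delta[P],X)\simeq\holim_{\{i_0,\dots,i_k\}\in P}X_k$.

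Finally, I would check naturality: the inclusion $\Delta[P]\hookrightarrow\Delta[n]$ is assembled from the inclusions $\Delta[k]\hookrightarrow\Delta[n]$ for each $\{i_0,\dots,i_k\}\in P$, and under the Yoneda identification these correspond to the structure maps $X_n\to X_k$ defining the $P$-Segal map. Hence under the equivalences of the previous two steps, the map $\Map^h(\Delta[n],X)\to\Map^h(\Delta[P],X)$ is identified with the $P$-Segal map, so one is a weak equivalence if and only if the other is. The main obstacle is the justification that the strict colimit defining $\Delta[P]$ computes the homotopy colimit in the relevant sense, which reduces to the observation that all structure maps in the indexing poset diagram are cofibrations.
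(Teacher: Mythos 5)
Your strategy---identify $\Map^h(\Delta[n],X)\simeq X_n$ by the enriched Yoneda lemma after an injective fibrant replacement, turn the colimit presentation of $\Delta[P]$ into a limit of the objects $(X^f)_k$, and check naturality---is exactly the route the paper intends (the proposition is stated there as a summary of the preceding discussion, with no separate proof), and those steps are fine. The genuine gap is in the step you yourself single out as the main obstacle: the claim that a $P$-indexed diagram in which all structure maps are cofibrations is projectively cofibrant is false, and it fails for precisely the diagrams at hand as soon as $n\ge 4$. The poset $P$ is a direct category, so projective cofibrancy is detected by the latching maps; the elements of $P$ strictly contained in a polygon $p$ are the defining diagonals lying in $p$, which are pairwise incomparable, so the latching object of the diagram $\{i_0,\dots,i_k\}\mapsto\Delta[k]$ at $p$ is a coproduct of copies of $\Delta[1]$, and the latching map is a monomorphism only when those diagonals are pairwise disjoint. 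For any triangulation of the pentagon, say the fan with diagonals $\{0,2\}$ and $\{0,3\}$, the middle triangle $\{0,2,3\}$ contains both diagonals, and the latching map $\Delta[1]\sqcup\Delta[1]\to\Delta[2]$ identifies the two copies of the vertex $0$; for $\targetcat=\sset$ (and in general) this is not a levelwise cofibration of discrete objects, so the diagram is not projectively cofibrant and your reduction does not apply.

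The conclusion you need---that $\colim_P\Delta[k]\cong\Delta[P]$ is a homotopy colimit, equivalently that the strict limit $\lim_{P^{\op}}(X^f)_k\cong\Map(\Delta[P],X^f)$ models $\holim_P X_k$---is nevertheless true, but it requires the argument implicit in the discussion preceding the proposition rather than a cofibrancy claim about the whole diagram. Since the dual graph of a polygonal decomposition is a tree, the polygons can be ordered so that each meets the union of the previous ones in a single defining diagonal; this exhibits $\Delta[P]$ as an iterated pushout along cofibrations $\Delta[1]\hookrightarrow\Delta[k]$ (the iterated pushout description the paper records for triangulations). Each such pushout is a homotopy pushout because all objects are cofibrant and the injective structure is left proper, and dually, by \cref{lemmahomotopypullback}, $\Map(\Delta[P],X^f)$ becomes an iterated strict pullback along fibrations, hence an iterated homotopy pullback; one then identifies this iterated homotopy pullback with $\holim_P X_k$ by the corresponding decomposition of the homotopy limit over the tree-like poset $P$, as in \cite{DK}. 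With that replacement for your cofibrancy step, the rest of your argument goes through.
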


Determining whether a simplicial object is $2$-Segal from the definition can be difficult, since there are so many possible triangulations of a polygon.  However, our next result gives a substantial simplification, namely, that it is enough to check specific $P$-Segal maps induced by decompositions of an $(n+1)$-gon into a triangle and remaining $n$-gon.

To state that result, we recall some notation from \cite{DK}. Given a simplicial object $X$ in $\targetcat$ and an injective map $\theta\colon [k] \to [n]$ in $\Delta$, we denote the induced map $\theta^*$ by $X_n \to X_{\{\theta(0), \ldots, \theta(k)\}}$.  Note that this notation is compatible with composition.

\begin{prop}
\label{easy2segal}
A simplicial object $X$ in $\targetcat$ is 2-Segal if and only if, for any $n\geq 3$, the induced maps
$$X_n \to X_{\{0,1,2\}} \htimes{X_{\{0,2\}}} X_{\{0,2,\ldots,n\}}\text{ and }X_n \to X_{\{n-2,n-1,n\}} \htimes{X_{\{n-2,n\}}} X_{\{0,\ldots,n-2, n\}}$$
are weak equivalences.
\end{prop}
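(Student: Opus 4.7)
The plan is to prove both implications of the biconditional. For the forward direction, assume $X$ is $2$-Segal. To show that the first hypothesized map is a weak equivalence for a given $n\geq 3$, I would refine the polygonal decomposition $P_L=\{\{0,1,2\},\{0,2,\ldots,n\}\}$ to a full triangulation $T$ of the $(n+1)$-gon by choosing any triangulation $T'$ of the sub-$n$-gon $\{0,2,\ldots,n\}$, so that $T=\{\{0,1,2\}\}\cup T'$. The $T$-Segal map then factors as
$$X_n \to X_{\{0,1,2\}}\htimes{X_{\{0,2\}}}X_{\{0,2,\ldots,n\}} \to X_{\{0,1,2\}}\htimes{X_{\{0,2\}}}\holim_{\{i_0,\ldots,i_k\}\in T'}X_k,$$
where the composite is a weak equivalence by the $2$-Segal property at level $n$ applied to $T$, and the second map is a base change of the $T'$-Segal map for the sub-$n$-gon (with $X_{\{0,2,\ldots,n\}}\cong X_{n-1}$), which is a weak equivalence by the $2$-Segal property at level $n-1$. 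By two-out-of-three the first map is a weak equivalence; the second hypothesized map is handled symmetrically.

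For the backward direction I would argue by strong induction on $n\geq 3$. The base case $n=3$ is immediate: the $4$-gon admits exactly the two triangulations $\{\{0,1,2\},\{0,2,3\}\}$ and $\{\{0,1,3\},\{1,2,3\}\}$ underlying the two hypothesized decompositions. For the inductive step, fix $n\geq 4$ and assume $2$-Segal at all levels $3,\ldots,n-1$. First I would establish the $T_0$-Segal condition for the fan triangulation $T_0=\{\{0,1,2\},\{0,2,3\},\ldots,\{0,n-1,n\}\}$ by writing $T_0=\{\{0,1,2\}\}\cup T_0'$ and factoring the $T_0$-Segal map through the first hypothesized map at level $n$ (a weak equivalence by assumption) followed by a base change of the $T_0'$-Segal map (a weak equivalence by the inductive hypothesis applied to the sub-$n$-gon).

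To pass from $T_0$ to an arbitrary triangulation $T$, I would invoke the classical fact that the graph of triangulations of a convex polygon under diagonal flips is connected, together with a \emph{flip lemma}: if $T$ and $T'$ differ by a single flip on a quadrilateral sub-polygon $\{a,b,c,d\}$, then the $T$-Segal map is a weak equivalence if and only if the $T'$-Segal map is. I would prove this by factoring both $T$-Segal and $T'$-Segal through the $P_{abcd}$-Segal map, where $P_{abcd}$ agrees with both $T$ and $T'$ outside $\{a,b,c,d\}$ but leaves the quadrilateral untriangulated; the remaining factor in each case is a base change of one of the two $2$-Segal maps for the quadrilateral $\{a,b,c,d\}$, both of which are weak equivalences by the already-established base case. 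The main obstacle will be this flip lemma: it requires a careful decomposition of the iterated homotopy pullback associated to $T$ via the intermediate $P_{abcd}$-decomposition, identification of the remaining factor on the quadrilateral piece, and confirmation that this factor reduces to $2$-Segal at $X_3$. Once the flip lemma and the connectivity of the flip graph are in place, the induction propagates the weak equivalence from $T_0$ to all triangulations of the $(n+1)$-gon, completing the proof.
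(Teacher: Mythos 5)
Your proposal is correct, but it follows a genuinely different route from the paper. The paper leans on Dyckerhoff--Kapranov's Proposition 2.3.2, which already reduces the 2-Segal condition to the single-diagonal decompositions $X_n \to X_{\{0,\ldots,j\}} \htimes{X_{\{0,j\}}} X_{\{0,j,\ldots,n\}}$ and $X_n \to X_{\{i,\ldots,n\}} \htimes{X_{\{i,n\}}} X_{\{0,\ldots,i,n\}}$; this makes the forward direction immediate (take $j=2$, $i=n-2$), and the converse becomes an induction on $n$ showing that the two extreme cases imply all single-diagonal maps, by superimposing the diagonals $\{0,2\}$ and $\{0,j\}$ and running a two-out-of-three argument on (strict models of) the pullbacks after injective fibrant replacement. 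You instead argue directly at the level of triangulations: for the forward direction you refine the triangle-plus-$n$-gon decomposition to a triangulation and use two-out-of-three together with the induced $T'$-Segal equivalence on the sub-$n$-gon, and for the converse you first establish the fan triangulation by induction and then propagate to all triangulations via your flip lemma and the connectivity of the flip graph, with each flip controlled by the two quadrilateral 2-Segal maps from the base case $n=3$. In effect you reprove the Dyckerhoff--Kapranov reduction rather than citing it; this buys self-containedness at the cost of the extra bookkeeping in the flip lemma (the pushout decomposition $\Delta[P_{abcd}]\cong\Delta[T]\amalg_{\Delta[T_{abcd}]}\Delta[3]$ and the resulting base-change identifications), which does go through as you describe. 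Note also that the base-change and two-out-of-three steps need a coherent model for the homotopy pullbacks involved (the same subtlety the paper addresses via \cref{lemmahomotopypullback} and \cref{remark fixing pullbacks}); your homotopy-invariance phrasing is acceptable but should be made precise in the same way if written out in full.
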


Since we are working with model categories, it will be convenient to make use of the following fact in the proof, which allows us to use strict pullbacks, rather than homotopy pullbacks, in the targets of the Segal maps.

 \begin{lem}
 \label{lemmahomotopypullback}
 If $X$ is an injectively fibrant
  simplicial object in $\targetcat$, then for any injective map $\theta\colon [k] \to [n]$ in $\Delta$ the induced map
  \[X_n \to X_{\{\theta(0), \ldots, \theta(k)\}}\]
 is a fibration $\targetcat$. In particular,
  $$X_{\{0,\ldots,j\}} \ttimes{X_{\{0,j\}}}X_{\{0,j,\ldots,n\}}\simeq X_{\{0,\ldots,j\}} \htimes{X_{\{0,j\}}}X_{\{0,j,\ldots,n\}}.$$
  Moreover, for every $0\le j\le n$ the projections
 \[X_{\{0,\ldots,j\}} \ttimes{X_{\{0,j\}}}X_{\{0,j,\ldots,n\}}\to X_{\{0,\ldots,j\}}\text{ and }X_{\{0,\ldots,j\}} \ttimes{X_{\{0,j\}}}X_{\{0,j,\ldots,n\}}\to X_{\{0,j,\ldots,n\}}\]
are fibrations in $\targetcat$.
\end{lem}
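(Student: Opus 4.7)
The plan is to exploit the $\targetcat$-enrichment of the injective model structure on $\sS$ given by \cref{injectivemodelstructure}, together with the enriched Yoneda identification $X_m\cong\Map_{\sS}(d_*\Delta[m],X)$, which identifies the structure maps of $X$ with induced maps of mapping objects.

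For the first assertion, any injective $\theta\colon[k]\to[n]$ in $\Delta$ induces a monomorphism of simplicial sets $\Delta[k]\hookrightarrow\Delta[n]$, whose image under $d_*$ is a levelwise cofibration by \cref{discreterespectscofibrations}, hence a cofibration in $\sS_{\inj}$. The pushout-product axiom for the $\targetcat$-model category $\sS_{\inj}$ then yields, for every injectively fibrant $X$, that the induced map
\[\Map_{\sS}(d_*\Delta[n],X)\longrightarrow \Map_{\sS}(d_*\Delta[k],X)\]
is a fibration in $\targetcat$; under enriched Yoneda this is precisely the map $\theta^*\colon X_n\to X_{\{\theta(0),\ldots,\theta(k)\}}$.

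For the identification of the strict pullback with the homotopy pullback, apply the first part to the two inclusions $\{0,j\}\hookrightarrow\{0,\ldots,j\}$ and $\{0,j\}\hookrightarrow\{0,j,\ldots,n\}$, which are injective maps in $\Delta$. This produces a cospan in $\targetcat$ whose two legs are fibrations, and since injective fibrancy is levelwise all four objects involved are fibrant. The standard model-categorical fact that the strict pullback of a cospan of fibrations between fibrant objects is a model for the homotopy pullback then yields the asserted equivalence. Finally, the claim that the two projections out of $X_{\{0,\ldots,j\}}\ttimes{X_{\{0,j\}}}X_{\{0,j,\ldots,n\}}$ are fibrations in $\targetcat$ is immediate from the stability of fibrations under pullback.

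The argument is essentially formal, reducing every step to a standard enriched model category axiom; the only conceptual input is the $\targetcat$-enrichment of $\sS_{\inj}$ already recorded in \cref{injectivemodelstructure}, and no substantial obstacle is expected.
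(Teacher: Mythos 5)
Your proof is correct and takes essentially the same route as the paper: the key step is identical, namely that $d_*\Delta[k]\to d_*\Delta[n]$ is a cofibration by \cref{discreterespectscofibrations}, so the $\targetcat$-enrichment of the injective model structure (\cref{injectivemodelstructure}) together with injective fibrancy of $X$ makes $\theta^*\colon X_n\to X_{\{\theta(0),\ldots,\theta(k)\}}$ a fibration. The paper leaves the ``in particular'' and ``moreover'' clauses implicit, and your justification of them (both legs of the cospan are fibrations with fibrant objects, so the strict pullback models the homotopy pullback, and projections are fibrations by pullback stability) is the standard argument and is fine.
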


\begin{proof}
By \cref{discreterespectscofibrations}, the map $\theta$ is a levelwise cofibration $\theta\colon\Delta[k]\to\Delta[n]$ between discrete simplicial objects. Since $X$ is injectively fibrant, it follows that the induced map
 \[X_n\cong\Map(\Delta[n],X) \to \Map(\Delta[k],X)\cong X_{\{\theta(0), \ldots, \theta(k)\}}\]
is a fibration in $\targetcat$, as desired.
\end{proof}

We can now prove our criterion for 2-Segal objects.

\begin{proof}[Proof of \cref{easy2segal}]
In \cite[Proposition 2.3.2]{DK}, Dyckerhoff and Kapranov show that $X$ is 2-Segal if and only if for every $n\geq 3$, $0<i < n-1$, and $1<j<n$, the maps
$$X_n \to X_{\{0,\ldots,j\}} \htimes{X_{\{0,j\}}} X_{\{0,j,\ldots,n\}}\text{ and }X_n \to X_{\{i,\ldots, n\}} \htimes{X_{\{i,n\}}} X_{\{0,\ldots i,n\}}$$
are weak equivalences.  With $j=2$ and $i=n-2$, we get the direct implication. For the converse implication, we proceed by induction on $n$. Here we show that, for every $n$ and every $1< j<n$, the map
\begin{equation} \label{nice2Segal}
X_n \longrightarrow X_{\{0,\ldots,j\}} \htimes{X_{\{0,j\}}} X_{\{0,j,\ldots,n\}}
\end{equation}
is a weak equivalence; the argument for the other map follows by a similar argument.

When $n=3$, we have that $j=2$, so the map in \eqref{nice2Segal} is by assumption a weak equivalence.  Fix $n>3$ and assume the condition holds for $k<n$. For $1<j<n$, consider the decomposition of the $(n+1)$-gon given by the two diagonals $\{0,j\}$ and $\{0,2\}$
\begin{center}
\begin{tikzpicture}[scale=0.5, every node/.style={font=\footnotesize}]
\begin{scope}
\path (-2,0) node[dot] (0) {} node[anchor=east] {$0$} arc (180:225:2) node[dot] (1) {} node[anchor=north east] {$1$} arc (225:270:2) node[dot] (2) {} node[anchor=north] {$2$} arc (270:315: 2) node (3) {} arc (315: 45:2) node[dot] (j) {} node[anchor=south west] {$j$} arc (45:90:2) node (n-1) {} arc(90:135:2) node[dot] (n) {} node[anchor=south east] {$n$};

\path (2) -- node[near end] (23) {} (3) 
(n) -- node[near end] (nn-1) {} (n-1) -- node (jn-1) {} (j) -- node (j0) {} (2,0);

\draw (23) -- (2) -- (1) -- (0) -- (n) -- (nn-1);
\draw (jn-1) -- (j) -- (j0);

\draw (n-1) node[yshift=-0.15cm]{$\dots$};
\draw (j0) node[yshift=-0.65cm, rotate=55]{$\dots$};

\draw (j) -- (0);
\draw (0) -- (2);
\end{scope}

\end{tikzpicture}
\end{center}
into a triangle and two other polygons.  By the inductive hypothesis, the maps
$$X_{\{0,\ldots,j\}}\to X_{\{0,1,2\}}\htimes{X_{\{0,2\}}} X_{\{0,2,\ldots,j\}}\,\text{{\small and }}X_{\{0,2,\ldots,n\}} \to X_{\{0,2,\ldots,j\}}\htimes{X_{\{0,j\}}} X_{\{0,j,\ldots,n\}}$$
are weak equivalences.

We will exhibit the desired map as a zig-zag of weak equivalences using the 2-out-of-3 property, but to get the commutating diagram, it is useful to use concrete models for the homotopy pullback. We do so by first taking an injective fibrant replacement for $X$. Using 
\cref{lemmahomotopypullback} we can now work with strict pullbacks rather than homotopy pullbacks.
Thus, in the diagram
$$
\begin{tikzcd}
X_n \arrow{d}{\simeq} \arrow{r} & X_{\{0,\ldots,j\}} \ttimes{X_{\{0,j\}}} X_{\{0,j,\ldots,n\}} \arrow{d}{\simeq}\\
X_{\{0,1,2\}}\ttimes{X_{\{0,2\}}} X_{\{0,2,\ldots,n\}} \arrow{r}{\simeq} & X_{\{0,1,2\}}\ttimes{X_{\{0,2\}}}X_{\{0,2,\ldots,j\}}\ttimes{X_{\{0,j\}}}X_{\{0,j,\ldots,n\}}
\end{tikzcd}
$$
the bottom and right arrows are weak equivalences. The left arrow is a weak equivalence by assumption. The two-out-of-three property implies that the top arrow is a weak equivalence as well, as we wished to show.
\end{proof}

\begin{rmk}\label{remark fixing pullbacks}
Note that there is a technically subtle step in the proof above which arises from working with homotopy pullbacks in the model category itself rather than in the corresponding $(\infty,1)$-category or homotopy category. There are different ways to handle this subtlety; we chose \cref{lemmahomotopypullback} as the most suitable for our purposes. Another possibility would have been to choose a functorial model for homotopy pullbacks in $\targetcat$. We continue to use the proof strategy as above in future proofs without further discussion.
\end{rmk}

As established in both \cite{DK} and \cite{GalvezKockTonks}, many interesting examples of $2$-Segal objects satisfy an extra unitality condition.  We conclude this subsection by reviewing this condition.

\begin{defn}  \label{defn:unital} 
A 2-Segal object in $\targetcat$ is \emph{unital} if, for all $n\geq 2$ and $0\leq i \leq n-1$, the diagram
\begin{center}
\begin{tikzcd}
X_{n-1} \arrow[r, "\alpha_i"] \arrow[d, "s_i"'] & X_0 \arrow[d, "s_0"]\\
X_n \arrow[r, "\beta_i"']  & X_1
\end{tikzcd}
\end{center}
is a homotopy pullback, where $s_0$ and $s_i$ are degeneracy maps and the maps $\alpha_i, \beta_i$ are induced by the following maps: 
$$
\alpha^i \colon [0]\to [n-1], \, 0\mapsto i; \quad\mbox{ and }\quad  \beta^i\colon [1] \to [n], \, 0<1 \mapsto i< i+1.
$$
\end{defn}

The following lemma, analogous to a reduction in \cite{GalvezKockTonks}, gives a simpler criterion for when a 2-Segal object is unital.

\begin{lem}\label{lem:easyunital}
A 2-Segal object $X$ is unital if and only if the squares
\begin{center}
\begin{tikzcd}
X_{1} \arrow[r, "d_0"] \arrow[d, "s_1"'] & X_0 \arrow[d, "s_0"]\\
X_2 \arrow[r, "d_0"']  & X_1
\end{tikzcd}
\quad\mbox{and}\quad
\begin{tikzcd}
X_{1} \arrow[r, "d_1"] \arrow[d, "s_0"'] & X_0 \arrow[d, "s_0"]\\
X_2 \arrow[r, "d_2"']  & X_1
\end{tikzcd}
\end{center}
are homotopy pullback diagrams.
\end{lem}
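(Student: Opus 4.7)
The plan is that the forward direction will be immediate, and the main work will go into the converse, which I would prove by induction on $n \geq 2$. For the forward direction, specializing \cref{defn:unital} to $n = 2$ with $i = 0$ and $i = 1$ yields the second and first squares of the statement, respectively, once one computes $\alpha_0 = d_1$, $\beta_0 = d_2$, $\alpha_1 = d_0$, $\beta_1 = d_0$.

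For the converse, assuming the two displayed squares are homotopy pullbacks, I would proceed by induction on $n \geq 2$; the base case is precisely the hypothesis. For the inductive step at $(n, i)$ with $n \geq 3$, the main tool will be the 2-Segal equivalence
\[ X_n \simeq X_{\{0,1,2\}} \htimes{X_{\{0,2\}}} X_{\{0,2,\ldots,n\}}, \]
where $X_{\{0,2,\ldots,n\}} \cong X_{n-1}$ via the reindexing $v \mapsto v - 1$ for $v \geq 2$. As in the proof of \cref{easy2segal}, I would implicitly replace $X$ by an injectively fibrant model so that \cref{lemmahomotopypullback} lets us work with strict rather than homotopy pullbacks.

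The argument would split into cases by the position of the edge $\{i, i+1\}$ in the decomposition. If $i \in \{0, 1\}$, the edge lies in the triangle $X_{\{0,1,2\}}$, so $\beta_i^{(n)}$ factors through the projection to $X_2 = X_{\{0,1,2\}}$ via $d_2$ (if $i = 0$) or $d_0$ (if $i = 1$); the corresponding $n = 2$ unitality hypothesis (the second, respectively first, square of the statement) identifies the pullback of $s_0$ along this factor with $X_1$, and the outer pullback would then collapse by the simplicial identity $d_1 s_i = \id$ to $X_{\{0,2,\ldots,n\}} \cong X_{n-1}$. If $i \geq 2$, the edge $\{i, i+1\}$ lies in $X_{\{0,2,\ldots,n\}}$ and, after reindexing, corresponds to the edge $\{i-1, i\}$ in $X_{n-1}$, so $\beta_i^{(n)} = \beta_{i-1}^{(n-1)} \circ d_1$; applying the inductive hypothesis at $(n-1, i-1)$ and then the simplicial identity $\beta_0^{(n-1)} \circ s_{i-1} = \beta_0^{(n-2)}$ (valid since $i - 1 \geq 1$) would reduce the pullback to the 2-Segal decomposition of $X_{n-1}$, namely $X_{n-1}$ itself.

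In both cases, a direct calculation using simplicial identities confirms that the resulting comparison map $X_{n-1} \to X_0 \htimes{X_1} X_n$ is given by $(\alpha_i, s_i)$, as required. I expect the main obstacle to be purely notational, namely tracking the pullback identifications, reindexings, and iterated simplicial operators through the chain of equivalences; working with strict pullbacks via the fibrant replacement above should make this bookkeeping tractable.
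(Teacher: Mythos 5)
Your proposal is essentially correct, but note that the paper itself offers no proof of this lemma: it is stated as ``analogous to a reduction in \cite{GalvezKockTonks}'' and left unproved, so there is no in-text argument to compare against. Your strategy is the natural one and runs exactly parallel to the paper's own proof of \cref{easy2segal}: forward direction by specializing \cref{defn:unital} to $n=2$ (your identifications $\alpha_0=d_1$, $\beta_0=d_2$, $\alpha_1=d_0$, $\beta_1=d_0$ are correct), and converse by induction on $n$ using the decomposition $X_n \simeq X_{\{0,1,2\}}\htimes{X_{\{0,2\}}} X_{\{0,2,\ldots,n\}}$, with an injective fibrant replacement and \cref{lemmahomotopypullback} so that the pasting/associativity manipulations of pullbacks are legitimate. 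The case split by the position of the edge $\{i,i+1\}$ is right, the simplicial identities you invoke ($d_1s_0=\id$, $d_1s_1=\id$, and $\beta_0^{(n-1)}\circ s_{i-1}=\beta_0^{(n-2)}$ for $i\ge 2$) all check out, and in the case $i\ge 2$ the resulting object $X_{\{0,1,2\}}\htimes{X_{\{0,2\}}}X_{n-2}$ is indeed the 2-Segal decomposition of $X_{n-1}$. The one point I would make explicit in a write-up is the final step: rather than verifying that the comparison map ``is'' $(\alpha_i,s_i)$ under all the identifications, it is cleaner to check that the composite of $(\alpha_i,s_i)\colon X_{n-1}\to X_0\htimes{X_1}X_n$ with your constructed equivalence $X_0\htimes{X_1}X_n\xrightarrow{\simeq}X_{n-1}$ (which projects through the $X_{\{0,2,\ldots,n\}}$-factor) is the identity, and conclude by two-out-of-three. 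This is the routine bookkeeping you anticipate, and I see no obstruction to it.
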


\begin{rmk}
\label{unitalitymappingspaces}
The squares appearing in the previous lemma are induced by the following squares of simplicial sets: 
\begin{center}
\begin{tikzcd}
\Delta[1] &  \arrow[l, "d^0", swap]\Delta[0]\\
\Delta[2]\arrow[u, "s^1"]   & \Delta[1]\arrow[l, "d^0"]\arrow[u, "s^0", swap]
\end{tikzcd}
\quad\mbox{and}\quad
\begin{tikzcd}
\Delta[1]  &\Delta[0]\arrow[l, "d^1", swap]\\
\Delta[2]\arrow[u, "s^0"]    & \Delta[1] \arrow[l, "d^2"]\arrow[u, "s^0", swap].
\end{tikzcd}
\end{center}
As these squares are commutative, they induce two  maps 
$$\Delta[2]\bamalg{\Delta[1]}{d^0,s^0}\Delta[0]\to\Delta[1] \quad \text{and} \quad \Delta[2]\bamalg{\Delta[1]}{d^2,s^0}\Delta[0]\to\Delta[1].$$
Note that the sources of the two maps are different, since they correspond to taking pushouts with respect to different maps.
We can now rephrase \cref{lem:easyunital} by saying that $X$ is a unital 2-Segal space if the two induced maps
$$\Map^h(\Delta[1],X)\to\Map^h(\Delta[2]\bamalg{\Delta[1]}{d^0,s^0}\Delta[0],X)$$
and 
$$\Map^h(\Delta[1],X)\to\Map^h(\Delta[2]\bamalg{\Delta[1]}{d^2, s^0}\Delta[0],X)$$
are weak equivalences.
\end{rmk}

\subsection{Unital 2-Segal sets and augmented stable double categories} \label{Unital 2-Segal sets and stable augmented double categories}

In our previous paper \cite{boors}, we established an equivalence of categories between unital 2-Segal sets and certain kinds of double categories.  In this subsection, we review double categories, the extra conditions we want to place on them, and the equivalence with unital 2-Segal sets.

In brief, a \emph{double category} $\cD$ is a category internal to categories.  In other words, it consists of objects $\Ob\cD$, horizontal morphisms $\Hor\cD$, vertical morphisms $\Ver\cD$, and squares $\Sq\cD$.  We denote horizontal morphisms by $\rightarrowtail$ and vertical morphisms by $\twoheadrightarrow$. There are usual source and target maps from $\Hor\cD$ and $\Ver\cD$ to $\Ob\cD$, denoted by $s$ and $t$. There are ``horizontal" source and target maps $s_h, t_h \colon \Sq\cD \rightarrow \Ver\cD$ and ``vertical"\footnote{Although possibly confusing, the ``horizontal'' source and target correspond to horizontal composition, and hence are vertical morphisms.} 
source and target maps $s_v, t_v \colon \Sq\cD \rightarrow \Hor\cD$.  We refer the reader to \cite[\S3]{boors} for more details on the notation that we use, and to \cite{FiorePaoliPronk} or \cite{GP} for the general theory of double categories.  

We recall from \cite{boors} that the double category $\cD$ is
\emph{stable} if the maps 
\begin{equation}
\label{equationstable}\Hor\cD\btimes{\Ob\cD}{s_h,s_v}\Ver\cD\xleftarrow{(s_v, s_h)}\Sq\cD\xrightarrow{(t_h, t_v)}\Ver\cD\btimes{\Ob\cD}{t_v,t_h}\Hor\cD
\end{equation}
are isomorphisms. These isomorphisms basically amount to requiring that every span in $\cD$ given by a horizontal arrow and a vertical arrow can be filled uniquely to a square, and similarly for every cospan.

Given a subset of objects $\aug\cD$, the double category $\cD$ is \emph{augmented}  by $\aug\cD$ if the maps
\begin{equation}
\label{equationaugmentation}
\aug\cD\btimes{\Ob\cD}{\quad s_h}\Hor\cD\xrightarrow{t_h\circ \pr_2}\Ob\cD\text{ and }\Ver\cD\btimes{\Ob\cD}{t_v \quad}\aug\cD\xrightarrow{s_v\circ \pr_1}\Ob\cD
\end{equation}
are isomorphisms. These isomorphisms basically amount to requiring that for every object $x$ in $\cD$ there is a unique horizontal morphism with source in the augmentation and target $x$, and similarly, the there is a unique vertical morphism with source $x$ and target in the augmentation. 

We recall three important examples of stable double categories that appear later in the paper. 

\begin{ex}
\label{definitionWn}
For any $n\ge0$, we consider the double category $\sW{n}$ from \cite[Definition 4.1]{boors}, defined as follows.
\begin{itemize}
\item The set of objects is given by 
\[
\Ob(\sW{n})=\left\{ (i,j) \mid 0\leq i\leq j\leq n\right\}.
\]
\item The set of horizontal arrows is given by
\[
\Hor(\sW{n})=\left\{ (i,j,\ell) \mid 0\leq i\leq j\leq \ell\leq n\right\}.
\]
A horizontal morphism $(i,j,\ell)$ with $i\leq j\leq \ell$ is viewed as a map
$$(i,j)\mono (i,\ell).$$
\item The set of vertical arrows is given by
\[
\Ver(\sW{n})=\left\{ (i,k,j) \mid 0\leq i\leq k\leq j\leq n\right\}.
\] 
A vertical morphism $(i,k,j)$ with $i\leq k\leq j$ is viewed as a map
$$(i,j)\epi (k,j).$$
\item The set of squares is given by
\[
\Sq(\sW{n})=\left\{(i,k,j,\ell) \mid 0\leq i\leq k\leq j\leq \ell\leq n\right\}.
\]
A square $(i,k,j,\ell)$ with $0\leq i\leq k\leq j\leq \ell\leq n$ is viewed as a square
 \begin{equation}\label{pic Sq Wn}
  \begin{tikzpicture}[scale=0.75, baseline= (current bounding box.center)]
    \def\l{2cm}
    \begin{scope}
   \draw[fill] (0,0) node (b0){$(k,\ell).$};
   \draw[fill] (-\l,\l) node (b2){$(i,j)$};
   \draw[fill] (-\l,0) node (b3){$(k,j)$};
   \draw[fill] (0,\l) node (b1){$(i,\ell)$};

   \draw[epi] (b1)--node[anchor=west](x01){}(b0);
   \draw[mono] (b2)--node[anchor=south](x12){}(b1);
   \draw[epi] (b2)--node[anchor=east](x23){}(b3);
   \draw[mono] (b3)--node[anchor=north](x03){}(b0);

   \draw[twoarrowlonger] (-0.8*\l, 0.8*\l)--(-0.2*\l,0.2*\l);
   \end{scope}
  \end{tikzpicture}
 \end{equation}
\end{itemize}
For any $n$ the double category $\sW{n}$ is stable, and augmented when endowed with the augmentation set given by
\[\aug(\sW{n})=\left\{(i,i) \mid 0\leq i\leq n\right\}.
\]
For instance, if we follow the convention that a diagram represents a double category that contains all the displayed objects, arrows and squares, and all their possible composites and identities, and that the stars $*$ denote precisely the elements of the augmentation set, the double category $\sW{4}$ can be depicted as follows: 
\begin{equation}\label{pictureWn}
\begin{tikzpicture}[scale=0.7, baseline= (current bounding box.center)]
\begin{scope}
     \draw (1,0) node(a00){*};
\draw[fill] (2,0) circle (1pt) node(a01){};
\draw (2,-1) node(a11) {*};
\draw[fill] (3, -1) circle (1pt) node(a12){};
\draw[fill] (3, 0) circle (1pt) node(a02){};
\draw (3,-2) node (a22){*};
\draw[fill] (4, 0) circle (1pt) node (a03){};
\draw[fill] (4, -1) circle (1pt) node (a13){};
\draw[fill](4, -2) circle (1pt) node (a23){};
\draw (4, -3) node (a33){*};
\draw[fill] (5, 0) circle (1pt) node (a04){};
\draw[fill] (5, -1)circle (1pt) node (a14){};
\draw[fill] (5, -2)circle (1pt) node (a24){};
\draw[fill] (5, -3)circle(1pt) node (a34){};
\draw (5, -4) node (a44){*};
%horizontal
\draw[mono] (a00)--(a01);
\draw[mono] (a11)--(a12);
\draw[mono] (a01)--(a02);
\draw[mono] (a02)--(a03);
\draw[mono] (a12)--(a13);
\draw[mono] (a22)--(a23);
\draw[mono] (a03)--(a04);
\draw[mono] (a13)--(a14);
\draw[mono] (a23)--(a24);
\draw[mono] (a33)--(a34);
%vertical
\draw[epi] (a01)--(a11);
\draw[epi] (a12)--(a22);
\draw[epi] (a02)--(a12);
\draw[epi] (a03)--(a13);
\draw[epi] (a13)--(a23);
\draw[epi] (a23)--(a33);
\draw[epi] (a04)--(a14);
\draw[epi] (a14)--(a24);
\draw[epi] (a24)--(a34);
\draw[epi] (a34)--(a44);

\begin{scope}[yshift=-0.3cm]
   \draw[twoarrowlonger] (2.2,0.1)--(2.8,-0.5);
   \draw[twoarrowlonger] (3.2,0.1)--(3.8,-0.5);
   \draw[twoarrowlonger] (3.2,-0.9)--(3.8,-1.5);
   \draw[twoarrowlonger] (4.2,0.1)--(4.8,-0.5);
   \draw[twoarrowlonger] (4.2,-1.9)--(4.8,-2.5);
   \draw[twoarrowlonger] (4.2,-0.9)--(4.8,-1.5);
\end{scope}
\draw (a44) node[xshift=0.1cm, yshift=-0.1cm]{.};
\end{scope}
\end{tikzpicture}
\end{equation}
\end{ex}

\begin{ex}
\label{definitionHn}
For any $n\ge0$, consider the sub-double category $\sH{n}$ of $\sW{n}$ spanned by the first row of $\sW{n}$, i.e., the full sub-double category on the set
\[
\Ob(\sH{n})=\left\{ (0,j) \mid 0\leq j\leq n\right\}\cong\left\{j \mid 0\leq j\leq n\right\}.
\]
For any $n$ the double category $\sH{n}$ is stable, and it is augmented by the set
\[
\aug(\sH{n})=\left\{(0,0)\right\}\cong\{0\}.
\]
For example, the double category $\sH{4}$ can be depicted as follows:
\begin{equation}
\label{pictureHn}
\begin{tikzpicture}[scale=0.7, baseline= (current bounding box.center)]
\begin{scope}
     \draw (1,0) node(a00){{\raisebox{-0.8\baselineskip}{*}}};
\draw[fill] (2,0) circle (1pt) node(a01){};
\draw[fill] (3, 0) circle (1pt) node(a02){};
\draw[fill] (4, 0) circle (1pt) node (a03){};
\draw[fill] (5, 0) circle (1pt) node (a04){};
\draw[mono] (a00)--(a01);
\draw[mono] (a01)--(a02);
\draw[mono] (a02)--(a03);
\draw[mono] (a03)--(a04);

\draw (a04) node[xshift=0.1cm, yshift=-0.1cm]{$.$};
\end{scope}
\end{tikzpicture}
\end{equation}

Note that $\sH{n}$ only has identity squares. Dually, we can consider the double category $\sV{n}$ spanned by the last column of $\sW{n}$, which is also stable and augmented by a single object.
\end{ex}

\begin{ex}
\label{exampleboxtimes} 
For any $\inda, \indb\ge0$, consider the double category $[\inda]\boxtimes[\indb]$ from \cite[Example 2.9]{FiorePaoliPronk},
defined as follows.
\begin{itemize}
\item The set of objects is given by
\[
\Ob([\inda]\boxtimes[\indb])=\left\{ (i,j) \mid 1\leq i\leq \inda,1\leq j\leq \indb\right\}.
\]
\item The set of horizontal arrows is given by
\[
\Hor([\inda]\boxtimes[\indb])=\left\{ (i,j,k) \mid 0\leq i\leq \inda, 0 \leq j\leq k\leq \indb\right\}.
\]
\item The set of vertical arrows is given by
\[
\Ver([\inda]\boxtimes[\indb])=\left\{  (i,j,k) \mid 0\leq i\leq k\leq \inda, 0\leq  j\leq \indb\right\}.
\]
\item The set of squares is given by
\[
\Sq([\inda]\boxtimes[\indb])=\left\{(i,j,k,\ell) \mid 0\leq i\leq k \leq \inda, 0\leq j\leq \ell\leq \indb\right\}.
\]
\end{itemize}
For instance, the double category $[2]\boxtimes[3]$ can be depicted as follows:
\begin{equation}
\label{pictureqr}
\begin{tikzpicture}[scale=0.8, baseline=(current bounding box.center)]
\foreach\x in {0,...,3}{
   \foreach\y in {0,...,2}{
   \draw[fill] (\x, \y ) circle (1pt) node(a\x\y){};
   }
}
%monos
\draw[mono] (a00)--(a10);
\draw[mono] (a10)--(a20);
\draw[mono] (a20)--(a30);
\draw[mono] (a01)--(a11);
\draw[mono] (a11)--(a21);
\draw[mono] (a21)--(a31);
\draw[mono] (a02)--(a12);
\draw[mono] (a12)--(a22);
\draw[mono] (a22)--(a32);
%epis
\draw[epi] (a01)--(a00);
\draw[epi] (a02)--(a01);
\draw[epi] (a11)--(a10);
\draw[epi] (a12)--(a11);
\draw[epi] (a21)--(a20);
\draw[epi] (a22)--(a21);
\draw[epi] (a31)--(a30);
\draw[epi] (a32)--(a31);

\draw (a30) node[xshift=0.2cm, yshift=-0.1cm]{$.$};

\begin{scope}
\foreach\x in {0,...,2}{
   \foreach\y in {0,1}{
 \draw[twoarrowlonger] (\x + 0.2, \y+ 0.8)--(\x +0.8,\y+0.2);
   }
}
\end{scope}

\end{tikzpicture}
\end{equation}
For any $\inda$ and $\indb$, the double category $[\inda]\boxtimes[\indb]$ is stable. However, it cannot be made into an augmented double category, since the sets of connected components of the horizontal category and of the vertical category are necessarily in bijection for an augmented double category.
\end{ex}

We denote by $\asdc$ the category of augmented stable double categories, and by $\untwoseg$ the category of unital 2-Segal sets and simplicial maps between them.
In previous work \cite{boors}, we defined the functors
$$
 \pcat\colon\untwoseg\to\asdc \quad \text{and} \quad
 \sdot\colon\asdc\to\untwoseg,
$$
 which we briefly recall. For more details, see \cite[\S\S4 and 5]{boors}.
 
Given a unital 2-Segal set $X$, the \emph{path construction} $\pcat$ assigns to $X$ an  augmented stable double category $\pcat X$ in which
 $\Ob(\pcat X)= X_1$, $\Hor(\pcat X)= X_2=\Ver(\pcat X)$,
$\Sq(\pcat X )= X_3$, and $\aug(\pcat X )= X_0$.
The source, target and identity maps are given by appropriate face and degeneracy maps, and the various composites are induced by the inverses of the 2-Segal maps. 

In other other direction, given an augmented stable double category $\cD$, the \emph{Waldhausen construction} $\sdot$ takes $\cD$ to a simplicial set whose set of $n$-simplices is corepresented by $\sW{n}$.

\begin{rmk}
Although Waldhausen did not work explicitly with double categories in \cite{waldhausen}, his $\sdot$ construction can be interpreted in that language. Given a Waldhausen category $\cF$, one can define an augmented (not necessarily stable) double category $\cD(\cF)$ with cofibrations, quotient maps, pushout diagrams, and a chosen zero object as the horizontal morphisms, vertical morphisms, squares, and the augmentation, respectively. Then the objects of $\sdot \cF$ can be identified with the set of augmented double functors $\sW{n}\to \cD(\cF)$. We revisit this point of view in \cite{exact}. 
\end{rmk}

The following theorem is the main result of \cite{boors}.

\begin{thm}[{\cite[Theorem 6.1]{boors}}]
\label{oldresult}
The path construction $\pcat$ and the Waldhausen construction $\sdot$ induce an equivalence of categories
\[
 \pcat\colon\untwoseg \rightleftarrows \asdc\colon\sdot.
\]
\end{thm}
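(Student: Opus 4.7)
The plan is to construct explicit natural isomorphisms $\eta\colon \id_{\untwoseg} \Rightarrow \sdot\circ \pcat$ and $\varepsilon\colon \pcat\circ\sdot \Rightarrow \id_{\asdc}$, and verify that these are bijections in each simplicial degree (respectively, on objects, horizontal and vertical morphisms, squares, and augmentation).

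First, I would unpack $(\sdot\pcat X)_n=\Hom_{\asdc}(\sW{n},\pcat X)$ and exhibit a canonical map $\eta_{X,n}\colon X_n\to(\sdot\pcat X)_n$ as follows. An augmented double functor $\sW{n}\to\pcat X$ is specified by choices of augmented objects in $X_0$, objects in $X_1$, horizontal and vertical morphisms in $X_2$, and squares in $X_3$, indexed by the combinatorics described in \cref{definitionWn}. An $n$-simplex $x\in X_n$ naturally determines such data via the face maps $\theta^*\colon X_n\to X_{\{\theta(0),\ldots,\theta(k)\}}$ for the various injective $\theta\colon[k]\to[n]$ with $k\le 3$ corresponding to the simplex types in $\sW{n}$. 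The key point is that unitality and 2-Segality are exactly what is needed for this assignment to be a bijection: the 2-Segal condition guarantees that an $n$-simplex is determined by (and can be built from) its triangulation data, matching the square data in $\sW{n}$, while the unitality condition corresponds to the augmentation condition and identity squares in \cref{definitionWn}.

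Next, I would construct $\varepsilon_\cD\colon \pcat\sdot\cD\to\cD$. By definition, $\Ob(\pcat\sdot\cD)=(\sdot\cD)_1=\Hom_{\asdc}(\sW{1},\cD)$, and since $\sW{1}$ is the double category consisting of two augmentation objects and a single horizontal arrow between them (together with a single vertical arrow and square forced by the augmentation and stability conditions as in \eqref{equationaugmentation} and \eqref{equationstable}), the augmentation of $\cD$ produces a bijection $\Hom_{\asdc}(\sW{1},\cD)\cong \Ob\cD$. Similarly, $\sW{2}$ packages together a horizontal arrow, a vertical arrow, and a square (forced by stability from the single nontrivial element of $\sW{2}$), yielding bijections $\Hor\cD\cong\Ver\cD\cong (\sdot\cD)_2$, and $\sW{3}$ likewise corepresents $\Sq\cD$. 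The augmentation bijections \eqref{equationaugmentation} then identify $\aug\cD$ with $(\sdot\cD)_0$. Compatibility with sources, targets, identities, and composites follows since these are all encoded in appropriate augmented double functors between $\sW{m}$'s, and the stability isomorphisms \eqref{equationstable} guarantee that horizontal and vertical compositions in $\cD$ are recovered unambiguously from the span/cospan data in the simplicial set $\sdot\cD$.

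Finally, I would check naturality of $\eta$ and $\varepsilon$ (essentially automatic from functoriality of $\pcat$ and $\sdot$ combined with the representability by the diagrams $\sW{n}$), and verify the triangle identities, which reduce to the assertion that the above identifications are compatible on $(\pcat X)$ and on $\sdot\cD$ degree-wise. The main obstacle is the careful bookkeeping in the first paragraph: establishing, using only the unital 2-Segal condition, that every augmented double functor $\sW{n}\to\pcat X$ arises uniquely from an $n$-simplex of $X$. This requires systematically translating the many defining decomposition squares of $\sW{n}$ (as in \eqref{pic Sq Wn} and \eqref{pictureWn}) into the homotopy/strict pullback conditions of \cref{easy2segal} and \cref{lem:easyunital} applied at the level of sets, where homotopy pullbacks become strict pullbacks, so that both existence and uniqueness of lifts are automatic. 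Since this is precisely the content of \cite[Theorem 6.1]{boors}, I would simply cite that argument.
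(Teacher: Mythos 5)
The paper does not actually prove this statement: it is quoted verbatim from \cite[Theorem 6.1]{boors}, and your proposal—which sketches the unit/counit strategy and then defers the substantive combinatorial verification to that same reference—is therefore consistent with the paper's treatment. Your outline is sound in substance (one small imprecision: $\sW{1}$ has three objects, the two augmented ones $(0,0),(1,1)$ and the middle object $(0,1)$, with the nontrivial horizontal arrow $(0,0)\mono(0,1)$ and vertical arrow $(0,1)\epi(1,1)$, rather than a single arrow between the two augmentation objects; the conclusion $\Hom_{\asdc}(\sW{1},\cD)\cong\Ob\cD$ is nevertheless correct), so beyond the citation there is nothing further to compare.
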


The aim of this paper is to prove the homotopical analogue to this theorem.  Before moving away from the discrete setting, however, we establish a helpful result about the effect of applying the path construction to a simplex.

For any $n \geq 0$, the simplicial set $\Delta[n]$ is the nerve of the category $[n]$ and hence a Segal set.  Since it is thus a unital 2-Segal set, we can apply the functor $\pcat$ to it to obtain an augmented stable double category.

\begin{prop}
\label{prop:P(Delta)=W}
There is a natural isomorphism of augmented stable double categories 
\[ \pcat \Delta[n] \cong \sW{n}. \]
\end{prop}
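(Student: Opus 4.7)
The plan is to write down explicit bijections at each of the four constitutive levels (objects, horizontal morphisms, vertical morphisms, squares) and the augmentation, and then check that all structure maps agree. Since $\Delta[n]_k$ is the set of $(k+1)$-tuples $0\le i_0\le \cdots \le i_k\le n$, the assignments
\[
X_0\ni (i)\mapsto (i,i),\quad X_1\ni (i,j)\mapsto (i,j),\quad X_2\ni (i,j,k) \mapsto \begin{cases}(i,j,k)\in \Hor\\ (i,j,k)\in\Ver\end{cases},\quad X_3\ni(i,j,k,\ell)\mapsto(i,j,k,\ell)
\]
give bijections onto $\aug(\sW{n})$, $\Ob(\sW{n})$, $\Hor(\sW{n})$, $\Ver(\sW{n})$, and $\Sq(\sW{n})$, respectively, after matching the combinatorial conventions. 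No non-trivial re-indexing is needed: the set of weakly increasing tuples in $[n]$ is literally the indexing set of $\sW{n}$.

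Next I would verify that all the structure maps match. By definition of the path construction (recalled from \cite{boors}), the sources and targets of horizontal and vertical morphisms, the augmentation maps, and the various source/target maps on squares are given by specific face and degeneracy operators $d_i, s_j$ of the simplicial set $X = \Delta[n]$. I would go through these one by one; for example, the horizontal source of $(i,j,k)\in X_2$ is computed by a face map producing $(i,j)\in X_1$, which matches the horizontal source $(i,j)\mono(i,k)$ in $\sW{n}$, and the horizontal target produces $(i,k)$ matching the target in $\sW{n}$. The augmentation inclusions recover $(i,i)\in X_1$ via the degeneracy $s_0\colon X_0\to X_1$, matching $\aug(\sW{n})\hookrightarrow \Ob(\sW{n})$. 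I would do the analogous check for vertical sources/targets, and for the four edges of each square in $X_3$, confirming the picture in \eqref{pic Sq Wn}.

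For the composition and identities, I would use that the 2-Segal maps for $\Delta[n]$ are actually bijections (since $\Delta[n]$ is the nerve of a poset, hence a Segal, and in particular unital 2-Segal, set), so that the composition defined in \cite{boors} via inverses of Segal maps is well-defined and given by the obvious concatenation on tuples. Horizontal composition of $(i,j,k)$ and $(i,k,\ell)$ in $X_2$ yields the $2$-simplex of $\Delta[n]$ with vertex set $\{i,k,\ell\}$, i.e., $(i,k,\ell)$, matching horizontal composition in $\sW{n}$. Vertical composition and the interchange law for squares are handled analogously. Identities are given by degeneracies, which on $\Delta[n]$ insert a repeated entry, recovering the degenerate horizontal/vertical morphisms in $\sW{n}$.

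The naturality statement requires that these bijections commute with the functorial action of simplicial maps $\Delta[n]\to \Delta[m]$ on both sides. Since a simplicial map $\Delta[n]\to\Delta[m]$ is determined by an order-preserving map $\varphi\colon [n]\to [m]$ acting termwise on tuples, and the same is true for the induced double functor $\sW{n}\to\sW{m}$, naturality is tautological. The only real content of the proof is the matching of indexing conventions between $\Delta[n]$ and $\sW{n}$; the main obstacle is therefore not mathematical but notational bookkeeping, making sure each of the sixteen-or-so face and degeneracy operators, plus the stability and augmentation isomorphisms in~\eqref{equationstable} and \eqref{equationaugmentation}, are identified correctly on both sides.
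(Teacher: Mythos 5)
Your approach is essentially the same as the paper's: identify $\Ob$, $\Hor$, $\Ver$, $\Sq$, $\aug$ of $\pcat\Delta[n]$ with $\Delta[n]_1,\Delta[n]_2,\Delta[n]_2,\Delta[n]_3,\Delta[n]_0$ via weakly increasing tuples, match the face/degeneracy maps with the source/target/identity structure of $\sW{n}$, and note that composition (defined in \cite{boors} via inverses of the 2-Segal maps, which are bijections here) and naturality come out tautologically; the paper does exactly this and leaves the composition check as ``straightforward.'' One slip to correct: with your (and the paper's) conventions, the horizontal composite of $(i,j,k)\colon (i,j)\mono(i,k)$ and $(i,k,\ell)\colon (i,k)\mono(i,\ell)$ is $(i,j,\ell)$, namely the face $d_2$ of the lifted $3$-simplex $(i,j,k,\ell)$, not $(i,k,\ell)$ as you wrote --- your stated answer has source $(i,k)$, contradicting the source/target identification you just verified. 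With that correction the check goes through as you describe.
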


\begin{proof}
By the construction of $\pcat$ from \cite[\S 5]{boors}, we have the following descriptions of the sets of objects, horizontal and vertical morphisms, squares and augmentation set of the augmented stable double category $\pcat\Delta[n]$:
\begin{align*}
\Ob(\pcat \Delta[n])&= \Delta[n]_1 \cong \Funset([1], [n]) \cong \left\{ (i,j) \mid 0\leq i\leq j\leq n\right\}\\
\Hor(\pcat \Delta[n]) &= \Delta[n]_2 \cong \Funset([2],[n]) \cong \left\{(i,j,\ell) \mid 0\leq i\leq j\leq \ell\leq n\right\}\\
\Ver(\pcat \Delta[n]) &= \Delta[n]_2 \cong \Funset([2],[n]) \cong \left\{(i,k,j) \mid 0\leq i\leq k\leq j\leq n\right\}\\
\Sq(\pcat \Delta[n] )&= \Delta[n]_3 \cong \Funset([3],[n]) \cong \left\{(i,k,j,\ell) \mid 0\leq i\leq k\leq j\leq \ell\leq n\right\}\\
\aug(\pcat \Delta[n] )&= \Delta[n]_0 \cong \Funset([0],[n]) \cong \left\{(i,i) \mid 0\leq i \leq n\right\}. \end{align*}

It remains to compare the rest of the double category structure: the horizontal and vertical source and target maps, identities and compositions.

By definition, the horizontal source and target maps
$\Hor (\pcat \Delta[n]) \rightrightarrows\Ob (\pcat \Delta[n])$
are given by $d_2, d_1\colon\Delta[n]_2 \rightrightarrows \Delta[n]_1$,
respectively, and therefore send an element $(i,j,\ell)\in\Hor (\pcat \Delta[n])$ to $(i,j)$ and $(i,\ell)$, which are exactly the source and target of the horizontal morphism $(i,j,\ell)\colon (i,j)\mono (i,\ell)$ in~$\sW{n}$.

Similarly, the vertical source and target maps $\Ver (\pcat \Delta[n]) \rightrightarrows\Ob (\pcat \Delta[n] )$ are given by $d_1, d_0\colon\Delta[n]_2 \rightrightarrows \Delta[n]_1$, which coincide with the source and target maps of the vertical morphism $(i,k, j)\colon (i,j)\epi(k,j)$ in~$\sW{n}$.

On $\Sq (\pcat \Delta[n]) = \Delta[n]_3$, the vertical source and target by definition are given by $d_1,d_0\colon\Delta[n]_3 \rightrightarrows \Delta[n]_2$, whereas the horizontal source and target are given by $d_3,d_2\colon\Delta[n]_3 \rightrightarrows \Delta[n]_2$. By inspection, for an element $(i,k,j,\ell)\in \Sq (\pcat \Delta[n])$, these definitions coincide with those indicated in \cref{pic Sq Wn}.

A straightforward check establishes that composition and identities of $\pcat \Delta[n]$ and $\sW{n}$ coincide.
\end{proof}

\section{The generalized \texorpdfstring{$\sdot$-}{Waldhausen }construction} \label{gensdot}

In this section we introduce our main construction, a homotopical generalization of the $\sdot$-construction as described in the previous section.  

The first step is to establish the correct input objects for this construction, which should possess some of the features of an exact category.  As we saw in the previous section, in the discrete case the appropriate structure is that of an augmented stable double category.  Here, we generalize double categories to double Segal objects, then develop augmented and stable versions in this context.

\subsection{Double Segal objects}

Inspired by the fact that double categories are categories internal to categories, we use a higher categorical version thereof modelled by \emph{double Segal objects in $\targetcat$}, as investigated by Haugseng \cite{HaugsengIteratedSpans}.  They are Segal objects in the category of Segal objects in $\targetcat$ and model double categories up to homotopy.

It is important to note that double Segal objects are more general than the $2$-fold Segal spaces of \cite{BarwickThesis} or \cite{LurieStableInfty}, which instead model $(\infty,2)$-categories, or 2-categories up to homotopy.  

Let us look at this definition more precisely. Recall the notion of the spine of a standard simplex from \cref{spine}.

\begin{defn}
\label{doublesegal}
A bisimplicial object $Y$ in $\targetcat$ is a \emph{double Segal object} if for every $\inda,\indb \geq 1$ the maps
$$Y_{\inda,\indb} \rightarrow \underbrace{Y_{\inda,1} \htimes{Y_{\inda,0}} \cdots \htimes{Y_{\inda,0}} Y_{\inda,1}}_{\indb} \text{ and }Y_{\inda,\indb} \rightarrow \underbrace{Y_{1,\indb} \htimes{Y_{0,\indb}} \cdots \htimes{Y_{0,\indb}} Y_{1,\indb}}_{\inda} $$
are weak equivalences in $\targetcat$.
Here the left-hand map is induced by the inclusion $\mathcal I[\indb] \hookrightarrow \Delta[\indb]$ in the second variable in $(\Delta\times \Delta)^{\op}$, whereas the right-hand map is induced by the same 
inclusion $\mathcal I[\inda] \hookrightarrow \Delta[\inda]$, but in the first variable.
\end{defn}

Let us consider two motivating examples. 

\begin{ex}
When $\targetcat$ is the category of sets with its trivial model structure, a double Segal set $Y$ is precisely
an appropriately defined nerve of a double category $\cD$, where
$Y_{0,0}=\Ob\cD$ is the set of objects, $Y_{0,1}=\Hor\cD$ and $Y_{1,0}=\Ver\cD$ are the sets of horizontal and vertical morphisms, respectively, and $Y_{1,1}=\Sq\cD$ is the set of distinguished squares \cite[Definition 5.12]{FiorePaoliPronk}. More generally, using Example \ref{exampleboxtimes}, the set $Y_{\inda,\indb}=\Hom_{\dc}([\inda]\boxtimes[\indb],\cD)$ is the set of $(\inda\times \indb)$-grids of distinguished squares. Horizontal composition of squares is a strict operation encoded by 
$$\xymatrix{Y_{1,1}\ttimes{Y_{1,0}} Y_{1,1}&\ar[l]_-\cong Y_{1,2}\ar[r]&Y_{1,1}.}$$
The rest of the structure can be defined similarly.
\end{ex}

\begin{ex}
Consider the case when $\targetcat$ is the category of simplicial sets with the classical model structure due to Quillen \cite{QuillenHA}.  Just as Segal spaces can be regarded as a homotopical analogue of Segal sets, which are nerves of categories, double Segal spaces give a homotopical version of double Segal sets, which we have just seen model nerves of double categories.

Given a double Segal space $Y$, we can think of $Y_{0,0}$ as a space of objects, $Y_{0,1}$ and $Y_{1,0}$ as spaces of horizontal and vertical morphisms, respectively, and $Y_{1,1}$ as a space of distinguished squares.  Horizontal composition of squares is an operation now only well-defined up to homotopy, and encoded by the span
$$\xymatrix{Y_{1,1}\htimes{Y_{1,0}} Y_{1,1}&\ar[l]_-\simeq Y_{1,2}\ar[r]&Y_{1,1},}$$
where the homotopy fiber product on the left-hand side encodes the space of horizontally composable
squares, and $Y_{1,2}$ represents the space of horizontally composable
squares together with a specified composition of such.
According to this interpretation, $Y_{\inda,\indb}$ can be thought of as
the space of $(\inda\times \indb)$-grids of distinguished squares together with their intermediate composites.
\end{ex}

\begin{rmk} 
An important feature of a double category is the \emph{interchange law}, which describes the compatibility of horizontal and vertical composition. Let us look more closely at how the analogous law for a double Segal object is encoded by the bisimplicial structure.

Consider the space $\mathbb Y$ of four adjacent squares in a double Segal space $Y$ (but without any composition) glued along the dots as in the left picture below. We will interpret it differently from the picture on the right. The picture on the right is supposed to possess all the compositions of squares, and have their compatibilities assured. It should be thought of as corresponding to an element in $Y_{2,2}$:

\begin{center}
\begin{tikzpicture}[scale=0.7, inner sep=0.2]
\def\z{0.4}
\end{tikzpicture}
\end{center}

\begin{center}
\begin{tikzpicture}[scale=0.8]
\begin{scope}[inner sep=0.2]
 
\def\z{0.4}

\draw[fill] (3, 0) circle (1pt) node(a02){};
\draw[fill, magenta] (3, -1) circle (1pt) node(a12){};
\draw[fill, blue] (4, 0) circle (1pt) node (a03){};
\draw[fill,red] (4, -1) circle (1pt) node (a13){};
\draw[fill, orange] (5+\z, -1)circle (1pt) node (a14){};

\draw[fill] (5+\z, 0) circle (1pt) node (a04){};

\draw[fill, cyan](4, -2-\z) circle (1pt) node (a23){};
\draw[fill] (3,-2-\z) circle (1pt) node (a22){};

\draw[fill] (5+\z, -2-\z)circle (1pt) node (a24){};

\draw[fill, blue] (a03)+(\z,0) circle (1pt) node (b03){};

\draw[fill,red] (a13)+(\z,0) circle (1pt) node (b13){};
\draw[fill,red] (a13)+(0,-\z) circle (1pt) node (c13){};
\draw[fill,red] (a13)+(\z,-\z) circle (1pt) node (d13){};

\draw[fill, cyan](a23)+(\z,0) circle (1pt) node (b23){};

\draw[fill, magenta] (a12)+(0,-\z) circle (1pt) node(c12){};

\draw[fill, orange] (a14)+(0,-\z) circle (1pt) node (c14){};
\draw[mono] (a02)--(a03);
\draw[mono, magenta] (a12)--(a13);

\draw[mono] (a22)--(a23);
\draw[mono] (b03)--(a04);
\draw[mono, orange] (b13)--(a14);
\draw[mono] (b23)--(a24);
\draw[mono,magenta] (c12)--(c13);
\draw[mono,orange] (d13)--(c14);
\draw[epi] (c12)--(a22);
\draw[epi] (a02)--(a12);
\draw[epi, blue] (a03)--(a13);
\draw[epi,cyan] (c13)--(a23);
\draw[epi] (a04)--(a14);
\draw[epi] (c14)--(a24);

\draw[epi, blue] (b03)--(b13);
\draw[epi, cyan] (d13)--(b23);

\begin{scope}[yshift=-0.3cm]
   \draw[twoarrowlonger] (3.2,0.1)--(3.8,-0.5);
   \draw[twoarrowlonger] (3.2,-0.9-\z)--(3.8,-1.5-\z);
   \draw[twoarrowlonger] (4.2+\z,0.1)--(4.8+\z,-0.5);
   \draw[twoarrowlonger] (4.2+\z,-0.9-\z)--(4.8+\z,-1.5-\z);
\end{scope}

\draw[densely dotted] (a03)--(b03);
\draw[densely dotted] (a14)--(c14);
\draw[densely dotted] (a13)--(b13);
\draw[densely dotted] (b13)--(d13);
\draw[densely dotted] (a13)--(c13);
\draw[densely dotted] (c13)--(d13);
\draw[densely dotted] (a23)--(b23);
\draw[densely dotted] (a12)--(c12);
\draw[fill, opacity=0.1] (a03.center)--(b03.center)--(b13.center)--(a14.center)--(c14.center)--(d13.center)--(b23.center)--(a23.center)--(c13.center)--(c12.center)--(a12.center)--(a13.center)--(a03.center)--cycle;

\begin{scope}[yshift=-0.3cm]
   \draw[twoarrowlonger] (3.2,0.1)--(3.8,-0.5);
   \draw[twoarrowlonger] (3.2,-0.9-\z)--(3.8,-1.5-\z);
   \draw[twoarrowlonger] (4.2+\z,0.1)--(4.8+\z,-0.5);
   \draw[twoarrowlonger] (4.2+\z,-0.9-\z)--(4.8+\z,-1.5-\z);
\end{scope}

\draw[densely dotted] (a03)--(b03);
\draw[densely dotted] (a14)--(c14);
\draw[densely dotted] (a13)--(b13);
\draw[densely dotted] (b13)--(d13);
\draw[densely dotted] (a13)--(c13);
\draw[densely dotted] (c13)--(d13);
\draw[densely dotted] (a23)--(b23);
\draw[densely dotted] (a12)--(c12);

\draw[fill, opacity=0.1] (a03.center)--(b03.center)--(b13.center)--(a14.center)--(c14.center)--(d13.center)--(b23.center)--(a23.center)--(c13.center)--(c12.center)--(a12.center)--(a13.center)--(a03.center)--cycle;
\end{scope}

\begin{scope}[xshift=8cm, yshift=-2.4cm, scale=1.2]
\foreach\x in {0,...,2}{
   \foreach\y in {0,...,2}{
   \draw[fill] (\x, \y ) circle (1pt) node(a\x\y){};
   }
}
\draw[mono] (a00)--(a10);
\draw[mono] (a10)--(a20);
\draw[mono] (a01)--(a11);
\draw[mono] (a11)--(a21);
\draw[mono] (a02)--(a12);
\draw[mono] (a12)--(a22);
\draw[epi] (a01)--(a00);
\draw[epi] (a02)--(a01);
\draw[epi] (a11)--(a10);
\draw[epi] (a12)--(a11);
\draw[epi] (a21)--(a20);
\draw[epi] (a22)--(a21);
\draw (a20) node[xshift=0.2cm, yshift=-0.1cm]{$.$};

\begin{scope}
\foreach\x in {0,1}{
   \foreach\y in {0,1}{
 \draw[twoarrowlonger] (\x + 0.2, \y+ 0.8)--(\x +0.8,\y+0.2);
   }
}
\end{scope}
\end{scope}
\end{tikzpicture}
\end{center}

More precisely, by ``glued along the dots'' we mean that the space $\mathbb Y$ can be expressed as the homotopy limit of a diagram of the form
\label{interchange}
$$
\mathbb Y=\left(\begin{tikzcd}[scale=0.8]
Y_{1,1} \arrow{r} \arrow{d} & Y_{1,0} \arrow{d} & Y_{1,1} \arrow{d} \arrow{l} \\
Y_{0,1} \arrow{r} & Y_{0,0} & Y_{0,1} \arrow{l}\\
Y_{1,1} \arrow{r} \arrow{u} & Y_{1,0} \arrow{u} & Y_{1,1} \arrow{u} \arrow{l}
\end{tikzcd}\right),$$
which in turn can be written as either of the iterated homotopy fiber products
\[(Y_{1,1} \htimes{Y_{0,1}} Y_{1,1})\!\!\!\!\htimes{(Y_{1,0}\htimes{Y_{0,0}} Y_{1,0})} \!\!\!\!(Y_{1,1} \htimes{Y_{0,1}} Y_{1,1}) \,\text{ and }\, (Y_{1,1} \htimes{Y_{1,0}} Y_{1,1})\!\!\!\! \htimes{(Y_{0,1}\htimes{Y_{0,0}} Y_{0,1})} \!\!\!\!(Y_{1,1} \htimes{Y_{1,0}} Y_{1,1}).\]

Consider the following diagram, which commutes up to homotopy:
\begin{equation*}
 \begin{tikzcd}[column sep=0.5cm]
  (Y_{1,1} \htimes{Y_{0,1}} Y_{1,1})\htimes{(Y_{1,0}\htimes{Y_{0,0}} Y_{1,0})} (Y_{1,1} \htimes{Y_{0,1}} Y_{1,1})& \arrow[l, "\simeq"] Y_{2,1}\htimes{Y_{2,0}} Y_{2,1} \arrow[r]&Y_{1,1}\htimes{Y_{1,0}}Y_{1,1}\arrow[d]\\
\holim\mathbb{Y}\arrow[u, "\simeq"]\arrow[d, "\simeq"]&\arrow[l]Y_{2,2}\arrow[u, "\simeq"]\arrow[d,"\simeq"]\arrow[r] & Y_{1,1}\\
(Y_{1,1} \htimes{Y_{1,0}} Y_{1,1}) \htimes{(Y_{0,1}\htimes{Y_{0,0}} Y_{0,1})} (Y_{1,1} \htimes{Y_{1,0}} Y_{1,1})&\arrow[l, "\simeq"] Y_{1,2}\htimes{Y_{0,2}} Y_{1,2}\arrow[r] & Y_{1,1}\htimes{Y_{0,1}} Y_{1,1}. \arrow[u]
 \end{tikzcd}
\end{equation*}
Here, the top line corresponds to first composing the two pairs of squares vertically, and then composing the result horizontally.  On the other hand, the bottom line corresponds to first composing the two pairs of squares horizontally, and then composing the result vertically.  The two operations are therefore compatible with each other, as expected. One can similarly express higher versions of the interchange law by using similar versions of this diagram. 
\end{rmk}

\subsection{Stable double Segal objects}

We now introduce the stability condition on a double Segal object, which generalizes the definition given in the discrete case and also appears in work of Carlier \cite{Carlier}. 

For ease of notation, we denote an object $([k], [\ell])$ in $\Delta \times \Delta$ simply by $(k,\ell)$. %\aonote{moved this here following a suggestion, and added a definition of $s^h$, etc}\csnote{great} 
We denote by $s^h, t^h\colon (k,0) \to (k,\ell)$ the maps given by $(\id,\alpha^0)$ and $(\id,\alpha^\ell)$, respectively, where $\alpha^i$ is the map from \cref{defn:unital}. We similarly denote by $s^v,t^v\colon (0,\ell) \to (k,\ell)$ the maps $(\alpha^0,\id)$ and $(\alpha^k,\id)$, respectively.

\begin{defn}
A bisimplicial object $Y$ in $\targetcat$ is {\em stable} if for every $\inda,\indb\geq 1$ the squares 
\\
\begin{subequations}
\noindent\begin{minipage}{0.45\linewidth}
\begin{equation}\label{eq:stabilityspan}
 \begin{tikzcd}
(0,0 )\arrow{r}{\sourcehord} \arrow[d, "\sourceverd", swap] & (0, \indb) \arrow{d}{\sourceverd} \\
({\inda},0 )\arrow[r,"\sourcehord", swap] & ({\inda},\indb)
\end{tikzcd}
\end{equation}
    \end{minipage}
    \begin{minipage}{0.1\columnwidth}\centering
    and 
    \end{minipage}
       \reqnomode
    \begin{minipage}{0.45\linewidth}
\begin{equation}\label{eq:stabilitycospan}
 {\begin{tikzcd}
(0, 0) \arrow{r}{\targethord} \arrow[d,"\targetverd", swap] &( {0},\indb) \arrow{d}{\targetverd} \\
({\inda},0 )\arrow[r, "\targethord", swap] & ({\inda},\indb),
\end{tikzcd}}
\end{equation}
    \end{minipage}
   \end{subequations}
 
induce weak equivalences
$$Y_{0,\indb} \htimes{Y_{0,0}} Y_{\inda,0} \overset{\simeq}{\longleftarrow} Y_{\inda,\indb} \overset{\simeq}{\longrightarrow} Y_{\inda,0} \htimes{Y_{0,0}} Y_{0,\indb}.$$
\end{defn}

Under the double Segal condition, there is an easier criterion to check whether a bisimplicial object is stable. For an analogous result, see \cite[Lemma 2.3.3]{Carlier}.

\begin{lem}
\label{babystability} A double Segal space $Y$ is stable if and only if
the squares
\\
\begin{subequations}
\leqnomode
\noindent\begin{minipage}{0.45\linewidth}
\begin{equation}\label{eq:babystabilityspan}
 \begin{tikzcd}
(0,0 )\arrow{r}{\sourcehord} \arrow[d, "\sourceverd", swap] & (0, 1) \arrow{d}{\sourceverd} \\
(1,0 )\arrow[r,"\sourcehord", swap] & (1,1)
\end{tikzcd}
\end{equation}
    \end{minipage}%\hfill%
    \begin{minipage}{0.1\columnwidth}\centering
    and 
    \end{minipage}%
    \reqnomode
    \begin{minipage}{0.45\linewidth}
\begin{equation}\label{eq:babystabilitycospan}
 {\begin{tikzcd}
(0, 0) \arrow{r}{\targethord} \arrow[d,"\targetverd", swap] &( {0},1) \arrow{d}{\targetverd} \\
(1,0 )\arrow[r,"\targethord", swap] & (1,1),
\end{tikzcd}}
\end{equation}
    \end{minipage}
   \end{subequations}
   \leqnomode
\\induce weak equivalences
$$Y_{0,1} \htimes{Y_{0,0}} Y_{1,0} \overset{\simeq}{\longleftarrow} Y_{1,1} \overset{\simeq}{\longrightarrow} Y_{1,0} \htimes{Y_{0,0}} Y_{0,1}.$$
\end{lem}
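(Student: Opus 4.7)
The forward implication is immediate by specializing the stability condition to $q = r = 1$. For the converse, I will prove the general span stability $Y_{q, r} \simeq Y_{0, r} \htimes{Y_{0, 0}} Y_{q, 0}$ by induction on $q + r$; the general cospan stability follows by a symmetric argument from the baby cospan equivalence, and the two arguments are independent.

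The base case $(q, r) = (1, 1)$ is precisely the baby span assumption. For the inductive step, fix $(q, r)$ with $q, r \geq 1$ and $q + r \geq 3$, and assume the result for all strictly smaller pairs. By symmetry (handling the case $q \geq 2, r = 1$ via the Segal condition in the first variable), it suffices to treat the case $r \geq 2$. The double Segal condition in the second variable then yields
$$Y_{q, r} \simeq Y_{q, 1} \htimes{Y_{q, 0}} Y_{q, r-1},$$
where the maps to $Y_{q, 0}$ are induced by $\targethord$ on the left factor and $\sourcehord$ on the right factor. Applying the inductive hypothesis gives
$$Y_{q, 1} \simeq Y_{0, 1} \htimes{Y_{0, 0}} Y_{q, 0} \quad \text{and} \quad Y_{q, r-1} \simeq Y_{0, r-1} \htimes{Y_{0, 0}} Y_{q, 0},$$
both induced by the top-left corner maps $(\sourceverd, \sourcehord)$.

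The main technical point, and the step I expect to be the crucial one, concerns identifying the Segal fiber product maps under these inductive equivalences. Under the top-left decomposition of $Y_{q, r-1}$, the map $\sourcehord$ coincides with the projection onto the $Y_{q, 0}$ factor (both extract the leftmost column of $Y_{q, r-1}$), so the right factor is handled directly. The map $\targethord$ on the left factor, however, extracts the \emph{right} column of $Y_{q, 1}$, which under the top-left equivalence corresponds to some nontrivial map $\phi\colon Y_{0, 1} \htimes{Y_{0, 0}} Y_{q, 0} \to Y_{q, 0}$. The key observation unlocking the argument is that the source vertex of $\phi(h, v) \in Y_{q, 0}$ agrees with the target of $h \in Y_{0, 1}$, since both describe the top-right vertex of $Y_{q, 1}$. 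Consequently, the pullback compatibility $s(h') = s(\phi(h, v))$, where $h'$ is the top row of $Y_{q, r-1}$ in its top-left decomposition, simplifies to $s(h') = t(h)$, which is precisely the Segal condition for $(h, h')$ to assemble into an element of $Y_{0, 1} \htimes{Y_{0, 0}} Y_{0, r-1} \simeq Y_{0, r}$. Reorganizing the iterated pullback then yields the desired equivalence $Y_{q, r} \simeq Y_{0, r} \htimes{Y_{0, 0}} Y_{q, 0}$, completing the induction. (Throughout, we tacitly pass to a fibrant replacement of $Y$ so that homotopy pullbacks may be computed as strict pullbacks, as in \cref{remark fixing pullbacks}.)
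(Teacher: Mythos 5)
Your proof is correct and takes essentially the same route as the paper's: induction on $q+r$, splitting off a strip by a generalized Segal equivalence, applying the inductive hypothesis to both factors, and reorganizing homotopy pullbacks to conclude by two-out-of-three, the only difference being that you split off a column (Segal in the second variable) where the paper's displayed diagram splits off a row, i.e.\ a mirror image of the same argument. Your final reassembly invokes the Segal equivalence $Y_{0,1}\htimes{Y_{0,0}}Y_{0,r-1}\simeq Y_{0,r}$ for the zeroth row, exactly as the paper's bottom map invokes it for the zeroth column, so you are on the same footing as the paper on that point.
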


\begin{proof}
We prove that for a double Segal object, condition \cref{eq:babystabilityspan} is equivalent to condition \cref{eq:stabilityspan}, and the equivalence of conditions \cref{eq:babystabilitycospan} and \cref{eq:stabilitycospan} follows similarly. 
We proceed by induction on $\inda+\indb$. In the base case $\inda=\indb=1$, the claim is exactly the assumption. Now, if $\inda+\indb>2$, either $\inda>1$ or $\indb>1$.
Without loss of generality, assume $\inda>1$. 
The map we wish to show is an equivalence is the left vertical map in the following homotopy commutative
diagram (see \cref{remark fixing pullbacks}):
$$
\begin{tikzcd}[scale=0.8]
Y_{\inda,\indb} \arrow{r}{\simeq} \arrow{dd} & Y_{1,\indb} \htimes{Y_{0,\indb}} Y_{\inda-1,\indb} \arrow{r}{\simeq} & Y_{1,\indb} \htimes{Y_{0,\indb}} (Y_{0,\indb} \htimes{Y_{0,0}} Y_{\inda-1,0}) \arrow{d}{\simeq}\\
&& (Y_{1,\indb} \htimes{Y_{0,\indb}} Y_{0,\indb}) \htimes{Y_{0,0}} Y_{\inda-1,0}  \arrow{d}{\simeq}\\
Y_{0,\indb} \htimes{Y_{0,0}} Y_{\inda,0} \arrow{r}{\simeq}
& Y_{0,\indb} \htimes{Y_{0,0}} Y_{1,0} \htimes{Y_{0,0}} Y_{\inda-1,0} &Y_{1,\indb} \htimes{Y_{0,0}} Y_{\inda-1,0} \arrow{l}{\simeq}.
\end{tikzcd}
$$
The left-hand horizontal maps are given by Segal maps in the first index,
and are equivalences since $Y$ is double Segal. The right-hand horizontal maps are equivalences by the induction hypothesis. The vertical left maps use the properties of the homotopy fiber product.  The two-out-of-three property finishes the proof.
\end{proof}

\subsection{Preaugmented bisimplicial objects} 

Recall from \cref{Unital 2-Segal sets and stable augmented double categories} that in the discrete context, we singled out some collection of the objects of a double category which in turn satisfied some conditions. In this section, we focus on identifying such a collection for a more general double Segal object.  Since we do not yet impose any universality conditions, we refer to such objects as \emph{preaugmented}.
We do so by modifying the indexing category~$\Delta \times \Delta$.

\begin{defn} 
\label{categorysigma} \label{definitionpreaugmented}
Let $\Sigma$ be the category obtained from $\Delta\times \Delta$ by adding a new terminal object, denoted by~$[-1]$:
\[
\begin{tikzcd}
{[-1]}  \\
\mbox{} & 
(0,0)
\arrow{lu}{} 
\arrow[r, arrow, shift left=1ex] \arrow[r, arrow, shift right=1ex] 
\arrow[d, arrow, shift left=1ex] \arrow[d, arrow, shift right=1ex]& 
(1,0)\arrow[l, arrow]  \arrow[r, arrow] \arrow[r, arrow, shift left=1.5ex] \arrow[r, arrow, shift right=1.5ex]
\arrow[d, arrow, shift left=1ex] \arrow[d, arrow, shift right=1ex]& 
(2,0) 
\arrow[l, arrowshorter, shift left=0.75ex] \arrow[l, arrowshorter, shift right=0.75ex] 
\arrow[d, arrow, shift left=1ex] \arrow[d, arrow, shift right=1ex]&[-0.8cm]\cdots\\
\mbox{} & 
(0,1) \arrow[r, arrow, shift left=1ex] \arrow[r, arrow, shift right=1ex]
\arrow[u, arrowshorter]
\arrow[d, arrow] \arrow[d, arrow, shift left=1.5ex] \arrow[d, arrow, shift right=1.5ex]
 & 
(1,1)  \arrow[l, arrow]  \arrow[r, arrow] \arrow[r, arrow, shift left=1.5ex] \arrow[r, arrow, shift right=1.5ex] 
\arrow[u, arrowshorter]
\arrow[d, arrow] \arrow[d, arrow, shift left=1.5ex] \arrow[d, arrow, shift right=1.5ex]& 
(2,1) \arrow[l, arrowshorter, shift left=0.75ex] \arrow[l, arrowshorter, shift right=0.75ex] 
\arrow[u, arrowshorter]
\arrow[d, arrow] \arrow[d, arrow, shift left=1.5ex] \arrow[d, arrow, shift right=1.5ex]&\cdots\\
\mbox{} & 
(0,2) \arrow[r, arrow, shift left=1ex] \arrow[r, arrow, shift right=1ex]
\arrow[u, arrowshorter, shift left=0.75ex] \arrow[u, arrowshorter, shift right=0.75ex] 
& 
(1,2)  \arrow[l, arrow]  \arrow[r, arrow] \arrow[r, arrow, shift left=1.5ex] \arrow[r, arrow, shift right=1.5ex]
\arrow[u, arrowshorter, shift left=0.75ex] \arrow[u, arrowshorter, shift right=0.75ex] & 
(2,2) \arrow[l, arrowshorter, shift left=0.75ex] \arrow[l, arrowshorter, shift right=0.75ex] 
\arrow[u, arrowshorter, shift left=0.75ex] \arrow[u, arrowshorter, shift right=0.75ex] &
\cdots\\[-0.7cm]
\mbox{} & \vdots & \vdots & \vdots &\ddots.\\
\end{tikzcd}
\]

A {\em preaugmented bisimplicial object} in $\targetcat$ is a functor $Y\colon\Sigma^{\op} \to\targetcat$.  The category of preaugmented bisimplicial objects is the category $\Fun(\Sigma^{\op}, \targetcat)$, which, for simplicity of notation, we denote by $\saS$.
\end{defn}

\begin{defn} \label{underlying}
Let $Y$ be a preaugmented bisimplicial object in $\targetcat$.  Its \emph{underlying bisimplicial object} is the image of $Y$ under the functor
\[ i^* \colon \Fun(\Sigma^{\op},\targetcat) \rightarrow \Fun(\left(\Delta\times\Delta\right)^{\op}, \targetcat)\]
induced by the inclusion $i\colon \Delta\times \Delta \to \Sigma$.
\end{defn}

\begin{rmk}
\label{AugmBisimplicial}
The category $\Sigma$ can be expressed as a pushout of categories
$$
\Sigma\cong(\Delta\times\Delta)\ \bamalg{[0]}{\hphantom{s^0}d^1}\ [1].
$$
Therefore, the category of preaugmented bisimplicial objects can be expressed as a pullback of categories
$$
\Fun(\D^{\op},\targetcat)\cong\Fun((\Delta\times\Delta)^{\op},\targetcat)\ttimes{\targetcat}\Fun([1]^{\op},\targetcat),
$$
decomposing the data of a preaugmented bisimplicial object into its bisimplicial part and its preaugmentation.
\end{rmk}

Let us consider some examples when $\targetcat$ is the category of sets, the first that of representable functors.  It is important to note, however, that via Proposition \ref{setsadjunction} we can consider these examples as discrete objects for more general $\targetcat$.

\begin{ex}
\label{representables} 
Let $\targetcat$ be the category of sets.  For any $\inda,\indb\ge0$ we have the representable preaugmented bisimplicial set $\Sigma[\inda,\indb]$ given by
\[
\Sigma[\inda, \indb]_{\indc,\indd}=\Hom_{\Sigma}((\indc, \indd),(\inda, \indb))\cong\Hom_{\Delta\times\Delta}((\indc, \indd),(\inda, \indb))
\]
for every $\indc, \indd \geq 0$, and
\[
\Sigma[\inda,\indb]_{-1}=\Hom_{\Sigma}([-1],(\inda,\indb))=\varnothing.
\]
Note that the bisimplicial set $i^*\Sigma[\inda,\indb]$ (without the preaumentation) is precisely the double nerve of the double category $[\inda]\boxtimes[\indb]$ from \cref{exampleboxtimes}.  Thus, one can keep the picture \eqref{pictureqr} in mind to visualize $\Sigma[\inda,\indb]$. These ideas are discussed in more detail in \cref{augmentednerve}.

However, we must treat the functor represented by $[-1]$ separately.  Since $[-1]$ is the terminal object of $\Sigma$, necessarily $\Sigma[-1]$ is the constant functor at $\{*\}$.
\end{ex}

The next examples are motivated by \cref{definitionHn,definitionWn}; a more explicit connection will be established in the next section.

\begin{ex}
\label{nerveofHn}
Again, let $\targetcat$ be the category of sets.  For any $n\ge0$, we define preaugmented bisimplicial sets
$$\wH{n}:=\Sigma[0,n]\bamalg{\Sigma[0,0]}{\sourcehord\hphantom{s}}\Sigma[-1]\quad\text{ and }\quad\wV{n}:=\Sigma[n,0]\bamalg{\Sigma[0,0]}{\targetverd\hphantom{s}}\Sigma[-1],$$ 
where $\sourcehord\colon\Sigma[0,0]\to\Sigma[0,n]$ is induced by composition with $\sourcehord$ in $\Delta\times \Delta$,
$$\Sigma[0,0]_{\indc,\indd}\cong\Hom_{\Delta\times\Delta}((\indc, \indd),(0,0)) \xrightarrow{\sourcehord\circ -} \Hom_{\Delta\times\Delta}((\indc, \indd),(0,n)) \cong \Sigma[0,n],$$
and similary, $\targetverd\colon\Sigma[0,0]\to\Sigma[n,0]$ is induced by composition with $\targetverd$ in~$\Delta\times \Delta$. 

In particular $\wH{0}=\Sigma[-1]=\wV{0}$.
In the next section, after defining a nerve functor for augmented stable double categories, we recover $\wH{n}$ as the nerve of the double category $\sH{n}$ from \cref{definitionHn}, and analogously for $\wV{n}$. Nonetheless, one can refer to \eqref{pictureHn} for an intuitive description for $\wH{n}$. 
\end{ex}

Finally, we consider an analogue of \cref{definitionWn}.

\begin{ex}
\label{nerveofWn}
For any $n\ge0$, the preaugmented bisimplicial set $\wW{n}$ is defined by
\[
\wW{n}_{\indc, \indd}=\left\{(i_0,\dots,i_\indc,j_0,\dots,j_\indd) \mid 0\leq i_0\leq\dots\leq i_{\indc}\leq j_0\leq\dots\leq j_{\indd}\leq n\right\}
\]
for any $\indc, \indd \geq 0$ and by
\[
\wW{n}_{-1}=\left\{(i,i) \mid 0\leq i\leq n\right\}.
\]
Degeneracy maps are given by repeating the appropriate index, face maps by removing according to that index, and the augmentation map by the canonical inclusion.  It is useful to note that $\wW{0}=\Sigma[-1]$.

Also in this case, after defining the nerve functor, we can recover $\wW{n}$ as nerve of the $\sW{n}$.  In the meantime, the reader might want to keep the diagram \eqref{pictureWn} in mind for intuition for $\wW{n}$. 
\end{ex}

The three examples, $\wH{n}$, $\wV{n}$, and $\wW{n}$, taken as discrete objects in the setting of more general $\targetcat$, play an important role in what follows.

\subsection{The generalized path construction and the \texorpdfstring{$\sdot$-}{Waldhausen }construction}
\label{sectionpathconstruction}
While we still need to introduce the notion of augmentation in this context, the structure of a preaugmented simplicial object is sufficient to define the functors which we use for our comparison with unital 2-Segal objects.  In this section we introduce an adjoint pair of functors between the category of simplicial objects in $\targetcat$ and the category of preaugmented bisimplicial objects in $\targetcat$. These functors generalize in a precise sense the path construction and the $\sdot$-construction from \cref{oldresult}, as we show explicitly in~\cref{prop:oldvsnewpathconstruction}.

\begin{defn}
The ordinal sum $\Delta\times\Delta\to\Delta$ extends to a functor $p \colon \Sigma \to \Delta$ along the canonical inclusion $i\colon \Delta\times \Delta \to \Sigma$ satisfying
$$p\left((\inda,\indb)\to[-1]\right):=\left([\inda+1+\indb]\to[0]\right).$$
In particular, on objects $p$ is given by
$$p(\inda, \indb):=[\inda+1+\indb]\quad\text{ and }\quad p(-1):=[0].$$
\end{defn}

\begin{defn}
\label{definitionpcat}
The \emph{(generalized) path construction} is the induced functor
\[p^*\colon \sS=\Fun(\Delta^{\op}, \targetcat)\longrightarrow \Fun(\Sigma^{\op}, \targetcat)=\saS. \]
\end{defn}

Note that after composing with the underlying bisimplicial object functor $i^*$ this path construction is the total d\'{e}calage functor of~\cite{Illusie}.

\begin{rmk}
\label{rmkpstar(Delta)=W}
Recall from \cref{prop:P(Delta)=W} that for every $n\geq 0$ there is an isomorphism of augmented stable double categories
$$\pcat\Delta[n]\cong\sW{n}.$$
The analogous result in our new context is that there is an isomorphism of preaugmented bisimplicial sets 
$$p^*\Delta[n]\cong\wW{n},$$
where $\wW{n}$ is as defined in Example \ref{nerveofWn}.  As usual, via Proposition \ref{setsadjunction} we can regard it as an isomorphism of discrete preaugmented bisimplicial objects for an arbitrary $\targetcat$.
This isomorphism can be verified directly from the definitions.
\end{rmk} 

The functor $p^*$ admits a right adjoint $p_*\colon\Fun(\Sigma^{\op}, \targetcat)\to\Fun(\Delta^{\op}, \targetcat)$ given by right Kan extension.  Since, as we show later in \cref{prop:oldvsnewpathconstruction}, $p^*$ generalizes the functor $\pcat$, we take the adjunction $(\pcat, \sdot)$ as motivation for the following definition. 

\begin{defn}
\label{definitionsdot}
The \emph{(generalized) $\sdot$-construction} is the right adjoint
\[ p_* \colon \saS \rightarrow \sS \]
to the generalized path construction.
\end{defn}

Using \cref{upradeadjunctions}, this adjoint pair has additional structure.

\begin{prop}
The precomposition functor $p^*$ and its right adjoint $p_*$ form a $\targetcat$-enriched adjunction
\[ p^* \colon \sS \leftrightarrows \saS \colon p_*. \]
\end{prop}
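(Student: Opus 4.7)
The plan is essentially to invoke the general result \cref{upradeadjunctions} in the special case $f = p^{\op} \colon \Sigma^{\op} \to \Delta^{\op}$ induced by the functor $p \colon \Sigma \to \Delta$ described just before \cref{definitionpcat}. Since $\sS = \Fun(\Delta^{\op}, \targetcat)$ and $\saS = \Fun(\Sigma^{\op}, \targetcat)$ by definition, precomposition with $p^{\op}$ is exactly the functor $p^*$ of \cref{definitionpcat}, and its right adjoint via right Kan extension is exactly the $p_*$ of \cref{definitionsdot}. The cited proposition then asserts that this adjunction is automatically $\targetcat$-enriched.

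Concretely, I would first remark that $p$ is a functor between small categories (both $\Sigma$ and $\Delta$ are small), so the hypotheses of \cref{upradeadjunctions} are satisfied. Second, I would observe that the mapping objects on both $\sS$ and $\saS$ are the ends described in \cref{mappingobjects}, and that $p^*$ is compatible with these ends because precomposition commutes with the end formula. Third, I would point out that the unit and counit of the adjunction $(p^*, p_*)$ correspond under the enriched hom-tensor adjunction to $\targetcat$-natural transformations, so the adjunction isomorphism
\[
\Map_{\saS}(p^*X, Y) \cong \Map_{\sS}(X, p_*Y)
\]
lifts from a bijection on underlying sets to an isomorphism in $\targetcat$.

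There is essentially no obstacle here beyond identifying the hypotheses of \cref{upradeadjunctions} with the data at hand; the main content is really the observation that, via the identification $\sS = \Fun(\Delta^{\op}, \targetcat)$ and $\saS = \Fun(\Sigma^{\op}, \targetcat)$, the functors $p^*$ and $p_*$ of \cref{definitionpcat,definitionsdot} are instances of the general precomposition/right-Kan-extension pair considered in \cref{upradeadjunctions}. Given how routine this is, I would keep the proof to a single sentence invoking that proposition.
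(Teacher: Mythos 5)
Your proposal matches the paper's own argument: the paper simply applies \cref{upradeadjunctions} to the functor $p\colon\Sigma\to\Delta$, identifying $\sS$ and $\saS$ as the functor categories $\Fun(\Delta^{\op},\targetcat)$ and $\Fun(\Sigma^{\op},\targetcat)$, which is exactly what you do. Your additional remarks about ends and the unit/counit are harmless elaboration, but the one-sentence invocation you suggest is precisely how the paper handles it.
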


\begin{rmk}
\label{descriptionrightadjointSdot}
The right adjoint
\[p_*\colon \saS \to \sS \]
can be described explicitly. Indeed, by \cref{upradeadjunctions} and \cref{rmkpstar(Delta)=W}, for every preaugmented bisimplicial object $Y$ there are canonical isomorphisms
\[ p_*(Y)_n\cong\Map_{\sS}(\Delta[n],p_*Y)
 \cong \Map_{\saS}(p^*\Delta[n],Y)
\cong\Map_{\saS}(\wW{n},Y). \]
\end{rmk}

\subsection{Augmented stable double Segal objects} 
\label{stableaugmenteddoublecategories}

In this section, we complete our definition of augmented stable double Segal objects.  

Since we have defined stable double Segal objects in the context of bisimplicial objects, we need to use the the underlying bisimplicial object functor $i^*$ from Definition \ref{underlying} to make sense of the double Segal condition and stability in the context of preaugmented bisimplicial objects. 

\begin{defn} 
A preaugmented bisimplicial object $Y$ is \emph{double Segal} if its underlying bisimplicial object $i^*Y$ is double Segal.  Similarly, it is \emph{stable} if $i^*Y$ is stable.
\end{defn}

Mimicking the approach to 2-Segal objects from \cref{Segalmapintermsofmappingspaces}, we can express the double Segal property in terms of derived mapping spaces, using the representable preaugmented bisimplicial sets $\Sigma[\inda,\indb]$ from \cref{representables}.  This perspective will be useful when we develop our desired model structure.  The following result is straightforward to check from the definitions.

\begin{prop} 
\label{2segalmappingspace} \label{doubleSegal}
The preaugmented bisimplicial object $Y$ in $\targetcat$ is double Segal if and only if the maps
\begin{align*}
\Map^h( \Sigma[\inda, \indb],Y) &\longrightarrow \Map^h(\Sigma[\inda, 1]\aamalg{\Sigma[\inda,0]} \ldots \aamalg{\Sigma[\inda,0]}\Sigma[\inda,1],Y) \quad\mbox{and}\\
\Map^h( \Sigma[\inda, \indb],Y) &\longrightarrow\Map^h(\Sigma[1,\indb]\aamalg{\Sigma[0,\indb]} \ldots \aamalg{\Sigma[0,\indb]}\Sigma[1,\indb] ,Y)
\end{align*}
induced by
\begin{align*}
\segalhor{\inda, \indb}:\Sigma[\inda, 1]\aamalg{\Sigma[\inda,0]} \ldots \aamalg{\Sigma[\inda,0]}\Sigma[\inda,1] & \longrightarrow \Sigma[\inda, \indb] \quad\mbox{and}\\
\segalver{\inda,\indb}:\Sigma[1,\indb]\aamalg{\Sigma[0,\indb]} \ldots \aamalg{\Sigma[0,\indb]}\Sigma[1,\indb] & \longrightarrow \Sigma[\inda, \indb],
\end{align*}
the Segal maps in the second and first  variable, respectively, are weak equivalences in~$\targetcat$.
\end{prop}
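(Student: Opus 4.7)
The plan is to identify both sides of the proposed weak equivalences with the maps appearing in the definition of the double Segal condition, so that the statement becomes a formal consequence of the enriched Yoneda lemma together with the fact that $\Map^h(-,Y)$ converts homotopy pushouts into homotopy pullbacks.

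First I would replace $Y$ by a functorial injective fibrant replacement $Y^f$; since all objects are cofibrant by \cref{injectivemodelstructure}, the derived mapping space $\Map^h(\Sigma[\inda,\indb],Y)$ is modelled by the ordinary mapping space into $Y^f$, which by the enriched Yoneda lemma is $Y^f_{\inda,\indb}$. This gives a natural equivalence $\Map^h(\Sigma[\inda,\indb],Y)\simeq Y_{\inda,\indb}$ that realizes $Y_{\inda,\indb}$ itself, since $i^*Y$ is double Segal if and only if $i^*Y^f$ is.

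Next I would analyse the sources of $\segalhor{\inda,\indb}$ and $\segalver{\inda,\indb}$. By \cref{discreterespectscofibrations} the face inclusions $\Sigma[\inda,0]\hookrightarrow\Sigma[\inda,1]$ (images under $d_*$ of monomorphisms of representable set-valued presheaves on $\Sigma$) are injective cofibrations, so the iterated pushout
\[ \Sigma[\inda,1]\aamalg{\Sigma[\inda,0]}\cdots\aamalg{\Sigma[\inda,0]}\Sigma[\inda,1] \]
is a homotopy pushout of cofibrant objects. Applying $\Map^h(-,Y)$ therefore yields a homotopy pullback, and \cref{lemmahomotopypullback} ensures that against the fibrant $Y^f$ this homotopy pullback is computed by the strict iterated fibre product
\[ Y^f_{\inda,1}\ttimes{Y^f_{\inda,0}}\cdots\ttimes{Y^f_{\inda,0}}Y^f_{\inda,1}\;\simeq\;Y_{\inda,1}\htimes{Y_{\inda,0}}\cdots\htimes{Y_{\inda,0}}Y_{\inda,1}. \]
A direct inspection identifies the map induced by $\segalhor{\inda,\indb}$ with the Segal map in the second index from \cref{doublesegal}; the argument for $\segalver{\inda,\indb}$ is symmetric, swapping the two variables.

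Combining the two cases for all $\inda,\indb\geq 1$ gives the stated equivalence. The main subtle point, as flagged in \cref{remark fixing pullbacks}, is the bookkeeping between strict and homotopy pullbacks in the target; this is precisely what \cref{lemmahomotopypullback} is designed to handle, so beyond taking a fibrant replacement and citing these earlier results no further work is required.
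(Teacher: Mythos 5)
Your argument is correct and is exactly the routine verification the paper intends here: it leaves this proposition as ``straightforward to check from the definitions,'' and your fibrant-replacement-plus-enriched-Yoneda argument (representables give the levels $Y^f_{\inda,\indb}$, pushouts along cofibrations of discrete representables give strict, hence homotopy, pullbacks) is the standard fill-in, parallel to \cref{Segalmapintermsofmappingspaces} and the proof of \cref{easy2segal}. The only slight imprecision is the citation of \cref{lemmahomotopypullback}, which is stated for simplicial objects indexed by $\Delta$; for presheaves on $\Sigma$ you should rerun its one-line proof (a levelwise cofibration between discrete representables mapped into the injectively fibrant $Y^f$ yields a fibration in $\targetcat$ by the $\targetcat$-enrichment of the injective model structure, and the levels of $Y^f$ are fibrant) rather than quote it verbatim.
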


We are now ready to define what it means for a preaugmented double Segal object to be augmented.  To do so, we generalize notation from \cref{nerveofHn} as follows. For any $0<k<n$, denote by $\echainhord\colon\Sigma[0,k]\to\Sigma[0,n]$ the identification of $\Sigma[0,k]$ as the $k$ arrows at the \emph{end} of the horizontal chain of $n$ arrows present in $\Sigma[0,n]$, as highlighted in the following picture:
\begin{center} 
\begin{tikzpicture}[scale=1]
\begin{scope}
\draw[fill] (-1,0) circle (1pt) node(a0-2){};
\draw[fill] (0,0) circle (1pt) node(a0-1){};
\draw[fill] (1,0) circle (1pt) node(a00){};
\draw[fill, red] (2,0) circle (1pt) node(a01){};
\draw[fill, red] (3, 0) circle (1pt) node(a02){};;
\draw[fill, red] (4, 0) circle (1pt) node (a03){};
\draw[fill, red] (5, 0) circle (1pt) node (a04){};
\draw (0.5, 0) node (dots0){$\ldots$};
\draw[red] (3.5, 0) node (dots3){$\ldots$};

\draw[mono] (a0-2)--(a0-1);
\draw[mono] (a00)--(a01);
\draw[mono, red, very thick] (a01)--(a02);
\draw[mono, red, very thick] (a03)--(a04);

\draw[decorate, decoration= {brace, raise=5pt, amplitude=10pt, mirror}, thick] (a0-2)--(a01) node [midway,yshift=-0.8cm] {\footnotesize $n-k$ };
\draw[decorate, decoration= {brace, raise=5pt, amplitude=10pt, mirror}, thick] (a01)--(a04) node [midway,yshift=-0.8cm] {\footnotesize $k$ };
\draw (a04) node[anchor=west](period){$.$};
\end{scope}
\end{tikzpicture}
\end{center}
Similarly, we denote by $\bchainhord\colon\Sigma[0,k]\to\Sigma[0,n]$ the identification of $\Sigma[0,k]$ as the $k$ arrows at the \emph{beginning} of the horizontal chain of $n$ arrows present in $\Sigma[0,n]$. We define $\echainverd,\bchainverd\colon \Sigma[k,0]\to\Sigma[n,0]$ similarly. We refer the reader to the discussion in \cref{representables} in regard to the slight abuse of notation in treating these arrows as if they were in a category.

\begin{defn}
A preaugmented bisimplicial object $Y$ is {\em augmented} if for any $\inda, \indb\geq 1$, the composites %$ \tensor[^a_b^c_d]{M}{^a_b^c_d} $
\begin{align} 
 Y_{\inda,0} \mathbin{^{\targetver}\mkern-9mu{\htimes{Y_{0,0}}}} Y_{-1}\xrightarrow{\pr_1} Y_{\inda,0} \xrightarrow{\bchainver} Y_{\inda-1,0} \label{eqn: augmented 1}\\
Y_{0,\indb} {^{\sourcehor}\htimes{Y_{0,0}}}  Y_{-1}\xrightarrow{\pr_1} Y_{0,\indb} \xrightarrow{\echainhor} Y_{0,\indb-1} \label{eqn: augmented 2}
\end{align}
are weak equivalences, where the maps $\bchainver$ and $\echainhor$ are induced by the maps of representables $\bchainverd$ and $\echainhord$.
\end{defn}

We warn the reader that our notion of augmented bisimplicial object is different\footnote{There, an augmented bisimplicial $\infty$-groupoid is an object in $\Fun((\Delta_+\times \Delta_+)^{\op}\setminus{(-1,-1)}, \mathcal{S}pace)$. Thus, they are similar to our preaugmented bisimplicial objects in $\infty$-groupoids, but rather than adding a single extra object $X_{-1}$, they add many extra objects $X_{\inda, -1}$ and $X_{-1, \indb}$, but without any additional conditions.} than the one used in \cite[\S 2.2]{Carlier}.

Under the double Segal condition, to check whether a preaugmented bisimplicial object is augmented it suffices to consider the above maps only for $\inda=1$ and $\indb=1$.

\begin{lem}
\label{babyaugmentation}
A preaugmented double Segal object $Y$ is augmented if and only if the composites
\begin{align}
 Y_{1,0} {^{\targetver}\htimes{Y_{0,0}}} Y_{-1}\xrightarrow{\pr_1} Y_{1,0} \xrightarrow{\bchainver} Y_{0,0} \label{eqn: augmented 1A}\\
Y_{0,1}{^{\sourcehor}\htimes{Y_{0,0}}}  Y_{-1}\xrightarrow{\pr_1} Y_{0,1} \xrightarrow{\echainhor} Y_{0,0} \label{eqn: augmented 2A}
\end{align}
are weak equivalences in $\targetcat$.
\end{lem}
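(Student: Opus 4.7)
The forward direction is immediate: specializing \eqref{eqn: augmented 1} to $\inda = 1$ and \eqref{eqn: augmented 2} to $\indb = 1$ gives exactly \eqref{eqn: augmented 1A} and \eqref{eqn: augmented 2A}, so any augmented $Y$ satisfies the lemma's hypothesis. For the converse direction, I plan to prove the two statements separately by induction, in parallel with the proof strategy of \cref{babystability}. I will establish that \eqref{eqn: augmented 1} is a weak equivalence for every $\inda \geq 1$ by induction on $\inda$; the argument for \eqref{eqn: augmented 2} will be symmetric, inducting on $\indb$ and using the horizontal Segal condition instead of the vertical one. The base case $\inda = 1$ is exactly \eqref{eqn: augmented 1A}.

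For the inductive step with $\inda > 1$, the plan is to replace $Y$ by an injectively fibrant model as in \cref{remark fixing pullbacks} so that homotopy pullbacks can be computed as strict pullbacks (this is harmless by \cref{lemmahomotopypullback} since the relevant structure maps $Y_{\inda,0} \to Y_{0,0}$ and $Y_{-1} \to Y_{0,0}$ are induced by maps in $\Sigma$). Then I will use the double Segal condition in the first variable to obtain
\[Y_{\inda, 0} \simeq Y_{1, 0} \ttimes{Y_{0, 0}} Y_{\inda-1, 0},\]
and rewrite the domain of \eqref{eqn: augmented 1} as
\[Y_{\inda, 0} \htimes{Y_{0,0}}^{\targetver} Y_{-1} \simeq Y_{1, 0} \ttimes{Y_{0, 0}} \left( Y_{\inda-1, 0} \htimes{Y_{0,0}}^{\targetver} Y_{-1}\right),\]
using associativity of the pullback together with the observation that the augmentation map $\targetver$ out of $Y_{\inda, 0}$ factors through the second factor $Y_{\inda-1, 0}$ in the Segal decomposition (since $t^v \colon (0,0) \to (\inda,0)$ picks out the last vertex, which lies entirely in the second summand of the spine). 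Applying the inductive hypothesis gives $Y_{\inda-1, 0} \htimes{Y_{0,0}} Y_{-1} \simeq Y_{\inda-2, 0}$, and a further application of the double Segal condition collapses $Y_{1, 0} \ttimes{Y_{0, 0}} Y_{\inda-2, 0}$ to $Y_{\inda-1, 0}$, as required.

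The main obstacle will be a bookkeeping one: making sure that the zigzag of equivalences assembled above actually commutes with the structure map $\bchainver \circ \pr_1$ of \eqref{eqn: augmented 1}, rather than producing some other map $Y_{\inda, 0} \htimes Y_{-1} \to Y_{\inda-1, 0}$ by accident. Concretely, I will package the argument as a homotopy commutative square whose other three arrows are known to be weak equivalences, then invoke the two-out-of-three property, mirroring the diagrammatic step at the end of the proof of \cref{babystability}. The key combinatorial check is that the composite map in the bottom row — first decomposing $Y_{\inda,0}$ via the spine, then collapsing $Y_{\inda-1,0} \htimes Y_{-1}$ to $Y_{\inda-2,0}$, then recomposing via the spine — agrees with the map induced by the inclusion of the first $\inda - 1$ segments $\bchainverd \colon \Sigma[\inda-1, 0] \to \Sigma[\inda, 0]$; this is a direct identification on representables in $\Sigma$ and then extends by naturality.
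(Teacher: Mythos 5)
Your argument is correct and matches the paper's proof in all essentials: both directions reduce \cref{eqn: augmented 1} to the base case \cref{eqn: augmented 1A} using vertical Segal decompositions, associativity of homotopy fiber products, a homotopy-commutative square, and two-out-of-three, with strict pullbacks justified by fibrant replacement as in \cref{remark fixing pullbacks}. The only difference is organizational: you induct on $\inda$, peeling off one arrow at a time via the two-piece decomposition $Y_{\inda,0}\simeq Y_{1,0}\htimes{Y_{0,0}}Y_{\inda-1,0}$, whereas the paper argues in a single step by decomposing $Y_{\inda,0}$ into the full spine of $\inda$ copies of $Y_{1,0}$, grouping $Y_{-1}$ with the factor carrying the last vertex, and applying \cref{eqn: augmented 1A} to that factor alone.
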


\begin{proof}
We show that having a weak equivalence in \cref{eqn: augmented 1A} implies having one in \cref{eqn: augmented 1}; a similar argument shows that having a weak equivalence in \cref{eqn: augmented 2A} implies having a weak equivalence in \cref{eqn: augmented 2}.

The composite map in \cref{eqn: augmented 1}
$$Y_{\inda,0} {^{\targetver}\htimes{Y_{0,0}}}  Y_{-1}\xrightarrow{\pr_1} Y_{\inda,0} \xrightarrow{\bchainver} Y_{\inda-1,0}$$
is the left vertical map in the following homotopy commutative diagram (cf.\ \cref{remark fixing pullbacks}):
$$\begin{tikzcd}
Y_{-1}  {\htimes{Y_{0,0}}}{}^{\targetver}  Y_{\inda,0}  \arrow{r}{\simeq} \arrow{dd} &
Y_{-1}  {\htimes{Y_{0,0}}}{}^{\targetver}
\overbrace{(Y_{1,0} {^{\sourcever}\htimes{Y_{0,0}}} \cdots {\htimes{Y_{0,0}}}{}^{\targetver} Y_{1,0})}^{\inda} \arrow{d}{\simeq}\\
&  (Y_{-1}  {\htimes{Y_{0,0}}}{}^{\targetver}
Y_{1,0} ) {^{\sourcever}\htimes{Y_{0,0}}} (Y_{1,0}  {\underset{Y_{0,0}}{^{\sourcever}\times^h}}\cdots{\htimes{Y_{0,0}}}{}^{\targetver} Y_{1,0}) \arrow{d}{\simeq}\\
Y_{\inda-1,0} \arrow{r}{\simeq}&
Y_{0,0} \htimes{Y_{0,0}}
\underbrace{(Y_{1,0}  {^{\sourcever}\htimes{Y_{0,0}}}\cdots {\htimes{Y_{0,0}}}{}^{\targetver}Y_{1,0})}_{\inda-1}.
\end{tikzcd} $$
The horizontal maps are given by Segal maps in the first index, and are equivalences since $Y$ is double Segal. On the right, the top vertical map is given by associativity of the homotopy fiber product and the bottom one is the composition in \cref{eqn: augmented 1A}.  The two-out-of-three property finishes the proof. 
\end{proof}

As before, we can formulate the augmentation property in terms of derived mapping spaces, the proof of which we leave to the reader.

\begin{prop}
\label{augmentedmappingspace}
\label{augmentation}
The preaugmented double Segal object $Y$ is augmented if and only if the maps
\begin{align*}
\Map^h( \Sigma[\inda,0]\aamalg{\Sigma[0,0]} \Sigma[-1],Y) &\longrightarrow \Map^h(\Sigma[\inda-1,0] ,Y) \quad\mbox{and}\\
\Map^h(\Sigma[0,\indb]\aamalg{\Sigma[0,0]} \Sigma[-1],Y) &\longrightarrow\Map^h(\Sigma[0,\indb-1]  ,Y)
\end{align*}
induced by the inclusions
\begin{align*}
\augver{\inda} \colon \Sigma[\inda-1,0] & \xrightarrow{\bchainverd} \Sigma[\inda,0]\longrightarrow \Sigma[\inda,0]\aamalg{\Sigma[0,0]} \Sigma[-1]=\wV{\inda} \quad\mbox{and}\\
\aughor{\indb} \colon \Sigma[0,\indb-1] & \xrightarrow{\echainhord} \Sigma[0,\indb] \longrightarrow \Sigma[0,\indb]\aamalg{\Sigma[0,0]} \Sigma[-1]=\wH{\indb},
\end{align*}
respectively, are weak equivalences for all $\inda, \indb\geq 1$.
\end{prop}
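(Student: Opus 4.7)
The plan is to show the equivalence by unpacking both sides in terms of the natural data of $Y$ via the enriched Yoneda lemma, and then matching the two sets of maps up to chasing definitions. There is no substantive homotopical content beyond verifying that the pushouts appearing in the proposition are homotopy pushouts, which is automatic in the injective model structure.

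First I would record the following identification: because the injective model structure on $\saS$ has every object cofibrant (\cref{injectivemodelstructure}), the enriched Yoneda lemma gives natural equivalences $\Map^h(\Sigma[\inda,\indb], Y) \simeq Y_{\inda,\indb}$ and $\Map^h(\Sigma[-1], Y) \simeq Y_{-1}$, where we replace $Y$ by a fibrant model if needed (this does not affect the statement, since the maps \eqref{eqn: augmented 1}, \eqref{eqn: augmented 2} are invariant under levelwise weak equivalences). Next, since $\Sigma[0,0] \hookrightarrow \Sigma[\inda, 0]$ is a monomorphism between discrete preaugmented bisimplicial sets, \cref{discreterespectscofibrations} tells us it is a cofibration in $\saS$, so the pushout $\wV{\inda} = \Sigma[\inda,0]\aamalg{\Sigma[0,0]}\Sigma[-1]$ is a homotopy pushout. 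Applying $\Map^h(-,Y)$ therefore yields an equivalence
$$\Map^h(\wV{\inda}, Y) \;\simeq\; Y_{\inda,0} \,{^{\targetver}\htimes{Y_{0,0}}}\, Y_{-1},$$
and symmetrically $\Map^h(\wH{\indb}, Y) \simeq Y_{0,\indb}\, {^{\sourcehor}\htimes{Y_{0,0}}}\, Y_{-1}$.

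It remains to identify the maps. By construction, $\augver{\inda}\colon \Sigma[\inda-1, 0] \to \wV{\inda}$ factors as $\Sigma[\inda-1, 0] \xrightarrow{\bchainverd} \Sigma[\inda, 0] \hookrightarrow \wV{\inda}$. Under the equivalences above, the induced map $(\augver{\inda})^* \colon \Map^h(\wV{\inda}, Y) \to \Map^h(\Sigma[\inda-1,0], Y)$ is identified with the first projection $Y_{\inda,0}\,{^{\targetver}\htimes{Y_{0,0}}}\,Y_{-1} \xrightarrow{\pr_1} Y_{\inda,0}$ followed by $\bchainver\colon Y_{\inda,0} \to Y_{\inda-1, 0}$; this is precisely the composite \eqref{eqn: augmented 1}. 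The same argument applied to $\aughor{\indb}$ recovers \eqref{eqn: augmented 2}. The two equivalence statements are therefore the same condition, proving the proposition.

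The main obstacle is purely bookkeeping: verifying that under the enriched Yoneda identification the pushout inclusion $\Sigma[\inda, 0] \hookrightarrow \wV{\inda}$ corresponds to the projection $\pr_1$ and that $\bchainverd$ corresponds to $\bchainver$. Both follow from unraveling the definition of $\wV{\inda}$ in \cref{nerveofHn} and tracking the universal cocone, and analogously for the horizontal case. No difficulty specific to $\targetcat$ arises since discrete objects and their homotopy colimits behave as in $\set$ by \cref{discreterespectscofibrations}.
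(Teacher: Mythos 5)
Your proposal is correct and is exactly the argument the paper intends (the paper explicitly leaves this proof to the reader as a routine check): identify the representables' mapping objects via the enriched Yoneda lemma, use that the pushout defining $\wV{\inda}$ (resp.\ $\wH{\indb}$) is along a cofibration and hence computes a homotopy pushout sent by $\Map^h(-,Y)$ to the homotopy pullback $Y_{\inda,0}\htimes{Y_{0,0}}Y_{-1}$, and match the restriction along $\augver{\inda}$ (resp.\ $\aughor{\indb}$) with the composite in the definition of augmentation. No gaps.
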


We give an analogous characterization for the stability condition.

\begin{prop}
\label{stablemappingspace}
\label{stability}
A preaugmented double Segal object $Y$ is stable if and only if the maps
\begin{align*}
\Map^h( \Sigma[1,1],Y) & \longrightarrow\Map^h(\Sigma[1,0]\aamalg{\Sigma[0,0]} \Sigma[0,1] ,Y) \quad\mbox{and}\\
\Map^h(\Sigma[1,1],Y) & \longrightarrow\Map^h(\Sigma[0,1]\aamalg{\Sigma[0,0]} \Sigma[1,0],Y)
\end{align*}
induced by the inclusions
\begin{align*}
\stablecospan{1,1}\colon\Sigma[1,0]\aamalg{\Sigma[0,0]} \Sigma[0,1] & \longrightarrow \Sigma[1,1] \quad\mbox{and}\\
\stablespan{1,1}\colon\Sigma[0,1]\aamalg{\Sigma[0,0]} \Sigma[1,0] & \longrightarrow \Sigma[1,1],
\end{align*}
respectively, are weak equivalences.
\end{prop}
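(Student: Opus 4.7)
The plan is to reduce this statement to \cref{babystability} by translating the conditions on derived mapping spaces into conditions on the spaces $Y_{\inda,\indb}$ featuring in that lemma, and then to observe that the induced maps are precisely the structural ones comparing $Y_{1,1}$ with the fiber products $Y_{1,0} \htimes{Y_{0,0}} Y_{0,1}$ (along source, respectively target, maps).

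First I would use that representables compute the corresponding levels: by the Yoneda lemma and the fact that $Y$ can be taken injectively fibrant without loss of generality, we have $\Map^h(\Sigma[\inda,\indb],Y) \simeq Y_{\inda,\indb}$ for all $\inda,\indb \geq 0$ and $\Map^h(\Sigma[-1],Y) \simeq Y_{-1}$. Next I would verify that the pushouts appearing in the statement are in fact homotopy pushouts. By \cref{discreterespectscofibrations}, the maps $\Sigma[0,0] \to \Sigma[1,0]$ and $\Sigma[0,0] \to \Sigma[0,1]$, being monomorphisms of discrete preaugmented bisimplicial sets, are levelwise cofibrations, and since all objects are cofibrant in the injective model structure (\cref{injectivemodelstructure}), the strict pushouts
\[\Sigma[1,0]\aamalg{\Sigma[0,0]}\Sigma[0,1] \quad\text{and}\quad \Sigma[0,1]\aamalg{\Sigma[0,0]}\Sigma[1,0]\]
model the corresponding homotopy pushouts.

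Consequently, $\Map^h(-,Y)$ carries these homotopy pushouts to homotopy pullbacks:
\[\Map^h(\Sigma[1,0]\aamalg{\Sigma[0,0]}\Sigma[0,1],Y) \simeq Y_{1,0}\htimes{Y_{0,0}} Y_{0,1},\]
and similarly for the other pushout. Unpacking the definition of $\stablecospan{1,1}$ (respectively $\stablespan{1,1}$) shows that under these identifications, the restriction map coming from $\stablecospan{1,1}$ is precisely the map $Y_{1,1} \to Y_{1,0} \htimes{Y_{0,0}} Y_{0,1}$ induced by the cospan \cref{eq:babystabilitycospan}, while the restriction map coming from $\stablespan{1,1}$ is the map $Y_{1,1} \to Y_{0,1} \htimes{Y_{0,0}} Y_{1,0}$ induced by the span \cref{eq:babystabilityspan}. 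Both identifications follow by inspecting what $\stablecospan{1,1}$ and $\stablespan{1,1}$ do on representables and using \cref{lemmahomotopypullback} to replace strict pullbacks by homotopy pullbacks.

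Finally, applying \cref{babystability} gives the equivalence: the two displayed restriction maps are weak equivalences if and only if $Y$ is stable. The main step that requires care is the unpacking in the third paragraph, verifying that the maps $\stablecospan{1,1}$ and $\stablespan{1,1}$ of representables agree on the nose with the span and cospan determined by $\sourcehord,\sourceverd,\targethord,\targetverd$; otherwise the proof is a direct translation between pushouts and pullbacks via derived mapping spaces, entirely parallel to \cref{Segalmapintermsofmappingspaces} and \cref{augmentation}.
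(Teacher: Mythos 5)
Your argument is correct and is exactly the verification the paper leaves implicit: the paper states this proposition without proof (as with \cref{2segalmappingspace} and \cref{augmentation}), and the intended justification is precisely your combination of the enriched Yoneda identification $\Map^h(\Sigma[\inda,\indb],Y)\simeq Y_{\inda,\indb}$, the observation that the pushouts along the cofibrations $\Sigma[0,0]\to\Sigma[1,0],\Sigma[0,1]$ are homotopy pushouts carried by $\Map^h(-,Y)$ to homotopy pullbacks, and the reduction to \cref{babystability}. The only cosmetic point is that \cref{lemmahomotopypullback} is stated for simplicial objects, so you should invoke the analogous fact for $\saS$ (same one-line proof: $Y$ injectively fibrant plus \cref{discreterespectscofibrations}), but this does not affect the substance.
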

In the former case, we say that we include a cospan into a square and in the latter case we say that we include a span into a square. This is justified by graphical explanations in \cref{pictureqr}.

\begin{rmk}
Like augmented stable double categories, augmented stable double Segal objects enjoy nice symmetry properties.  Specifically, if $Y$ is an augmented stable double Segal object, then $Y_{1,0}$ and $Y_{0,1}$ are equivalent objects of $\targetcat$.  More precisely, there is a zig-zag of weak equivalences connecting $Y_{1,0}$ and $Y_{0,1}$:
\[
\begin{tikzcd}[column sep=0.5cm]
Y_{1,0} \simeq Y_{1,0} \htimes{Y_{0,0}} Y_{0,0}
& \arrow[l, "\simeq", swap] Y_{1,0} \htimes{Y_{0,0}} (Y_{0,1} \htimes{Y_{0,0}} Y_{-1})
&\arrow[l, "\simeq", swap]  (Y_{1,0} \htimes{Y_{0,0}} Y_{0,1}) \htimes{Y_{0,0}} Y_{-1}\\[-0.3cm]
&&Y_{1,1} \htimes{Y_{0,0}} Y_{-1}\arrow[u, "\simeq" swap]\arrow[d, "\simeq"]\\[-0.3cm]
Y_{0,1} \simeq Y_{0,1} \htimes{Y_{0,0}} Y_{0,0}
& Y_{0,1} \htimes{Y_{0,0}} (Y_{1,0} \htimes{Y_{0,0}} Y_{-1})\arrow[l, "\simeq", swap]
&(Y_{0,1} \htimes{Y_{0,0}} Y_{1,0}) \htimes{Y_{0,0}} Y_{-1}. \arrow[l, "\simeq", swap]
\end{tikzcd}
\]
The horizontal maps are equivalences by the augmentation conditions \cref{eqn: augmented 1A} and \cref{eqn: augmented 2A} and by associativity of homotopy fiber products. The vertical maps are equivalences by stability.

With a similar argument, one can prove that for every $\inda\geq 0$ and $0\leq i\leq \inda$, the spaces $Y_{\inda,0}$ and $Y_{\inda-i,i}$ are weakly equivalent.
\end{rmk}

\section{Relationship with other \texorpdfstring{$\sdot$-}{Waldhausen }constructions}\label{various examples}

In this section, we exhibit two sources of examples for preaugmented bisimplicial object, quite different in nature. First, we define an augmented nerve for augmented stable double categories, enabling us to compare to the discrete version of the $\sdot$-construction developed in \cite{boors}. Second, we summarize a comparison to previous $\sdot$-constructions; full details can be found in \cite{exact}.

\subsection{Comparison to augmented stable double categories}
\label{augmentednerve}

The examples of discrete augmented stable double Segal sets from the previous section all arise from augmented stable double categories via the following nerve construction, which is induced by the cosimplicial augmented stable double category $\sW{\bullet}$.

\begin{defn}
 \label{definitionaugmentednerve}
The \emph{augmented double nerve} of an augmented stable double category $\cD$ is the preaugmented bisimplicial set 
$N^a\cD\colon \Sigma^{\op}\to \set$ obtained by applying the functor $\Hom_{\asdc}(-, \cD)$ to the composite  
\[\Sigma \xrightarrow{p} \Delta \xrightarrow{\sW{\bullet}} \asdc. \]
In other words, for $\inda, \indb \geq 0$,
\begin{alignat*}{2}
N^a\cD_{\inda,\indb} &= \Hom_{\asdc}(\sW{p(\inda,\indb)}, \cD)&&=\Hom_{\asdc}(\sW{\inda+1+\indb}, \cD),
\quad{\mbox{and}}\\
N^a\cD_{-1} &= \Hom_{\asdc}(\sW{p(-1)}, \cD)&&=\Hom_{\asdc}(\sW{0}, \cD)\cong\aug\cD.
\end{alignat*}
The augmented nerve defines a functor
$$N^a\colon\asdc\to \saset.$$
\end{defn}

Let us consider the augmented nerves of the our three families of examples of double categories. 

\begin{ex}
\label{nerveofHnnerve}
For any $n\ge0$, the preaugmented bisimplicial sets $\wH{n}$ and $\wV{n}$ from \cref{nerveofHn}
are the augmented nerves of the double categories $\sH{n}$ and $\sV{n}$ from \cref{definitionHn}, in the sense that there are isomorphisms
$$\wH{n}\cong N^a(\sH{n})\quad\text{ and }\quad\wV{n}\cong N^a(\sV{n}).$$
\end{ex}

\begin{ex}
\label{nerveofWnnerve}
\label{pstar(Delta)=W}
Similarly for any $n\ge0$, the preaugmented bisimplicial space $\wW{n}$ from \cref{nerveofWn} is the augmented nerve of the double category $\sW{n}$ from \cref{definitionWn}, in the sense that there is an isomorphism
$$\wW{n}\cong N^a(\sW{n}).$$
\end{ex}

The motivation for defining the augmented double nerve as such is that it agrees with the usual nerve of double categories from \cite{FiorePaoliPronk} when forgetting to the category $\dc$ of double categories, in the sense of the following proposition.  Here, we denote the category of bisimplicial sets by $\bset$. 

\begin{prop}
\label{augmentednervevsdoublenerve}
The following diagram commutes up to natural isomorphism:
\[
\begin{tikzcd}
 \asdc \arrow{d}[swap]{N^a} \arrow{r}{\mathrm{forget}}&\dc\arrow{d}{N}\\
\saset \arrow{r}[swap]{i^*}& \bset. %saset
\end{tikzcd}
\]
\end{prop}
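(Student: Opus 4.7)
The plan is to construct the natural isomorphism by restriction along a family of inclusions $\iota_{\inda,\indb} \colon [\inda] \boxtimes [\indb] \hookrightarrow \sW{\inda+1+\indb}$ of plain double categories. Comparing the explicit descriptions of $\sW{n}$ in \cref{definitionWn} and of $[\inda] \boxtimes [\indb]$ in \cref{exampleboxtimes}, the sub-double category of $\sW{\inda+1+\indb}$ spanned by the ``rectangular'' objects $\{(i, \inda+1+j) : 0 \leq i \leq \inda,\ 0 \leq j \leq \indb\}$ is canonically identified with $[\inda] \boxtimes [\indb]$: on objects one sends $(i,j) \mapsto (i,\inda+1+j)$, on horizontal morphisms $(i,j,k) \mapsto (i,\inda+1+j,\inda+1+k)$, on vertical morphisms $(i,j,k) \mapsto (i,k,\inda+1+j)$, and on squares $(i,j,k,\ell) \mapsto (i,k,\inda+1+j,\inda+1+\ell)$. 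Crucially, the image of $\iota_{\inda,\indb}$ is disjoint from the augmentation of $\sW{\inda+1+\indb}$, so precomposition with $\iota_{\inda,\indb}$ defines a map $\Hom_{\asdc}(\sW{\inda+1+\indb}, \cD) \to \Hom_{\dc}([\inda]\boxtimes[\indb], \cD)$, natural in $\cD$ and in $(\inda,\indb) \in (\Delta\times\Delta)^{\op}$.

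Bijectivity of each component will be established by extending an arbitrary double functor $F_0 \colon [\inda]\boxtimes[\indb] \to \cD$ uniquely to an augmented double functor $F \colon \sW{\inda+1+\indb} \to \cD$, using the augmentation and stability properties of $\cD$ as forcing conditions. Each augmentation object $(i,i)$ of $\sW{\inda+1+\indb}$ admits a morphism to or from a rectangular object --- horizontally to $(i,\inda+1)$ when $i \leq \inda$, and vertically from $(i',i)$ for any $i' \leq \inda$ when $i \geq \inda+1$ --- so the augmentation condition \cref{equationaugmentation} forces $F(i,i)$ to be the unique augmentation object of $\cD$ admitting the required morphism to or from the image under $F_0$ of the corresponding rectangular object. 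A non-rectangular, non-augmentation object $(i,j)$ with $0 \leq i < j \leq \inda$ appears as the free corner of the square $(i,j,j,\inda+1)$ in $\sW{\inda+1+\indb}$, whose three other corners $(i,\inda+1),(j,j),(j,\inda+1)$ and whose opposite cospan are already known; the stability condition \cref{equationstable} then uniquely extends this cospan to a square in $\cD$, defining $F(i,j)$ together with its bounding morphisms. A symmetric argument, now using the span rather than the cospan part of stability, handles objects in the opposite triangle with $\inda+1 \leq i < j$.

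Remaining horizontal and vertical morphisms and squares of $\sW{\inda+1+\indb}$ will be determined by iterating the same augmentation and stability steps. The consistency of the extension --- independence of the auxiliary choices involved, and preservation of sources, targets, identities, horizontal and vertical compositions, and the interchange law --- will follow from the uniqueness clauses in \cref{equationaugmentation} and \cref{equationstable}, combined with the functoriality of $F_0$ on the rectangle. The same uniqueness arguments show that any two augmented double functors $\sW{\inda+1+\indb} \to \cD$ agreeing on the image of $\iota_{\inda,\indb}$ must coincide, giving injectivity of the restriction map. Naturality of the resulting isomorphism in $\cD$ is automatic, and naturality in $(\inda,\indb) \in (\Delta\times\Delta)^{\op}$ follows from the evident naturality of the family $\iota_{\inda,\indb}$.

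The main obstacle will be the combinatorial bookkeeping of the extension: one must organize the ``missing'' objects, morphisms, and squares of $\sW{\inda+1+\indb}$ into an ordering along which each is determined by previously-handled data via a single augmentation or stability step, and then systematically verify that all compatibility conditions reduce to an instance of uniqueness of augmentation sources/targets or of square completions. The statement is conceptually clear, but the verification is tedious.
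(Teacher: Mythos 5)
Your proposal is correct and takes essentially the same route as the paper: both restrict along the embedding $[\inda]\boxtimes[\indb]\hookrightarrow\sW{\inda+1+\indb}$, $(i,j)\mapsto(i,\inda+1+j)$, and obtain bijectivity by uniquely extending a double functor defined on this rectangle to all of $\sW{\inda+1+\indb}$ using the uniqueness clauses in the augmentation and stability conditions. The paper likewise defers the tedious extension/uniqueness bookkeeping, invoking techniques as in \cite[Proposition 4.9]{boors}, which is precisely the verification you outline.
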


\begin{proof}
Since the double nerve $N$ is corepresented by the bi-cosimplicial double category $(\inda,\indb)\mapsto [\inda]\boxtimes[\indb]$ from \cref{exampleboxtimes}, it is enough to construct a bijection
\[
(i^* N^a \cD)_{\inda,\indb}=\Hom_{\asdc}(\sW{\inda+1+\indb}, \cD)\cong\Hom_{\dc}([\inda]\boxtimes [\indb], \cD)=(N\cD)_{q,r}
\]
that is natural in $\inda$ and $\indb$.
We construct a double functor 
\[[\inda]\boxtimes[\indb]\to \sW{\inda + 1 +\indb}\]
by sending the object $(i,j)$ to $(i,j+\inda +1)$, and extending (uniquely) to horizontal morphisms, vertical morphisms and squares. For instance, the image of $[1]\boxtimes[2]$ into $\sW{4}$ is highlighted in the following picture:
\begin{center}
\begin{tikzpicture}[scale=0.7, inner sep=0pt, font=\footnotesize]
\begin{scope}
     \draw (1,0) node(a00){$00$};
\draw (2,0) node(a01){{\footnotesize $01$}};
\draw (2,-1) node(a11) {{\footnotesize $11$}};
\draw[highnode]  (3, -1)   node(a12){{\footnotesize $\mathbf{12}$}};
\draw[highnode]  (3, 0)   node(a02){{\footnotesize $\mathbf{02}$}};
\draw (3,-2) node (a22){{\footnotesize $22$}};
\draw[red]  (4, 0)   node (a03){{\footnotesize $\mathbf{03}$}};
\draw[highnode]  (4, -1)   node (a13){{\footnotesize $\mathbf{13}$}};
\draw (4, -2)   node (a23){{\footnotesize $23$}};
\draw (4, -3) node (a33){{\footnotesize $33$}};
\draw[highnode] (5, 0)   node (a04){$\mathbf{04}$};
\draw[highnode]  (5, -1)  node (a14){{\footnotesize $\mathbf{14}$}};
\draw  (5, -2)  node (a24){{\footnotesize $24$}};
\draw  (5, -3)node (a34){{\footnotesize $34$}};
\draw (5, -4) node (a44){{\footnotesize $44.$}};
%horizontal
\draw[mono] (a00)--(a01);
\draw[mono] (a11)--(a12);
\draw[mono] (a01)--(a02);
\draw[mono, higharrow] (a02)--(a03);
\draw[mono, higharrow] (a12)--(a13);
\draw[mono] (a22)--(a23);
\draw[mono, higharrow] (a03)--(a04);
\draw[mono, higharrow] (a13)--(a14);
\draw[mono] (a23)--(a24);
\draw[mono] (a33)--(a34);
\draw[epi] (a01)--(a11);
\draw[epi] (a12)--(a22);
\draw[epi, higharrow] (a02)--(a12);
\draw[epi, higharrow] (a03)--(a13);
\draw[epi] (a13)--(a23);
\draw[epi] (a23)--(a33);
\draw[epi, higharrow] (a04)--(a14);
\draw[epi] (a14)--(a24);
\draw[epi] (a24)--(a34);
\draw[epi] (a34)--(a44);
\end{scope}

\begin{scope}[yshift=-0.3cm]
  \draw[twoarrowlonger] (2.2,0.1)--(2.8,-0.5);
  \draw[twoarrowlongerred] (3.2,0.1)--(3.8,-0.5);
  \draw[twoarrowlonger] (3.2,-0.9)--(3.8,-1.5);
  \draw[twoarrowlongerred] (4.2,0.1)--(4.8,-0.5);
  \draw[twoarrowlonger] (4.2,-1.9)--(4.8,-2.5);
  \draw[twoarrowlonger] (4.2,-0.9)--(4.8,-1.5);
\end{scope}
\end{tikzpicture}
\end{center}
The functor induces the right-hand map in
\begin{center}
\begin{tikzcd}[label=\scriptsize]
 \Hom_{\asdc}(\sW{\inda+1+\indb}, \cD)\arrow[rd] &[-2cm]&[-2cm]\Hom_{ \dc}([\inda]\boxtimes[\indb], \cD).\\
&\Hom_{\dc}(\sW{\inda+1+\indb}, \cD)\arrow[ru]&
\end{tikzcd}
\end{center}
Using stability, techniques similar to \cite[Proposition 4.9]{boors} can be used to show that the composite map is a bijection.
\end{proof}

The following proposition enables us to identify an augmented stable double category with the preaugmented bisimplicial set given by its augmented nerve.

\begin{prop}
\label{nervefullyfaithful}
The augmented nerve functor $N^a \colon \asdc\to \saset$
is fully faithful.
\end{prop}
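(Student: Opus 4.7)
The plan is to leverage the decomposition of preaugmented bisimplicial sets from \cref{AugmBisimplicial} together with the identification $i^*N^a\cong N$ from \cref{augmentednervevsdoublenerve}, so as to reduce the statement to the (standard) fully faithfulness of the double nerve functor $N\colon\dc\to\bset$.

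By \cref{AugmBisimplicial}, the category $\saset$ is the pullback $\bset\ttimes{\set}\Fun([1]^{\op},\set)$, so the datum of a preaugmented bisimplicial set $Y$ is equivalent to its underlying bisimplicial set $i^*Y$ together with a map of sets $Y_{-1}\to Y_{0,0}$. Correspondingly, a morphism $\phi\colon N^a\cD\to N^a\cD'$ decomposes as a pair $(i^*\phi,\phi_{-1})$ consisting of a map $i^*\phi\colon N\cD\to N\cD'$ of bisimplicial sets together with a map $\phi_{-1}\colon\aug\cD\to\aug\cD'$, compatible with the structure maps $(N^a\cD)_{-1}\to(N^a\cD)_{0,0}$ induced by the unique morphism $(0,0)\to[-1]$ of $\Sigma$. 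The crucial preliminary step will be to identify this structure map with the canonical inclusion $\aug\cD\hookrightarrow\Ob\cD$.

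To carry out this identification, I would trace through the definitions: the morphism $(0,0)\to[-1]$ in $\Sigma$ lies over $[1]\to[0]$ in $\Delta$ under $p$, and the cosimplicial object $\sW{\bullet}$ sends it to the collapse functor $\sW{1}\to\sW{0}$, which sends every object of $\sW{1}$ to the unique object of $\sW{0}$. Precomposition along this functor sends an augmentation element $a\in\aug\cD=\Hom_{\asdc}(\sW{0},\cD)$ to the augmented double functor $\sW{1}\to\cD$ constant at $a$. Under the bijection $\Hom_{\asdc}(\sW{1},\cD)\cong\Ob\cD$ from \cref{augmentednervevsdoublenerve} (concretely, restriction along the double functor $[0]\boxtimes[0]\hookrightarrow\sW{1}$, $(0,0)\mapsto(0,1)$), this constant functor is sent to the object $a\in\Ob\cD$. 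Hence the structure map is precisely the inclusion $\aug\cD\hookrightarrow\Ob\cD$.

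With this identification in hand, a morphism $\phi\colon N^a\cD\to N^a\cD'$ is equivalently the datum of a double functor $f\colon\cD\to\cD'$ (recovered from $i^*\phi$ by the fully faithfulness of the classical double nerve, a standard fact about internal categories) together with a map $\phi_{-1}\colon\aug\cD\to\aug\cD'$ whose composite with $\aug\cD'\hookrightarrow\Ob\cD'$ agrees with the restriction of $f$ to $\aug\cD$. This last compatibility is precisely the condition that $f$ preserves augmentations, so the data is exactly that of an augmented (stable) double functor $\cD\to\cD'$. The construction is manifestly inverse to the action of $N^a$ on morphisms, yielding the desired bijection $\Hom_{\asdc}(\cD,\cD')\cong\Hom_{\saset}(N^a\cD,N^a\cD')$. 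The only nontrivial step is the identification of the structure map with the inclusion; the rest of the argument is formal.
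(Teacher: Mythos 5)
Your argument is correct and takes essentially the same route as the paper: both reduce the claim to the full faithfulness of the double nerve $N\colon\dc\to\bset$ via the identification $i^*N^a\cong N$ from \cref{augmentednervevsdoubleneve}. Your explicit identification of the structure map $(N^a\cD)_{-1}\to(N^a\cD)_{0,0}$ with the inclusion $\aug\cD\hookrightarrow\Ob\cD$ is precisely the content that the paper compresses into the assertion that its square of hom-sets is a pullback ``by definition of the augmented nerve.''
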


\begin{proof}
Given augmented stable double categories $\cD$ and $\cE$, the commutative square
$$
\begin{tikzcd}
 \Hom_{\asdc}(\cD,\cE)  \arrow{r}{\text{forget}} \arrow{d}[swap]{N^a} & \Hom_{\dc}(\cD,\cE) \arrow{d}{N}\\
 \Hom_{\saset}(N^a\cD,N^a\cE)\arrow{r}[swap]{i^*} & \Hom_{ss\set}(N\cD,N\cE),
\end{tikzcd}
$$
is a pullback square by definition of the augmented nerve. Since the nerve functor on double categories $N\colon\dc\to \bset$ is known to be fully faithful \cite[Proposition 2.17]{FiorePaoli}, the right vertical arrow is an isomorphism, and therefore so is the left vertical arrow. Thus the augmented nerve $N^a\colon\asdc\to \saset$ is fully faithful, as desired.
\end{proof}

We can now show augmented stable double Segal sets indeed generalize augmented stable double categories via the augmented nerve construction.

\begin{prop} 
\label{augmentednervedoubleSegal}
If $\cD$ is an augmented stable double category, then its augmented nerve $N^a\cD$ is an augmented stable double Segal set.
\end{prop}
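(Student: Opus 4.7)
The plan is to verify the three defining properties---double Segal, stable, augmented---separately, reducing each to a strict set-theoretic condition that matches one of the hypotheses on $\cD$. Since $N^a\cD$ takes values in $\set$, every homotopy fiber product appearing in the defining conditions agrees with the strict fiber product, so I can work with ordinary pullbacks throughout.

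For the double Segal condition I will invoke \cref{augmentednervevsdoublenerve}, which identifies the underlying bisimplicial set $i^*N^a\cD$ with the classical double nerve $N\cD$. It then suffices to recall that the double nerve of any double category is a double Segal set, which holds because in each simplicial direction separately $N\cD$ reduces to the ordinary nerve of a category (horizontal or vertical), and nerves of categories satisfy the (1-)Segal condition strictly.

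For stability I will apply \cref{babystability} to reduce to checking that the span and cospan maps
\[
N^a\cD_{0,1}\ttimes{N^a\cD_{0,0}}N^a\cD_{1,0}\longleftarrow N^a\cD_{1,1}\longrightarrow N^a\cD_{1,0}\ttimes{N^a\cD_{0,0}}N^a\cD_{0,1}
\]
are bijections. Under the identifications $N^a\cD_{0,0}\cong\Ob\cD$, $N^a\cD_{0,1}\cong\Hor\cD$, $N^a\cD_{1,0}\cong\Ver\cD$, and $N^a\cD_{1,1}\cong\Sq\cD$ provided by \cref{augmentednervevsdoublenerve}, these become precisely the maps of \cref{equationstable}, which are bijections by the stability of $\cD$.

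For augmentation I will apply \cref{babyaugmentation} to reduce to the case $\inda=\indb=1$. Together with the identification $N^a\cD_{-1}\cong\aug\cD$ coming directly from the definition of $N^a$, the composites in \cref{eqn: augmented 1A} and \cref{eqn: augmented 2A} become exactly the two maps of \cref{equationaugmentation}, hence are bijections by the augmentation of $\cD$. The step that requires the most care is this bookkeeping: one must confirm that the horizontal and vertical source/target maps on $N^a\cD$---which arise from the cosimplicial structure of $\sW{\bullet}$ composed with $p\colon\Sigma\to\Delta$---really agree with the structure maps of $\cD$ under the above identifications. This is routine and can be handled exactly as in the proof of \cref{prop:P(Delta)=W}, where the analogous match-up was made for $\pcat\Delta[n]\cong\sW{n}$.
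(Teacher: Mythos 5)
Your proof is correct and follows essentially the same route as the paper's: double Segality via \cref{augmentednervevsdoublenerve} together with the classical fact that nerves of categories are Segal, and stability and augmentation via the reductions in \cref{babystability} and \cref{babyaugmentation} combined with \cref{equationstable} and \cref{equationaugmentation}. Your added bookkeeping (matching the structure maps and noting that strict pullbacks model homotopy pullbacks in $\set$) just makes explicit what the paper leaves as a straightforward check.
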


\begin{proof}
First, we show that the underlying bisimplicial set of $N^a\cD$ is a double Segal set.
By \cref{augmentednervevsdoublenerve}, the underlying bisimplicial set of $N^a\cD$ is the usual double category nerve $N\cD$, which is a double Segal set analogously to the fact that usual nerves of categories are Segal sets.

To show that $N^a\cD$ is stable, it is straightforward to check the condition in \cref{babystability} using \cref{equationstable}.
One can similarly verify that $N^a\cD$ is augmented by verifying  \cref{babyaugmentation} using \cref{equationaugmentation}.
\end{proof}

Via the augmented nerve, the equivalence from \cref{oldresult}
\[
 \pcat\colon\untwoseg \rightleftarrows \asdc \colon \sdot
\]
is compatible with the adjunction
\[ p^*\colon \sS \rightleftarrows \saS \colon p_* \] 
in the sense of the following proposition. 

\begin{prop} \label{prop:oldvsnewpathconstruction} 
The following diagrams commute up to natural isomorphism:
\[
\begin{tikzcd}
\untwoseg \arrow{r}{\pcat} \arrow[hook]{d}[swap]{} & \asdc \arrow{d}{N^a}\\
s\set \arrow{r}[swap]{p^*} & \saset
\end{tikzcd}
\text{ \quad }
\begin{tikzcd}
\untwoseg  \arrow[hook]{d}[swap]{} & \asdc \arrow{d}{N^a} \arrow{l}[swap]{\sdot}\\
s\set  & \saset. \arrow{l}{p_*}
\end{tikzcd}
\]
\end{prop}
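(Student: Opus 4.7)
The plan is to establish both commutativities by a direct level-wise computation, invoking the isomorphism $\sW{n} \cong \pcat \Delta[n]$ from \cref{prop:P(Delta)=W}, the equivalence of categories from \cref{oldresult}, and the fully faithfulness of the augmented nerve $N^a$ from \cref{nervefullyfaithful}.

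For the left-hand triangle, I fix a unital 2-Segal set $X$ and construct an isomorphism $N^a(\pcat X) \cong p^*X$ of preaugmented bisimplicial sets. By definition of $N^a$, for $\inda, \indb \geq 0$,
$$N^a(\pcat X)_{\inda,\indb} = \Hom_{\asdc}(\sW{\inda+1+\indb}, \pcat X) \cong \Hom_{\asdc}(\pcat \Delta[\inda+1+\indb], \pcat X),$$
using \cref{prop:P(Delta)=W}. Since $\pcat$ is an equivalence of categories by \cref{oldresult}, this is further isomorphic to $\Hom_{\untwoseg}(\Delta[\inda+1+\indb], X) \cong X_{\inda+1+\indb} = (p^* X)_{\inda,\indb}$. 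On the augmentation component, $N^a(\pcat X)_{-1} = \aug(\pcat X) = X_0 = (p^*X)_{-1}$ directly from the explicit description of $\pcat$ recalled in \cref{Unital 2-Segal sets and stable augmented double categories}.

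For the right-hand triangle, given $\cD \in \asdc$, I would combine the description of $p_*$ in \cref{descriptionrightadjointSdot} with the identification $\wW{n} \cong N^a(\sW{n})$ from \cref{nerveofWnnerve} and the fully faithfulness of $N^a$ to obtain
$$(p_* N^a \cD)_n \cong \Map_{\saS}(\wW{n}, N^a\cD) \cong \Hom_{\saset}(N^a \sW{n}, N^a\cD) \cong \Hom_{\asdc}(\sW{n}, \cD) = (\sdot\cD)_n.$$

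The only remaining task is to verify that these level-wise isomorphisms are natural in the indexing categories. For maps in $(\Delta \times \Delta)^{\op}$ or $\Delta^{\op}$ this is automatic from functoriality, since $p$ restricts to the ordinal sum and the isomorphism $\sW{\bullet} \cong \pcat \Delta[\bullet]$ of \cref{prop:P(Delta)=W} is natural in $[\bullet]$. The main bookkeeping step I anticipate is compatibility with the augmentation arrows $(\inda, \indb) \to [-1]$ in $\Sigma$, which correspond under $p$ to the unique map $[\inda+1+\indb] \to [0]$ in $\Delta$; on the $N^a(\pcat X)$ side this structure map is induced by precomposition with the cosimplicial double functor $\sW{\inda+1+\indb} \to \sW{0}$, and a direct check shows that this recovers the iterated degeneracy $X_0 \to X_{\inda+1+\indb}$, matching the corresponding map on $p^*X$.
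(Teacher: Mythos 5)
Your proposal is correct and follows essentially the same route as the paper's proof: both diagrams are verified levelwise via the Yoneda/corepresentability identifications, the isomorphism $\pcat\Delta[n]\cong\sW{n}$, full faithfulness of $\pcat$ (from the equivalence of \cref{oldresult}), the adjunction $(p^*,p_*)$ together with $\wW{n}\cong N^a\sW{n}$, and full faithfulness of $N^a$. Your explicit check of compatibility with the augmentation arrows $(\inda,\indb)\to[-1]$ just makes precise the naturality in $\Sigma$ that the paper asserts without detail, and it is carried out correctly.
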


In light of this compatibility, we henceforth use the following notation.

\begin{notn}\label{notation p S}
We denote by $\pcat$ the path construction $p^*$ from \cref{definitionpcat}, and by $\sdot$ the right adjoint $p_*$ from \cref{definitionsdot}.
\end{notn}

\begin{proof}[Proof of \cref{prop:oldvsnewpathconstruction}]
Let $X$ be a unital $2$-Segal set. Commutativity of the first diagram follows from the following bijections
\begin{align*}
(p^*X)_{\inda, \indb}&=X_{\inda+1+\indb}\\
&\cong\Hom_{\untwoseg}(\Delta[\inda+1+\indb], X)\\
&\cong\Hom_{\asdc}(\pcat(\Delta[\inda+1+\indb]), \pcat X)\\
&\cong\Hom_{\asdc}(\sW{\inda+1+\indb}, \pcat X)\\
&=N^a(\pcat X)_{\inda, \indb},\\
\end{align*}
which are justified as follows.  The initial equality is the definition of $p^*$.  The first isomorphism follows from corepresentability, while the second is given by fact that $\pcat$ is fully faithful from \cref{oldresult}. The last isomorphism follows from \cref{prop:P(Delta)=W}, and the last line is simply the definition of the augmented nerve. Similarly, 
\[(p^*X)_{-1}=X_0=\aug(\pcat X)=N^a(\pcat X)_{-1}.\]
Moreover, these bijections are natural in~$\Sigma$. 

In order to prove that the second diagram commutes, let $\cD$ be an augmented stable double category. There are bijections
\begin{align*}
(p_*N^a\cD)_{n}&\cong \Hom_{s\set}(\Delta[n], p_*N^a\cD)\\
&\cong \Hom_{\saset}(p^*\Delta[n], N^a\cD)\\
&\cong \Hom_{\saset}(\wW{n}, N^a\cD)\\
&\cong \Hom_{\saset}(N^a\sW{n}, N^a\cD)\\
&\cong \Hom_{\asdc}(\sW{n}, \cD)\\
&= S_n(\cD)\\
\end{align*}
\belowdisplayskip=0pt
which are natural in $\Delta$.  The first two isomorphisms arise from the Yoneda lemma and the adjunction $(p^*, p_*)$.  The others are given by \cref{rmkpstar(Delta)=W}, \cref{pstar(Delta)=W} and \cref{nervefullyfaithful}, respectively.  Thus we conclude that the diagram commutes up to isomorphism.
\end{proof}

So far we have discussed how to view augmented stable double categories as augmented stable double Segal {\em sets}. However, one might ask whether the same is true for {\em any} choice of $\targetcat$ using the discrete functor $d_*\colon\saset \to \saS$ from \cref{sec model background}. Note that a priori, the image of $d_*$ is merely a preaugmented bisimplicial object in $\targetcat$. As the augmentation, double Segal, and stability conditions are all defined in terms of homotopy pullbacks, a sufficient condition for preservation of these structures is that $d_*$ preserves homotopy pullbacks.\footnote{The appendix of \cite{GMMO} gives sufficient conditions for $d_*$ to preserve finite limits and hence pullbacks.}  It is not clear that this property holds for any $\targetcat$, 
but we verify it for the case of $\targetcat=\sset$ in the following example, which is the most important example in practice. In particular, augmented stable double categories provide examples of augmented stable double Segal spaces.

\begin{ex} \label{remarkdiscreteasds}
Since every map of discrete simplicial sets is a Kan fibration, the strict pullback of maps between discrete spaces yields a model for the homotopy pullback.  Thus, for any diagram of discrete spaces
$$A\rightarrow C\leftarrow B$$
the canonical map from the pullback to the homotopy pullback gives an equivalence
$$A\ttimes{C}B\xrightarrow{\simeq} A \htimes{C}B.$$
In particular, a preaugmented bisimplicial set is double Segal, augmented, or stable if and only if it is such when regarded as a discrete preaugmented bisimplicial space.
\end{ex}

\subsection{Preview of exact and stable \texorpdfstring{$(\infty,1)$}{quasi}-categories}
\label{examples}

In this section we give an overview of how augmented stable double Segal spaces arise naturally from the inputs of previous $\sdot$-constructions, namely from proto-exact $(\infty,1)$-categories, which generalize both exact categories and stable $(\infty, 1)$-categories.  In particular, our generalized $\sdot$-construction is a natural generalization of ones which have been previously developed in these contexts. In the companion paper \cite{exact}, we give detailed proofs of the statements mentioned here and in particular look closely at the special cases of exact categories and stable $(\infty,1)$-categories.

We summarize these different settings and constructions in the following diagram.
\begin{center}
 \begin{tikzpicture}
\begin{scope}[xshift=-1cm]
  \draw (6,4) node[category2] (stableMC){stable model categories};
  \draw (6,1) node[category2] (stableIC){stable $(\infty,1)$-categories};
   \draw (9,4) node[category2] (exact){exact categories};
   \draw (9,1) node[category2] (protoexactIC){(proto)-exact $(\infty,1)$-categories};
\end{scope}
   \draw (12,1) node[category1] (Sigma){augmented stable double Segal spaces};
   \draw (12,4) node[category1] (ASDC){augmented stable double categories};
   \draw (15,1) node[category1] (U2S){unital $2$-Segal spaces};
   \draw (15, 4) node[category1] (U2d){unital $2$-Segal sets}; 
   
   \draw[-stealth] (stableMC)--(stableIC);
   \draw[-stealth] (stableIC)--(protoexactIC);
   \draw[-stealth] (exact)--(protoexactIC);
   \draw[right hook-stealth, shorten <=0.1cm, shorten >=0.1cm] (ASDC)--(Sigma);
   \draw[right hook-stealth, shorten <=0.1cm, shorten >=0.1cm] (U2d)--(U2S);
   \draw[-stealth, thick] (protoexactIC)-- node[above](a1){$N^{pe}$}(Sigma);
   \draw[stealth-] ([yshift=0.1cm]Sigma.east)--node[above](a2){$\pcat$} ([yshift=0.1cm]U2S.west);
  \draw[stealth-] (U2S)--node[below](a3){$\sdot$}(Sigma);
    \draw[stealth-] ([yshift=0.1cm]ASDC.east)--node[above](a2){$\pcat$} ([yshift=0.1cm]U2d.west);
   \draw[stealth-] (U2d)--node[below](a3){$\sdot$}(ASDC);
 \end{tikzpicture}
\end{center}

Roughly speaking, we start with a homotopical category, together with extra data which distinguishes certain objects, morphisms, and commutative squares.
An appropriate nerve functor $N^{pe}$ assigns distinguished objects to the object $[-1]$ of $\Sigma$ and arrays of size $q\times r$ of commuting squares to the object $(q,r)$.

While we do not want to go into the details of $(\infty,1)$-category theory here, the main idea is that an $(\infty,1)$-category should be thought of as a category up to homotopy, with a discrete collection of objects but with mapping spaces for which composition is defined only up to homotopy.  While there are several different ways to model such a structure, here we use the framework of quasi-categories, which has been developed in by Joyal \cite{Joyal} and Lurie \cite{htt}.  Quasi-categories are simplicial sets which satisfy certain horn-filling conditions.  They are more general than Kan complexes, which can be thought of as groupoids up to homotopy. Often in the literature, and in particular for the notions we describe below, the term ``$\infty$-category" is used rather than ``quasi-category", but we use the later for the sake of precision.

Let us now turn to the notion of proto-exact quasi-category, which was defined by Dyckerhoff-Kapranov \cite{DK}; exact quasi-categories are also defined by Barwick \cite{barwickq}, \cite{barwickKtheory}. The idea is that this structure includes that of an exact category as a special case, yet is general enough to include interesting examples that do not fit into the previous framework, such as sub-quasi-categories of a stable quasi-category which are closed under extensions, as considered in \cite{barwickKtheory}.  The terminology used is very similar to that in the definition of an exact category, so a reader only familiar with that level of generality can work in that context instead. 

\begin{defn}[{\cite[Definition 7.2.1]{DK}}]
A \emph{proto-exact quasi-category} consists of a triple of quasi-categories $(\cQ,\cM,\cE)$ such that:
\begin{itemize}
\item both $\cM$ and $\cE$ are sub-quasi-categories of $\cQ$ containing all equivalences;
\item the category $\cQ$ has a zero object, $\cM$ contains all the morphisms whose source is a zero object, and $\cE$ contains all the morphisms whose target is a zero object; 
\item any pushout of a morphism in $\cM$ along a morphism of $\cE$, and any pullback of a morphism in $\cE$ along a morphism of $\cM$ exist; and
\item a square whose horizontal morphisms are in $\cM$ and whose vertical morphisms are in $\cE$ is cartesian if and only if it is cocartesian.
\end{itemize}
\end{defn}

A morphism between proto-exact quasi-categories preserves the appropriate structure.

We now define a nerve functor which takes a proto-exact quasi-category to a preaugmented bisimplicial space.  For any quasi-category $\cQ$, we denote by $J(\cQ)$ the maximal Kan complex spanned by its vertices, as in \cite[Proposition 1.16]{JoyalTierney}, and by $\cQ^{[\inda]\times[\indb]}$ the quasi-category of functors $[\inda] \times [\indb] \rightarrow \cQ$.

\begin{defn} \label{nerveofexactinfinitycategories}
The \emph{proto-exact nerve} $N^{pe}\cQ$ of a proto-exact quasi-category $\cQ=(\cQ,\cM,\cE)$
is the preaugmented bisimplicial space $N^{pe}\cQ\colon \Sigma^{\op}\to \sset$ defined as follows.
\begin{enumerate}
 \item The component in degree $(\inda,\indb)$ is the simplicial set
 \[
N^{pe}\cQ_{\inda,\indb}\subset J(\cQ^{[\inda]\times[\indb]})
\]
spanned by $(\inda\times \indb)$-grids in $\cQ$ with horizontal morphisms in $\cM$, vertical morphisms in $\cE$, and all squares bicartesian. 
\item The augmentation space is the simplicial set
\[
N^{pe}\cQ_{-1}\subset J(\cQ)
\]
spanned by all the zero objects of $\cQ$.
 \end{enumerate}
The bisimplicial structure is induced by the bi-cosimplicial structure of the categories $[\inda]\times[\indb]$. The additional map is the canonical inclusion of the space of zero objects into $\cQ$.
\end{defn}

As we prove in \cite{exact}, we can say more.

\begin{prop}
The proto-exact nerve $N^{pe}\cQ$ defines a functor from the category of proto-exact quasi-categories to the category of  augmented stable double Segal spaces. 
\end{prop}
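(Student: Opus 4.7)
The plan is to verify separately the three conditions (double Segal, stable, augmented) and observe that functoriality is immediate. The latter holds because a morphism of proto-exact quasi-categories preserves zero objects, $\cM$-morphisms, $\cE$-morphisms, and bicartesian squares of the prescribed type, so it induces a compatible map on each component of the nerve. The unifying observation for the remaining three conditions is that membership in $\cM$ or $\cE$, being a zero object, and being a bicartesian square of the appropriate type are all \emph{pointwise} conditions on the constituent arrows and unit squares of an $(\inda\times\indb)$-grid, so they interact cleanly with Segal-style decompositions and fiber sequences.

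First, for the double Segal condition (\cref{doubleSegal}), I would begin by noting that the ambient bisimplicial space $(\inda,\indb)\mapsto J(\cQ^{[\inda]\times[\indb]})$ is double Segal. This follows from the Joyal--Tierney theorem identifying $[n]\mapsto J(\cP^{[n]})$ as a complete Segal space for any quasi-category $\cP$, applied in each variable separately. A diagram $[\inda]\times[\indb]\to\cQ$ lies in $N^{pe}\cQ_{\inda,\indb}$ exactly when every unit subsquare of it lies in $N^{pe}\cQ_{1,1}$; in particular the Segal maps for $N^{pe}\cQ$ factor as (homotopy) pullbacks of the ambient Segal maps along the inclusions cut out by these pointwise conditions, and thus inherit the weak equivalence property.

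Next, for stability, I would invoke \cref{babystability} to reduce to the case $\inda=\indb=1$ and verify that the maps
\[N^{pe}\cQ_{0,1}\htimes{N^{pe}\cQ_{0,0}}N^{pe}\cQ_{1,0}\xleftarrow{\simeq}N^{pe}\cQ_{1,1}\xrightarrow{\simeq}N^{pe}\cQ_{1,0}\htimes{N^{pe}\cQ_{0,0}}N^{pe}\cQ_{0,1}\]
are equivalences. This is exactly the content of the proto-exact axioms: any span of an $\cM$-morphism with an $\cE$-morphism sharing a source completes to an essentially unique pushout square, which by the cartesian$\,=\,$cocartesian hypothesis is automatically bicartesian with horizontal arrows in $\cM$ and vertical arrows in $\cE$; dually for cospans. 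Concretely, I would identify each of the two displayed maps with a trivial Kan fibration whose fiber over a span (respectively a cospan) is the contractible space of pushout (respectively pullback) completions in $\cQ$.

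Finally, for augmentation, I would appeal to \cref{babyaugmentation} and compute the fibers of the maps in \cref{eqn: augmented 1A} and \cref{eqn: augmented 2A} over a point $x\in N^{pe}\cQ_{0,0}$: by construction, these fibers are the spaces of pairs $(0,x\epi 0)$ with $0$ a zero object and the morphism in $\cE$ (respectively pairs $(0,0\mono x)$ with the morphism in $\cM$). Since by the proto-exact axiom every morphism with zero target (respectively source) lies in $\cE$ (respectively $\cM$), and the full sub-quasi-category of $\cQ$ on zero objects is contractible (all zero objects being canonically equivalent, with contractible mapping spaces from and to any object), each such fiber is contractible. The principal obstacle in making the above rigorous is the quasi-categorical bookkeeping in the double Segal step, namely identifying $N^{pe}\cQ_{\inda,\indb}$ as a \emph{homotopy} pullback inside $J(\cQ^{[\inda]\times[\indb]})$ of the pointwise conditions so that the Segal equivalences pass through; this is handled in detail in the companion paper \cite{exact}.
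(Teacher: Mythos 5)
There is no in-paper proof to compare against here: this proposition appears in the preview section \cref{examples}, and the paper explicitly defers all proofs about the proto-exact nerve to the companion paper \cite{exact}. So your proposal can only be judged against the expected argument, and on that score it follows the natural route: double Segal via the Joyal--Tierney identification of $J(\cQ^{(-)})$ as a complete Segal space in each variable plus the observation that membership in $N^{pe}\cQ_{\inda,\indb}$ is detected on unit subsquares (pasting of bicartesian squares, closure of $\cM$ and $\cE$ under composition); stability via \cref{babystability} and essential uniqueness of ambigressive pushout/pullback completions; augmentation via \cref{babyaugmentation} and contractibility of the space of (co)augmenting morphisms to or from a zero object. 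These are exactly the reductions the paper sets up for this purpose, and the structure of your argument is sound.

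One step is stated more strongly than the quoted axioms support. In the stability step you claim that the pushout of a span (an $\cM$-morphism and an $\cE$-morphism out of a common source) is ``automatically bicartesian with horizontal arrows in $\cM$ and vertical arrows in $\cE$.'' The cartesian-iff-cocartesian axiom only applies to squares that \emph{already} have horizontals in $\cM$ and verticals in $\cE$, so before invoking it you must know that the two new edges of the pushout square again lie in $\cM$ and $\cE$; this class-stability under ambigressive pushout/pullback is part of the proto-exact structure as used in \cite{DK} and \cite{exact}, not a consequence of the conditions listed in the definition you are working from, so it needs to be invoked explicitly rather than derived. Two smaller bookkeeping points: in the augmentation step, arguing ``fiberwise over $x\in N^{pe}\cQ_{0,0}$'' requires either replacing the relevant maps by Kan fibrations or phrasing the contractibility claim in terms of homotopy fibers over every basepoint together with surjectivity on components; and in the double Segal step, you correctly flag that one must check the ambient Segal equivalences restrict to the unions of path components cut out by the pointwise conditions — this is precisely the kind of verification carried out in \cite{exact}, so acknowledging it rather than proving it is acceptable for a sketch but is where the real work lies.
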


The $\sdot$-construction for (proto)-exact $\infty$-categories was considered by Barwick \cite{barwickq}, \cite{barwickKtheory} and by Dyckerhoff and Kapranov \cite[\S 2.4]{DK}, and we denote the result of applying this construction to a proto-exact quasi-category $\cQ$ by $\sdot(\cQ)$.  The main result of \cite{exact} is the following theorem.

\begin{thm} 
Let $\cQ$ be a proto-exact quasi-category.  There is a levelwise weak equivalence of unital 2-Segal spaces
$$\sdot(\cQ)\xrightarrow{\simeq}\sdot(N^{pe}\cQ).$$
\end{thm}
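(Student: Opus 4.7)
The plan is to identify both sides of the claimed equivalence with one and the same space of proto-exact $\sW{n}$-shaped diagrams in $\cQ$, and then to verify that the induced comparison map is a levelwise weak equivalence of simplicial spaces.

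First, using \cref{descriptionrightadjointSdot} together with \cref{rmkpstar(Delta)=W}, I would rewrite
$$\sdot(N^{pe}\cQ)_n \;\cong\; \Map_{\saS}(\wW{n}, N^{pe}\cQ) \;=\; \Map_{\saS}(N^a(\sW{n}), N^{pe}\cQ),$$
invoking also \cref{pstar(Delta)=W} for the last identification. Since $\wW{n}$ is discrete, a standard end computation expresses this mapping space as a homotopy limit of the values $N^{pe}\cQ_{q,r}$ and $N^{pe}\cQ_{-1}$ indexed by the combinatorics of $\sW{n}$. Because $N^{pe}\cQ$ is an augmented stable double Segal space (in analogy with \cref{augmentednervedoubleSegal} and as indicated in \cref{examples}), the double Segal, stability, and augmentation conditions force this homotopy limit to reduce, up to weak equivalence, to one built only from the low-dimensional pieces $N^{pe}\cQ_{-1}$, $N^{pe}\cQ_{0,0}$, $N^{pe}\cQ_{0,1}$, $N^{pe}\cQ_{1,0}$, and $N^{pe}\cQ_{1,1}$.

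Next, using the explicit description from \cref{nerveofexactinfinitycategories}, each of these remaining pieces is a concrete moduli space in $\cQ$: zero objects, objects, morphisms in $\cM$, morphisms in $\cE$, and bicartesian squares with $\cM$-horizontals and $\cE$-verticals. Assembling them according to the poset structure of $\sW{n}$ produces a model for the space of functors $\sW{n}\to\cQ$ that send each vertex $(i,i)$ of the augmentation to a zero object, each horizontal morphism to $\cM$, each vertical morphism to $\cE$, and each square to a bicartesian square. By definition, this is Waldhausen's $\sdot(\cQ)_n$ in the proto-exact setting of Barwick and Dyckerhoff--Kapranov. The comparison map of the theorem is then obtained by sending such a proto-exact diagram to the map $\wW{n}\to N^{pe}\cQ$ determined by restricting to $[q]\boxtimes[r]$-subgrids and by identifying augmented vertices with chosen zero objects; naturality in $n$ is immediate from the functoriality of this restriction.

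The main obstacle lies in the reduction of $\Map_{\saS}(\wW{n}, N^{pe}\cQ)$ to the correct $(\infty,1)$-categorical diagram space. One must verify that the higher homotopical coherences produced by the Segal, stability, and augmentation conditions on $N^{pe}\cQ$ match precisely those required of a quasi-categorical functor out of $\sW{n}$, rather than merely a strict functor, and that everything is compatible with the simplicial operators in $n$. This is a technical argument, whose full treatment forms the central content of the companion paper \cite{exact}; here only the structure of the reduction is sketched.
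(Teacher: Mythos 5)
The first thing to note is that this paper contains no proof of the statement at all: \cref{examples} is explicitly a preview, and the theorem is quoted as the main result of the companion paper \cite{exact}. So there is no in-text argument to measure your sketch against; the only question is whether your outline closes the argument on its own, and it does not.

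The genuine gap sits exactly where you place the ``main obstacle.'' By \cref{descriptionrightadjointSdot}, $\sdot(N^{pe}\cQ)_n\cong\Map_{\saS}(\wW{n},N^{pe}\cQ)$ is a \emph{strict} end, hence a strict limit of the spaces $N^{pe}\cQ_{q,r}$ and $N^{pe}\cQ_{-1}$. Your ``standard end computation'' converting this into a homotopy limit is not automatic: $N^{pe}\cQ$ is only levelwise Kan (projectively fibrant), not injectively fibrant, and the double Segal, stability and augmentation conditions of \cref{doubleSegal}, \cref{stability} and \cref{augmentation} are conditions on \emph{derived} mapping spaces, so they cannot be applied directly to the strict mapping space that computes $\sdot(N^{pe}\cQ)_n$; replacing $N^{pe}\cQ$ by an injectively fibrant model changes the value of $\sdot$ in a way that itself has to be controlled. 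Second, even granting a reduction to the low-dimensional pieces, identifying the result with $\sdot(\cQ)_n$ in the sense of Barwick and Dyckerhoff--Kapranov --- a maximal Kan complex inside a quasi-category of diagrams subject to exactness conditions --- requires a rectification/coherence argument comparing strict bisimplicial diagram data with quasi-categorical functors out of the relevant shape, together with a careful treatment of the contractible-but-not-a-point space of zero objects entering through the augmentation (your ``identifying augmented vertices with chosen zero objects'' hides a choice that must be shown to be homotopically harmless, and the comparison map must be constructed point-set naturally in $n$ for the levelwise equivalence claim to make sense). You acknowledge these points but defer them to \cite{exact}; since they constitute the entire content of the theorem, what you have is a plausible strategy outline rather than a proof.
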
 

\section{The model structures} \label{modelstructures}

Now that we have introduced the categories of objects we would like to work with, we equip them with homotopical structure.  Specifically, we define model structures for unital $2$-Segal objects and for augmented stable double Segal objects which encode the homotopy theories we are interested in. 

\subsection{The model structure for unital 2-Segal spaces}

Following the approach of Dyckerhoff and Kapranov \cite[\S 5.3]{DK}, we construct a model structure for unital $2$-Segal spaces via an application of \cref{enrichedlocalization}.  Here we use the mapping space description of unital 2-Segal spaces to determine which maps to use for the localization.

\begin{defn} \label{SlocalNotation}
Let $\Sloc$ be the union of the following two sets of maps in~$\sS$:
\begin{enumerate}
\item the set $\Sloc_{\text{$2$-Segal}}$ consisting of the maps 
\[
f_\decomposition:\Delta[\decomposition]\to\Delta[n],
\]
where $\decomposition$ is a triangulation of the $(n+1)$-gon, and $\Delta[\decomposition]$ is as in \cref{DeltaP}; and 

\item the set $\Sloc_{\text{unital}}$ of the composites
$$\omega_{n,i} \colon \xymatrix{\Delta[n]\ar[r]^-{d^i}&\Delta[n+1]\ar[r]&\Delta[n+1]\bamalg{\Delta[1]}{\beta^i,s^0}\Delta[0]},$$
where $n\ge0$, $0\le i\le n$, and the map $\beta^i$ is as in \cref{defn:unital}.

\end{enumerate}
\end{defn}

\begin{rmk} \label{unitalitymappingspaces2}
To make sense of the name $\Sloc_{\text{unital}}$, observe that the map $\omega_{1,i}$ is left inverse to the map 
$$\Delta[2] \bamalg{{\Delta[1]}}{\beta^i,s^0}\Delta[0]\longrightarrow\Delta[1]$$
from \cref{unitalitymappingspaces}. Therefore, a simplicial space $X$ is local with respect to $\omega_{1,i}$ if and only if it is local with respect to this map; namely, if and only if the induced map 
$$\Map^h(\Delta[1],X)\longrightarrow\Map^h(\Delta[2]\aamalg{\Delta[1]}\Delta[0],X)$$
is a weak equivalence in $\targetcat$. By \cref{defn:unital}, this condition says precisely that $X$ is unital.  The choice to use the maps $\omega_{1,i}$ rather than their inverses has the advantage that all maps in $\Sloc$ are cofibrations.
\end{rmk}

We now localize the injective model structure on $\sS$ with respect to this set $\Sloc$, using \cref{enrichedlocalization}.

\begin{prop}[{\cite[\S5.2]{DK}}] 
Localizing the injective model structure on $\sS$ with respect to the set $\Sloc$ results in a model structure, which we denote by $\sS_{\Sloc}$, in which the fibrant objects are precisely the injectively fibrant unital 2-Segal objects.
\end{prop}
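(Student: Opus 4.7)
The plan is to apply \cref{enrichedlocalization} directly to the set $\Sloc$ inside the injective model structure $\sS_{\inj}$: this produces a combinatorial model structure $\sS_{\Sloc}$ whose fibrant objects are by construction the $\Sloc$-local ones---namely, injectively fibrant $X$ for which the induced map $\Map^h(B,X)\to\Map^h(A,X)$ is a weak equivalence for every $A\to B$ in $\Sloc$. The remainder of the proof is then the identification of these $\Sloc$-local objects with the injectively fibrant unital $2$-Segal objects, which I would carry out separately for the two subsets $\Sloc_{\text{$2$-Segal}}$ and $\Sloc_{\text{unital}}$ making up $\Sloc$.

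The $2$-Segal portion is immediate from \cref{Segalmapintermsofmappingspaces}: the generators $f_\decomposition\colon\Delta[\decomposition]\to\Delta[n]$ of $\Sloc_{\text{$2$-Segal}}$ are precisely the inclusions appearing there, specialized to triangulations $\decomposition$ of the $(n+1)$-gon, so $\Sloc_{\text{$2$-Segal}}$-locality of $X$ is equivalent to every $T$-Segal map being a weak equivalence for every triangulation $T$ of every $(n+1)$-gon with $n\geq 3$---that is, to $X$ being $2$-Segal.

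For the unital portion, I would generalize the observation recorded in \cref{unitalitymappingspaces2}. Using the cosimplicial identity $s^i\circ d^i=\id$, each map $\omega_{n,i}$ is a one-sided inverse in $\sS$ to the canonical collapse
$$\Delta[n+1]\bamalg{\Delta[1]}{\beta^i,s^0}\Delta[0]\longrightarrow\Delta[n]$$
built from $s^i$ and $s^0$. Hence locality with respect to $\omega_{n,i}$ is equivalent (by the two-out-of-three property) to the map
$$\Map^h(\Delta[n+1]\bamalg{\Delta[1]}{\beta^i,s^0}\Delta[0],X)\longrightarrow\Map^h(\Delta[n],X)$$
being a weak equivalence, which upon unwinding the pushout becomes precisely the assertion that the unitality square of \cref{defn:unital} at index $(n+1,i)$ is a homotopy pullback. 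Letting $n$ and $i$ range recovers the full unitality condition; alternatively, one could shortcut this by invoking \cref{lem:easyunital} to reduce to the two squares at $n=1$.

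The step I expect to require the most care is precisely this last translation: verifying that the derived mapping space description of $\omega_{n,i}$-locality matches the unitality square pointwise, including the identification of the canonical face and degeneracy maps with those induced by the representables. This is mostly routine and leverages that every object of $\sS_{\inj}$ is cofibrant---so that derived mapping spaces compute easily once the target is fibrantly replaced---but the bookkeeping of which face and degeneracy maps appear where still needs to be carried out carefully.
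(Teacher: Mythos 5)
Your proposal is correct and follows essentially the same route the paper takes (the paper defers the proof to Dyckerhoff--Kapranov, but its surrounding discussion is exactly your argument): apply \cref{enrichedlocalization} to $\Sloc$, identify $\Sloc_{\text{$2$-Segal}}$-locality with the $2$-Segal condition via \cref{Segalmapintermsofmappingspaces}, and identify $\omega_{n,i}$-locality with the unitality squares via the retraction onto the collapse map, as in \cref{unitalitymappingspaces2}. The only cosmetic slip is describing the collapse $\Delta[n+1]\amalg_{\Delta[1]}\Delta[0]\to\Delta[n]$ as ``built from $s^i$ and $s^0$'': its restriction to the $\Delta[0]$ factor is the vertex inclusion $\alpha^i$, not $s^0$; the identification with the unitality square of \cref{defn:unital} is otherwise exactly as you state.
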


\subsection{The model structure for augmented stable double Segal objects}

The model structure for augmented stable double Segal objects is also obtained via localization.  The maps with respect to which we localize are those developed in \cref{doubleSegal,,augmentation,,stability}, and those results demonstrate that the local objects have precisely the desired properties.  

\begin{defn} 
\label{elementaryT}
Let $\Tloc$ be the union of the following three sets of maps in~$\saS$:
\begin{enumerate}
\item the set $\Tloc_{\text{Segal}}$ of maps
\[
\segalhor{\inda,\indb} \colon \Sigma[\inda,1]\aamalg{\Sigma[\inda,0]} \ldots \aamalg{\Sigma[\inda,0]}\Sigma[\inda,1] \to \Sigma[\inda,\indb],
\]	
induced by the Segal map in the second variable, and
\[
\segalver{\inda,\indb} \colon \Sigma[1,\indb]\aamalg{\Sigma[0,\indb]} \ldots \aamalg{\Sigma[0,\indb]}\Sigma[1,\indb] \to \Sigma[\inda,\indb],
\]
induced by the Segal map in the first variable, each for all $\inda, \indb \geq 2$;

\item the set $\Tloc_{\text{aug}}$ of maps 
\[ \aughor{\indb} \colon \Sigma[0,\indb-1] \rightarrow \Sigma[0,\indb] \rightarrow \Sigma[0,\indb]\aamalg{\Sigma[0,0]} \Sigma[-1]=\wH{\indb}, \]
for every $\indb$, induced by the map $\bchainhord$, and
\[ \augver{\inda} \colon \Sigma[\inda-1,0] \rightarrow \Sigma[\inda,0] \rightarrow \Sigma[\inda,0]\aamalg{\Sigma[0,0]} \Sigma[-1]=\wV{\inda}, \]
for every $\inda \geq 0$, induced by the map $\echainverd$; and

\item the set $\Tloc_{\text{stable}}$ of maps
$$
\stablecospan{\inda,\indb}\colon\Sigma[\inda,0]\aamalg{\Sigma[0,0]} \Sigma[0,\indb] \to \Sigma[\inda,\indb],
$$	
induced by the inclusion of the cospan, and
and
$$
\stablespan{\inda,\indb}\colon\Sigma[0,\indb]\aamalg{\Sigma[0,0]} \Sigma[\inda,0] \to \Sigma[\inda,\indb],
$$
induced by the inclusion of the span, each for all $\inda, \indb \geq 0$. 
\end{enumerate}
\end{defn}

Similarly to before, we now localize the injective model structure on $\saS$ with respect to the set $\Tloc$ using \cref{enrichedlocalization}.

\begin{prop} 
Localizing the injective model structure on $\saS$ results in a model structure, which we denote by $\saS_{\Tloc}$, in which the fibrant objects are precisely the augmented stable double Segal objects which are injectively fibrant.
\end{prop}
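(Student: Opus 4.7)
The proof naturally splits into two parts: establishing the existence of the localized model structure, and identifying its fibrant objects. For the first, I would directly invoke \cref{enrichedlocalization} applied to the combinatorial enriched injective model structure on $\saS$ from \cref{injectivemodelstructure} together with the set $\Tloc$ from \cref{elementaryT}. This immediately produces the desired cofibrantly generated model structure $\saS_{\Tloc}$ in which the cofibrations are the levelwise cofibrations and the fibrant objects are the $\Tloc$-local objects.

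For the second part, by \cref{definitionlocal} an object $Y$ is $\Tloc$-local precisely when $Y$ is injectively fibrant and, for every $f\colon A\to B$ in $\Tloc$, the induced map $f^*\colon\Map^h(B,Y)\to\Map^h(A,Y)$ is a weak equivalence in $\targetcat$. My strategy is to verify each of the three families of local conditions making up $\Tloc$ separately and match them against the three defining properties of an augmented stable double Segal object. Locality with respect to $\Tloc_{\text{Segal}}$ matches the double Segal condition by \cref{doubleSegal}, and locality with respect to $\Tloc_{\text{aug}}$ matches augmentation by \cref{augmentation}. For $\Tloc_{\text{stable}}$, the case $(\inda,\indb)=(1,1)$ is covered directly by \cref{stability}.

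The only nontrivial step is handling $\Tloc_{\text{stable}}$ at general $(\inda,\indb)$. Here I would mimic the passage from mapping spaces to homotopy pullbacks used in the proof of \cref{stability}: after taking an injective fibrant replacement, \cref{lemmahomotopypullback} allows us to replace homotopy pullbacks by strict pullbacks, so that locality with respect to $\stablecospan{\inda,\indb}$ and $\stablespan{\inda,\indb}$ becomes precisely the stability condition at $(\inda,\indb)$ in the sense of its definition. Since we will already have verified that $Y$ is double Segal, \cref{babystability} then reduces the condition at arbitrary $(\inda,\indb)$ to the $(1,1)$ case. This last reduction is the main---and essentially only---point of care in the argument; everything else is a bookkeeping assembly of the three previously established mapping-space characterizations, and the overall pattern closely parallels the $\Sloc$-local model structure on $\sS$ described in \cite[\S5.2]{DK}.
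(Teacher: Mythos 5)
Your proposal is correct and follows essentially the same route as the paper, which treats this proposition as an immediate consequence of \cref{enrichedlocalization} together with the mapping-space characterizations in \cref{doubleSegal}, \cref{augmentation}, and \cref{stability} (indeed the paper offers no further argument). Your extra care in reducing the stability maps at general $(\inda,\indb)$ to the $(1,1)$ case via \cref{babystability} is a reasonable way to spell out the only implicit step, noting only that a $\Tloc$-local object is already injectively fibrant, so no fibrant replacement is actually needed there.
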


\subsection{The auxiliary model structure on \texorpdfstring{$\saS$}{preaugmented bisimplicial objects}}\label{extra model structure}

Our goal is to prove that the model categories $\sS_{\Sloc}$ and $\saS_{\Tloc}$ are Quillen equivalent.  However, for the proof we need an intermediate model structure on $\saS$ which is easier to compare to $\sS_{\Sloc}$.  Once again, this model structure is given by a localization of the injective model structure on $\saS$.

The idea behind the localization here is to start with the maps in $\Sloc$ and apply the path construction $\pcat=p^*$.  With the isomorphism from \cref{prop:P(Delta)=W} in mind, we make the following definition.

\begin{defn}
\label{defWP}
Let $P$ be a polygonal decomposition of the regular $(n+1)$-gon. Recall from \cref{DeltaP} the simplicial set $\Delta[\decomposition]$. Let $\wW{\decomposition}$ be the preaugmented bisimplicial set
$$\wW{\decomposition}:=\mathcal P\Delta[\decomposition].$$
\end{defn}

In particular, when $\decomposition$ is a decomposition of the $(n+1)$-gon into a triangle and an $n$-gon, $\wW{\decomposition}$ can be expressed as a pushout of objects
\[ \wW{\decomposition}=\pcat(\Delta[\decomposition])=\pcat(\Delta[2] \aamalg{\Delta[1]}\Delta[n-1])\cong\wW{2}\aamalg{\wW{1}}\wW{n-1}. \] 
Below is an image of $\wW{\decomposition}$ in the case where $n=4$ and $\decomposition$ is the decomposition of a pentagon into a triangle and a quadrilateral by adding the diagonal from 0 to 2,
\tikzset{firststyle/.style={very thick, red}}
\tikzset{secondstyle/.style={very thick, blue}}
\tikzset{thirdstyle/.style={dashed, very thick, blue}}
\begin{center}

\begin{tikzpicture}[scale=0.95, font=\footnotesize]
\begin{scope}[outer sep=2, inner sep=0.5]
     \draw[blue] (1,0)  node(a00){$*$};
\draw[fill,blue]   (2,0) circle (1pt)   node(a01){};
\draw[fill, firststyle]   (3, 0)  circle (1pt) node (a02){};
\draw[fill, firststyle]   (4, 0) circle (1pt)  node(a03){};
\draw[fill, firststyle]   (5, 0)  circle (1pt) node (a04){};

\draw[blue]   (2,-1)   node(a11){$*$};
\draw[fill, blue]   (3, -1)  circle (1pt) node (a12){};

\draw[firststyle]   (3, -2) node (a22){$*$};
\draw[fill, firststyle]   (4, -2) circle (1pt)  node(a23){};
\draw[fill, firststyle]   (5, -2)  circle (1pt) node (a24){};

\draw[ firststyle]   (4, -3) node(a33){$*$};
\draw[fill, firststyle]   (5, -3)  circle (1pt) node (a34){};

\draw[firststyle]   (5, -4)  node (a44){$*$};
\draw[mono, secondstyle] (a00)--(a01);
\draw[mono, secondstyle] (a01)--(a02);
\draw[mono, firststyle] (a02)--(a03);
\draw[mono, firststyle] (a03)--(a04);

\draw[mono, secondstyle] (a11)--(a12);

\draw[mono, firststyle] (a22)--(a23);
\draw[mono, firststyle] (a23)--(a24);

\draw[mono, firststyle] (a33)--(a34);
\draw[epi, secondstyle] (a01)--(a11);
\draw[epi, secondstyle] (a12)--(a22);
\draw[epi, secondstyle] (a02)--(a12);

\draw[epi, firststyle] (a23)--(a33);
\draw[epi, firststyle] (a24)--(a34);

\draw[epi, firststyle] (a34)--(a44);

\draw[mono, firststyle] (a00)..controls (2, 0.5)..(a02);
\draw[mono, thirdstyle] (a00)..controls (2, 0.5)..(a02);

\draw[epi, firststyle] (a02)..controls (3.4, -0.8)..(a22);
\draw[epi, thirdstyle] (a02)..controls (3.4, -0.8)..(a22);

\draw[epi, firststyle] (a03)..controls (4.4, -0.8)..(a23);
\draw[epi, firststyle] (a04)..controls (5.4, -0.8)..(a24);

\draw (a44) node[xshift=0.2cm, yshift=-0.1cm]{$.$};
\end{scope}
\end{tikzpicture}
\end{center}

Treating $\wW{\decomposition}$ as a discrete object in $\saS$ for any $\targetcat$, we can now define the maps with respect to which we want to localize.

\begin{defn}
\label{Wlocal}
Let $\Wloc$ be the union of the following sets of maps in $\saS$:
\begin{enumerate}
\item the set $\Wloc_{\text{$2$-Seg}}$ of maps $\pcat f_\decomposition$,
	\[
\pcat f_\decomposition\colon\pcat\Delta[\decomposition]\cong\wW{\decomposition}\to\wW{n}\cong\pcat\Delta[n],
\]
where $\decomposition$ is a triangulation
of the $(n+1)$-gon; and

\item the set $\Wloc_{\text{unital}}$  of maps $\pcat\omega_{n,i}$,
\[
\pcat\omega_{n,i} \colon \wW{n}\xrightarrow{d^i}\wW{n+1}\longrightarrow\wW{n+1}\aamalg{\wW{1}}\wW{0},
\]
where $n\ge0$ and $0\le i\le n$.
\end{enumerate}
\end{defn}

As in the previous two localizations, we apply \cref{enrichedlocalization}.

\begin{prop}
Localizing the injective model structure on $\saS$ with respect to $\Wloc$ results in a model structure which we denote by $\saS_{\Wloc}$.
\end{prop}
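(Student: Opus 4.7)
The proposition is a direct application of the enriched localization theorem, \cref{enrichedlocalization} (stated as \cite[Theorem 4.46]{barwick2010}), so the plan is essentially just to verify that its hypotheses are satisfied for $\saS$ and the class $\Wloc$.

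First I would observe that \cref{injectivemodelstructure} endows $\saS = \Fun(\Sigma^{\op},\targetcat)$ with a combinatorial injective model structure in which every object is cofibrant, since $\targetcat$ satisfies \cref{assumption} and $\Sigma$ is a small category (it is obtained from the small category $\Delta\times\Delta$ by adjoining a single terminal object). This is precisely the setting required to invoke \cref{enrichedlocalization}.

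Second, I would check that $\Wloc$ is genuinely a \emph{set} of maps, not a proper class. The subclass $\Wloc_{\text{2-Seg}}$ is indexed by pairs $(n,\decomposition)$ with $n\ge 3$ and $\decomposition$ a triangulation of the $(n+1)$-gon, and since there are only finitely many triangulations for each $n$, this is a countable set. The subclass $\Wloc_{\text{unital}}$ is indexed by pairs $(n,i)$ with $n\ge 0$ and $0\le i\le n$, which is also a countable set. Hence $\Wloc = \Wloc_{\text{2-Seg}}\cup \Wloc_{\text{unital}}$ is a set of maps in $\saS$.

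With these two facts in hand, \cref{enrichedlocalization} applies directly and produces the desired cofibrantly generated model structure $\saS_{\Wloc}$ on $\saS$, in which the cofibrations are the levelwise cofibrations, the fibrant objects are the $\Wloc$-local objects, the weak equivalences are the $\Wloc$-local equivalences, and weak equivalences between fibrant objects are levelwise weak equivalences. There is no main obstacle: the work lies entirely in \cref{enrichedlocalization} and \cref{injectivemodelstructure}, which have already been cited from the literature, and the substantive content (identifying the fibrant objects via $\Wloc$) is deferred to the subsequent comparison results in \cref{quillenpair,quillenequiv}.
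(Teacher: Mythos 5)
Your proof is correct and takes the same route as the paper: the paper offers no argument beyond the remark "we apply \cref{enrichedlocalization}," exactly as you do, and your added verifications (that $\Sigma$ is small so \cref{injectivemodelstructure} applies, and that $\Wloc=\pcat(\Sloc)$ is a set) are the right hypotheses to check.
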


\begin{rmk} 
In the next section, we show that every $\Tloc$-local object in $\saS$ is also $\Wloc$-local, a key fact in comparing the two model structures on $\saS$. However, the converse result fails in general, showing that the localized structures do not coincide.  Specifically, for $\targetcat=\sset$, we claim that $\Sigma[0,0]$ is $\Wloc$-local but not $\Tloc$-local in $\sasset$.
To see that $\Sigma[0,0]$ is $\Wloc$-local, first observe that, for any $n$ and any decomposition $\decomposition$ we have that
\[
\begin{aligned}
\Map(\wW{n},\Sigma[0,0])_{k,l}\cong d_*\Hom(\wW{n}\times \Sigma[k,l], \Sigma[0,0])\\
\text{ and }\quad\Map(\wW{\decomposition},\Sigma[0,0])_{k,l}\cong d_*\Hom(\wW{\decomposition}\times \Sigma[k,l], \Sigma[0,0])
\end{aligned}
\]
are both $\Delta[0]$. On the other hand, 
\[
\begin{aligned}
\Map(\wW{n},\Sigma[0,0])_{-1}=\varnothing\\
\text{ and }\quad\Map(\wW{\decomposition},\Sigma[0,0])_{-1}=\varnothing,
\end{aligned}
\]
because the augmentation of $\Sigma[0,0]$ is empty, whereas the augmentations of $\wW{n}$ and $\wW{\decomposition}$ are not.
However, one can check that $\Sigma[0,0]$ is injectively fibrant, so the isomorphism of the non-derived mapping spaces induces a weak equivalence of derived mapping spaces.  As a consequence $\Sigma[0,0]$ is $\Wloc$-local.

To see that $\Sigma[0,0]$ is not $\Tloc$-local, consider the map
$$\aughor{1}\colon\Sigma[0,0]\longhookrightarrow\wH{1}$$
in $\Tloc$.  Mapping into $\Sigma[0,0]$, we get the map
$$\varnothing=\Map^h(\wH{1},\Sigma[0,0])_{-1}\longrightarrow\Map^h(\Sigma[0,0],\Sigma[0,0])_{-1}\neq\varnothing, $$
which cannot be induced by a weak equivalence.
\end{rmk}

\section{The generalized \texorpdfstring{$\sdot$-}{Waldhausen }construction as a right Quillen functor} \label{quillenpair}

In this section we prove the following result, which is at the core of the comparison of our model structures.  Recall the functors $\pcat$ and $\sdot$ from \cref{notation p S}. 
\begin{thm}
\label{mainquillenpair}
The path construction and the $\sdot$-construction induce a Quillen pair 
\[
 \quad\pcat\colon \sS_{\Sloc} \rightleftarrows \saS_{\Tloc} \colon \sdot.
\]
\end{thm}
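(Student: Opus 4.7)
The plan is to invoke the standard recognition criterion for Quillen pairs between left Bousfield localizations: since both $\sS_{\Sloc}$ and $\saS_{\Tloc}$ are localizations of injective model structures, it suffices to verify (a) that $(\pcat,\sdot)$ is already a Quillen pair between the unlocalized injective model structures $\sS$ and $\saS$, and (b) that $\pcat$ sends every map in $\Sloc$ to a $\Tloc$-local equivalence in $\saS$.

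For (a), I would observe that $\pcat=p^*$ is precomposition with $p\colon\Sigma\to\Delta$ and hence computed levelwise. Both cofibrations and weak equivalences in the injective model structures (\cref{injectivemodelstructure}) are defined levelwise, so $p^*$ preserves both. Combined with the $\targetcat$-enriched adjunction supplied by \cref{upradeadjunctions}, this immediately gives that $(\pcat,\sdot)$ is a Quillen pair between the injective model structures.

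For (b), I would first identify $\pcat(\Sloc)$ with $\Wloc$. Indeed, by \cref{rmkpstar(Delta)=W} and \cref{defWP}, $\pcat$ carries each $2$-Segal map $f_\decomposition\colon\Delta[\decomposition]\to\Delta[n]$ in $\Sloc_{\text{$2$-Segal}}$ to the map $\wW{\decomposition}\to\wW{n}$ in $\Wloc_{\text{$2$-Seg}}$; and since $\pcat$ is a left adjoint it preserves pushouts, so applied to each $\omega_{n,i}\in\Sloc_{\text{unital}}$ it produces exactly the map $\pcat\omega_{n,i}\in\Wloc_{\text{unital}}$ of \cref{Wlocal}. Thus the theorem reduces to showing that every map in $\Wloc$ is a $\Tloc$-local equivalence, equivalently, that every $\Tloc$-local object is also $\Wloc$-local.

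The main obstacle lies in this last reduction. The strategy is to give an explicit cellular decomposition of each $\wW{n}$ and each $\wW{\decomposition}$ as an iterated homotopy colimit built from the basic pieces $\Sigma[\inda,\indb]$, $\wH{\indb}$, $\wV{\inda}$, and $\Sigma[-1]$, in a way that mirrors the triangulation on the simplex side. Once such a decomposition is in place, mapping into a $\Tloc$-local $Y$ turns these homotopy colimits into homotopy limits in $\targetcat$, and the three local conditions defining $\Tloc$-locality, namely the double Segal condition of \cref{2segalmappingspace}, the augmentation condition of \cref{augmentedmappingspace}, and the stability condition of \cref{stablemappingspace}, can be applied iteratively to convert the resulting homotopy limit for $\Map^h(\wW{n},Y)$ into the one for $\Map^h(\wW{\decomposition},Y)$, and analogously for the unital maps. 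In essence, augmentation, stability, and the double Segal condition form the exact $\Sigma$-indexed translation of the $2$-Segal and unitality conditions under $\pcat$, so the compatibility should follow structurally once the cellular decomposition of $\wW{n}$ is correctly set up; the bookkeeping of this decomposition (especially for arbitrary triangulations, where one would likely induct on $n$ as in \cref{easy2segal}) is the step I expect to require the most care.
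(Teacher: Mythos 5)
Your plan follows the paper's proof essentially verbatim: the paper likewise first establishes the Quillen pair on the injective model structures, observes that $\Wloc$ is by definition $\pcat(\Sloc)$ so that the pair descends to $\sS_{\Sloc}\rightleftarrows\saS_{\Wloc}$, and then reduces everything to showing that every $\Tloc$-local object is $\Wloc$-local, which it proves via exactly the kind of cellular filtration of $\wW{n}$ (and $\wW{\decomposition}$) by pushouts along generators of $\Tloc$ that you describe (\cref{inclusionHn}, \cref{Hnnerve}, \cref{sigmaopsets3}, and \cref{facycliccofibration} for the unital maps). The only organizational difference is that the paper packages the second step as a Quillen pair of identity functors $\saS_{\Wloc}\rightleftarrows\saS_{\Tloc}$ and routes the locality argument through the intermediate statement that $\sdot Y$ is unital $2$-Segal whenever $Y$ is $\Tloc$-local.
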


To prove this theorem, we make use of the auxiliary model structure, introduced in \cref{extra model structure}, and, more specifically, we show that the two adjoint pairs
\[
\sS_{\Sloc} \underset{\sdot}{\overset{\pcat}{\rightleftarrows}} \saS_{\Wloc}  \underset{\id}{\overset{\id}{\rightleftarrows}} \saS_{\Tloc}
\]
are Quillen pairs, with the left adjoints written topmost.

\begin{prop}\label{prop left Quillen pair}
\label{injectivequillenpair}
\label{intermediateequillenpair}
The path construction induces a Quillen pair 
\[
 \pcat\colon \sS \rightleftarrows \saS \colon\sdot,
\]
on injective model structures, which descends to a Quillen pair after localization
\[
 \pcat\colon \sS_{\Sloc} \rightleftarrows \saS_{\Wloc} \colon \sdot.
\]
\end{prop}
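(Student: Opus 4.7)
The plan is to prove the two statements separately, both via general model-categorical principles applied to the concrete combinatorial data at hand.

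For the injective Quillen pair, I would observe that $\pcat = p^*$ is precomposition with the functor $p\colon \Sigma \to \Delta$, so on objects $(p^*X)_{\inda,\indb} = X_{\inda+\indb+1}$ and $(p^*X)_{-1} = X_0$. Because the injective model structures on both $\sS$ and $\saS$ have cofibrations and weak equivalences defined levelwise (\cref{injectivemodelstructure}), and because precomposition with $p$ sends levelwise (trivial) cofibrations to levelwise (trivial) cofibrations tautologically, $p^*$ is left Quillen. Its right adjoint is the right Kan extension $p_* = \sdot$ supplied by \cref{upradeadjunctions}.

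For the descent to the localizations, I would appeal to the standard Bousfield localization criterion for Quillen pairs (for instance Hirschhorn's Theorem 3.3.20), which applies since our model structures are combinatorial and left proper by \cref{injectivemodelstructure}. The criterion states that, given the already established Quillen pair on injective model structures, the adjunction descends to $\sS_\Sloc \rightleftarrows \saS_\Wloc$ if and only if the total left derived functor of $p^*$ carries each element of $\Sloc$ to a weak equivalence in $\saS_\Wloc$. Since every object of $\sS$ is cofibrant in the injective model structure, the total left derived functor agrees with $p^*$, so the condition reduces to verifying $p^*(\Sloc) \subseteq W(\saS_\Wloc)$.

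The verification is then essentially tautological, which is the point of how $\Wloc$ was defined. Using the identification $p^*\Delta[n] \cong \wW{n}$ of \cref{rmkpstar(Delta)=W}, the definition $\wW{\decomposition} = p^*\Delta[\decomposition]$ from \cref{defWP}, and the fact that $p^*$ is a left adjoint and hence preserves the pushouts defining $\Delta[\decomposition]$ and the codomain of $\omega_{n,i}$, we compute $p^* f_\decomposition = \pcat f_\decomposition$ and $p^*\omega_{n,i} = \pcat\omega_{n,i}$. Comparing with \cref{Wlocal}, this gives $p^*(\Sloc_{\text{2-Segal}}) = \Wloc_{\text{2-Seg}}$ and $p^*(\Sloc_{\text{unital}}) = \Wloc_{\text{unital}}$, so $p^*(\Sloc) \subseteq \Wloc$, and maps in $\Wloc$ are weak equivalences in $\saS_\Wloc$ by construction of the localization. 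The only conceptual point requiring attention is verifying that $p^*$ commutes with the colimits producing $\Delta[\decomposition]$ and the pushouts $\Delta[n+1]\amalg_{\Delta[1]}\Delta[0]$, which is immediate from $p^*$ being left adjoint. There is no substantial obstacle; the entire descent follows from the fact that $\Wloc$ was cooked up to be precisely the image of $\Sloc$ under $p^*$.
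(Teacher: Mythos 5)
Your proposal is correct and follows essentially the same route as the paper: the injective Quillen pair is established by the levelwise description of $\pcat=p^*$ (so it preserves levelwise cofibrations and weak equivalences), and the descent to the localized structures is the standard Bousfield localization criterion (Hirschhorn 3.3.20) together with the observation that $\Wloc$ is by definition $\pcat(\Sloc)$. The extra care you take in checking that $p^*$ preserves the relevant pushouts is a fine (and harmless) elaboration of what the paper leaves implicit.
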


\begin{proof} 
Let $X$ be a simplicial object in $\targetcat$. Using the definition of $\pcat=p^*$, recall that $(\pcat X)_{\inda, \indb} = X_{\inda + \indb+1}$ and $(\pcat X)_{-1} = X_0$.  It follows that $\pcat$ preserves levelwise cofibrations and levelwise weak equivalences and therefore is a left Quillen functor for the injective model structures by \cite[Proposition 8.5.3]{Hirschhorn}. 

To show that the Quillen pair is compatible with the localizations, recall that we defined $\Wloc$ precisely to be $\pcat(\Sloc)$.  
It follows that localizing $\sS$ with respect to $\Sloc$, and $\saS$ with respect to $\pcat(\Sloc)$ retains the structure of the Quillen pair by \cite[Theorem 3.3.20]{Hirschhorn}.
\end{proof}

Thus, we have reduced \cref{mainquillenpair} to comparing the two different model structures on $\saS$ as stated in \cref{identityquillenpair}.  Since its proof is quite involved, for organizational purposes we first give the main ingredients in \cref{subsec:mainingredients} and then in \cref{subsec:acyclicity} we return to some deferred technical points. 

\subsection{Main outline of the proof} \label{subsec:mainingredients}

\begin{prop} \label{identityquillenpair}
The identity functors induce a Quillen pair 
\[ \saS_{\Wloc} \rightleftarrows \saS_{\Tloc}. \]
\end{prop}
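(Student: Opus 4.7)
The plan is to exploit the fact that both $\saS_{\Wloc}$ and $\saS_{\Tloc}$ are left Bousfield localizations of the same injective model structure on $\saS$, so they share the same (levelwise) cofibrations. Hence the identity functor automatically preserves cofibrations, and by general Bousfield localization theory, being left Quillen reduces to showing that every $\Tloc$-local object is $\Wloc$-local.

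So let $Y$ be an injectively fibrant augmented stable double Segal object. By the definitions of $\Wloc$ in \cref{defWP} and \cref{Wlocal}, its generators are precisely the $\pcat$-images of the generators of $\Sloc$. The enriched adjunction $(\pcat,\sdot)$ from \cref{prop left Quillen pair} therefore identifies, for each map $f\colon A\to B$ in $\Sloc$, the map $(\pcat f)^*\colon \Map^h_{\saS}(\pcat B, Y) \to \Map^h_{\saS}(\pcat A, Y)$ with the map $f^*\colon \Map^h_{\sS}(B, \sdot Y) \to \Map^h_{\sS}(A, \sdot Y)$. Consequently, $Y$ is $\Wloc$-local if and only if $\sdot Y$ is $\Sloc$-local, and the proposition reduces to showing that $\sdot Y$ is a unital $2$-Segal object whenever $Y$ is an (injectively fibrant) augmented stable double Segal object.

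To prove this remaining claim, I will use the description $(\sdot Y)_n \cong \Map^h_{\saS}(\wW{n},Y)$ from \cref{descriptionrightadjointSdot}, together with the mapping space characterizations of $2$-Segality in \cref{Segalmapintermsofmappingspaces,easy2segal} and of unitality in \cref{lem:easyunital,unitalitymappingspaces}. By \cref{easy2segal} it is enough to verify the $2$-Segal condition for the two ``ear'' decompositions of each $(n+1)$-gon, and by \cref{lem:easyunital} only two unitality squares need be checked. In each case the plan is to exhibit $\wW{n}$, as well as the corresponding $\wW{\decomposition}$ or unitality pushout, as an iterated homotopy colimit of simpler building blocks---representables $\Sigma[\inda,\indb]$ together with the objects $\wH{k}$ and $\wV{k}$ from \cref{nerveofHn}---in such a way that the requisite weak equivalences follow by stitching together the equivalences on $\Map^h(-,Y)$ coming from the local conditions $\Tloc_{\text{Segal}}$, $\Tloc_{\text{stable}}$, and $\Tloc_{\text{aug}}$ satisfied by $Y$.

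The principal obstacle will be engineering these combinatorial decompositions so that the ``staircase'' shape of $\wW{n}$ is expressed through pushouts built from spans, cospans, and horizontal or vertical chains of representables; once this is done, the Segal, stability, and augmentation equivalences for $Y$ can be applied and then assembled, almost certainly via induction on $n$. This technical decomposition is exactly what is deferred to \cref{subsec:acyclicity}.
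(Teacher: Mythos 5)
Your proposal follows essentially the same route as the paper's proof: since the cofibrations agree, the statement reduces to showing that every $\Tloc$-local object is $\Wloc$-local, which via the enriched adjunction $(\pcat,\sdot)$ (and injective fibrancy of $Y$ and $\sdot Y$) amounts to showing $\sdot Y$ is a unital $2$-Segal object, and this in turn rests on exhibiting the relevant maps of preaugmented bisimplicial sets---after the reductions of \cref{easy2segal} and \cref{lem:easyunital}---as weak equivalences in $\saS_{\Tloc}$ obtained by pushouts along generators of $\Tloc$. The only caveat is that the combinatorial decompositions you defer are precisely where the substance lies: the paper proves the acyclicity of $\wW{\decomposition}\hookrightarrow\wW{n}$ by filtering $\wW{n}$ starting from $\wH{n}$ (adding squares, their composites, interchange grids, and the final augmentation step), and handles the unitality maps by a right-inverse-plus-retract argument, so your outline is correct but incomplete exactly at the point carried out in \cref{subsec:acyclicity}.
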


Since cofibrations are the same in both model structures, the left adjoint, as the identity functor, preserves them.  Thus, our strategy to complete the proof is to verify that every $\Wloc$-local equivalence is a $\Tloc$-local equivalence.

To do so, it suffices to show that the right adjoint, also the identity, preserves fibrant objects. Indeed, if $A$ a $\Tloc$-local object, and hence $\Wloc$-local, and $f\colon Y\to Y'$ is a $\Wloc$-local equivalence, it follows that the induced map
$$\Map^h(Y', A) \to \Map^h(Y, A)$$ 
is a weak equivalence in $\targetcat$, and therefore $f$ is also a $\Tloc$-local equivalence. Thus, to complete the proof of \cref{mainquillenpair} we only need the following result, whose proof is technical and occupies the remainder of the section. 

\begin{prop}\label{right adjoint preserves fibrant}
Any $\Tloc$-local object of $\saS$ is $\cW$-local.
\end{prop}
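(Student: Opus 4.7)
The plan is to reduce the statement to showing that $\sdot Y$ is a unital $2$-Segal object whenever $Y$ is $\Tloc$-local. Since $(\pcat,\sdot)$ is a Quillen pair on the injective model structures by \cref{intermediateequillenpair}, the right adjoint $\sdot$ preserves injectively fibrant objects. Combined with the fact that all objects are cofibrant in both structures, the strict adjunction induces natural weak equivalences
\[
\Map^h_{\sS}(g,\sdot Y)\simeq \Map^h_{\saS}(\pcat g, Y)
\]
for every $g$ in $\sS$ and every injectively fibrant $Y$. Because $\Wloc=\pcat(\Sloc)$ by construction (\cref{Wlocal}), this shows that $Y$ is $\Wloc$-local if and only if $\sdot Y$ is $\Sloc$-local, i.e.\ a unital $2$-Segal object.

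Using the identification $(\sdot Y)_n\simeq\Map^h(\wW{n},Y)$ from \cref{descriptionrightadjointSdot}, the claim that $\sdot Y$ is unital $2$-Segal splits into two sub-claims via \cref{easy2segal} and \cref{lem:easyunital}. For the $2$-Segal part, one must show that for each $n\geq 3$ the map
\[
\Map^h(\wW{n},Y)\longrightarrow \Map^h(\wW{2}\aamalg{\wW{1}}\wW{n-1},Y)
\]
(and the corresponding map from the other end of the polygon) is a weak equivalence, where the colimit description of $\wW{P}$ (since $\pcat$ is a left adjoint) identifies the target with $\Map^h(\wW{2},Y)\htimes{\Map^h(\wW{1},Y)}\Map^h(\wW{n-1},Y)$. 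Unitality amounts to showing that two specific squares in $\saS$, built out of $\wW{1}$, $\wW{2}$ and $\wW{0}=\Sigma[-1]$, become homotopy pullback squares after applying $\Map^h(-,Y)$; these will follow from the augmentation conditions \cref{eqn: augmented 1A} and \cref{eqn: augmented 2A} on $Y$.

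The main work lies in establishing the $2$-Segal condition. The approach is to express $\wW{n}$ as an iterated colimit of representable preaugmented bisimplicial sets $\Sigma[\inda,\indb]$ (corresponding to squares, edges and vertices of the triangular pattern \eqref{pictureWn}) together with copies of $\Sigma[-1]$ glued along the diagonal, and then to compute $\Map^h(\wW{n},Y)$ inductively by applying the three $\Tloc$-local conditions on $Y$ as successive simplifications: the double Segal condition reduces each $Y_{\inda,\indb}$ to an iterated homotopy fiber product of copies of $Y_{0,0}$, $Y_{1,0}$, $Y_{0,1}$ and $Y_{1,1}$; stability collapses each $Y_{1,1}$ to $Y_{1,0}\htimes{Y_{0,0}}Y_{0,1}$ (or its cospan counterpart); and augmentation absorbs each factor involving $Y_{-1}$ into an adjacent edge. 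Performing the analogous reduction on $\wW{2}\aamalg{\wW{1}}\wW{n-1}$ and matching the two reductions term by term would identify both sides as the same iterated homotopy limit.

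The main obstacle is the combinatorial bookkeeping: the triangular arrangement of $\wW{n}$ involves $O(n^2)$ squares and a substantial collection of augmentation points, and the order in which Segal, stability and augmentation simplifications are carried out must be compatible with the decomposition of $\wW{2}\aamalg{\wW{1}}\wW{n-1}$. An induction on $n$, writing $\wW{n}$ as $\wW{n-1}$ glued to an additional row or column along a shared chain of horizontal or vertical arrows, and carefully tracking the overlap with the pushout decomposition $\wW{2}\aamalg{\wW{1}}\wW{n-1}$, should provide a clean organizational framework and keep the naturality of the comparison maps transparent at each step.
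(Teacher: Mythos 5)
Your opening reduction is exactly the paper's: since all objects are cofibrant and $\Wloc=\pcat(\Sloc)$, the adjunction identity $\Map(\pcat A,Y)\cong\Map(A,\sdot Y)$ for injectively fibrant $Y$ shows that $Y$ is $\Wloc$-local precisely when $\sdot Y$ is $\Sloc$-local, and the paper likewise routes the proof through \cref{Sdotfibrant}. So the top-level architecture is right.

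The gap is in the part you label ``the main work.'' What actually has to be proved is that the maps $\pcat f_\decomposition\colon\wW{\decomposition}\to\wW{n}$ and $\pcat\omega_{n,i}$ are $\Tloc$-local equivalences, and your plan --- reduce $\Map^h(\wW{n},Y)$ and $\Map^h(\wW{2}\aamalg{\wW{1}}\wW{n-1},Y)$ separately to iterated homotopy limits using the Segal, stability and augmentation conditions and then ``match term by term'' --- is precisely the step you defer to ``combinatorial bookkeeping.'' This matching is not routine: the two reductions must be shown to be compatible with the comparison map itself (not merely to produce abstractly equivalent limits), and organizing the $O(n^2)$ squares so that the simplifications on the two sides line up is the entire difficulty. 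The paper avoids this by a different organizational idea that your sketch does not contain: it proves the inclusions $\wH{n}\hookrightarrow\wW{n}$ (\cref{inclusionHn}) and $\wH{\decomposition}\hookrightarrow\wW{\decomposition}$ are acyclic cofibrations in $\saS_{\Tloc}$, by exhibiting $\wW{n}$ as an explicit filtration of pushouts along generating maps in $\Tloc$ (fill a span to a square, adjoin horizontal and vertical composites, enforce interchange, adjoin the augmentation). Two-out-of-three in the resulting square then reduces the two-dimensional comparison $\wW{\decomposition}\to\wW{n}$ to the one-dimensional Segal comparison $\wH{\decomposition}\to\wH{n}$, which is standard. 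Without this (or an equally concrete substitute), your argument is a plan rather than a proof.

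A secondary point: your claim that unitality ``will follow from the augmentation conditions'' is too quick. The paper's \cref{facycliccofibration} shows that the relevant map $\wW{1}\to\wW{2}\aamalg{\wW{1}}\wW{0}$ is an acyclic cofibration using stability (to fill a cospan to a square), augmentation, the Segal condition, \emph{and} a retract argument identifying a degenerate horizontal edge with a composite; augmentation alone does not produce the required equivalence.
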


The key step is to prove that $\sdot$ preserves fibrant objects, as made precise by the following proposition. 

\begin{prop} 
\label{Sdotfibrant}
If $Y$ is a $\Tloc$-local object in $\saS$, then $\sdot Y$ is $\Sloc$-local.
\end{prop}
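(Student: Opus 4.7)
The plan is to establish the three defining properties of an $\Sloc$-local object for $\sdot Y$: injective fibrancy, the 2-Segal condition, and unitality. For fibrancy, I would invoke the first (non-localized) assertion of \cref{injectivequillenpair}: since $(\pcat,\sdot)$ is a Quillen pair between injective model structures, the right adjoint $\sdot$ takes the injectively fibrant $Y$ to an injectively fibrant object $\sdot Y$. Once $\sdot Y$ is known to be fibrant, the enriched adjunction yields, for every (cofibrant) $A$ in $\sS$, a natural isomorphism $\Map(A,\sdot Y)\cong\Map(\pcat A,Y)$ that simultaneously computes the derived mapping objects on both sides. So it suffices to show that $Y$ is local with respect to every map in $\pcat(\Sloc)=\Wloc$; by \cref{Segalmapintermsofmappingspaces} and \cref{unitalitymappingspaces2}, this is equivalent to $\sdot Y$ being 2-Segal and unital.

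For unitality I would first pass through \cref{lem:easyunital} to reduce to the two basic homotopy pullback squares, whose images under $\pcat$ become assertions about pushouts involving $\wW{2}$, $\wW{1}$, and $\wW{0}=\Sigma[-1]$, and these are verified directly from the augmentation condition on $Y$ via \cref{babyaugmentation}. For the 2-Segal condition I would apply \cref{easy2segal} to reduce to the two decomposition types of the $(n+1)$-gon into a triangle plus an $n$-gon; treating the left case (the right is analogous) and using that $\pcat$ preserves colimits, the desired condition on $Y$ becomes: the inclusion $\wW{2}\aamalg{\wW{1}}\wW{n-1}\hookrightarrow\wW{n}$ induces a weak equivalence on mapping spaces into $Y$.

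The crux is an intermediate reduction that I would establish by induction on $n$: for $\Tloc$-local $Y$, the inclusion of the top row $\wH{n}\hookrightarrow\wW{n}$ (corresponding to the horizontal chain $(0,0)\hookrightarrow\dots\hookrightarrow(0,n)$) induces a weak equivalence
\[\Map(\wW{n},Y)\xrightarrow{\simeq}\Map(\wH{n},Y)\cong Y_{0,n}\htimes{Y_{0,0}}Y_{-1}.\]
The inductive step builds $\wW{n}$ from $\wW{n-1}$ by attaching a new top horizontal arrow, a new column of squares with vertical arrows to the augmentation diagonal, and the necessary gluing data, each piece corresponding to cells among $\Sigma[1,1]$, $\Sigma[1,0]$, $\Sigma[0,1]$, $\Sigma[0,0]$, and $\Sigma[-1]$ attached along maps which, on mapping into $Y$, are inverted by stability (\cref{stablemappingspace}), the double Segal condition (\cref{2segalmappingspace}), and augmentation (\cref{augmentedmappingspace}). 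Granting this reduction, the 2-Segal comparison then follows by applying the same computation to each $\wW{2}$ and $\wW{n-1}$ summand of the pushout and reassembling using the Segal condition for the simplicial object $Y_{0,\bullet}$ to recover $Y_{0,n}$.

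The main obstacle is the inductive construction of $\wW{n}$ as an iterated strict pushout of representable cells along maps belonging to $\Tloc$, together with the careful bookkeeping needed to translate each attachment into a fiber product of known equivalences in $\targetcat$. Once the reduction $\Map(\wW{n},Y)\simeq Y_{0,n}\htimes{Y_{0,0}}Y_{-1}$ is in place, the remaining comparisons assemble by repeated use of two-out-of-three, cofibrancy of all objects in the injective model structure, and \cref{remark fixing pullbacks}.
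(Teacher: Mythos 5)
Your overall architecture coincides with the paper's: fibrancy of $\sdot Y$ from the injective Quillen pair, translation through the enriched adjunction to locality of $Y$ with respect to $\Wloc=\pcat(\Sloc)$, reduction of the 2-Segal condition via \cref{easy2segal} to the triangle-plus-$n$-gon decompositions, and, as the crux, the inductive statement that $\wH{n}\hookrightarrow\wW{n}$ is a $\Tloc$-local acyclic cofibration obtained by attaching representable cells along maps from $\Tloc_{\text{stable}}$, $\Tloc_{\text{Segal}}$, and $\Tloc_{\text{aug}}$; this is exactly the paper's \cref{inclusionHn}, and your reassembly of the pushout $\wW{2}\aamalg{\wW{1}}\wW{n-1}$ using the Segal condition on $Y_{0,\bullet}$ is its \cref{Hnnerve} together with \cref{sigmaopsets3}.

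The one step that does not hold as you state it is unitality. You claim the images under $\pcat$ of the two unitality squares are ``verified directly from the augmentation condition on $Y$ via \cref{babyaugmentation}.'' Augmentation alone does not suffice. The map you need $Y$ to be local with respect to is $w^i\colon\wW{2}\bamalg{\wW{1}}{i}\wW{0}\to\wW{1}$, and the paper's \cref{facycliccofibration} shows that its right inverse $f_i$ is a $\Tloc$-local acyclic cofibration by a three-stage cell attachment --- filling a cospan to a square (stability), adjoining a horizontal augmentation for the newly created object (augmentation), and adjoining a horizontal composite (the Segal condition) --- followed by a retract argument identifying the extra copy of $\wH{1}$ with the composite of the two horizontal arrows of $\wW{2}$; only then does two-out-of-three give that $w^i$ is a weak equivalence. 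So this step requires all three families of localizing maps, not just $\Tloc_{\text{aug}}$. With that repaired, the rest of your argument goes through as in the paper.
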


We already know from \cref{prop left Quillen pair} that $\sdot$ preserves injectively fibrant objects.
%If $Y$ is $\mathcal T$-fibrant, it is also $\Tloc_{stable}\cup\Tloc_{Segal}$- and $\Tloc_{aug}$-fibrant.
Therefore, to prove \cref{Sdotfibrant}, it suffices to show that $\sdot(Y)$ is a unital 2-Segal object and therefore $\Sloc$-local.  We establish that $\sdot(Y)$ is 2-Segal in Proposition \ref{sdotrespectsSegality} and unital in Proposition \ref{sdotrespectsunital}. 

In order to prove these results we need to use that certain maps are acyclic cofibrations in $\saS_\Tloc$. To improve readability we have collected the necessary results in \cref{subsec:acyclicity}.

\begin{prop}
\label{sdotrespectsSegality}
If an object $Y$ of $\saS$ is $\Tloc$-local, then $\sdot Y$ is a $2$-Segal object.
\end{prop}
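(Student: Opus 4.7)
The plan is to reduce the $2$-Segal condition on $\sdot Y$ to a weak equivalence of mapping spaces into $Y$, and then to reduce that to the statement that a certain comparison map between preaugmented bisimplicial objects is an acyclic cofibration in $\saS_{\Tloc}$. First, by \cref{easy2segal} it suffices to prove that for each $n \geq 3$ the two maps
\[
(\sdot Y)_n \longrightarrow (\sdot Y)_{\{0,1,2\}} \htimes{(\sdot Y)_{\{0,2\}}} (\sdot Y)_{\{0,2,\ldots,n\}}
\quad\text{and}\quad
(\sdot Y)_n \longrightarrow (\sdot Y)_{\{n-2,n-1,n\}} \htimes{(\sdot Y)_{\{n-2,n\}}} (\sdot Y)_{\{0,\ldots,n-2,n\}}
\]
are weak equivalences; the two cases are symmetric, so I would treat only the first.

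Next, using \cref{descriptionrightadjointSdot} together with the $\targetcat$-enriched adjunction $\pcat \dashv \sdot$, I identify $(\sdot Y)_n \cong \Map_{\saS}(\wW{n},Y)$. Since $\pcat = p^*$ is a left adjoint and thus preserves the pushout defining $\Delta[P]$ in \cref{DeltaP}, the target of the Segal map above is computed as $\Map_{\saS}(\wW{P},Y)$, where $P$ is the polygonal decomposition of the $(n+1)$-gon into the triangle $\{0,1,2\}$ and the remaining $n$-gon; by \cref{defWP} we have $\wW{P} \cong \wW{2}\aamalg{\wW{1}} \wW{n-1}$. Thus the Segal map in question is precisely the map on mapping spaces induced by the canonical inclusion $\wW{P} \hookrightarrow \wW{n}$. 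Arguing as in \cref{lemmahomotopypullback} and \cref{remark fixing pullbacks}, the underived strict pullback computing this mapping space agrees with the homotopy pullback when $Y$ is injectively fibrant, so it is enough to check that mapping into $Y$ converts $\wW{P} \hookrightarrow \wW{n}$ into a weak equivalence in $\targetcat$.

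The key technical step, deferred to \cref{subsec:acyclicity}, is to show that the inclusion $\wW{P} \hookrightarrow \wW{n}$ is a $\Tloc$-acyclic cofibration in $\saS$; since every object is cofibrant, being a cofibration is automatic, and the content is the weak equivalence part. Granting this, the $\Tloc$-locality of $Y$ combined with \cref{enrichedlocalization} immediately yields that the induced map $\Map^h(\wW{n},Y) \to \Map^h(\wW{P},Y)$ is a weak equivalence, completing the proof. The main obstacle is thus the acyclicity of $\wW{P} \hookrightarrow \wW{n}$: one must decompose $\wW{n}$ as an iterated pushout of representables $\Sigma[\inda,\indb]$ along the generating Segal, stability, and augmentation maps of $\Tloc$, match this against an analogous cellular decomposition of $\wW{P}=\wW{2}\aamalg{\wW{1}}\wW{n-1}$, and exhibit the comparison map as a transfinite composition of pushouts of maps in $\Tloc$. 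The bookkeeping for the ``extra'' square cells of $\wW{n}$ that are filled using stability and higher Segal cells (and that must be cancelled via augmentation along the diagonal of $\sW{n}$) is the combinatorially delicate part of the argument.
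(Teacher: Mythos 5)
Your proposition-level argument matches the paper's: reduce via \cref{easy2segal} to the two corner decompositions, identify $(\sdot Y)_n\cong\Map_{\saS}(\wW{n},Y)$ and the Segal-map target with $\Map_{\saS}(\wW{\decomposition},Y)$ using the enriched adjunction and the fact that $\pcat$ preserves the defining pushout (cf.\ \cref{descriptionrightadjointSdot}, \cref{defWP}), handle the strict-versus-homotopy pullback issue via injective fibrancy, and then invoke the acyclicity of $\wW{\decomposition}\hookrightarrow\wW{n}$ in $\saS_{\Tloc}$, which is exactly the paper's \cref{sigmaopsets3}. Two remarks. First, a small bookkeeping point: the identification of the homotopy pullback target with a mapping space out of $\Delta[\decomposition]$ needs $\sdot Y$ (not just $Y$) to be injectively fibrant; this is free, since $\sdot$ is right Quillen for the injective structures (\cref{injectivequillenpair}), but you should say it. Second, and more substantively, your sketch of the deferred acyclicity lemma differs from the paper's proof: you propose a direct cellular comparison, writing both $\wW{n}$ and $\wW{\decomposition}$ as iterated pushouts of representables along maps in $\Tloc$ and exhibiting the inclusion as a transfinite composite of pushouts of such maps, and you acknowledge the bookkeeping is delicate. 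The paper avoids exactly that bookkeeping by factoring through the top row: it shows $\wH{\decomposition}\hookrightarrow\wH{n}$ is acyclic by a one-dimensional Segal-type argument (\cref{Hnnerve}), shows $\wH{n}\hookrightarrow\wW{n}$ is acyclic by a filtration that adjoins one square at a time using stability, Segal, augmentation, and interchange pushouts (\cref{inclusionHn}), deduces $\wH{\decomposition}\hookrightarrow\wW{\decomposition}$ from the same lemma via the homotopy-pushout description $\wW{\decomposition}\cong\wW{2}\aamalg{\wW{1}}\wW{n-1}$, and concludes by two-out-of-three in the resulting square. Your direct approach is plausible but the hard combinatorics you defer is precisely what this row-reduction is designed to bypass, so as written your proof has an unresolved (though correctly identified) technical core.
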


\begin{proof}
This proof is inspired by the proof of \cite[Proposition 2.4.8]{DK}.  Recall from \cref{easy2segal} that to check 2-Segality it suffices to consider a decomposition $\decomposition$ which is either the decomposition of the $(n+1)$-gon into a triangle $\{0,1,2\}$ and the remaining $n$-gon, or the decomposition into a triangle $\{n-2,n-1,n\}$ and the remaining $n$-gon.

Consider the former case; the other decomposition can be treated similarly.
We need to check that the $2$-Segal map 
$$\Map^h(\Delta[n],\sdot Y)\to\Map^h(\Delta[\decomposition],\sdot Y),$$
with the derived mapping spaces taken in the injective model structure, is a weak equivalence. 

To do so, let us begin by looking at similar derived mapping spaces for the original object $Y$.  We will prove in \cref{sigmaopsets3} that the inclusion $\wW{\decomposition}\hookrightarrow\wW{n}$ is an acyclic cofibration, and in particular a weak equivalence, in $\saS_{\Tloc}$.
Since $Y$ is $\Tloc$-local, by definition the induced map
$$\Map^h(\wW{n},Y)\to\Map^h(\wW{\decomposition},Y)$$
is a weak equivalence. Moreover, since $Y$ is in particular injectively fibrant (and all objects are cofibrant),
this weak equivalence can be realized by underived mapping spaces
$$\Map_{\saS}(\wW{n},Y)\to\Map_{\saS}(\wW{\decomposition},Y).$$
Applying the isomorphisms $\pcat \Delta[n] \cong \wW{n}$ of  \cref{rmkpstar(Delta)=W} and the definition of $\wW{\decomposition}$, we can rewrite this weak equivalence as
\[ \Map_{\saS}(\pcat \Delta[n], Y) \rightarrow \Map_{\saS}(\pcat \Delta[\decomposition], Y), \]
to which we can invoke the adjunction $(\pcat, \sdot)$ to obtain the weak equivalence
$$\Map_{\sS}(\Delta[n],\sdot Y) \rightarrow \Map_{\sS}(\Delta[\decomposition], \sdot Y).$$
By inspection, this map is precisely the one  induced by $f_\decomposition \colon \Delta[P]\to\Delta[n]$ from \cref{SlocalNotation}.  Thus, it only remains to verify that $\sdot(Y)$ is injectively fibrant, so that this map realizes the corresponding map on derived mapping spaces, which is the 2-Segal map. However, $Y$ is injectively fibrant as a $\Tloc$-local object, and therefore $\sdot(Y)$ is also injectively fibrant by \cref{injectivequillenpair}.   We conclude that $\sdot Y$ is $2$-Segal. 
\end{proof}

Let us now verify unitality.

\begin{prop}
\label{sdotrespectsunital}
If an object $Y$ in $\saS$ is $\Tloc$-local, then the $2$-Segal object $ \sdot Y$ is unital.
\end{prop}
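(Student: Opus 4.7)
The plan is to reduce unitality of $\sdot Y$ to a pair of mapping-space conditions in $\saS$ via the $(\pcat, \sdot)$-adjunction, and then to invoke $\Tloc$-locality of $Y$.

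By \cref{sdotrespectsSegality}, we already know that $\sdot Y$ is a $2$-Segal object. Combining \cref{lem:easyunital} with the mapping-space reformulation in \cref{unitalitymappingspaces}, it therefore suffices to prove that, for each $\varepsilon \in \{0,2\}$, the map
\[
\Map^h_{\sS}(\Delta[1], \sdot Y) \longrightarrow \Map^h_{\sS}\bigl(\Delta[2] \bamalg{\Delta[1]}{d^\varepsilon,s^0} \Delta[0],\, \sdot Y\bigr)
\]
is a weak equivalence in $\targetcat$. Since $Y$ is injectively fibrant and $\sdot$ is a right Quillen functor for the injective model structures by \cref{injectivequillenpair}, the object $\sdot Y$ is also injectively fibrant, so these derived mapping spaces agree with their underived counterparts. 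The $(\pcat, \sdot)$-adjunction, together with the isomorphism $\pcat\Delta[n] \cong \wW{n}$ from \cref{rmkpstar(Delta)=W} and the fact that $\pcat$ preserves colimits, rewrites the displayed map as
\[
\Map_{\saS}(\wW{1}, Y) \longrightarrow \Map_{\saS}\bigl(\wW{2} \aamalg{\wW{1}} \wW{0},\, Y\bigr),
\]
induced by the canonical map $\wW{2} \aamalg{\wW{1}} \wW{0} \to \wW{1}$ obtained by applying $\pcat$ to one of the two pushout maps of \cref{unitalitymappingspaces}.

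It therefore suffices to show that these two retractions are $\Tloc$-local equivalences in $\saS$; $\Tloc$-locality of $Y$ then closes the argument. This is the main obstacle, and I would defer its verification to \cref{subsec:acyclicity}, in parallel with the way the inclusions $\wW{\decomposition} \hookrightarrow \wW{n}$ are handled there for the proof of \cref{sdotrespectsSegality}. The strategy is to exhibit each retraction as an iterated composite of pushouts of generating maps in $\Tloc_{\text{Segal}}$, $\Tloc_{\text{aug}}$, and $\Tloc_{\text{stable}}$: the double Segal and stability generators allow one to replace the square-like object $\wW{2}$ by a union of horizontal and vertical chains glued along $\wW{0,0}$, while the augmentation generators absorb the factor $\wW{0} = \Sigma[-1]$; combining these identifications via the two-out-of-three property for $\Tloc$-local equivalences then yields the claim.
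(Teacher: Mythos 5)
Your reduction is exactly the one the paper uses: via \cref{sdotrespectsSegality}, \cref{unitalitymappingspaces}, injective fibrancy of $Y$ and $\sdot Y$, the $(\pcat,\sdot)$-adjunction and the isomorphism $\pcat\Delta[n]\cong\wW{n}$, unitality of $\sdot Y$ comes down to showing that the two collapse maps $w^i\colon\wW{2}\aamalg{\wW{1}}\wW{0}\to\wW{1}$ are $\Tloc$-local equivalences. The gap lies in the step you defer, and the strategy you sketch for it cannot work as stated. Every generating map in $\Tloc_{\text{Segal}}$, $\Tloc_{\text{aug}}$ and $\Tloc_{\text{stable}}$ is an inclusion of discrete preaugmented bisimplicial sets, and a composite of pushouts of such maps only ever \emph{adds} cells: the original object maps into each stage via the pushout structure maps, and on the underlying discrete data the resulting map is injective in every level. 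The map $w^i$ is not of this kind --- it identifies the two objects of $\wW{2}\aamalg{\wW{1}}\wW{0}$ that do not lie in the augmentation and sends the horizontal arrow between them to a degenerate one --- so it cannot be exhibited as an iterated pushout of generating maps, no matter how the Segal, stability and augmentation generators are combined.

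What is needed instead, and what the paper proves in \cref{facycliccofibration}, is that $w^i$ admits a section $f_i\colon\wW{1}\to\wW{2}\aamalg{\wW{1}}\wW{0}$ (induced by $\pcat d^1$) which is an acyclic cofibration in $\saS_{\Tloc}$; since $w^i\circ f_i=\id$, the two-out-of-three property then makes $w^i$ a weak equivalence, and $\Tloc$-locality of $Y$ closes the argument as you indicate. Note that even the acyclicity of this section is not a pure cell-attachment argument: one first enlarges $\wW{1}$ by pushouts along a stability generator, an augmentation generator and a Segal generator, but the final step --- identifying the freely added horizontal arrow with the composite already present in $\wW{2}$ --- is again a collapse, and is handled by exhibiting it as a retract of the inclusion $\wH{1}\hookrightarrow\wH{1}\aamalg{\Sigma[0,0]}\wH{1}$, which is a pushout of an augmentation generator. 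Without this section-plus-retract mechanism (or some substitute, such as a direct verification that $w^i$ induces equivalences on mapping objects into every $\Tloc$-local object), your appeal to two-out-of-three has nothing to act on, and the key acyclicity claim remains unproved.
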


\begin{proof}
For simplicity, we denote 
\[
\Delta[2]\bamalg{\Delta[1]}{i}\Delta[0] := \Delta[2] \bamalg{{\Delta[1]}}{\beta^i,s^0}\Delta[0].
\]
By \cref{sdotrespectsSegality} we know that that $\sdot Y$ is 2-Segal, so using \cref{unitalitymappingspaces} it suffices to show that for $i=0,1$ the unitality map
$$\Map^h(\Delta[1],\sdot Y)\to\Map^h(\Delta[2]\bamalg{\Delta[1]}{i}\Delta[0],\sdot Y)$$
is a weak equivalence.

For $i=0,1$, consider the following diagram, which is the image of the diagrams in \cref{unitalitymappingspaces} under $\pcat$:
\[
\begin{tikzcd}
\wW{1}\arrow[r, "\pcat\beta^i"]\arrow[d, "\pcat s^0", swap]&\wW{2}\arrow[d, "\pcat s^i"]\\
\wW{0}\arrow[r, "\pcat\alpha^i", swap]&\wW{1}.
\end{tikzcd}
\]
The pushout
$$\wW{2}\bamalg{\wW{1}}{i}\wW{0}:=\wW{2}\bamalg{\wW{1}}{\pcat\beta^i,\pcat s^0 }\wW{0}\cong\pcat(\Delta[2]\bamalg{\Delta[1]}{\beta^i,s^0}\Delta[0])$$
can be pictured as follows for $i=0$ and $i=1$, respectively: 
\begin{center}
 \begin{tikzpicture}[scale=0.7]
 \begin{scope}
     \draw (1,0) node(a00){$*$};
\draw (2,0) node(a01){$*$};
\draw (2,-1) node(a11) {$*$};
\draw[fill] (3, -1) circle (1pt) node(a12){};
\draw[fill] (3, 0) circle (1pt) node(a02){};
\draw (3,-2) node (a22){$*$};

\draw[double] (a00)--(a01);
\draw[mono] (a11)--(a12);
\draw[mono] (a01)--(a02);

\draw[double] (a01)--(a11);
\draw[epi] (a12)--(a22);
\draw[epi] (a02)--(a12);
\begin{scope}
   \draw[twoarrowlonger] (2.2,-0.2)--(2.8,-0.8);
\end{scope}
\end{scope}

 \begin{scope}[xshift=5cm]
     \draw (1,0) node(a00){$*$};
\draw[fill] (2,0) circle (1pt) node(a01){};
\draw (2,-1) node(a11) {$*$};
\draw (3, -1) node(a12){$*$};
\draw[fill] (3, 0) circle (1pt) node(a02){};
\draw (3,-2) node (a22){$*$};

\draw[mono] (a00)--(a01);
\draw[double] (a11)--(a12);
\draw[mono] (a01)--(a02);

\draw[epi] (a01)--(a11);
\draw[double] (a12)--(a22);
\draw[epi] (a02)--(a12);
\begin{scope}
   \draw[twoarrowlonger] (2.2,-0.2)--(2.8,-0.8);
\end{scope}

\draw (a22) node[xshift=0.2cm, yshift=-0.1cm]{$.$};
\end{scope}
 \end{tikzpicture}
\end{center}
Fix $i=0,1$. As the original diagram commutes, it induces a map 
$$w^i\colon\wW{2}\bamalg{\wW{1}}{i}\wW{0}\longrightarrow\wW{1}.$$

Informally speaking, the map $w^i$ identifies the two distinct objects in the respective diagram which are not in the augmentation and sends the morphism between them to the identity. 

By \cref{facycliccofibration}, the map $w^i$ has a right inverse which is a weak equivalence in $\saS_{\Tloc}$. Hence, by the two-out-of-three property it is a weak equivalence itself. Since $Y$ is assumed to be $\Tloc$-local, the induced map
$$\Map^h(\wW{1},Y)\to\Map^h(\wW{2}\bamalg{\wW{1}}{i}\wW{0},Y)$$
is a weak equivalence.   

We now can use similar arguments as in the proof of \cref{sdotrespectsSegality}. Since $Y$ is in particular injectively fibrant, we can conclude that the map of underived mapping spaces
$$\Map_{\saS}(\wW{1},Y)\to\Map_{\saS}(\wW{2}\bamalg{\wW{1}}{i}\wW{0},Y)$$
is a weak equivalence.  Then using the definition of $\wW{n}$ and the adjunction $(\pcat, \sdot)$, we can conclude that 
$$\Map_{\sS}(\Delta[1],\sdot Y)\to\Map_{\sS}(\Delta[2]\bamalg{\Delta[1]}{i}\Delta[0],\sdot Y)$$
is a weak equivalence and likewise the corresponding map on derived mapping spaces.  We conclude that $ \sdot Y$ is unital.
\end{proof}

Now that we have completed the proof of \cref{Sdotfibrant}, we will use it to prove \cref{right adjoint preserves fibrant}.

\begin{proof}[Proof of \cref{right adjoint preserves fibrant}] 
Let $Y$ be a $\Tloc$-local object and let $\decomposition$ be the decomposition of the $(n+1)$-gon into a triangle $\{0,1,2\}$ and the remaining $n$-gon or into a triangle $\{n-2,n-1,n\}$ and the remaining $n$-gon, as  in \cref{easy2segal}. Consider the map
\[ f_\decomposition \colon \Delta[\decomposition] \longrightarrow\Delta[n] \]
corresponding to this decomposition. We want to prove that $Y$ is $\{\pcat f_\decomposition\}$-local.

Since $\sdot Y$ is $\Sloc$-local by \cref{Sdotfibrant}, the map 
\[ (f_\decomposition)^*\colon\Map^h(\Delta[n],\sdot Y)\longrightarrow \Map^h(\Delta[\decomposition],\sdot Y) \]
is a weak equivalence.  Using similar manipulations between derived and underived mapping spaces and the adjunction $(\pcat, \sdot)$ as in the proofs of Propositions \ref{sdotrespectsSegality} and \ref{sdotrespectsunital}, we obtain that the map
\[ (\pcat f_\decomposition)^*\colon\Map^h(\wW{n},Y)\longrightarrow \Map^h(\wW{\decomposition},Y)\]
is a weak equivalence, as desired. The verification of unitality is similar.
\end{proof}

\subsection{Acyclicity results}\label{subsec:acyclicity}

\begin{lem} \label{facycliccofibration}
For $i=0,1$, the map $f_i\colon\wW{1}\xrightarrow{\pcat d^1} \wW{2}\to\wW{2}\bamalg{\wW{1}}{i}\wW{0},$
 whose image is displayed with red dotted arrows in
 \begin{center}
 \begin{tikzpicture}[scale=0.9]
 \begin{scope}
   \draw[red] (1,0) node(a00){$*$};
\draw (2,0) node(a01){$*$};
\draw (2,-1) node(a11) {$*$};
\draw[fill] (3, -1) circle (1pt) node(a12){};
\draw[red, fill] (3, 0) circle (1pt) node(a02){};
\draw[red] (3,-2) node (a22){$*$};

\draw[double] (a00)--(a01);
\draw[mono] (a11)--(a12);
\draw[mono] (a01)--(a02);
\draw[mono,red, densely dotted, very thick] (a00)..controls (2,0.4)..(a02);

\draw[double] (a01)--(a11);
\draw[epi] (a12)--(a22);
\draw[epi] (a02)--(a12);
\draw[epi,red, densely dotted, very thick] (a02)..controls (3.4, -0.8)..(a22);
\begin{scope}
   \draw[twoarrowlonger] (2.2,-0.2)--(2.8,-0.8);
\end{scope} 
\end{scope}

 \begin{scope}[xshift=5cm]
\draw[red] (1,0) node(a00){$*$};
\draw[fill] (2,0) circle (1pt) node(a01){};
\draw (2,-1) node(a11) {$*$};
\draw (3, -1) node(a12){$*$};
\draw[red, fill] (3, 0) circle (1pt) node(a02){};
\draw[red] (3,-2) node (a22){$*$};

\draw[mono] (a00)--(a01);
\draw[double] (a11)--(a12);
\draw[mono] (a01)--(a02);
\draw[mono, red, densely dotted, very thick] (a00)..controls (2,0.4)..(a02);

\draw[epi] (a01)--(a11);
\draw[double] (a12)--(a22);
\draw[epi] (a02)--(a12);
\draw[epi,red, densely dotted, very thick] (a02)..controls (3.4, -0.8)..(a22);
\begin{scope}
   \draw[twoarrowlonger] (2.2,-0.2)--(2.8,-0.8);
\end{scope}
\end{scope}
\end{tikzpicture}
\end{center}
 for $i=0,1$, respectively, is an acyclic cofibration in $\saS_{\Tloc}$.
\end{lem}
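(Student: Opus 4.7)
By inspection of the formulas from \cref{nerveofWn} and \cref{sectionpathconstruction}, the map $f_i$ is a levelwise monomorphism of discrete preaugmented bisimplicial sets, so by \cref{discreterespectscofibrations} it is a cofibration in the injective model structure on $\saS$, hence also in the localization $\saS_{\Tloc}$.

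\textbf{Weak equivalence --- overall plan.} To show $f_i$ is a weak equivalence in $\saS_{\Tloc}$, I plan to verify that for every $\Tloc$-local object $Y$ the induced map
\[
(f_i)^{*}\colon \Map_{\saS}(\wW{2}\bamalg{\wW{1}}{i}\wW{0},Y)\longrightarrow\Map_{\saS}(\wW{1},Y)
\]
is a weak equivalence in $\targetcat$. Since $Y$ is injectively fibrant and all sources are cofibrant, these underived mapping objects compute the derived ones. By the universal property of the pushout, the adjunction $(\pcat,\sdot)$ from \cref{notation p S}, and the identification $\pcat\Delta[n]\cong\wW{n}$ from \cref{rmkpstar(Delta)=W}, the map $(f_i)^*$ is identified with a map of the form
\[
\sdot Y_2\ttimes{\sdot Y_1}\sdot Y_0\longrightarrow\sdot Y_1,\qquad (\xi,a)\longmapsto d_1\xi,
\]
where the fibre product is formed along the face and degeneracy maps induced by $\pcat\beta^i$ and $\pcat s^0$ (namely $d_2,s_0$ for $i=0$ and $d_0,s_0$ for $i=1$). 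Recognising $\pcat d^1$ as the map defining $f_i$ gives the final $d_1$.

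\textbf{Key technical step and main obstacle.} The heart of the proof is to establish this map of fibre products is a weak equivalence using \emph{only} the fact that $Y$ is double Segal, stable and augmented --- in particular, without invoking the unital $2$-Segal property of $\sdot Y$, since such an appeal would be circular: \cref{sdotrespectsunital} relies on the present lemma. My plan is to unfold $\sdot Y_n=\Map_{\saS}(\wW{n},Y)$ and use the double Segal condition on $Y$ to decompose $\sdot Y_2$ as an iterated homotopy fibre product built from $Y_{-1}$, $Y_{0,0}$, $Y_{0,1}$, $Y_{1,0}$ and $Y_{1,1}$; stability of $Y$ then replaces each $Y_{1,1}$-factor by a span $Y_{0,1}\htimes{Y_{0,0}}Y_{1,0}$ (or a cospan, depending on $i$); and augmentation of $Y$ allows one to absorb the constraint that the relevant face of $\xi$ lie in the image of $s_0$ into a collapse of the augmentation factor $a$, producing the required equivalence with $\sdot Y_1\cong\Map_{\saS}(\wW{1},Y)$. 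The main obstacle is bookkeeping: the combinatorial identifications have to be lined up precisely with the pushout structure, and the two cases $i=0$ and $i=1$ are mirror arguments --- exchanging source with target, span with cospan, and horizontal with vertical --- which must both be worked out explicitly.
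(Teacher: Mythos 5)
Your reduction is set up correctly and is not circular: by the adjunction $(\pcat,\sdot)$ and the identification $\pcat\Delta[n]\cong\wW{n}$, showing that $f_i$ is a weak equivalence in $\saS_{\Tloc}$ is indeed equivalent to showing that for every $\Tloc$-local $Y$ the map $(\sdot Y)_2\ttimes{(\sdot Y)_1}(\sdot Y)_0\to(\sdot Y)_1$, $(\xi,a)\mapsto d_1\xi$ (pullback along $(d_2,s_0)$ for $i=0$, resp.\ $(d_0,s_0)$ for $i=1$), is a weak equivalence, and you rightly observe that you may not quote \cref{sdotrespectsunital}. But that statement \emph{is} the entire content of the lemma, and your proposal stops exactly there: the ``key technical step'' is only a plan, and the plan as described hides two concrete issues. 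First, decomposing $(\sdot Y)_2=\Map_{\saS}(\wW{2},Y)$ into an iterated homotopy fibre product of the $Y_{p,q}$ and $Y_{-1}$ is not a consequence of the double Segal condition alone: $\wW{2}$ contains three augmentation points, a square, and composite edges, so such a decomposition uses the augmentation and stability maps as well --- it is essentially the $n=2$ case of \cref{inclusionHn} (together with \cref{Hnnerve}-type Segal arguments), which you would need to invoke or reprove. Second, and more seriously, the restriction $f_i^*$ is taken along the image of $\pcat d^1$, i.e.\ along the \emph{composite} horizontal and vertical edges of $\wW{2}$, which do not lie in the subobjects (first row plus augmentations) that these decompositions naturally compare against; after replacing $\Map_{\saS}(\wW{2},Y)$ by a smaller model you must still identify what the induced map has become, and that identification is a genuine step, not bookkeeping. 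In the paper's proof this is precisely the point handled by a retract argument: the map collapsing an extra copy of $\wH{1}$ onto the composite of the two horizontal arrows of $\wW{2}$ is exhibited as a retract of the $\Tloc$-local equivalence $\wH{1}\hookrightarrow\wH{1}\aamalg{\Sigma[0,0]}\wH{1}$.

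For comparison, the paper argues covariantly rather than by testing against local objects: it factors $f_1$ as $\wW{1}\hookrightarrow\wH{1}\aamalg{\Sigma[0,0]}\wW{2}\bamalg{\wW{1}}{1}\wW{0}\to\wW{2}\bamalg{\wW{1}}{1}\wW{0}$, builds the first map by three explicit pushouts along maps coming from $\Tloc_{\text{stable}}$, $\Tloc_{\text{aug}}$ and $\Tloc_{\text{Segal}}$ (fill the cospan consisting of the vertical arrow and the identity on its target to a square, attach the horizontal augmentation of the new object, attach the horizontal composite), and handles the second map by the retract trick above. Your dual strategy can certainly be made to work, but carrying it out amounts to reproducing this same cell decomposition on the mapping-space side; as written, the decisive verification --- in particular the treatment of the composite edges --- is missing, and the attribution of which locality condition accomplishes what is not yet accurate.
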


\begin{proof}
We prove that $f_{1}$ is an acyclic cofibration; the proof for $f_0$ is similar. Write $f_1$ as the composite 
\[
\wW{1}\hookrightarrow \wH{1}\aamalg{\Sigma[0,0]}\wW{2}\bamalg{\wW{1}}{1}\wW{0}\longrightarrow\wW{2}\bamalg{\wW{1}}{1}\wW{0},
\]
depicted by
\begin{center}
\begin{tikzpicture}[scale=0.7]
\begin{scope}[red, xshift=-1cm]
 \draw (1, 0.5) node(b00){$*$};
\draw[fill] (3,0) circle (1pt) node(a02){};
\draw (3,-1) node(a12) {$*$};

\draw[mono] (b00)--(a02);

\draw[epi] (a02)--(a12);
\end{scope}

\draw (3,-1) node{$\hookrightarrow$};

 \begin{scope}[xshift=4cm]
     \draw (1,0) node(a00){$*$};
\draw[fill] (2,0) circle (1pt) node(a01){};
\draw (2,-1) node(a11) {$*$};
\draw[red] (3, -1) node(a12){$*$};
\draw[fill, red] (3, 0) circle (1pt) node(a02){};
\draw (3,-2) node (a22){$*$};

\draw[red] (1, 0.5) node(b00){$*$};

\draw[mono] (a00)--(a01);
\draw[double] (a11)--(a12);
\draw[mono] (a01)--(a02);
\draw[mono, red] (b00)--(a02);

\draw[epi] (a01)--(a11);
\draw[double] (a12)--(a22);
\draw[epi, red] (a02)--(a12);
\begin{scope}
   \draw[twoarrowlonger] (2.2,-0.2)--(2.8,-0.8);
\end{scope}
\end{scope}

\draw (8.5, -1) node{$\rightarrow$};

\begin{scope}[xshift=8cm]
     \draw (1,0) node(a00){$*$};
\draw[fill] (2,0) circle (1pt) node(a01){};
\draw (2,-1) node(a11) {$*$};
\draw (3, -1) node(a12){$*$};
\draw[fill] (3, 0) circle (1pt) node(a02){};
\draw (3,-2) node (a22){$*$};

\draw[mono] (a00)--(a01);
\draw[double] (a11)--(a12);
\draw[mono] (a01)--(a02);

\draw[epi] (a01)--(a11);
\draw[double] (a12)--(a22);
\draw[epi] (a02)--(a12);
\begin{scope}
   \draw[twoarrowlonger] (2.2,-0.2)--(2.8,-0.8);
\end{scope}

\draw (a22) node[xshift=0.2cm, yshift=-0.1cm]{$.$};
\end{scope}
 \end{tikzpicture}
\end{center}
We claim that both are weak equivalences.

The first map is obtained by the following several steps: first we fill the cospan formed by the unique vertical morphism and the identity on the target to a square. Then we add a horizontal augmentation for the newly added object. Finally, we add a horizontal composite of the augmentation map and the top vertical source in the square.  Each of these steps is implemented by taking a pushout along an acyclic cofibration, and the resulting inclusion is therefore a $\Tloc$-local equivalence.

The second map is obtained by identifying the copy of $\wH{1}$ at the top with the composition of the two horizontal maps in the copy of $\wW{2}$. It can be depicted as
\begin{center}
\begin{tikzpicture}
 \begin{scope}[xshift=3cm]
  \draw (1,0) node(a00){$*$};
\draw[fill] (2,0) circle (1pt) node(a01){};

\draw[mono] (a00)--(a01);
\draw (a01) node[xshift=0.2cm, yshift=-0.1cm]{$.$};
\end{scope}

\draw (3, 0) node{$\longrightarrow$};

\begin{scope}
  \draw (1,0) node(a00){$*$};
\draw[fill] (2,0) circle (1pt) node(a01){};
\draw (1, 0.5) node (b00){$*$};

\draw[mono] (a00)--(a01);
\draw[mono] (b00)--(a01);

\end{scope}

\end{tikzpicture}
\end{center}
It is a retract of the inclusion $\wH{1}\hookrightarrow\wH{1}\amalg_{\Sigma[0,0]}\wH{1}$
depicted by 
\begin{center}
\begin{tikzpicture}
 \begin{scope}
  \draw (1,0) node(a00){$*$};
\draw[fill] (2,0) circle (1pt) node(a01){};

\draw[mono] (a00)--(a01);
\end{scope}
\draw (3, 0) node{$\hookrightarrow$};
\begin{scope}[xshift=3cm]
  \draw (1,0) node(a00){$*$};
\draw[fill] (2,0) circle (1pt) node(a01){};
\draw (1, 0.5) node (b00){$*$};

\draw[mono] (a00)--(a01);
\draw[mono] (b00)--(a01);
\end{scope}
\end{tikzpicture}
\end{center}
which is a $\Tloc$-local equivalence. Since retracts of weak equivalences are weak equivalences, this finishes the proof.
\end{proof}

For the remainder of this section, we will prove acyclicity results for maps induced by decomposing polygons.  Let us first set some notation.

\begin{notn}
Let $\decomposition$ be the decomposition of the $(n+1)$-gon into a triangle $\{0,1,2\}$ and the remaining $n$-gon, as in \cref{easy2segal}. Denote by $\wH{\decomposition}$ the first row of $\wW{\decomposition}$, namely the preaugmented bisimplicial set given by 
$$\wH{\decomposition}:=\wW{\decomposition}\cap\wH{n}\subset\wW{n}.$$
\end{notn}
The description of $\wW{\decomposition}$ as a pushout from \cref{defWP} restricts to an isomorphism%\mrnote{added this, as it's used later}
\[ \wH{\decomposition}\cong\wH{2}\aamalg{\wH{1}}\wH{n-1}. \]

\begin{lem}
\label{sigmaopsets3}
For any decomposition $\decomposition$ as described above, the inclusion
$$\wW{\decomposition}\hookrightarrow\wW{n}$$
is an acyclic cofibration in $\saS_{\Tloc}$.
\end{lem}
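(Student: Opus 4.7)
The plan is to factor the inclusion $\wW{\decomposition}\hookrightarrow \wW{n}$ as a finite composite of pushouts along maps in $\Tloc_{\text{stable}}\cup \Tloc_{\text{Segal}}$. Each such map is an acyclic cofibration in $\saS_{\Tloc}$ by construction of the localization, and since cofibrations and acyclic cofibrations are closed under pushouts and composition, this will give the result. Note that no $\Tloc_{\text{aug}}$-pushouts are needed, since the preaugmentations of $\wW{\decomposition}$ and $\wW{n}$ already coincide.

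From the isomorphism $\wW{\decomposition}\cong \wW{2}\aamalg{\wW{1}}\wW{n-1}$ arising from $\Delta[\decomposition]=\Delta[\{0,1,2\}]\aamalg{\Delta[\{0,2\}]}\Delta[\{0,2,3,\dots,n\}]$, the cells of $\wW{n}=\pcat\Delta[n]$ missing from $\wW{\decomposition}$ are precisely those whose underlying nondegenerate simplex of $\Delta[n]$ contains both the vertex $1$ and some vertex $\geq 3$; in particular, the missing objects are $(1,j)$ for $3\le j\le n$. I would attach the missing data in two stages. \emph{Stage~1.} For $j=3,\dots,n$ in order, adjoin $(1,j)$ together with the elementary square on corners $(0,j{-}1),(0,j),(1,j{-}1),(1,j)$ via a pushout along the stability span map $\stablespan{1,1}\colon \Sigma[0,1]\aamalg{\Sigma[0,0]}\Sigma[1,0]\hookrightarrow \Sigma[1,1]$; the input span $(1,j{-}1)\twoheadleftarrow(0,j{-}1)\rightarrowtail(0,j)$ lies in $\wW{\decomposition}$ for $j=3$ and is produced by the preceding step for $j>3$. \emph{Stage~2.} Attach all remaining missing cells -- composite horizontal and vertical arrows involving the new objects, composite and interior squares, and higher-dimensional $(q,r)$-cells involving some $(1,j)$ -- by further pushouts along Segal maps $\segalhor{q,r}$, $\segalver{q,r}$ and higher stability maps $\stablespan{q,r}$, $\stablecospan{q,r}$, ordered by increasing bidegree so that each attaching map lands in the already-constructed subobject.

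The main obstacle is the combinatorial bookkeeping in Stage~2: one must arrange the pushouts so that every attaching map has its domain inside the already-constructed subobject, and so that no cell is introduced twice. In particular, interior squares of $\wW{n}$ whose four corners are all already present after Stage~1 require a delicate combination of Segal pushouts (together, if needed, with a retract argument in the spirit of \cref{facycliccofibration}), since a naive stability pushout would duplicate an existing corner. A cleaner alternative is induction on $n$: the base case $n=3$ is verified directly by a single pushout along $\stablespan{1,1}$ (which produces exactly the unique missing object $(1,3)$, the two missing arrows, and the square $\{0,1,2,3\}$), and the inductive step proceeds by factoring $\wW{\decomposition}\hookrightarrow \wW{n}$ through an intermediate inclusion coming from a one-diagonal refinement of $\decomposition$ on the $n$-gon side.
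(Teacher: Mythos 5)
There is a genuine gap. Your Stage~2 is exactly where the substance of the lemma lies, and you leave it unresolved: the cells of $\wW{n}$ missing from $\wW{\decomposition}$ are not only the objects $(1,j)$, the elementary squares and their boundaries, but \emph{all} cells of $\pcat\Delta[n]$ whose underlying simplex of $\Delta[n]$ meets both $1$ and some vertex $\ge 3$ -- in particular composite arrows and composite/grid cells whose pieces are already present. This also makes the base case of your ``cleaner alternative'' false as stated: for $n=3$ a single pushout along $\stablespan{1,1}$ (attached along the span $(1,2)\twoheadleftarrow(0,2)\rightarrowtail(0,3)$) adds only the object $(1,3)$, the arrows $(1,2)\mono(1,3)$ and $(0,3)\epi(1,3)$, and the square with corners $(0,2),(0,3),(1,2),(1,3)$. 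It does not produce, for instance, the horizontal arrow $(0,1)\mono(0,3)$ (the cell $(0,1,3)$ in bidegree $(0,1)$), the arrows $(1,1)\mono(1,3)$, $(1,3)\epi(2,3)$, $(1,3)\epi(3,3)$, the squares $(0,1,1,3)$ and $(1,2,2,3)$, or the bidegree-$(0,2)$ cell $(0,1,2,3)$ encoding the chain $(0,1)\mono(0,2)\mono(0,3)$ -- all of which lie in $\wW{3}$ but not in $\wW{\decomposition}$, and which would require further Segal-type (and interchange-type) pushouts. The inductive step via a ``one-diagonal refinement'' is likewise only gestured at (note that refining $\decomposition$ produces a \emph{smaller} subobject of $\wW{n}$, so it is not clear how the induction is supposed to run). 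Finally, even granting a list of pushouts, one must still verify that the resulting colimit is $\wW{n}$ itself rather than merely an object mapping to it.

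For comparison, the paper does not factor $\wW{\decomposition}\hookrightarrow\wW{n}$ directly. It places the map in the commutative square of inclusions with $\wH{\decomposition}\hookrightarrow\wH{n}$ on the left and $\wH{n}\hookrightarrow\wW{n}$ on the bottom, shows the left map is an acyclic cofibration (\cref{Hnnerve}), shows $\wH{n}\hookrightarrow\wW{n}$ is an acyclic cofibration by an explicit filtration $\wH{n}\subseteq\filt^{(0)}\subseteq\cdots\subseteq\filt^{(n)}\cong\wW{n}$ built from stability, Segal, interchange, and augmentation pushouts (\cref{inclusionHn}), deduces that $\wH{\decomposition}\hookrightarrow\wW{\decomposition}$ is a weak equivalence from the homotopy-pushout descriptions $\wH{\decomposition}\cong\wH{2}\amalg_{\wH{1}}\wH{n-1}$ and $\wW{\decomposition}\cong\wW{2}\amalg_{\wW{1}}\wW{n-1}$, and concludes by two-out-of-three. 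The combinatorial bookkeeping you correctly identify as the obstacle is thus done once, for the single inclusion $\wH{n}\hookrightarrow\wW{n}$, rather than for the more intricate inclusion $\wW{\decomposition}\hookrightarrow\wW{n}$; if you want to rescue your approach, routing it through these two auxiliary inclusions is the efficient way to do so.
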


The proof of \cref{sigmaopsets3} boils down to the following two technical results and their respective duals, whose proofs are similar.

\begin{lem}
\label{Hnnerve}
Let $\decomposition$ be the decomposition of the $(n+1)$-gon into a triangle $\{0,1,2\}$ and the remaining $n$-gon. The inclusion
$\wH{\decomposition}\hookrightarrow\wH{n}$
is an acyclic cofibration in $\saS_{\Tloc}$.
\end{lem}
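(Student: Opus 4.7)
The plan is to exhibit $\wH{\decomposition}\hookrightarrow\wH{n}$ as the bottom edge of a commutative square whose other three sides are $\Tloc$-acyclic cofibrations, and conclude by two-out-of-three. The inclusion is automatically a cofibration since it is a levelwise monomorphism of preaugmented bisimplicial objects, so the content is entirely in being a weak equivalence.

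I would work with the square
\[
\begin{tikzcd}
\Sigma[0,1]\aamalg{\Sigma[0,0]}\Sigma[0,n-2] \arrow[r] \arrow[d] & \Sigma[0,n-1] \arrow[d, "\aughor{n}"] \\
\wH{\decomposition} \arrow[r, hook] & \wH{n},
\end{tikzcd}
\]
in which the top arrow is the natural Segal-type comparison identifying the spine as the subchain $1\to 2\to \cdots \to n$, the right arrow is $\aughor{n}\in\Tloc_{\text{aug}}$, and the left arrow is the map of pushouts induced by the three $\Tloc$-acyclic cofibrations $\aughor{1}, \aughor{2}, \aughor{n-1}$ applied to the presentation $\wH{\decomposition}=\wH{2}\aamalg{\wH{1}}\wH{n-1}$. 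Commutativity is immediate from naturality of the augmentation construction. The right vertical is a $\Tloc$-acyclic cofibration by construction.

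For the left vertical I would invoke the gluing lemma in the left proper model category $\saS_{\Tloc}$: the three augmentation maps are $\Tloc$-acyclic cofibrations, the defining arrows $\wH{1}\to\wH{2}$ and $\wH{1}\to \wH{n-1}$ of the pushouts are cofibrations (levelwise monomorphisms), and therefore the induced map between the pushouts $\Sigma[0,1]\aamalg{\Sigma[0,0]}\Sigma[0,n-2]\to \wH{\decomposition}$ is also a $\Tloc$-acyclic cofibration.

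The main obstacle is to show the top arrow is a $\Tloc$-acyclic cofibration: this is a Segal-type comparison with $\inda = 0$, outside the range of the explicit generators in $\Tloc_{\text{Segal}}$, so it has to be derived from the other maps in $\Tloc$. My strategy is to combine the Segal maps $\segalhor{2,n-1}\in\Tloc_{\text{Segal}}$ with the stability maps $\stablespan{2,\bullet}\in\Tloc_{\text{stable}}$ to first obtain a $\Tloc$-acyclic cofibration $\Sigma[2,0]\aamalg{\Sigma[0,0]}(\Sigma[0,1]\aamalg{\Sigma[0,0]}\Sigma[0,n-2])\to \Sigma[2,0]\aamalg{\Sigma[0,0]}\Sigma[0,n-1]$, and then to iteratively push out along the augmentation acyclic cofibrations $\augver{2}$ and $\augver{1}$ (each replacing $\Sigma[\inda,0]$ by $\Sigma[\inda-1,0]$ after coproduct with $\Sigma[-1]$) in order to cancel the $\Sigma[2,0]$ factor. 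Once this is in place, two-out-of-three applied to the square yields the weak equivalence $\wH{\decomposition}\simeq_{\Tloc}\wH{n}$, as required.
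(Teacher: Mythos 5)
Your reduction is sound and genuinely different from the paper's in how it disposes of the augmentation. The paper first asserts that the generalized Segal maps $(\bchainhord,\echainhord)\colon\Sigma[0,k]\aamalg{\Sigma[0,0]}\Sigma[0,n-k]\to\Sigma[0,n]$ are $\Tloc$-local equivalences, uses two-out-of-three to deduce that $\Sigma[0,2]\aamalg{\Sigma[0,1]}\Sigma[0,n-1]\to\Sigma[0,n]$ is an acyclic cofibration, and then exhibits $\wH{\decomposition}\hookrightarrow\wH{n}$ as a pushout of that map along the attachment of the augmentation point. You instead strip the augmentation off each corner using the maps $\aughor{\indb}\in\Tloc_{\text{aug}}$ and conclude by the gluing lemma and two-out-of-three; your square does commute, and the left vertical map is a weak equivalence for the reason you give (all objects cofibrant, span legs cofibrations). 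The crux is then the same in both arguments: a spine decomposition of the zeroth row, in your case $\Sigma[0,1]\aamalg{\Sigma[0,0]}\Sigma[0,n-2]\to\Sigma[0,n-1]$, which, as you correctly observe, is not among the generators of $\Tloc$.

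The gap is in your derivation of that crux. The map $\segalhor{2,n-1}$ presents $\Sigma[2,n-1]$ as an iterated pushout of copies of $\Sigma[2,1]$ glued along \emph{both} column inclusions $\Sigma[2,0]\rightrightarrows\Sigma[2,1]$, whereas the stability span $\stablespan{2,1}\colon\Sigma[0,1]\aamalg{\Sigma[0,0]}\Sigma[2,0]\to\Sigma[2,1]$ contains only the $0$th column. The column along which the next copy of $\Sigma[2,1]$ is attached is therefore not in the image of the span, so the pieces do not assemble into the single pushout $\Sigma[2,0]\aamalg{\Sigma[0,0]}(\Sigma[0,1]\aamalg{\Sigma[0,0]}\Sigma[0,n-2])$, and your intermediate acyclic cofibration is not constructed. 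The subsequent ``cancellation'' of the $\Sigma[2,0]$ factor is also not formal: knowing that a map becomes a $\Tloc$-equivalence after forming a pushout with $\Sigma[2,0]$ over $\Sigma[0,0]$ does not yield the equivalence before the pushout, and the maps $\augver{\inda}$ attach $\Sigma[-1]$ at the \emph{target} of the vertical chain while simultaneously deleting its last arrow, so they do not simply remove the $\Sigma[2,0]$ factor glued at the source vertex. What is actually needed here --- and what the paper's phrase ``a standard argument for Segal objects'' is also eliding --- is the statement that for a $\Tloc$-local $Y$ the row $Y_{0,\bullet}$ is itself a Segal object; this is proved by combining the Segal condition on $Y_{1,\bullet}$, stability, and the augmentation equivalences of \cref{babyaugmentation}, after which the generalized Segal maps for $\Sigma[0,\bullet]$ follow by the usual induction. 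With that statement supplied, the rest of your argument goes through.
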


\begin{proof}
We want to prove that the canonical inclusion $\wH{\decomposition}\hookrightarrow\wH{n}$ is an acyclic cofibration by realizing it as a pushout along an acyclic cofibration.  First, a standard argument for Segal objects can be used to show that the generalized Segal maps
$$(\bchainhord,\echainhord)\colon \Sigma[0,k]\aamalg{\Sigma[0,0]}\Sigma[0,n-k] \longrightarrow \Sigma[0,n]$$
are $\Tloc$-local equivalences.

Then we consider $\Sigma[0,n]$, and observe that it contains a copy of $\Sigma[0,n-1]$, highlighted in red, and a copy of $\Sigma[0,2]$ given highlighted in dashed blue, which overlap on a copy of $\Sigma[0,1]$,
\tikzset{firststyle/.style={very thick, red}}
\tikzset{secondstyle/.style={dashed, very thick, blue}}
\[
\begin{tikzpicture}[scale=0.95, font=\footnotesize]
\begin{scope}[outer sep=0.3, inner sep=0.4]
     \draw[fill, blue] (1,0) circle (1pt) node(a00){};
\draw[fill,blue]   (2,0) circle (1pt)   node(a01){};
\draw[fill, firststyle]   (3, 0)  circle (1pt) node (a02){};
\draw[firststyle]   (4, 0)  node(a03){$\ldots$};
\draw[fill, firststyle]   (5, 0)  circle (1pt) node (a04){};

\draw[mono, secondstyle] (a00)--(a01);
\draw[mono, secondstyle] (a01)--(a02);
\draw[mono, firststyle] (a02)--(a03);

\draw[mono, firststyle] (a03)--(a04);

\draw[mono, firststyle] (a00)..controls (2, 0.5)..(a02);
\draw[mono, secondstyle] (a00)..controls (2, 0.5)..(a02);
\draw (a04) node[xshift=0.1cm, yshift=-0.1cm]{$.$};
\end{scope}
\end{tikzpicture}
\]
The induced morphism
$$ \Sigma[0,2]\aamalg{\Sigma[0,1]}\Sigma[0,n-1] \longrightarrow \Sigma[0,n]$$
is an acyclic cofibration, as can be observed from the commutative diagram \begin{center}
\begin{tikzcd}[scale=0.6]
\Sigma[0,2]\aamalg{\Sigma[0,1]}\Sigma[0,n-1]\arrow[r]&\Sigma[0,n]\\
\arrow[u,hook, "\id\sqcup {(\bchainhord,\echainhord)}" swap, "\simeq"]
\Sigma[0,2]\aamalg{\Sigma[0,1]}
\Sigma[0,1]\aamalg{\Sigma[0,0]}\Sigma[0,n-2]\arrow[r, "\cong"] &\Sigma[0,2]\aamalg{\Sigma[0,0]}\Sigma[0,n-2],\arrow[u, hook, "\simeq" swap, "{(\bchainhord,\echainhord)}"]
\end{tikzcd}
\end{center}
where the vertical morphisms are induced by generalized Segal maps in the second variable and the bottom map is a canonical isomorphism.

Recall that $\wH{\decomposition}\cong \wH{2}\amalg_{\wH{1}}\wH{n-1}$. Since there is an inclusion $\Sigma[0,k] \hookrightarrow H[k]= \Sigma[0,k]\amalg_{\Sigma[0,0]} H[0]$ for every $k$, these maps induce the top horizontal map in the following pushout square
\begin{center}
\begin{tikzcd}[scale=0.6]
\Sigma[0,2]\aamalg{\Sigma[0,1]}\Sigma[0,n-1]\arrow[r]\arrow[d, hook]&\wH{2}\aamalg{\wH{1}}\wH{n-1} \arrow[d, hook]\\
\Sigma[0,n]\arrow[r]&\wH{n}.
\end{tikzcd}
\end{center}
Note that both horizontal maps are the identity on the underlying bisimplicial subsets of $\Sigma[0,n]$. In particular, the canonical inclusion $\wH{\decomposition}\hookrightarrow\wH{n}$ is the pushout of an acyclic cofibration, and hence it is an acyclic cofibration itself.
\end{proof}

The second fact we need is an analogue
of \cite[Lemma 2.4.9]{DK}.

\begin{lem}
\label{inclusionHn}
The inclusion $\wH{n}\hookrightarrow\wW{n}$ is an acyclic cofibration $\saS_{\Tloc}$. 
\end{lem}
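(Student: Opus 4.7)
The plan is to exhibit the inclusion $\wH{n}\hookrightarrow\wW{n}$ as a transfinite composition of pushouts along maps in $\Tloc$, all of which are acyclic cofibrations in $\saS_{\Tloc}$ by construction. I proceed by induction on $n$: the case $n=0$ is trivial, since $\wH{0}=\Sigma[-1]=\wW{0}$, while the case $n=1$ is a single pushout of $\wH{1}$ along the augmentation generator $\augver{1}\colon\Sigma[0,0]\to\wV{1}$ in $\Tloc_{\text{aug}}$, attached at the vertex $(0,1)$.

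For the inductive step, I build $\wW{n}$ from $\wH{n}$ by alternating augmentation and stability pushouts, followed by a round of Segal pushouts. First, attach a copy of $\wV{1}$ at each vertex $(0,j)$ of $\wH{n}$ for $j=1,\dots,n$ via pushouts along $\augver{1}$; this introduces the augmented diagonal objects $(j,j)$ together with the vertical augmentation arrows $(0,j)\epi(j,j)$. Then, for each pair $0\le j_1<j_2\le n$, the horizontal arrow $(0,j_1)\mono(0,j_2)$ and the vertical arrow $(0,j_1)\epi(j_1,j_1)$ assemble into a map $\Sigma[0,j_2-j_1]\amalg_{\Sigma[0,0]}\Sigma[1,0]\to F$ out of the current stage $F$; pushing out along the stability generator $\stablespan{1,j_2-j_1}\in\Tloc_{\text{stable}}$ fills this span into a $\Sigma[1,j_2-j_1]$-block, adjoining the objects $(j_1,k)$ for $j_1<k\le j_2$, the horizontal arrows in row~$j_1$, the vertical arrows connecting row~$0$ to row~$j_1$, and the squares between them. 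Iterating this scheme---adjoining further augmentation arrows at the newly added vertices $(j_1,k)$ and filling further spans---eventually produces all of the objects, arrows, squares, and augmentation data of $\wW{n}$. Finally, Segal pushouts along $\segalhor{\inda,\indb}$ and $\segalver{\inda,\indb}$ in $\Tloc_{\text{Segal}}$ realize the higher $(\inda\times\indb)$-grids of composable squares that appear in $\wW{n}=N^a(\sW{n})$.

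Each step is a pushout along a map (or a coproduct of maps) in $\Tloc$, and hence an acyclic cofibration in $\saS_{\Tloc}$. Since acyclic cofibrations are stable under pushout and transfinite composition, the inclusion $\wH{n}\hookrightarrow\wW{n}$ is itself an acyclic cofibration.

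The main technical obstacle lies in the combinatorial bookkeeping of the second stage above: one must verify that the iterated pushouts terminate at exactly $\wW{n}$, with neither missing nor duplicated cells, and that the various stability pushouts can be sequenced coherently. This is ultimately controlled by the uniqueness clauses built into the augmentation and stability properties of the double category $\sW{n}$ (cf.\ \cref{definitionWn}): every span and cospan has a unique completion in $\sW{n}$, and every object has a unique augmented arrow to and from the augmentation, which ensures that each attachment lands at a well-defined subobject and that the cumulative construction recovers $\wW{n}\cong N^a(\sW{n})$ from \cref{nerveofWnnerve}.
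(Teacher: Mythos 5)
Your overall plan—realizing $\wH{n}\hookrightarrow\wW{n}$ as a composite of pushouts along maps in $\Tloc$—is exactly the strategy of the paper, but your write-up stops where the actual proof begins. The whole content of this lemma \emph{is} the combinatorial bookkeeping you defer as ``the main technical obstacle'': one must specify an explicit filtration, and at every stage check (a) that the domain of the attaching map is already contained in the current stage, (b) that the pushout is computed as a union inside $\wW{n}$, i.e.\ no cells are created beyond the intended sub-presheaf, and (c) that the colimit exhausts $\wW{n}\cong N^a(\sW{n})$. The uniqueness clauses of stability and augmentation in the double category $\sW{n}$ only tell you where a filler lands; they do not show that your chosen sequence of pushouts neither overshoots nor undershoots $\wW{n}$, and appealing to them does not replace that verification. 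The paper carries this out by induction on $n$ with a filtration $\wH{n}\subseteq\filt^{(0)}\subseteq\dots\subseteq\filt^{(n)}\cong\wW{n}$: first glue in $\wW{n-1}$ along $\wH{n-1}$ (inductive hypothesis), then adjoin the last column one square at a time, with each step written as an explicit pushout.

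Two concrete points where your sketch as stated would fail. First, you allow $j_1=0$, where the ``span'' uses the degenerate vertical arrow $(0,0)\epi(0,0)$; pushing out along $\stablespan{1,j_2-j_1}$ with such an attaching map adjoins a nondegenerate square whose image in $\wW{n}$ is degenerate, so the stage is no longer a subobject of $\wW{n}$ and the colimit cannot be $\wW{n}$. Second, filling spans and then performing ``Segal pushouts'' along the literal generators $\segalhor{\inda,\indb}$, $\segalver{\inda,\indb}$ is not enough: $\wW{n}$ contains horizontal and vertical composites of each new square with pre-existing ones, and its $(\inda\times\indb)$-grids overlap along sub-grids, so a naive pushout along a Segal generator duplicates the shared cells unless one glues along the full intersection. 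This is precisely why the paper's inner filtration has four sub-steps—complete a span to a square via $\stablespan{1,1}$, adjoin horizontal composites via a pushout along $\Sigma[1,n-m-1]\aamalg{\Sigma[1,0]}\Sigma[1,1]\to\Sigma[1,n-m]$, adjoin vertical composites similarly, and then push out with $\Sigma[m,n-m]$ along the intersection to enforce interchange—where the composite-adjoining maps are \emph{not} elements of $\Tloc$ and must separately be shown to be acyclic cofibrations (generalized Segal maps, as in the proof of \cref{Hnnerve}). Without fixing such an order of attachment and carrying out these checks, your argument is an outline of the statement rather than a proof of it.
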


\begin{proof} 
We prove by induction on $n\ge0$ that $\wW{n}$ can be built out of $\wH{n}$ by taking pushouts along basic acyclic cofibrations. When $n=0$, the inclusion $\wH{0} \hookrightarrow \wW{0}$ is the identity of $\Sigma[-1]$, so there is nothing to prove.

Now suppose $n>0$.  Define a filtration of preaugmented bisimplicial sets 
$$\wH{n}\subseteq\filt^{(0)}\subseteq\filt^{(1)}\subseteq\dots\subseteq\filt^{(n)}
\cong\wW{n}$$  
in which all the inclusions are acyclic cofibrations. For $n=4$, this filtration can be depicted as follows:
\begin{center} 
\begin{tikzpicture}[scale=0.7]
\begin{scope}
     \draw (1,0) node(a00){$*$};
\draw[fill] (2,0) circle (1pt) node(a01){};
\draw[fill] (3, 0) circle (1pt) node(a02){};
\draw[fill] (4, 0) circle (1pt) node (a03){};
\draw[fill] (5, 0) circle (1pt) node (a04){};
\draw[mono] (a00)--(a01);
\draw[mono] (a01)--(a02);
\draw[mono] (a02)--(a03);
\draw[mono] (a03)--(a04);
\end{scope}

\draw[right hook->, thick] (5.5, 0)--(6.5, 0) node[anchor=north east] (j1){{\scriptsize $j_0$}};

 \begin{scope}[xshift=6cm]
     \draw (1,0) node(a00){$*$};
\draw[fill] (2,0) circle (1pt) node(a01){};
\draw (2,-1) node(a11) {$*$};
\draw[fill] (3, -1) circle (1pt) node(a12){};
\draw[fill] (3, 0) circle (1pt) node(a02){};
\draw (3,-2) node (a22){$*$};
\draw[fill] (4, 0) circle (1pt) node (a03){};
\draw[fill] (4, -1) circle (1pt) node (a13){};
\draw[fill](4, -2) circle (1pt) node (a23){};
\draw (4, -3) node (a33){$*$};
\draw[fill] (5, 0) circle (1pt) node (a04){};

\draw[mono] (a00)--(a01);
\draw[mono] (a11)--(a12);
\draw[mono] (a01)--(a02);
\draw[mono] (a02)--(a03);
\draw[mono] (a12)--(a13);
\draw[mono] (a22)--(a23);
\draw[mono] (a03)--(a04);

\draw[epi] (a01)--(a11);
\draw[epi] (a12)--(a22);
\draw[epi] (a02)--(a12);
\draw[epi] (a03)--(a13);
\draw[epi] (a13)--(a23);
\draw[epi] (a23)--(a33);

\begin{scope}[yshift=-0.3cm]
   \draw[twoarrowlonger] (2.2,0.1)--(2.8,-0.5);
   \draw[twoarrowlonger] (3.2,0.1)--(3.8,-0.5);
   \draw[twoarrowlonger] (3.2,-0.9)--(3.8,-1.5);
\end{scope}
\end{scope}

\draw[right hook->, thick] (11.5, 0)--(12.5, 0) node[anchor=north east] (j2){{\scriptsize $j_1$}};

\begin{scope}[xshift=12cm]
     \draw (1,0) node(a00){$*$};
\draw[fill] (2,0) circle (1pt) node(a01){};
\draw (2,-1) node(a11) {$*$};
\draw[fill] (3, -1) circle (1pt) node(a12){};
\draw[fill] (3, 0) circle (1pt) node(a02){};
\draw (3,-2) node (a22){$*$};
\draw[fill] (4, 0) circle (1pt) node (a03){};
\draw[fill] (4, -1) circle (1pt) node (a13){};
\draw[fill](4, -2) circle (1pt) node (a23){};
\draw (4, -3) node (a33){$*$};
\draw[fill] (5, 0) circle (1pt) node (a04){};
\draw[fill] (5, -1)circle (1pt) node (a14){};

\draw[mono] (a00)--(a01);
\draw[mono] (a11)--(a12);
\draw[mono] (a01)--(a02);
\draw[mono] (a02)--(a03);
\draw[mono] (a12)--(a13);
\draw[mono] (a22)--(a23);
\draw[mono] (a03)--(a04);
\draw[mono] (a13)--(a14);

\draw[epi] (a01)--(a11);
\draw[epi] (a12)--(a22);
\draw[epi] (a02)--(a12);
\draw[epi] (a03)--(a13);
\draw[epi] (a13)--(a23);
\draw[epi] (a23)--(a33);
\draw[epi] (a04)--(a14);

\begin{scope}[yshift=-0.3cm]
   \draw[twoarrowlonger] (2.2,0.1)--(2.8,-0.5);
   \draw[twoarrowlonger] (3.2,0.1)--(3.8,-0.5);
   \draw[twoarrowlonger] (3.2,-0.9)--(3.8,-1.5);
   \draw[twoarrowlonger] (4.2,0.1)--(4.8,-0.5);

\end{scope}
\end{scope}

\draw[right hook->, thick] (17.5, 0)--(18.5, 0) node[anchor=north east] (j3){{\scriptsize $j_2$}};

\begin{scope}[yshift=-5cm]
     \draw (1,0) node(a00){$*$};
\draw[fill] (2,0) circle (1pt) node(a01){};
\draw (2,-1) node(a11) {$*$};
\draw[fill] (3, -1) circle (1pt) node(a12){};
\draw[fill] (3, 0) circle (1pt) node(a02){};
\draw (3,-2) node (a22){$*$};
\draw[fill] (4, 0) circle (1pt) node (a03){};
\draw[fill] (4, -1) circle (1pt) node (a13){};
\draw[fill](4, -2) circle (1pt) node (a23){};
\draw (4, -3) node (a33){$*$};
\draw[fill] (5, 0) circle (1pt) node (a04){};
\draw[fill] (5, -1)circle (1pt) node (a14){};
\draw[fill] (5, -2)circle (1pt) node (a24){};

\draw[mono] (a00)--(a01);
\draw[mono] (a11)--(a12);
\draw[mono] (a01)--(a02);
\draw[mono] (a02)--(a03);
\draw[mono] (a12)--(a13);
\draw[mono] (a22)--(a23);
\draw[mono] (a03)--(a04);
\draw[mono] (a13)--(a14);
\draw[mono] (a23)--(a24);

\draw[epi] (a01)--(a11);
\draw[epi] (a12)--(a22);
\draw[epi] (a02)--(a12);
\draw[epi] (a03)--(a13);
\draw[epi] (a13)--(a23);
\draw[epi] (a23)--(a33);
\draw[epi] (a04)--(a14);
\draw[epi] (a14)--(a24);

\begin{scope}[yshift=-0.3cm]
   \draw[twoarrowlonger] (2.2,0.1)--(2.8,-0.5);
   \draw[twoarrowlonger] (3.2,0.1)--(3.8,-0.5);
   \draw[twoarrowlonger] (3.2,-0.9)--(3.8,-1.5);
   \draw[twoarrowlonger] (4.2,0.1)--(4.8,-0.5);
 \draw[twoarrowlonger] (4.2,-0.9)--(4.8,-1.5);
\end{scope}
\end{scope}

\draw[right hook->, thick] (5.5, -5)--(6.5, -5) node[anchor=north east] (j4){{\scriptsize $j_3$}};

\begin{scope}[xshift=6cm, yshift=-5cm]
     \draw (1,0) node(a00){$*$};
\draw[fill] (2,0) circle (1pt) node(a01){};
\draw (2,-1) node(a11) {$*$};
\draw[fill] (3, -1) circle (1pt) node(a12){};
\draw[fill] (3, 0) circle (1pt) node(a02){};
\draw (3,-2) node (a22){$*$};
\draw[fill] (4, 0) circle (1pt) node (a03){};
\draw[fill] (4, -1) circle (1pt) node (a13){};
\draw[fill](4, -2) circle (1pt) node (a23){};
\draw (4, -3) node (a33){$*$};
\draw[fill] (5, 0) circle (1pt) node (a04){};
\draw[fill] (5, -1)circle (1pt) node (a14){};
\draw[fill] (5, -2)circle (1pt) node (a24){};
\draw[fill] (5, -3)circle(1pt) node (a34){};

\draw[mono] (a00)--(a01);
\draw[mono] (a11)--(a12);
\draw[mono] (a01)--(a02);
\draw[mono] (a02)--(a03);
\draw[mono] (a12)--(a13);
\draw[mono] (a22)--(a23);
\draw[mono] (a03)--(a04);
\draw[mono] (a13)--(a14);
\draw[mono] (a23)--(a24);
\draw[mono] (a33)--(a34);

\draw[epi] (a01)--(a11);
\draw[epi] (a12)--(a22);
\draw[epi] (a02)--(a12);
\draw[epi] (a03)--(a13);
\draw[epi] (a13)--(a23);
\draw[epi] (a23)--(a33);
\draw[epi] (a04)--(a14);
\draw[epi] (a14)--(a24);
\draw[epi] (a24)--(a34);

\begin{scope}[yshift=-0.3cm]
   \draw[twoarrowlonger] (2.2,0.1)--(2.8,-0.5);
   \draw[twoarrowlonger] (3.2,0.1)--(3.8,-0.5);
   \draw[twoarrowlonger] (3.2,-0.9)--(3.8,-1.5);
   \draw[twoarrowlonger] (4.2,0.1)--(4.8,-0.5);
   \draw[twoarrowlonger] (4.2,-1.9)--(4.8,-2.5);
   \draw[twoarrowlonger] (4.2,-0.9)--(4.8,-1.5);
\end{scope}
\end{scope}

\draw[right hook->, thick] (11.5, -5)--(12.5, -5) node[anchor=north east] (j5){{\scriptsize $j_4$}};
\begin{scope}[xshift=12cm, yshift=-5cm]
     \draw (1,0) node(a00){$*$};
\draw[fill] (2,0) circle (1pt) node(a01){};
\draw (2,-1) node(a11) {$*$};
\draw[fill] (3, -1) circle (1pt) node(a12){};
\draw[fill] (3, 0) circle (1pt) node(a02){};
\draw (3,-2) node (a22){$*$};
\draw[fill] (4, 0) circle (1pt) node (a03){};
\draw[fill] (4, -1) circle (1pt) node (a13){};
\draw[fill](4, -2) circle (1pt) node (a23){};
\draw (4, -3) node (a33){$*$};
\draw[fill] (5, 0) circle (1pt) node (a04){};
\draw[fill] (5, -1)circle (1pt) node (a14){};
\draw[fill] (5, -2)circle (1pt) node (a24){};
\draw[fill] (5, -3)circle(1pt) node (a34){};
\draw (5, -4) node (a44){$*$};
%horizontal
\draw[mono] (a00)--(a01);
\draw[mono] (a11)--(a12);
\draw[mono] (a01)--(a02);
\draw[mono] (a02)--(a03);
\draw[mono] (a12)--(a13);
\draw[mono] (a22)--(a23);
\draw[mono] (a03)--(a04);
\draw[mono] (a13)--(a14);
\draw[mono] (a23)--(a24);
\draw[mono] (a33)--(a34);
%vertical
\draw[epi] (a01)--(a11);
\draw[epi] (a12)--(a22);
\draw[epi] (a02)--(a12);
\draw[epi] (a03)--(a13);
\draw[epi] (a13)--(a23);
\draw[epi] (a23)--(a33);
\draw[epi] (a04)--(a14);
\draw[epi] (a14)--(a24);
\draw[epi] (a24)--(a34);
\draw[epi] (a34)--(a44);
\draw (a44) node[xshift=0.1cm, yshift=-0.1cm]{$.$};
\begin{scope}[yshift=-0.3cm]
   \draw[twoarrowlonger] (2.2,0.1)--(2.8,-0.5);
   \draw[twoarrowlonger] (3.2,0.1)--(3.8,-0.5);
   \draw[twoarrowlonger] (3.2,-0.9)--(3.8,-1.5);
   \draw[twoarrowlonger] (4.2,0.1)--(4.8,-0.5);
   \draw[twoarrowlonger] (4.2,-1.9)--(4.8,-2.5);
   \draw[twoarrowlonger] (4.2,-0.9)--(4.8,-1.5);
\end{scope}
\end{scope}
\end{tikzpicture}
\end{center}
Let us describe this filtration more precisely.
\begin{itemize}
\item Define $\filt^{(0)}$ to be the preaugmented bisimplicial set obtained from $\wH{n}$ by adjoining a copy of $\wW{n-1}$ along the first $n-1$ arrows.  More precisely $\filt^{(0)}$ can be written as a pushout
$$\filt^{(0)}:=\wW{n-1}\aamalg{\wH{n-1}}\wH{n},$$
which is depicted as in the codomain of the map $j_0$ above when $n=4$.
The inclusion of $\wH{n}$ into $\filt^{(0)}$ can be written in the form
$$\quad\quad\quad\wH{n}\cong\wH{n-1}\aamalg{\wH{n-1}}\wH{n}\hookrightarrow\wW{n-1}\aamalg{\wH{n-1}}\wH{n}=\filt^{(0)},$$
which is an acyclic cofibration by our inductive hypothesis. 
	
\item For each $1 \leq m \leq n-1$, we obtain $\filt^{(m)}$  from $\filt^{(m-1)}$ by adjoining a new square and all horizontal and vertical composites with pre-existing squares.  When $n=4$, this process can be visualized via the maps $j_1$, $j_2$, and $j_3$ in the diagram above.

In order to build $\filt^{(m)}$ explicitly, we need several steps: (1) complete a span to a square, (2) add its horizontal composites, (3) add its vertical composites, and (4) make sure that the interchange law holds. We use a further filtration 
\begin{alignat*}{2}
\hphantom{\filt\subseteq}&\filt^{(m-1)}\subseteq\filt^{(m-1,1)}\subseteq\filt^{(m-1,2)}\subseteq \filt^{(m-1,3)}\subseteq\filt^{(m-1,4)}=:\filt^{(m)},
\end{alignat*}
to encode these steps, in such a way that the inclusions are all acyclic cofibrations.  We now describe this filtration more precisely. \begin{enumerate}
\item To obtain $\filt^{(m-1,1)}$ from $\filt^{(m-1)}$, we complete the relevant span to a square, obtaining the $m$-th square in the last column. (We refer back to the above diagrams to identify the appropriate spans to the right.)  More precisely, it is given as a pushout
$$\filt^{(m-1,1)}:=\filt^{(m-1)}\aamalg{\Sigma[0,1]\aamalg{\Sigma[0,0]}\Sigma[1,0]}\Sigma[1,1],$$
which implies that the inclusion 
$$\filt^{(m-1)}\hookrightarrow\filt^{(m-1,1)}$$
is an acyclic cofibration.

\item To obtain $\filt^{(m-1,2)}$ from $\filt^{(m-1,1)}$, we want to include all horizontal composites involving the new square. This step can be described as a pushout
$$\quad\quad\quad\quad\quad\filt^{(m-1,2)}:=\filt^{(m-1,1)}\aamalg{\Sigma[1,n-m-1]\aamalg{\Sigma[1,0]}\Sigma[1,1]}\Sigma[1,n-m],$$
and the inclusion 
$$\filt^{(m-1,1)}\hookrightarrow\filt^{(m-1,2)}$$
is therefore an acyclic cofibration.

\item Similarly, we obtain $\filt^{(m-1,3)}$ from $\filt^{(m-1,2)}$ by adding all vertical composites involving the new square, a process 
given by the pushout
$$\quad\quad\quad\quad\filt^{(m-1,3)}:=\filt^{(m-1,2)}\aamalg{\Sigma[m-1,1]\aamalg{\Sigma[0,1]}\Sigma[1,1]}\Sigma[m,1],$$
and the inclusion 
$$\filt^{(m-1,2)}\hookrightarrow\filt^{(m-1,3)}$$
is again an acyclic cofibration.
	
\item Finally, we obtain $\filt^{(m)}:=\filt^{(m-1,4)}$ from $\filt^{(m-1,3)}$ by gluing a rectangular grid to ensure compatibility of the two types of composition, as described in \cref{interchange}.  This step can be described as a pushout of $\filt^{(m-1,3)}$ with $\Sigma[m,n-m]$ along their intersection.
The inclusion
$$\filt^{(m-1,3)}\hookrightarrow\filt^{(m-1,4)}$$
is therefore an acyclic cofibration.
\end{enumerate}
	
\item Returning to our original filtration, we now obtain $\filt^{(n)}$ from $\filt^{(n-1)}$ by adding a new element to the augmentation and all vertical compositions in the last column, as depicted in the codomain of the map $j_4$ in the diagram above when $n=4$. This step is described by a pushout
	$$\filt^{(n)}:=\filt^{(n-1)}\aamalg{\Sigma[n-1,0]}\wV{n},$$
and therefore the inclusion of 
$$\filt^{(n-1)}\hookrightarrow\filt^{(n)}$$
is an acyclic cofibration.
\end{itemize}
Finally, by direct verification one can check that
$\wW{n}\cong\filt^{(n)},$
which concludes the proof.
\end{proof}

\begin{proof}[Proof of \cref{sigmaopsets3}]
We need to prove that the map $\wW{\decomposition} \rightarrow \wW{n}$ is an acyclic cofibration in $\saS_{\Tloc}$.  Since it is an inclusion, we need only show that it is a weak equivalence.

Consider the commutative diagram of inclusions
\begin{center}
\begin{tikzcd}[scale=0.7]
\wH{\decomposition}\arrow[r,hook]\arrow[d, hook]&\wW{\decomposition} \arrow[d, hook]\\
\wH{n}\arrow[r, hook]&\wW{n}
\end{tikzcd}
\end{center}
in which the left-hand vertical map is a weak equivalence by \cref{Hnnerve} and the bottom horizontal map is a weak equivalence by \cref{inclusionHn}.  To prove our desired result, it suffices to prove that the top horizontal map is a weak equivalence.

We observed in \cref{defWP} that $\wW{\decomposition}$ can be obtained as a pushout of $\wW{2}$ and $\wW{n-1}$ along $\wW{1}$, and similarly $\wH{\decomposition}$ can be obtained as a pushout of $\wH{2}$ and $\wH{n-1}$ along $\wH{1}$.  Since these pushouts are taken along cofibrations, they are homotopy pushouts, and thus the weak equivalences of \cref{inclusionHn} assemble into the upper horizontal map being weak equivalence. 
\end{proof}

\section{The Quillen equivalence} \label{quillenequiv}

We are now ready to prove the main result of this paper, namely that the Quillen pair from \cref{mainquillenpair} in the previous section is indeed a Quillen equivalence.

\begin{thm}
\label{quillenequivalence}
The path construction and the $\sdot$-construction induce a Quillen equivalence
\[ \pcat\colon \sS_{\Sloc} \rightleftarrows \saS_{\Tloc}  \colon \sdot. \]
\end{thm}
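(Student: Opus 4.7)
The plan is to verify that both the derived counit and the derived unit of the Quillen adjunction $(\pcat, \sdot)$ established in \cref{mainquillenpair} are weak equivalences. Since $\sdot$ preserves fibrant objects by \cref{Sdotfibrant} and all objects of both model structures are cofibrant, this reduces to showing: (a) for every $\Tloc$-fibrant $Y$, the counit $\pcat \sdot Y \to Y$ is a $\Tloc$-local equivalence; and (b) for every object $X$ of $\sS$, the derived unit $X \to \sdot((\pcat X)^{\mathrm{fib}})$ is a $\Sloc$-local equivalence.

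The technical crux of both verifications will be the following key lemma: for every $a, b \ge 0$, the canonical inclusion $\iota_{a,b}\colon \Sigma[a,b] \hookrightarrow \wW{a+b+1}$ identifying $\Sigma[a,b]$ with a specific $(a,b)$-subgrid of $\wW{a+b+1}$ is a $\Tloc$-local acyclic cofibration. I expect this key lemma to be the main obstacle. My proposed proof follows the filtration template of \cref{inclusionHn,sigmaopsets3}: starting from $\Sigma[a,b]$, one successively adjoins the diagonal augmentation objects via pushouts along maps in $\Tloc_{\text{aug}}$, fills in the missing squares via pushouts along $\Tloc_{\text{stable}}$, and imposes higher coherences via the Segal maps in $\Tloc_{\text{Segal}}$. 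The combinatorial bookkeeping generalizes that of the one-dimensional filtration $\wH{n} \hookrightarrow \wW{n}$ from \cref{inclusionHn}, but tracking a two-dimensional subgrid through the simultaneous use of all three classes of generators will be significantly more delicate.

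Given the key lemma, condition (a) is immediate: at bidegree $(a,b)$, using the formula of \cref{descriptionrightadjointSdot} together with the Yoneda-type identification $Y_{a,b} \cong \Map_{\saS}(\Sigma[a,b], Y)$, the counit reads
\[
(\pcat \sdot Y)_{a,b} = (\sdot Y)_{a+b+1} \cong \Map_{\saS}(\wW{a+b+1}, Y) \xrightarrow{\iota_{a,b}^*} \Map_{\saS}(\Sigma[a,b], Y) \cong Y_{a,b},
\]
which is a weak equivalence by $\Tloc$-locality of $Y$; the augmentation level matches via $\wW{0} \cong \Sigma[-1]$. Hence $\pcat \sdot Y \to Y$ is a levelwise, and therefore $\Tloc$-local, equivalence between fibrant objects.

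For condition (b), I would first verify it on representables $X = \Delta[n]$. Here $\pcat \Delta[n] \cong \wW{n}$ by \cref{rmkpstar(Delta)=W}, which is the discrete augmented stable double Segal set $N^a \sW{n}$ (\cref{nerveofWnnerve,augmentednervedoubleSegal}), and which is already $\Tloc$-local by \cref{remarkdiscreteasds}, so no fibrant replacement is required. Applying the key lemma with $(a,b) = (0, m-1)$ identifies the $m$-th component of the derived unit with the natural bijection $\Delta[n]_m \cong (\wW{n})_{0, m-1}$ that presents $\Hom_\Delta([m],[n])$ as the set of weakly increasing $(m+1)$-tuples in $\{0, \ldots, n\}$. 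The extension from representables to arbitrary $X$ will proceed by a cell-induction argument, using that the class of objects for which the derived unit is a $\Sloc$-local equivalence contains representables and is closed under the colimits and cofibrations that build up general objects in the injective model structure on $\sS$.
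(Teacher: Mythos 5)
Your treatment of the counit is essentially the paper's: the key lemma you isolate is exactly \cref{window}, and your identification of the counit at bidegree $(\inda,\indb)$ with restriction along $\Sigma[\inda,\indb]\hookrightarrow\wW{\inda+1+\indb}$ is precisely the argument of \cref{lem: counitwe}. (The paper proves \cref{window} not by a fresh two-dimensional filtration but by factoring the inclusion through pushouts along maps in $\Tloc_{\text{stable}}$ and $\Tloc_{\text{aug}}$ together with \cref{inclusionHn} and its dual, so the combinatorial bookkeeping you anticipate as the main obstacle is largely already available.)

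The genuine gap is in your half (b). The assignment $X\mapsto\sdot\bigl((\pcat X)^{\mathrm{fib}}\bigr)$ is built from a right Quillen functor and a fibrant replacement, neither of which commutes with colimits, so the class of $X$ for which the derived unit is an $\Sloc$-local equivalence has no evident closure under the (homotopy) colimits used to build a general simplicial object out of representables; the proposed cell induction therefore does not go through as stated. (A smaller issue: that $\wW{n}$ is an augmented stable double Segal set, as in \cref{remarkdiscreteasds}, does not make it $\Tloc$-local, since locality also requires injective fibrancy, so even the representable case needs a fibrant replacement.) The paper sidesteps the unit entirely: by \cite[Corollary 1.3.16]{hovey}, since all objects are cofibrant it suffices to combine the counit condition with the fact that $\pcat$ \emph{reflects} weak equivalences, which is \cref{Preflectswe}; that proposition is proved by showing $(\pcat X)^f$ is $\Tloc$-local whenever $X$ is $\Sloc$-local (\cref{Prespectsfibrant}), so that a $\Tloc$-equivalence $\pcat g$ between such objects is levelwise, and the levels of $X$ occur among the levels of $\pcat X$. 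If you prefer to phrase the result via the derived unit, the clean route is to deduce the unit condition from the counit condition plus reflection of weak equivalences using the triangle identity, rather than by induction over cells.
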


Our strategy for the proof is to show that
\begin{itemize}
    \item the functor $\pcat$ reflects weak equivalences, and
    \item for every fibrant object $Y$ of $\saS_{\Tloc}$, the counit of map $\pcat \sdot Y \rightarrow Y$ is a weak equivalence.
\end{itemize}
These conditions comprise the characterization of Quillen equivalences from \cite[Corollary 1.3.16]{hovey}, in the special case when all objects are cofibrant.  The former statement appears in the next proposition, while the latter is given as \cref{lem: counitwe} below.

\begin{prop}
\label{Preflectswe}
The path construction functor
$$\pcat \colon \sS_{\Sloc}\to \saS_{\Tloc}$$
reflects weak equivalences.
\end{prop}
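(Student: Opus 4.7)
The plan is to exploit the adjunction $(\pcat,\sdot)$ to test the $\Sloc$-local equivalence of $f$ against $\Tloc$-local objects in $\saS$, and to produce enough such objects as injective fibrant replacements of $\pcat Z$ for $\Sloc$-local $Z$. Given $f\colon X\to X'$ with $\pcat f$ a $\Tloc$-local equivalence and an arbitrary $\Sloc$-local $Z$ in $\sS$, the goal is to produce a $\Tloc$-local $W$ together with a weak equivalence $\sdot W\simeq Z$, so that adjunction yields a natural identification
\[\Map^h(Y,Z) \simeq \Map^h(\pcat Y, W).\]
Applying this to $Y=X$ and $Y=X'$ and using that $\pcat f$ is a $\Tloc$-local equivalence then forces $\Map^h(X',Z)\to\Map^h(X,Z)$ to be a weak equivalence for every $\Sloc$-local $Z$, which is the $\Sloc$-local equivalence condition for $f$.

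The candidate for $W$ is an injective fibrant replacement of $\pcat Z$. The main work splits into two verifications. First, such a $W$ is $\Tloc$-local: using the formulas $(\pcat Z)_{\inda,\indb}=Z_{\inda+\indb+1}$ and $(\pcat Z)_{-1}=Z_0$, the double Segal condition on $\pcat Z$ amounts to a $P$-Segal condition on $Z$ for a specific polygonal decomposition of the $(\inda+\indb+2)$-gon, and follows from the 2-Segal hypothesis on $Z$ (since the 2-Segal property extends from triangulations to arbitrary polygonal decompositions); stability reduces via \cref{babystability} to the 2-Segal equivalence $Z_3\simeq Z_2\htimes{Z_1} Z_2$; and augmentation reduces via \cref{babyaugmentation} to the unitality equivalence $Z_1\simeq Z_2\htimes{Z_1} Z_0$ from \cref{defn:unital}. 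Second, the derived unit $Z\to\sdot W$ is a weak equivalence: at each level $n$ this map identifies with $Z_n\to\Map^h(\wW{n},\pcat Z)$ by \cref{descriptionrightadjointSdot}, and the filtration of $\wW{n}$ by pushouts of representables $\Sigma[\inda,\indb]$ and $\Sigma[-1]$ used in the proof of \cref{inclusionHn} rewrites the target as an iterated homotopy limit of values $Z_k$; the unital 2-Segal hypothesis on $Z$ then collapses this iterated limit to $Z_n$.

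The main obstacle is this second step: one must carefully unpack the iterated homotopy limit computing $\Map^h(\wW{n},\pcat Z)$ and apply the 2-Segal and unitality conditions in the correct pattern, essentially reproducing at the homotopical level the strict identification $\sdot\pcat X\cong X$ for unital 2-Segal sets that is implicit in \cref{oldresult}. Once this is in place, reflection follows formally: the natural weak equivalence $\Map^h(Y,Z)\simeq\Map^h(\pcat Y, W)$, combined with $W$ being $\Tloc$-local and $\pcat f$ being a $\Tloc$-local equivalence, produces the desired weak equivalence $\Map^h(X',Z)\to\Map^h(X,Z)$ for every $\Sloc$-local $Z$.
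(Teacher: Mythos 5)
Your first step---showing that an injective fibrant replacement $(\pcat Z)^f$ is $\Tloc$-local whenever $Z$ is $\Sloc$-local---is exactly the paper's \cref{Prespectsfibrant}, and your sketch of it (double Segal from the $2$-Segal condition, stability from the $n=3$ case, augmentation from unitality) matches the paper's argument. The divergence, and the genuine gap, lies in how you conclude reflection. Your reduction hinges on the claim that the derived unit $Z\to\sdot\bigl((\pcat Z)^f\bigr)$ is a weak equivalence for every $\Sloc$-local $Z$, and this you only sketch, flagging it yourself as the main obstacle. That claim is not a routine unwinding: it has essentially the same depth as the counit statement the paper proves separately in \cref{lem: counitwe}, and the ``collapse of the iterated homotopy limit'' you allude to is precisely the content of the acyclicity lemmas \cref{inclusionHn} and \cref{window}. (One way to actually finish your step would be to use that $\Sigma[n-1,0]\hookrightarrow\wW{n}$ is a $\Tloc$-acyclic cofibration, so that $\Tloc$-locality of $(\pcat Z)^f$ identifies $\Map^h(\wW{n},\pcat Z)$ with $(\pcat Z)_{n-1,0}\simeq Z_n$, and then verify that the composite with the unit is equivalent to the identity.) As written, the central step is asserted rather than proved, so the argument is incomplete.

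Moreover, the detour through the unit is unnecessary. Once you know $(\pcat X)^f$ and $(\pcat X')^f$ are $\Tloc$-local for $\Sloc$-local $X$ and $X'$, the paper concludes directly: a $\Tloc$-local equivalence between $\Tloc$-local objects is a levelwise weak equivalence, so $\pcat g$ is a levelwise weak equivalence by two-out-of-three against the replacement maps; and since $g_0=(\pcat g)_{-1}$ and $g_n=(\pcat g)_{n-1,0}$, the map $g$ itself is a levelwise weak equivalence, hence an $\Sloc$-local equivalence. Arbitrary (non-fibrant) $X$, $X'$ are then handled by an $\Sloc$-fibrant replacement, using that $\pcat$ preserves weak equivalences by \cref{mainquillenpair}; note that your route would avoid this last replacement step, but only at the cost of the unproven unit statement, which the paper only obtains implicitly after combining \cref{Preflectswe} with \cref{lem: counitwe}.
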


Before embarking on the proof, we show that, while $\pcat$ does not preserve fibrant objects, it does so up to injective fibrant replacement. 

\begin{notn}
If $Y$ is an object of $\saS$, we denote by $Y^f$ a functorial fibrant replacement of $Y$ in the injective model structure.  In particular, the accompanying map $Y\to Y^f$ is a levelwise weak equivalence.
\end{notn} 
 
\begin{lem}
\label{Prespectsfibrant}
If $X$ is a fibrant object of $\sS_\Sloc$, then $(\mathcal PX)^f$ is fibrant in $\saS_{\Tloc}$.
\end{lem}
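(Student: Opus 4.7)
The plan is to verify that $(\mathcal P X)^f$ is $\Tloc$-local. Since $(\mathcal P X)^f$ is injectively fibrant by construction, it remains to confirm the double Segal, augmentation, and stability conditions from \cref{doubleSegal}, \cref{augmentation}, and \cref{stability}. Because $\mathcal P X \to (\mathcal P X)^f$ is a levelwise weak equivalence, and each of these conditions can be expressed via homotopy (co)limits that are invariant under such equivalences, it suffices to check the conditions on $\mathcal P X$ itself, using the explicit identifications $(\mathcal P X)_{q,r} = X_{q+1+r}$ and $(\mathcal P X)_{-1}=X_0$, together with the face and degeneracy maps induced by the functor $p$.

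For the double Segal condition in the second variable, the relevant map takes the form
\[X_{q+1+r} \longrightarrow X_{q+2} \htimes{X_{q+1}} \cdots \htimes{X_{q+1}} X_{q+2},\]
which we identify with the $P$-Segal map for the polygonal decomposition $P$ of the $(q+r+2)$-gon whose pieces are the $(q+3)$-subsets $\{0,1,\dots,q,q+1+i,q+2+i\}$ for $i=0,\dots,r-1$, meeting along the $(q+2)$-subsets $\{0,1,\dots,q,q+1+i\}$. Since each $(q+3)$-gon piece can be further triangulated, $P$ admits a refinement to a triangulation $T$, and the $T$-Segal map factors as the $P$-Segal map followed by a product of $T_i$-Segal maps on the individual pieces. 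The 2-Segal condition on $X$ gives that the $T$-Segal map and each $T_i$-Segal map are weak equivalences, and then two-out-of-three yields the $P$-Segal map as a weak equivalence. The first-variable case is symmetric.

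For stability, by \cref{babystability} it suffices to verify the condition at $(1,1)$, where both the span and cospan maps $X_3 \to X_2 \htimes{X_1} X_2$ are special instances of the 2-Segal condition for $n=3$, hence weak equivalences. For augmentation, by \cref{babyaugmentation} it suffices to check the conditions at $(1,0)$ and $(0,1)$. Unwinding the definition of $p$ on the relevant morphisms of $\Sigma$, the condition at $(1,0)$ becomes the statement that the composite
\[X_2 \htimes{X_1} X_0 \xrightarrow{\pr_1} X_2 \xrightarrow{d_1} X_1\]
is a weak equivalence, where the fiber product is formed via $d_0\colon X_2\to X_1$ and $s_0\colon X_0\to X_1$. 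By the first square of \cref{lem:easyunital}, unitality of $X$ provides an equivalence $X_1 \xrightarrow{\simeq} X_2\htimes{X_1}X_0$ along $(s_1,d_0)$; under this identification the composite reduces to $x\mapsto d_1s_1x=x$ via the simplicial identity $d_1s_1=\id$. The case $(0,1)$ is handled analogously, using the second square of \cref{lem:easyunital} and the identity $d_1s_0=\id$.

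The main technical obstacle is the double Segal verification, specifically the bookkeeping required to make the refinement $P\rightsquigarrow T$ explicit and to run the two-out-of-three chain; no genuinely new ideas beyond those in the proof of \cref{easy2segal} are required.
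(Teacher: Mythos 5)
Your overall structure is sound and matches the paper's: reduce to checking that $\pcat X$ itself is double Segal, stable, and augmented (the conditions being invariant under the levelwise equivalence $\pcat X\to(\pcat X)^f$), and your verifications of stability and augmentation are correct. Indeed, both stability maps at $(1,1)$ unwind to the two $n=3$ $2$-Segal maps $X_3\to X_{\{0,1,2\}}\htimes{X_{\{0,2\}}}X_{\{0,2,3\}}$ and $X_3\to X_{\{0,1,3\}}\htimes{X_{\{1,3\}}}X_{\{1,2,3\}}$, which is exactly the one case the paper checks in detail; and your derivation of the augmentation conditions from the two squares of \cref{lem:easyunital} together with $d_1s_1=\id=d_1s_0$ is correct and fills in a case the paper leaves as ``similar.''

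There is, however, a genuine gap in your verification of the double Segal condition. The collection of subsets $\{0,1,\dots,q,\,q+1+i,\,q+2+i\}$, $i=0,\dots,r-1$, is \emph{not} a polygonal decomposition of the $(q+r+2)$-gon once $q\geq 1$: for $q=1$, $r=2$ the two pieces are $\{0,1,2,3\}$ and $\{0,1,3,4\}$, which overlap in the two-dimensional region spanned by the triangle $\{0,1,3\}$ rather than along a diagonal. Consequently the $2$-Segal property does not directly assert anything about this map, and the proposed factorization of a $T$-Segal map through the $P$-Segal map fails: triangulations of the individual pieces do not assemble into a triangulation of the whole polygon (the resulting diagonals cross, e.g.\ $\{0,2\}$ and $\{1,3\}$ in the pentagon). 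The conclusion you want is still true — it is essentially the path-space criterion of Dyckerhoff--Kapranov — but it requires a different argument: for instance, an induction on $r$ showing that $X_{\{0,\dots,q+1+r\}}\to X_{\{0,\dots,q+2\}}\htimes{X_{\{0,\dots,q,q+2\}}}X_{\{0,\dots,q,q+2,\dots,q+1+r\}}$ is an equivalence, which one deduces from the elementary $2$-Segal maps of \cref{easy2segal} by the same pullback manipulations used in its proof (first splitting off the triangle $\{q,q+1,q+2\}$ and then cancelling the overlapping factor $X_{\{0,\dots,q,q+2\}}$ using associativity of homotopy pullbacks). As written, the hardest of the three conditions is the one whose justification does not go through.
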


\begin{proof}
As $(\mathcal PX)^f$ is by definition injectively fibrant, we only need to prove that $(\pcat X)^f$ is $\Tloc$-local.
We verify that the map
$$\Map^h(\Sigma[1,1],(\pcat X)^f)\longrightarrow\Map^h(\Sigma[1,0]\aamalg{\Sigma[0,0]}\Sigma[0,1],(\pcat X)^f),$$
induced by 
$$\stablecospan{1,1}\colon\Sigma[1,0]\aamalg{\Sigma[0,0]}\Sigma[0,1]\longhookrightarrow\Sigma[1,1],$$
as described \cref{elementaryT}, is a weak equivalence; 
the other cases are similar.

Since $X$ is $\Sloc$-local, the 2-Segal map
$$X_3\xrightarrow{(d_2,d_0)} X_2\htimes{X_1}X_2.$$
is a weak equivalence. Using the definition of the functor $\pcat$, we can rewrite this map as 
$$(\mathcal PX)_{1,1}\longrightarrow (\mathcal PX)_{1,0}\htimes{(\mathcal PX)_{0,0}}(\mathcal PX)_{0,1}$$
induced by $\stablecospan{1,1}$. 
Since the injective fibrant replacement $(\pcat X)^f$ of $\pcat X$ is levelwise weakly equivalent to $\pcat X$, the map
$$(\mathcal PX)^f_{1,1}\longrightarrow (\mathcal PX)^f_{1,0}\htimes{(\mathcal PX)^f_{0,0}}(\mathcal PX)^f_{0,1}$$
is also a weak equivalence. But this weak equivalence can be identified with the map
\[
 \begin{tikzcd}[column sep=-1.5cm]
\Map(\Sigma[1,1],(\pcat X)^f)\arrow[rd] & \\
&\Map(\Sigma[1,0],(\mathcal P X)^f)\htimes{\Map(\Sigma[0,0],(\pcat X)^f)}\Map(\Sigma[0,1],(\pcat X)^f),  
 \end{tikzcd}
\]
which in turn is the map
$$\Map(\Sigma[1,1],(\mathcal P X)^f)\longrightarrow\Map(\Sigma[1,0]\aamalg{\Sigma[0,0]}\Sigma[0,1],(\mathcal P X)^f),$$
induced by $\stablecospan{1,1}$.  Because $(\pcat X)^f$ is injectively fibrant, this map models a weak equivalence on derived mapping spaces
$$\Map^h(\Sigma[1,1],(\mathcal P X)^f)\longrightarrow\Map^h(\Sigma[1,0]\aamalg{\Sigma[0,0]}\Sigma[0,1],(\mathcal P X)^f).$$
Hence $(\mathcal PX)^f$ is local with respect to this map, as desired.
\end{proof}

\begin{proof}[Proof of \cref{Preflectswe}]
Suppose that $g\colon X\to X'$ is a map in $\sS$ such that
$\pcat g\colon\pcat X\to\pcat X'$ is a weak equivalence in $\saS_{\Tloc}$.  We want to show that $g$ is a weak equivalence in   $\sS_{\Sloc}$.

Assume first that $X$ and $X'$ are $\Sloc$-local.  By \cref{Prespectsfibrant}, the injectively fibrant replacements $(\pcat X)^f$ and $(\pcat X')^f$ are $\Tloc$-local. Moreover, $(\pcat g)^f$ is a weak equivalence in $\saS_{\Tloc}$. In particular $(\pcat g)^f$ is a $\Tloc$-local equivalence between $\Tloc$-local objects, so it is a levelwise weak equivalence.  In the commutative diagram
\[
 \begin{tikzcd}
  \pcat X \arrow[d]\arrow[r, "\pcat g"] & \pcat X'\arrow[d]\\
  (\pcat X)^f \arrow[r,swap, "(\pcat g)^f"] & (\pcat X')^f.
 \end{tikzcd}
\]
the vertical maps are also levelwise weak equivalences, so $\pcat g$ is must be as well.

Finally, given that $\pcat g$ is a levelwise weak equivalence, the map $g_{0}=(\pcat g)_{-1}$ is a weak equivalence, and similarly for every $n$ we get that $g_n=(\pcat g)_{n-1,0}$ is a weak equivalence. We conclude that $g$ is a levelwise weak equivalence, and in particular an $\Sloc$-local equivalence, as desired.

For arbitrary $X$ and $X'$, let
$$\widetilde{(-)}\colon\sS_{\Sloc}\to\sS_{\Sloc}$$
denote a functorial fibrant replacement.
Then $g$ and $\pcat g$ fit into commutative diagrams
\[
\begin{tikzcd}
 X  \arrow[d, "\simeq_\Sloc" swap] \arrow[r, "g"]& X'\arrow[d, "\simeq_\Sloc"]\\
 \widetilde{X}\arrow[r,swap,"\widetilde{g}"]&\widetilde{X'}
\end{tikzcd}
\quad 
\rightsquigarrow
\quad
\begin{tikzcd}
 \pcat X  \arrow[d, "\simeq_\Tloc" swap] \arrow[r, "\pcat g"]& \pcat X'\arrow[d, "\simeq_\Tloc"]\\
 \pcat\widetilde{X}\arrow[r,swap,"\pcat\widetilde{g}"]&\pcat\widetilde{X'},
\end{tikzcd}
\]
where we use from \cref{mainquillenpair} that the image of a weak equivalence in $\saS_{\Tloc}$ under $\pcat$ is a weak equivalence in $\sS_{\Sloc}$. 

Suppose that $\pcat g$ is a weak equivalence in $\saS_{\Tloc}$. Then $\pcat \widetilde g$ is a also a weak equivalence in $\saS_{\Tloc}$. By the previous argument, $\widetilde g$ is a weak equivalence in $\sS_{\Sloc}$ from which we can conclude that $g$ is also, as desired.
\end{proof}

It remains to prove the aforementioned condition on the counit.

\begin{prop} \label{lem: counitwe}
If $Y$ is a fibrant object of $\saS_{\Tloc}$, then the counit map
\[ \epsilon_Y \colon \pcat\sdot Y\to Y \]
is a levelwise weak equivalence, and therefore a weak equivalence in $\saS_{\Tloc}$.
\end{prop}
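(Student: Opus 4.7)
The plan is to identify each level of the counit $\epsilon_Y$ as precomposition with a specific inclusion of preaugmented bisimplicial objects, and then to show this inclusion is an acyclic cofibration in $\saS_{\Tloc}$. Using the enriched adjunction $(\pcat,\sdot)$, the enriched Yoneda lemma, and the identification $p^*\Delta[n]\cong\wW{n}$ from \cref{rmkpstar(Delta)=W}, we obtain natural isomorphisms $(p^*p_*Y)_\sigma\cong\Map_{\saS}(\wW{p(\sigma)},Y)$ and $Y_\sigma\cong\Map_{\saS}(\Sigma[\sigma],Y)$, under which $(\epsilon_Y)_\sigma$ becomes precomposition with the canonical map $\iota_\sigma\colon\Sigma[\sigma]\to\wW{p(\sigma)}$ classified by the identity element of $(\wW{p(\sigma)})_\sigma=\Hom_\Delta(p(\sigma),p(\sigma))$. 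For $\sigma=-1$, we have $\wW{0}=\Sigma[-1]$ and $\iota_{-1}=\id$, so the augmentation component of the counit is trivially a weak equivalence. For $\sigma=(q,r)$, the identification $\wW{q+1+r}\cong N^a\sW{q+1+r}$ from \cref{nerveofWnnerve} shows that $\iota_{q,r}$ is the corner embedding $[q]\boxtimes[r]\hookrightarrow\sW{q+1+r}$, $(i,j)\mapsto(i,j+q+1)$, which already appeared in the proof of \cref{augmentednervevsdoublenerve}.

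It remains to show that $\iota_{q,r}$ is an acyclic cofibration in $\saS_{\Tloc}$; being a monomorphism it is obviously a cofibration, so only the weak-equivalence condition requires work. We plan to exhibit $\iota_{q,r}$ as a finite composite of pushouts along generators in $\Tloc$, by constructing a filtration
\[\Sigma[q,r]=F^{(0)}\subseteq F^{(1)}\subseteq\cdots\subseteq F^{(N)}\cong\wW{q+1+r}.\]
The embedded $(q,r)$-grid sits inside $\wW{q+1+r}$ so that its top row of horizontal morphisms lies along an initial segment of the full horizontal chain of $\wW{q+1+r}$, and its rightmost column along an initial segment of the vertical chain. We first glue on $\wH{q+1+r}$ along its overlap with the top row via pushouts involving maps from $\Tloc_{\text{aug}}$ and $\Tloc_{\text{Segal}}$, and dually attach $\wV{q+1+r}$ along the rightmost column. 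This supplies both diagonals of augmentation elements together with the outer boundary of the staircase picture \eqref{pictureWn} of $\sW{q+1+r}$. We then fill in the interior squares by an inductive argument that closely parallels the one used in the proof of \cref{inclusionHn}: at each stage one completes a span (or cospan) to a square using a map in $\Tloc_{\text{stable}}$, attaches the new horizontal and vertical composites via maps in $\Tloc_{\text{Segal}}$, and glues on a rectangular grid to enforce the interchange law. Each step is by construction a pushout along a generator of $\Tloc$ and hence an acyclic cofibration in $\saS_{\Tloc}$; the finite composite $\iota_{q,r}$ is therefore an acyclic cofibration as well.

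Since $Y$ is $\Tloc$-local and every object of $\saS$ is injectively cofibrant, the induced map $\iota_{q,r}^{*}\colon\Map_{\saS}^h(\wW{q+1+r},Y)\to\Map_{\saS}^h(\Sigma[q,r],Y)$ is then a weak equivalence in $\targetcat$, and the injective fibrancy of $Y$ ensures that the derived mapping spaces are modelled by the strict ones. It follows that $(\epsilon_Y)_{q,r}$ is a weak equivalence in $\targetcat$, and together with the trivial augmentation component this gives that $\epsilon_Y$ is a levelwise weak equivalence, hence in particular a weak equivalence in $\saS_{\Tloc}$. The main obstacle will be the combinatorial bookkeeping of the filtration: one must verify at every stage that the attaching map for the next pushout genuinely factors through the previously constructed subobject and that the construction ultimately exhausts $\wW{q+1+r}$. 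This is a direct generalization of the staircase argument proving \cref{inclusionHn}, which builds $\wW{n}$ from $\wH{n}$; the added subtlety here is splicing the initial $(q,r)$-grid between the two augmented chains on either side.
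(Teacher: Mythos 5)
Your identification of the counit components is correct and is essentially the paper's own argument: via the enriched adjunction and Yoneda, $(\pcat\sdot Y)_{q,r}\cong\Map_{\saS}(\wW{q+1+r},Y)$, the component of $\epsilon_Y$ is restriction along the canonical inclusion $\Sigma[q,r]\hookrightarrow\wW{q+1+r}$, the $[-1]$-component is trivial, and fibrancy of $Y$ lets one pass between strict and derived mapping objects. So, as in the paper, everything reduces to the acyclicity of $\Sigma[q,r]\hookrightarrow\wW{q+1+r}$ in $\saS_{\Tloc}$, which is exactly \cref{window}. (A small orientation slip: the top row of the embedded grid is the \emph{final} segment of the horizontal chain, i.e.\ the image of $\echainhord$, while the right-hand column is the \emph{initial} segment of the vertical chain, the image of $\bchainverd$.)

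The gap is in your proposed proof of that acyclicity. Your first step --- ``glue on $\wH{q+1+r}$ along its overlap with the top row, and dually $\wV{q+1+r}$ along the rightmost column'' --- is not a pushout along an acyclic cofibration, nor can it be decomposed into pushouts along generators of $\Tloc_{\text{aug}}\cup\Tloc_{\text{Segal}}$ in that order. Indeed, the overlap inclusion $\Sigma[0,r]\hookrightarrow\wH{q+1+r}$ (the end segment of the chain) induces, on mapping objects into a $\Tloc$-local $Y$, the map $Y_{0,q+r}\simeq Y_{0,q+1+r}\htimes{Y_{0,0}}Y_{-1}\to Y_{0,r}$ which forgets the first $q$ arrows, and this is not an equivalence once $q\geq 1$; dually for the vertical side when $r\geq 1$. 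Moreover the objects that must be interpolated next to the grid, namely $(0,1),\dots,(0,q)$ in the first row and $(q+1,n),\dots,(n-1,n)$ in the last column, are \emph{not} augmented, while every pushout along an augmentation generator attaches an augmented object at the new end of a chain, Segal pushouts only add composites of arrows already present, and stability completions are unavailable until the relevant diagonal objects and lower rows exist. So the order ``full chains first, interiors later'' fails; a direct filtration would have to grow outward from the bottom row and left column of the grid, where the adjacent diagonal objects $(q,q)$ and $(q+1,q+1)$ are augmented, interleaving augmentation, stability, Segal and interchange pushouts --- a nontrivial rearrangement of the bookkeeping you defer. The paper avoids constructing such a filtration altogether: it proves \cref{window} by comparing the inclusion, via a commutative square and two-out-of-three, with the stability generator $\Sigma[q,0]\aamalg{\Sigma[0,0]}\Sigma[0,r]\to\Sigma[q,r]$, a pushout along the augmentation map $\Sigma[q,0]\hookrightarrow\wV{q+1}$, and pushouts along $\wV{q+1}\hookrightarrow\wW{q+1}$ and $\wH{q+1}\aamalg{\Sigma[0,0]}\Sigma[0,r]\hookrightarrow\wH{q+1+r}$, thereby reusing the already-established filtration of \cref{inclusionHn} (and its dual and \cref{Hnnerve}) instead of building a new one from the grid. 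Either repair your filtration along these lines or quote the paper's reduction; as written, the key step does not go through.
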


We need a preliminary result, which roughly says that in the $\Tloc$-localized model structure, $\wW{\inda+1+\indb}$ is weakly equivalent to the copy of $\Sigma[\inda,\indb]$ contained inside of it. The argument is similar to that of \cref{inclusionHn}.

\begin{lem}
\label{window}
The inclusion $\Sigma[\inda, \indb] \hookrightarrow\wW{\inda+1+\indb}$
is an acyclic cofibration in $\saS_{\Tloc}$.
\end{lem}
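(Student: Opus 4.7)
The plan is to generalize the filtration argument used in the proof of \cref{inclusionHn}, constructing a filtration
\[
\Sigma[\inda, \indb] = F^{(0)} \subseteq F^{(1)} \subseteq \dots \subseteq F^{(N)} = \wW{\inda+1+\indb}
\]
in $\saS$ in which each successive inclusion is realized as a pushout along a generating acyclic cofibration of $\saS_{\Tloc}$. Since composites of acyclic cofibrations are acyclic cofibrations, this yields the desired result.

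To set up, I would identify the inclusion $\Sigma[\inda, \indb] \hookrightarrow \wW{\inda+1+\indb}$ with the map induced by the double functor $[\inda]\boxtimes[\indb]\to \sW{\inda+1+\indb}$, $(i,j)\mapsto (i, j+\inda+1)$, from the proof of \cref{augmentednervevsdoublenerve}. Concretely, $\Sigma[\inda, \indb]$ occupies the rectangular block of $\wW{\inda+1+\indb}$ consisting of objects $(i,k)$ with $0\le i\le \inda$ and $\inda+1\le k\le \inda+1+\indb$, together with its horizontal and vertical morphisms and squares, but contributes no elements to the augmentation (which is empty for $\Sigma[\inda,\indb]$).

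Next, I would build the remainder in stages. The complement decomposes into two triangular sub-staircases (on the objects with $k\le \inda$ and on the objects with $i>\inda$, respectively), the full augmentation set $\{(i,i)\mid 0\le i\le \inda+1+\indb\}$, and the morphisms and squares linking these pieces to the rectangle. Each missing square is attached by a pushout along a stability map $\stablespan{1,1}$ or $\stablecospan{1,1}$; each missing composite morphism by a pushout along a Segal map from $\Tloc_{\text{Segal}}$; each missing augmentation element by a pushout along an augmentation map $\augver{k}$ or $\aughor{k}$; and the interchange law between consecutive squares is enforced by further pushouts against higher-dimensional Segal maps. Each of these pushouts is taken along a map in $\Tloc$, hence is an acyclic cofibration in $\saS_{\Tloc}$.

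The principal obstacle is the combinatorial bookkeeping: specifying the order of pushouts so that at each stage the attaching map factors through a representable already present in $F^{(m-1)}$, and verifying that the transfinite composite indeed assembles to $\wW{\inda+1+\indb}$. I would organize this via a direct generalization of the inner sub-filtration $F^{(m-1)} \subseteq F^{(m-1,1)} \subseteq F^{(m-1,2)} \subseteq F^{(m-1,3)} \subseteq F^{(m)}$ from the proof of \cref{inclusionHn}, applied to fill the two complementary sub-staircases one row (or column) at a time. The special case $\inda = 0 = \indb$ recovers precisely \cref{inclusionHn}, which provides a useful sanity check on the bookkeeping.
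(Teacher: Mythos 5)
Your overall strategy---building $\wW{\inda+1+\indb}$ out of $\Sigma[\inda,\indb]$ by successive pushouts along maps in $\Tloc$---is plausible in spirit (it is how the paper proves \cref{inclusionHn}), but as written the proof is not actually there: the filtration, which carries all of the mathematical content, is only promised, and the local moves you list cannot be taken at face value. A pushout along $\stablespan{1,1}$, $\stablecospan{1,1}$, a Segal map, or an augmentation map \emph{freely} adjoins every cell of its codomain not in its domain; so if a ``missing'' square already has all four boundary arrows present at an earlier stage, or a missing composite already has its target present, the pushout creates duplicate cells and the colimit of your filtration maps onto, but is not isomorphic to, $\wW{\inda+1+\indb}$. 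Arranging an order of attachment in which every step adds only genuinely new cells, and in which all higher-bidegree cells $\wW{\inda+1+\indb}_{\indc,\indd}$ (long composites, interchange grids) are exhausted exactly once, is precisely the delicate part of the proof of \cref{inclusionHn}; your proposal defers exactly that, and merely requiring that the attaching maps factor through what is already present does not address it. A small but telling symptom: your sanity check fails. For $\inda=\indb=0$ the inclusion in question is $\Sigma[0,0]\hookrightarrow\wW{1}$, which is not \cref{inclusionHn} (that would be $\wH{1}\hookrightarrow\wW{1}$); in particular $\Sigma[0,0]$ carries no augmentation element, so even the degenerate case already requires augmentation attachments that \cref{inclusionHn} never sees.

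For comparison, the paper does not construct a new filtration at all. It places the inclusion $\Sigma[\inda,\indb]\hookrightarrow\wW{\inda+1+\indb}$ in a commutative square whose other three sides are already known to be acyclic cofibrations: the left side is the stability map $\Sigma[\inda,0]\amalg_{\Sigma[0,0]}\Sigma[0,\indb]\to\Sigma[\inda,\indb]$ in $\Tloc_{\text{stable}}$; the bottom is a composite of a pushout of the augmentation map $\Sigma[\inda,0]\hookrightarrow\wV{\inda+1}$ in $\Tloc_{\text{aug}}$ and a pushout of $\wV{\inda+1}\hookrightarrow\wW{\inda+1}$, the vertical dual of \cref{inclusionHn}; and the right side, $\wW{\inda+1}\amalg_{\Sigma[0,0]}\Sigma[0,\indb]\hookrightarrow\wW{\inda+1+\indb}$, is handled by a second square consisting of a pushout along a map in $\Tloc_{\text{Segal}}$ together with two applications of \cref{inclusionHn}; two-out-of-three then concludes. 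If you want to salvage your approach, you would essentially have to reprove a strengthened version of \cref{inclusionHn} with the full combinatorial bookkeeping spelled out; the far more economical route is to reduce to \cref{inclusionHn} and its dual as above.
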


\begin{proof}
This inclusion fits into a commutative diagram of inclusions of the following form:
\[
\begin{tikzcd}
 \Sigma[\inda,\indb]\arrow[hook, rr] & &\wW{\inda+1+\indb}\\
 \Sigma[\inda,0]\aamalg{\Sigma[0,0]} \Sigma[0,\indb] \arrow[hook, u] \arrow[hook, r]& \wV{\inda+1}\aamalg{\Sigma[0,0]} \Sigma[0,\indb] \arrow[hook, r] & \wW{\inda+1} \aamalg{\Sigma[0,0]}\Sigma[0,\indb].\arrow[hook, u]
\end{tikzcd}
\]
The left-hand vertical map is an element of $\Tloc_{\text{stable}}$. The left-most bottom horizontal map is a pushout along the map $\Sigma[\inda,0]\hookrightarrow\wV{\inda+1}$ in $\Tloc_{\text{aug}}$. The right-most bottom horizontal map is a pushout along the map $\wV{\inda+1}\hookrightarrow\wW{\inda+1}$, which can be proven to be an acyclic cofibration by dualizing the argument from \cref{inclusionHn}. 
The right-hand vertical map fits into a commutative diagram of inclusions of the following form:
\[
\begin{tikzcd}
 \wH{\inda+1}\aamalg{\Sigma[0,0]} \Sigma[0,\indb]\arrow[hook, d]\arrow[hook, r] & \wH{\inda+1+\indb} \arrow[hook, d]\\
 \wW{\inda+1} \aamalg{\Sigma[0,0]} \Sigma[0,\indb]\arrow[hook, r] & \wW{\inda+1+\indb},
\end{tikzcd}
\]
where the two vertical maps are weak equivalences by \cref{inclusionHn} and the top map can be written as a pushout along a map in $\Tloc_{\textrm{Segal}}$.
\end{proof}

\begin{proof}[Proof of \cref{lem: counitwe}]
We first observe that the precomposition functor $\pcat=p^*$ admits an $\targetcat$-enriched left adjoint
$$p_!\colon \saS \longrightarrow \sS,$$
which is given by left Kan extension \cite[Theorem 4.50]{Kelly}. Since left Kan extensions are well-behaved on representables \cite[\textsection 7.7]{RiehlCHT}, there are isomorphisms
$$p_!\Sigma[\inda,\indb]\cong\Delta[\inda+1+\indb]\quad\text{ and }\quad p_!\Sigma[-1]\cong\Delta[0].$$

The counit map evaluated at $(\inda,\indb)$, 
$$\epsilon_Y \colon (p^*p_* Y)_{\inda,\indb}\to Y_{\inda,\indb},$$
can be identified with a map
$$ \Map_{\saS}(\Sigma[\inda,\indb], p^*p_* Y)\longrightarrow \Map_{\saS}(\Sigma[\inda,\indb], Y)$$
via the enriched Yoneda lemma.  
Using the fact that $\pcat=p^*$ and $\sdot=p_*$ are right adjoints (of $p_!$ and $p^*$, respectively) the left hand side in the above map can be identified with
$$\Map_{\saS}(\Sigma[\inda,\indb], p^*p_* Y) \cong \Map_{\saS}(p^*p_!\Sigma[\inda,\indb], Y).$$
Applying \cref{rmkpstar(Delta)=W} and our adjunctions, we have that
$$\wW{\inda+1+\indb}\cong\pcat\Delta[\inda+1+\indb]\cong\pcat p_!\Sigma[\inda,\indb]=p^*p_!\Sigma[\inda,\indb],$$
hence the mapping space can further be identified with
$$\Map_{\saS}(p^*p_!\Sigma[\inda,\indb], Y) \cong \Map_{\saS}(\wW{\inda+1+\indb},Y)$$
and the map above can be identified with
$$ \Map_{\saS}(\wW{\inda+1+\indb},Y)\longrightarrow \Map_{\saS}(\Sigma[\inda,\indb],Y)$$
induced by the inclusion $\Sigma[\inda,\indb]\hookrightarrow \wW{\inda+1+\indb}$.
Since $Y$ is fibrant in $\saS_{\Tloc}$, this map models the analogous map on derived mapping spaces, which is a weak equivalence by \cref{window}. The $-1$-component can be treated similarly.
\end{proof}

\section{Variants of the model structures} \label{variants}

We conclude with some illustrations that the techniques employed in the paper are quite flexible and can be used to obtain further results along the same lines. Indeed, by performing enriched localizations with respect to different sets of maps, or by localizing the projective model structure instead of the injective model structure, we can obtain variants of \cref{quillenequivalence}. 

\subsection{Alternative localizations} 

In applications one might have further properties on unital 2-Segal objects and might wonder how to translate them to the context of augmented double Segal objects. We consider two natural such conditions here.

\paragraph{Reduced and pointed settings}
While the augmentation condition for stable double Segal objects might seem complicated, it is intended to serve as an appropriate generalization of zero objects.  For augmented stable double Segal objects coming from \emph{pointed} contexts, such as those from \cref{examples}, the augmentation represents a space of zero objects, and is therefore contractible. We can restrict ourselves to this setting as follows. 

\begin{defn}
A \emph{stable pointed double Segal object} in $\targetcat$ is a augmented stable double Segal object $Y$ such that the map
$$Y_{-1}\to\ \ast$$ 
to the terminal object is a weak equivalence in $\targetcat$.
\end{defn}

For unital 2-Segal objects, the analogous property was considered in \cite[Definition 1.9]{boors} in the case where $\targetcat=\set$, but it can be generalized as follows.

\begin{defn}
A simplicial object $X$ in $\targetcat$ is \emph{reduced} if the map from $X_0$ to the terminal object in $\targetcat$ is a weak equivalence in $\targetcat$.
\end{defn}

By means of \cref{enrichedlocalization}, there are model structures on $\sS$ and on $\saS$ whose fibrant objects are reduced unital $2$-Segal objects and pointed stable double Segal objects, respectively.

\paragraph{Segal objects}
We know that every Segal object is unital 2-Segal, so we can ask how to identify the corresponding preaugmented bisimplicial objects. In the context of reduced Segal objects, as just discussed, they can be described in terms of homological algebra, as we now explain. 

Let us consider the following condition.  Given an augmented stable double Segal object $Y$, there is a natural notion of an \emph{object of extensions} in $Y$, in the form of distinguished squares in which the left bottom corner is a zero object, realized as a homotopy pullback 
\[
Y_{1,1}\htimes{Y_{0,0}}Y_{-1}.
\]
This definition recovers in particular, the groupoid of exact sequences in an exact category, as well as the space of fiber sequences in a stable $(\infty,1)$-category.

As in homological algebra, a natural question is whether all the extensions in $Y$ split.  We thus make the following definition.

\begin{defn}\label{defn: split}
A stable pointed double Segal object $Y$ in $\targetcat$ is \emph{split} if the map
$$(\sourcever\circ \sourcehor\circ \pr_1, \targetver\circ \targethor\circ \pr_1)\colon Y_{1,1}\htimes{Y_{0,0}}Y_{-1}\to Y_{0,0}\times Y_{0,0}$$
is a weak equivalence.
\end{defn}

\begin{rmk}
Let us give some heuristic motivation for this definition.  We would like to encode the structure of $Y=\pcat(X)$ for a given Segal object $X$. It is not hard to show that a unital 2-Segal object is Segal if and only if the Segal map $X_2\xrightarrow{(d_2, d_0)} X_1\times^h_{X_0} X_1$ is an equivalence. While we would like to translate this condition to $Y$, the maps $d_2$ and $d_0$ have different domains there (namely, $d_2 \colon Y_{0,1}\to Y_{0,0}$ and $d_0 \colon Y_{1,0}\to Y_{0,0}$) and hence cannot be combined to a single map. Instead, we use the following reformulation: since $X$ is reduced, we have a weak equivalence %\jbnote{tweaked some}
$$X_3\htimes{X_1} X_0 \xrightarrow{\simeq} X_2.$$
Hence, we can identify the map from \cref{defn: split} in this case with
$$X_3\htimes{X_1} X_0 \longrightarrow X_1 \htimes{X_0} X_1$$
and require it to be a weak equivalence.
\end{rmk}

\begin{prop} \label{pointedquillenpair}
The generalized $\sdot$-construction induces Quillen equivalences
\[
 \quad\pcat\colon \sS\rightleftarrows \saS \colon\sdot
\]
\begin{itemize}
\item between the model structure for reduced unital $2$-Segal objects and the model structure for stable pointed double Segal objects; and 
\item between the model structure for reduced Segal objects and the model structure for split stable pointed double Segal objects.
\end{itemize}
\end{prop}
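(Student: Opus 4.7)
The plan is to obtain both Quillen equivalences by further left Bousfield localizing the Quillen equivalence of \cref{quillenequivalence} at compatible sets of maps, invoking the general principle that a Quillen equivalence $F \dashv G \colon M \rightleftarrows N$ descends to a Quillen equivalence $F \dashv G \colon M_S \rightleftarrows N_{F(S)}$ for any set $S$ of maps in $M$ (as in Hirschhorn's treatment, which underlies \cref{enrichedlocalization} already). Since all objects are cofibrant, no extra derivation is needed. The task then reduces to: (a) identifying a set $S$ of maps in $\sS$ whose additional locality characterizes reduced (resp.\ reduced Segal) unital $2$-Segal objects, and (b) verifying that the $\Tloc$-local objects of $\saS$ that are further $\pcat(S)$-local are precisely the stable pointed (resp.\ split stable pointed) double Segal objects.

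For the first equivalence, I would take $S_{\mathrm{red}} = \{\varnothing \hookrightarrow \Delta[0]\}$. A $\Sloc$-local object $X$ is further local with respect to this map if and only if $X_0 \simeq \ast$, so the fibrant objects of the localization are precisely the reduced unital $2$-Segal objects. Applying $\pcat$ gives $\varnothing \hookrightarrow \pcat\Delta[0] = \wW{0} = \Sigma[-1]$. A fibrant object $Y$ of $\saS_\Tloc$ is local with respect to this latter map if and only if $Y_{-1} \simeq \ast$, which is exactly the pointed condition. Hence the descended Quillen equivalence matches reduced unital $2$-Segal objects with stable pointed double Segal objects.

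For the second equivalence, I would take $S_{\mathrm{Seg}} = S_{\mathrm{red}} \cup \{\mathcal{I}[2] \hookrightarrow \Delta[2]\}$. A reduced unital $2$-Segal object $X$ is a Segal object exactly when the Segal map $X_2 \to X_1 \htimes{X_0} X_1$ is a weak equivalence; the higher Segal maps follow by the usual induction since the 2-Segal condition already provides compositional data. Under $\pcat$ this becomes $\wW{1} \amalg_{\wW{0}} \wW{1} \hookrightarrow \wW{2}$; via $(\pcat, \sdot)$ together with $\pcat\Delta[n] \cong \wW{n}$ from \cref{rmkpstar(Delta)=W}, an injectively fibrant $\Tloc$-local $Y$ is local with respect to this map if and only if $\sdot Y$ is Segal. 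The key additional step, already sketched in the remark following \cref{defn: split}, is to identify this condition with the split condition on $Y$: the unitality homotopy pullback of \cref{defn:unital} at $n=3$ yields $X_2 \simeq X_3 \htimes{X_1} X_0$, and under the dictionary $X_3 = Y_{1,1}$, $X_1 = Y_{0,0}$, $X_0 = Y_{-1}$ together with reducedness giving $X_1 \htimes{X_0} X_1 \simeq Y_{0,0} \times Y_{0,0}$, the Segal map of $\sdot Y$ is identified with the map appearing in \cref{defn: split}.

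The main obstacle will be the last identification, which requires carefully matching the structure maps used to form $Y_{1,1} \htimes{Y_{0,0}} Y_{-1}$ and $Y_{0,0} \times Y_{0,0}$ with the face maps coming from $\mathcal{I}[2] \hookrightarrow \Delta[2]$ under the bijection $\pcat \Delta[n]_{\inda,\indb} = X_{\inda+1+\indb}$. Once this compatibility is verified, everything else is formal: the descent of the Quillen equivalence to the further localizations is standard, and the characterizations of the new fibrant objects have already been reduced to mapping space conditions. No new homotopical content beyond \cref{quillenequivalence} is needed.
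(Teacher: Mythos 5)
Your proposal is correct and follows essentially the route the paper intends for this statement, which it asserts without a detailed proof: further enriched localizations of the Quillen equivalence of \cref{quillenequivalence} (descending via Hirschhorn's localization theorem, with $\mathbb{L}\pcat = \pcat$ since all objects are cofibrant), together with matching the fibrant objects through the $(\pcat,\sdot)$ adjunction. The identification of the extra locality condition with the split condition is exactly the dictionary sketched in the remark following \cref{defn: split}, and your deferred bookkeeping of the structure maps is the same verification the paper leaves implicit.
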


\paragraph{Localizing fewer maps}
In another direction, we can look at model structures obtained by localizing with respect to fewer maps.  For example, we still get a Quillen pair as in \cref{injectivequillenpair} if we localize only with respect to $\Sloc_{\text{$2$-Seg}}$ and $\Wloc_{\text{$2$-Seg}}$, respectively, or alternatively with respect to $\Sloc_{\text{unital}}$ and $\Wloc_{\text{unital}}$.
A more involved argument shows the following analogue of \cref{mainquillenpair}. 

\begin{prop}
The generalized $\sdot$-construction induces a Quillen pair
\[
 \quad\pcat\colon \sS_{\Sloc_{\text{$2$-Seg}}}\rightleftarrows \saS_{\Tloc_{stable}\cup\Tloc_{Segal}} \colon \sdot
\]
between the model structure for $2$-Segal objects and the model structure for stable preaugmented double Segal objects.
\end{prop}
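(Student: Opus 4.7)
The plan is to closely follow the strategy of the proof of \cref{mainquillenpair}, passing through an intermediate auxiliary model structure. Define $\Wloc_{\text{$2$-Seg}} := \pcat(\Sloc_{\text{$2$-Seg}})$ as in \cref{Wlocal}, and let $\saS_{\Wloc_{\text{$2$-Seg}}}$ be the enriched localization of the injective model structure at this set. Since $\pcat$ is left Quillen on the injective model structures by \cref{injectivequillenpair}, the argument of \cref{intermediateequillenpair} applies verbatim to yield a Quillen pair $\pcat \colon \sS_{\Sloc_{\text{$2$-Seg}}} \rightleftarrows \saS_{\Wloc_{\text{$2$-Seg}}} \colon \sdot$. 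It then suffices to show that the identity functors induce a Quillen pair $\saS_{\Wloc_{\text{$2$-Seg}}} \rightleftarrows \saS_{\Tloc_{stable} \cup \Tloc_{Segal}}$, which, exactly as in \cref{identityquillenpair}, reduces to verifying that every $(\Tloc_{stable}\cup\Tloc_{Segal})$-local object is $\Wloc_{\text{$2$-Seg}}$-local.

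By the adjunction $(\pcat,\sdot)$ and the isomorphism $\pcat\Delta[n] \cong \wW{n}$ of \cref{rmkpstar(Delta)=W}, this last condition is equivalent to showing that $\sdot Y$ is a $2$-Segal object for every $(\Tloc_{stable}\cup\Tloc_{Segal})$-local $Y$. By \cref{easy2segal} it is enough to verify the $P$-Segal condition for the two families of decompositions $\decomposition$ consisting of a single triangle and the remaining $n$-gon. Mimicking the proof of \cref{sdotrespectsSegality}, the task further reduces to showing that for each such decomposition $\decomposition$, the canonical inclusion $\wW{\decomposition} \hookrightarrow \wW{n}$ is a $(\Tloc_{stable}\cup\Tloc_{Segal})$-local equivalence.

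The hard part will be establishing this last statement without recourse to the augmentation maps in $\Tloc_{\text{aug}}$. The original proof (\cref{sigmaopsets3}) passes through \cref{inclusionHn}, whose filtration uses pushouts along maps of type $\augver{\inda}$ and $\aughor{\indb}$ to introduce new augmentation points. The key observation that circumvents this obstacle is that $\wW{\decomposition}$ and $\wW{n}$ share the same augmentation, namely $\{(i,i)\mid 0\le i\le n\}$, so no new augmentation points need to be adjoined. Concretely, the plan is to build an explicit filtration $\wW{\decomposition} = \filt^{(0)} \subseteq \filt^{(1)} \subseteq \cdots \subseteq \filt^{(N)} = \wW{n}$ in which each inclusion is a pushout along a basic map in $\Tloc_{\text{stable}} \cup \Tloc_{\text{Segal}}$. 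For the decomposition $\{0,1,2\}\cup\{0,2,\dots,n\}$, the missing content is the rectangular block of vertices $(1,j)$ for $3\le j \le n$ together with the associated arrows and squares; these can be introduced successively by (a) filling spans (or cospans) into squares via pushouts along $\stablespan{1,1}$ (or $\stablecospan{1,1}$), (b) adjoining horizontal and vertical composites via pushouts along generalized Segal maps $\segalhor{\inda,\indb}$ and $\segalver{\inda,\indb}$, and (c) ensuring interchange by gluing in grids $\Sigma[\inda,\indb]$, closely paralleling steps (1)--(4) in the proof of \cref{inclusionHn} but omitting the step which introduces a new augmentation point. The main technical content is the bookkeeping required to check that every stage is in fact a pushout along a specific map in $\Tloc_{\text{stable}} \cup \Tloc_{\text{Segal}}$. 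The argument for the dual family of decompositions (triangle at the end) is entirely analogous.
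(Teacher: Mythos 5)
Your formal reductions are fine and are exactly the ones the paper's machinery supports: localizing at $\Wloc_{\text{$2$-Seg}}=\pcat(\Sloc_{\text{$2$-Seg}})$ gives a Quillen pair as in \cref{intermediateequillenpair}, and the comparison of the two localized structures on $\saS$ reduces, as in \cref{identityquillenpair}, to showing that every $(\Tloc_{\text{stable}}\cup\Tloc_{\text{Segal}})$-local object is $\Wloc_{\text{$2$-Seg}}$-local, which via the adjunction and \cref{easy2segal} reduces to the acyclicity of $\wW{\decomposition}\hookrightarrow\wW{n}$ for the two triangle-plus-$n$-gon decompositions. Note that the paper itself offers no proof of this proposition (it only remarks that ``a more involved argument'' is needed), so the entire content lies in that last lemma, and you correctly observe that the paper's own route through \cref{Hnnerve}, \cref{inclusionHn} and \cref{sigmaopsets3} is unavailable: $\wH{n}\hookrightarrow\wW{n}$ is genuinely not a local equivalence once $\Tloc_{\text{aug}}$ is dropped, since the augmentations differ.

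The gap is in the claim that the missing cells can be added by pushouts along $\stablespan{1,1}$, $\stablecospan{1,1}$ and (generalized) Segal maps, ``closely paralleling steps (1)--(4) in the proof of \cref{inclusionHn}''. In \cref{inclusionHn} the region being filled is an entire lower-right corner of $\wW{n}$, so each new square is attached along exactly its span and its whole cospan is new; in your situation the rows below the missing strip are already present in $\wW{\decomposition}$. Concretely, to create the vertical morphisms $(1,j)\twoheadrightarrow(2,j)$ you must at some stage attach a square with corners $(1,2),(1,j),(2,2),(2,j)$: its span can be arranged to lie in what you already have, but its bottom cospan edge $(2,2)\rightarrowtail(2,j)$ is already in $\wW{\decomposition}$, so the step is a pushout not along $\stablespan{1,1}$ but along the larger inclusion $(\text{span}\cup\text{bottom edge})\hookrightarrow\Sigma[1,1]$, and that inclusion is \emph{not} a $(\Tloc_{\text{stable}}\cup\Tloc_{\text{Segal}})$-local equivalence: for $\targetcat=\set$, the double nerve of $[0]\boxtimes[2]$ with empty preaugmentation is stable and double Segal, yet maps out of $\Sigma[1,1]$ into it correspond to single horizontal arrows while maps out of the larger shape correspond to pairs of horizontal arrows with common source. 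Gluing larger grids instead does not escape the problem, since their images meet the existing object in shapes (span together with an already-present bottom row, etc.) that are likewise not local equivalences. So the ``bookkeeping'' is not routine: the lemma is very plausibly true (a discrete unique-filling argument as in \cite[Proposition 4.9]{boors} uses only stability and the Segal structure, and no new augmentation points are needed, as you note), but establishing it requires either a genuinely different chain of intermediate subobjects supported by new auxiliary acyclicity lemmas, or a direct computation of $\Map^h(\wW{n},Y)\to\Map^h(\wW{\decomposition},Y)$ for local $Y$ in the style of \cref{babystability}, homotopically implementing the unique-filler argument. That argument --- presumably the ``more involved'' one the paper alludes to --- is exactly what your proposal leaves out.
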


This Quillen pair is not expected to be a Quillen equivalence and would be an interesting subject for future investigation. 

\subsection{Localizations of the projective model structure}

The examples of augmented stable double Segal spaces provided in \cref{augmentednerve,examples} are levelwise Kan complexes, and in particular fibrant in the projective model structure. Since it is a stronger condition to be injectively fibrant, working in that context presents a potential obstruction to finding natural examples of $\Tloc$-local objects. As an example coming from bordisms, one can construct a homotopical version of the unital 2-Segal set of bordisms with genus constraints from \cite[Examples 2.2 and 7.2]{boors} using methods from \cite{CS}.

For this reason, we also want a model structure for augmented stable double Segal objects obtained as a localization of the projective model structure, so that the fibrant objects are precisely the levelwise fibrant augmented stable double Segal objects.  We can obtain the same kind of variant to get levelwise fibrant unital 2-Segal objects.

By an analogue of \cite[Theorem 3.3.20]{Hirschhorn}, the identity functors realize Quillen equivalences 
\[ \id\colon \sS_{\proj,S} \rightleftarrows \sS_{\inj,S}\colon \id \]
and 
\[ \id\colon \saS_{\proj,S} \rightleftarrows \saS_{\inj,S}\colon \id. \] 
Combining these equivalences with the ones from our main result we obtain the following comparisons.

\begin{prop}
There is a zig-zag of Quillen equivalences
\[
\begin{tikzcd}
 \sS_{\inj,\Sloc} \arrow[d, "\id"] \arrow[r, "\pcat"]&\saS_{\inj,\Tloc} \arrow[l, shift left, "\sdot"] \arrow[d, "\id" swap]\\
 \sS_{\proj,\Sloc} \arrow[u, shift left, "\id"]&\saS_{\proj,\Tloc}. \arrow[u, shift right, "\id" swap]
\end{tikzcd}
\]
\end{prop}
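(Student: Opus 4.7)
The plan is to exhibit the diagram as a composition of three separate Quillen equivalences: the horizontal $(\pcat,\sdot)$ adjunction on the injective side, which is precisely \cref{quillenequivalence}, and the two vertical identity adjunctions $\sS_{\proj,\Sloc}\rightleftarrows\sS_{\inj,\Sloc}$ and $\saS_{\proj,\Tloc}\rightleftarrows\saS_{\inj,\Tloc}$ referenced in the paragraph immediately preceding the proposition. No further comparison between the two sides is required once the three constituent Quillen equivalences are in hand, since the picture is obtained simply by stacking them.

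For each vertical adjunction I would first record that the identity adjunction between the \emph{unlocalized} projective and injective model structures is already a Quillen equivalence. The two model structures share the same underlying category and the same class of levelwise weak equivalences, and every projective cofibration is in particular a levelwise, hence injective, cofibration. Thus the identity from the projective to the injective side preserves both cofibrations and weak equivalences, and the derived unit and counit are literally identity maps, which are trivially weak equivalences.

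Next I would verify that this Quillen equivalence descends to the localizations. The crucial input, flagged in the excerpt just after \cref{enrichedlocalization}, is that the class of $S$-local equivalences depends only on the underlying weak equivalences of the model structure, not on its cofibration–fibration factorization. Consequently the $\Sloc$-local equivalences in $\sS_{\proj}$ and in $\sS_{\inj}$ coincide as a class of maps, and likewise for the $\Tloc$-local equivalences in the two model structures on $\saS$. Combined with the preservation of cofibrations, this shows that the identity is left Quillen after localization, and the derived unit and counit of the localized adjunction are again identities. This is the analogue of \cite[Theorem 3.3.20]{Hirschhorn} alluded to in the text; in our combinatorial, all-cofibrant setting it can be proved directly or extracted from the enriched localization framework of \cite{barwick2010} already invoked to construct the localizations.

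The main and essentially the only nontrivial point is the identification of $\Sloc$-local (respectively $\Tloc$-local) equivalences across the two model structures. The standard argument proceeds by noting that $\Map^h(A,X)$ for fibrant $X$ can be computed via the underived mapping space when $A$ is cofibrant, so its homotopy type is controlled by the weak equivalences alone; the injective structure has the additional feature that all objects are cofibrant, so one can freely convert between the two frameworks by cofibrant replacement in the projective structure without leaving the class of local equivalences. Once this identification is secured, the three Quillen equivalences assemble into the displayed zig-zag without further work.
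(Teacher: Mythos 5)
Your proposal is correct and follows essentially the same route as the paper: the paper also decomposes the diagram into the horizontal Quillen equivalence of \cref{quillenequivalence} and the two vertical identity adjunctions, justifying the latter by an analogue of \cite[Theorem 3.3.20]{Hirschhorn}. Your unpacking of that citation --- same weak equivalences in both localized structures because $S$-local equivalences depend only on the underlying weak equivalences, plus the fact that projective cofibrations are injective cofibrations --- is exactly the content the paper leaves implicit.
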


\bibliographystyle{abbrv}
\bibliography{ref}

\end{document}